\let\origlrcorner\lrcorner 
\let\lrcorner\origlrcorner
\newcommand\sect[2][\empty]{\Gamma_{#1}\left({#2}\right)}
\def\Z{\mathbb{Z}}
\def\R{\mathbb{R}}
\def\ins{\lrcorner}
\def\z{\Z_2}
\def\im{\operatorname{Im}}
\def\ev{\mathrm{ev}}
\def\tr{\operatorname{tr}}
\def\sgn{\operatorname{sgn}}
\def\End{\operatorname{End}}
\def\Hom{\operatorname{Hom}}
\def\Conn{\operatorname{Conn}}
\def\Inc{\operatorname{Inc}}
\def\Sym{\operatorname{Sym}}
\def\der{\operatorname{der}}
\def\sder{\operatorname{sder}}
\def\Der{\operatorname{Der}}
\newcommand{\twistcom}[3]{\left[{#1}\empty_{#2}{#3}\right]}
\newcommand{\abs}[1]{\left|{#1}\right|}
\newcommand{\parity}[1]{\lceil{#1}\rfloor}
\def\Sym{\operatorname{Sym}}
\def\Jet{\operatorname{Jet}}
\def\jet{\operatorname{jet}}
\def\Aut{\operatorname{Aut}}
\def\id{\operatorname{id}}
\def\ad{\operatorname{ad}}
\def\lsem{\llbracket}
\def\rsem{\rrbracket}
\def\pr{\operatorname{pr}}
\def\rank{\operatorname{rank}}
\def\Pol{\operatorname{Pol}}
\def\ker{\operatorname{ker}}
\def\coker{\operatorname{coker}}
\newcommand\superfold[1]{({#1}|\mathcal{R}{#1})}
\newcommand\superfoldi[1]{({#1}|\mathbf{S}{#1})}
\newcommand\C{\mathcal{C}^{\infty}}
\newcommand{\set}[2][\empty]{\left\{{#2}\right\}_{#1}}
\newcommand{\tupla}[2][n]{\left({#2}_1,\ldots{#2}_{#1}\right)}
\newcommand{\tuplao}[2][n]{\left({#2}_0,\ldots{#2}_{#1}\right)}
\newcommand{\listuplao}[2][n]{{#2}_0,\ldots{#2}_{#1}}
\newcommand{\setupla}[2][n]{\left\{{#2}_1,\ldots{#2}_{#1}\right\}}
\newcommand{\listupla}[2][n]{{#2}_1,\ldots,{#2}_{#1}}
\newcommand{\prodtupla}[2][n]{{#2}_1\dotsm{#2}_{#1}}
\newcommand{\inv}[1]{{#1}^{-1}}
\newtheoremstyle{mit}%
{3pt}
{3pt}
{}
{}
{\itshape}
{:}
{1em}
{}
\theoremstyle{definition}
\newtheorem{defn}{Definition}[chapter]
\newtheorem{nota}[defn]{Remark}
\newtheorem{ejemplo}[defn]{Example}
\theoremstyle{plain}
\newtheorem{teo}[defn]{Theorem}
\newtheorem{thm}[defn]{Theorem}
\newtheorem{lema}[defn]{Lemma}
\newtheorem{lem}[defn]{Lemma}
\newtheorem{cor}[defn]{Corollary}
\newtheorem{prop}[defn]{Proposition}
\newtheorem*{cartan-poincare}{Cartan-Poincaré Lemma}
\newtheorem*{ccr}{Cannonical Commutiation and Anticommutation relations}
\theoremstyle{mit}
\newtheorem*{claim}{Claim}
\begin{document}

\begin{titlepage}

\begin{minipage}{\textwidth}
\includegraphics[scale=0.38]{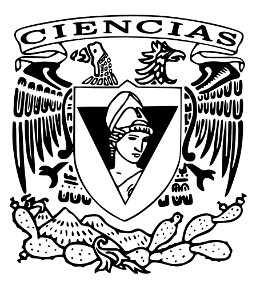}
\hfill
\includegraphics[scale=1.1]{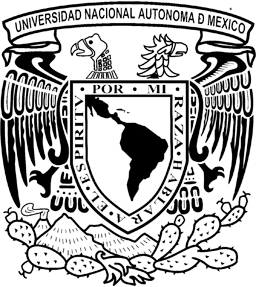}
\vfill
\rule{\textwidth}{2pt}
\end{minipage}
\vfill
\begin{minipage}[][0.56\textheight][s]{\textwidth}
\begin{center}
{\fontsize{25}{15}\selectfont%
\textbf{A classical approach to smooth supermanifolds}}\\
\vfill
{\fontsize{20}{30}\selectfont%
Óscar Guajardo}\\
\vfill
{\fontsize{18}{30}\selectfont {A thesis submitted to the Faculty of Sciences of the National Autonomous University of Mexico in partial fulfillment of the requirements for the degree of}}\\
\vfill
{\fontsize{19}{30}\selectfont \textbf{Master in Science}}\\
\vfill
{\fontsize{18}{30}\selectfont Thesis supervisor:\\ Dr. Gregor Weingart\\ Institute of Mathematics at Cuernavaca, UNAM}
\vfill
\end{center}
\end{minipage}
\vfill
\rule{\textwidth}{2pt}
\vfill
\end{titlepage}

\tableofcontents

\begin{abstract}
This is an extension of the author's Master's thesis written under the supervision of Dr. Gregor Weingart at the National Autonomous University of Mexico. 
By ``extension'' we mean that some of the results have been rewritten and some others have been added to the original work. The purpose of this study is to rewrite differential supergeometry in terms of classical differential geometry. This rewriting from ``first principles'' has two main motivations:
\begin{enumerate}
\item avoid using local (and usually not very well-defined) odd coordinates;
\item use both the language and the tools (both highly developed) of classical differential geometry to state and prove results in supergeometry.
\end{enumerate}

Although there is work in this direction (for instance \cite{adolfo-1}) this work's point of view might be useful to translate from the sheaf-theoretic language to one that is better suited for explicit calculations. We now give a summary of the contents. 

Chapter \ref{chap:ext} is about superalgebras. In particular, we give explicit isomorphisms for the space of superderivations of an exterior algebra.  The final section of this chapter is devoted to the twisted action of the symmetric group on the supervector space of tensors with a fixed rank; this allows us to construct the supersymmetric and superexterior algebras of a finite-dimensional supervector space.

In chapter \ref{chap:superfolds} we begin our study of smooth finite-dimensional supermanifolds. As the title of this monograph indicates, our approach will be rather classical in the following sense: our definition of a supermanifold is not given in terms of local charts nor in terms of sheaves of superalgebras; we rather study superalgebra bundles over smooth manifolds. That is, supermanifolds in our sense are vector bundles such that the fibre at each point is a free supercommutative superalgebra of finite rank. This approach allows the use of tools from differential geometry which in many cases (e.g. Batchelor's theorem, which we prove as corollary \ref{cor:batchelors}) simplifies the proofs; furthermore, we prove (theorem \ref{thm:sheaf=bundle}) that both approaches are equivalent. A noteworthy feature of our approach is that supersmooth maps turn out to be generalizations of linear differential operators (proposition \ref{prop:supermaps-are-diffops}). With our approach we prove special splittings of the tangent bundle (theorem \ref{thm:exact-sequence-derivations}) and study the tangential maps of a supersmooth map between two supermanifolds which give conditions for the supermanifold to be split (proposition \ref{prop:split-iff}). We also prove that the de Rham cohomology of a supermanifold is isomorphic to the de Rham cohomology of the underlying smooth manifold (theorem \ref{thm:deRham}) by writing down the exterior derivative in a different way (\ref{thm:super-exterior-derivative}).

We include three appendices. The first one concerns linear differential operators; there we develop the necessary tools to understand supersmooth maps with our approach. 

The two other appendices deal with algebraic results of independent interest.

The second appendix is the statement and proof of the Cartan--Poincaré lemma, a result on the (co)homology of a complex that arises in our study. 

The third appendix deals with the proof of an algebraic fact (lemma \ref{lemma:derivations-isomorphism}) we use in order to prove a ``flowbox coordinates'' theorem for supermanifolds (theorem \ref{thm:flowbox}) which is the foundation of the main results of chapter \ref{chap:superfolds}.

\end{abstract}

\chapter{Superlinear algebra}\label{chap:ext}
This first chapter is aimed at establishing the definitions and properties needed for doing superlinear algebra a.k.a supergeometry of a one--point manifold. 
First we study supervector spaces and superalgebras, later turning our attention to exterior algebras of finite-dimensional vector space. An explicit calculation of the space of derivations of an exterior algebra is deduced on section \ref{sec:derivations}; we then study graded (or super) derivations of these algebras. Section \ref{sec:lie-superalgebras} is on Lie superalgebras. The last section is devoted to the study of supersymmetric and superexterior algebras; we do this using a special action of the symmetric group on the supertensor algebra.

All vector spaces are finite-dimensional and defined over the real numbers unless stated otherwise.

\section{$\z$-graded spaces and supervector spaces}

\begin{defn}
A \textbf{$\z$-graded vector space} is a vector space \(W\) with a direct sum decomposition \(W=W_+\oplus W_-\).
\end{defn}

To obtain a supervector space an additional structure is needed for the algebra of endomorphisms of \(W\):

\begin{defn}
A \textbf{supervector space} is a \(\z\)-graded vector space \(W\) such that the space of endomorphisms is also endowed with a \(\z\)-grading:
\[
\End(W)=\End_+(W)\oplus\End_-(W)
\]
where
\[
\End_+(W)=\End(W_+)\oplus\End(W_-)\qquad\End_-(W)=\Hom(W_+,W_-)\oplus\Hom(W_-,W_+)
\]
in which the commutator operation is replaced by the \textbf{supercommutator}:
\begin{equation}\label{eq:supercommutator}
\lsem A,B\rsem\colon =A\circ B-(-1)^{\parity{A}\parity{B}}B\circ A;
\end{equation}
here $\parity{\cdot}$ denotes the \textbf{parity} of an element of the set $(W_+\cup W_-) -\set{0}$, defined as 
\begin{equation}\label{eq:parity}
\parity{w}=
\begin{cases}
0,\quad\text{if\ }w\in W_+;\\
1,\quad\text{if\ }w\in W_-
\end{cases}
\end{equation}
and likewise for endomorphisms of $W$. We denote a supervector space by \(W=(W_+|W_-)\) and the whole space of endomorphisms by $\End(W_+|W_-)$. The elements of $W_+$ (resp. $W_-$) are called \textbf{even} (resp. \textbf{odd}). The set $(W_+\cup W_-) -\set{0}$ is the set of \textbf{homogeneous elements} of $W$. For endomorphisms of supervector spaces we use the terms \textbf{even endomorphism} and \textbf{odd endomorphism} accordingly.
\end{defn}

\begin{nota}\label{remark:notation}
For notational purposes it will sometimes be convenient to denote a supervector space $(W_+|W_-)$ as $(W_0|W_1)$. This latter notation is consistent with the usual notation for gradings by a group, being in this case $\z$. On some other occasions (from chapter \ref{chap:superfolds} onwards) we'll denote a supervector space with decomposition $V\oplus U$ by $(V|U)$.
\end{nota}

A fundamental property of supervector spaces is given in the following

\begin{prop}\label{prop:superstructure-morphism}
Let $W$ be a vector space. There is a bijection between the set of supervector space structures on $W$ and automorphisms $\gamma$ of $W$ such that $\gamma^2=\id$. In this way, we get 
\begin{equation}\label{eq:superspace-grading}
W_\pm=\set{v\in W|\gamma(v)=\pm v}.
\end{equation}
\end{prop}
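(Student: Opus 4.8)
The plan is to establish the bijection by constructing maps in both directions and showing they are mutually inverse.

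\medskip

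First I would go from a supervector space structure to an involution. Given $W = (W_+ \mid W_-)$, define $\gamma \colon W \to W$ to be $+\id$ on $W_+$ and $-\id$ on $W_-$, extended linearly; since every $v \in W$ decomposes uniquely as $v = v_+ + v_-$ with $v_\pm \in W_\pm$, this is well-defined. It is clearly linear and invertible, and $\gamma^2$ acts as $\id$ on each summand, hence $\gamma^2 = \id$ on all of $W$. Conversely, given an automorphism $\gamma$ with $\gamma^2 = \id$, I would define $W_\pm = \set{v \in W \mid \gamma(v) = \pm v}$ as in \eqref{eq:superspace-grading} and check that $W = W_+ \oplus W_-$: for any $v$, write $v = \tfrac12(v + \gamma(v)) + \tfrac12(v - \gamma(v))$ and verify the two summands lie in $W_+$ and $W_-$ respectively (using $\gamma^2 = \id$); for the directness, note $W_+ \cap W_-$ forces $v = -v$, hence $v = 0$ since we are over $\R$.

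\medskip

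Next I would check that the two constructions are mutually inverse, which is essentially immediate: starting from a grading, building $\gamma$, and then re-extracting the eigenspaces returns the original $W_\pm$ by construction; starting from $\gamma$, forming the eigenspace decomposition, and then building the associated involution returns $\gamma$ because the new involution agrees with $\gamma$ on each eigenspace, which span $W$. The one point deserving a little care is that the definition of supervector space also carries the $\z$-grading on $\End(W)$; I would remark that this grading is itself determined by the decomposition $W = W_+ \oplus W_-$ via the stated formulas for $\End_\pm(W)$, so it is not extra data and the bijection is genuinely with involutions $\gamma$.

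\medskip

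I do not expect a serious obstacle here — the statement is a standard linear-algebra fact (eigenspace decomposition of an involution in characteristic $\neq 2$). The only thing to be slightly careful about is invoking that $2$ is invertible (true here, as everything is over $\R$) so that the idempotents $\tfrac12(\id \pm \gamma)$ make sense and give the projections onto $W_\pm$; without that, the claim would fail. So the ``hard part'' is really just bookkeeping: making explicit that a supervector space structure is exactly the pair $(W_+, W_-)$ and that the $\End$-grading is determined, so that the correspondence is a clean bijection.
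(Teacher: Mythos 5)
Your proposal is correct and follows essentially the same route as the paper: the involution is built as $+\id$ on $W_+$ and $-\id$ on $W_-$, and conversely the grading is recovered as the $\pm 1$-eigenspaces of $\gamma$, with the $\z$-grading of $\End(W)$ determined by that decomposition. You are in fact somewhat more careful than the paper's own proof, which does not spell out the direct-sum verification via the idempotents $\tfrac12(\id\pm\gamma)$ or the check that the two constructions are mutually inverse.
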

\begin{proof}
If  $W=(W_+|W_-)$ is a supervector space then the automorphism $\gamma$ is defined by setting it as in \eqref{eq:superspace-grading} and extending it linearly. Note that this induces a similar automorphism $\gamma^*$ on $\End(W_+|W_-)$ by precomposition:
\begin{equation}\label{eq:gamma-extended}
\gamma^* A=A\circ\gamma
\end{equation}

Conversely, let $\gamma$ be an automorphism of a vector space $W$ such that $\gamma^2=\id_W$. Then the decomposition \eqref{eq:superspace-grading} is a $\z$-grading of $W$. The supervector space structure is given by twisting the action of $\End W$ by $\gamma$ as in \eqref{eq:gamma-extended}. Then, for all endomorphisms $A$ and $B$ of $W$ we define
\begin{equation}
\lsem A,B\rsem=AB-\gamma^* A\gamma^* B
\end{equation}\label{eq:superbracket-gamma}
and thus get a decomposition
\[
\End(W)_+=\set{A\mid \gamma^*A=A},\quad \End(W)_-=\set{A\mid \gamma^*A=-A}
\]
for which the definition of the parity of a superlinear map makes sense is behaves as expected. Thus we get a unique supervector space structure on $W$ given by $\gamma$.
\end{proof}

\subsection{Superalgebras}
Of course the gradings are useless without a multiplicative structure. For instance, the algebra $\End(W_+|W_-)$ of endomorphisms of a supervector space satisfies the following: if $T^{\pm}\in\End_{\pm}(W)$ then the following identities hold:
\[
T^+\circ T^+\in\End_+(W),\quad T^-\circ T^-\in\End_+(W),\quad T^{\pm}\circ T^{\mp}\in\End_-(W)
\]
This is a very important example of the following concept.

\begin{defn}
Let $\mathcal{A}=(\mathcal{A}_0|\mathcal{A}_1)$ (see note \ref{remark:notation}) be a supervector space and $m\colon\mathcal{A}\otimes\mathcal{A}\to\mathcal{A}$ be a bilinear multiplication turning it into an algebra. The space $\mathcal{A}$ is a \textbf{superalgebra} if the multiplication is $\z$-graded, that is:
\[
m(\mathcal{A}_\mu\otimes\mathcal{A}_\nu)\subseteq\mathcal{A}_{\mu+\nu}
\]
where the indices are elements of the group $\z$. The algebra $\mathcal{A}$ is called \textbf{supercommutative} if for all $a$ and $b$ homogeneous elements of $\mathcal{A}$ the identity
\[
a\cdot b=(-1)^{\parity{a}\parity{b}}ba
\]
\end{defn}

Note that there is a priori no essential distinction between a superalgebra and a $\z$-graded algebra. A distinction will be fundamental when discussing Lie superalgebras. Notice that proposition \ref{prop:superstructure-morphism} still holds for superalgebras, in this case $\gamma$ being an algebra morphism. This has an important consequence:

\begin{lema}[The rule of signs]\label{rule-of-signs}
If $\mathcal{B}$ and $\mathcal{B}'$ are (left) supermodules over the superalgebra $\mathcal{A}$ (that is, $\mathcal{B}$ is a supervector space and a superlinear action $\mathcal{A}\otimes\mathcal{B}\to\mathcal{B}$ is defined, and likewise for $\mathcal{B}'$) then for any homogeneous homomorphism $T\colon\mathcal{B}\to\mathcal{B}'$ we have
\[
T(ab)=(-1)^{\parity{a}\parity{T}}aT(b)
\]
for all homogeneous $a\in\mathcal{A}$.
\end{lema}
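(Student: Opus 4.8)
The statement is the ``rule of signs'' for a homogeneous module homomorphism $T\colon\mathcal{B}\to\mathcal{B}'$ between supermodules over a superalgebra $\mathcal{A}$, asserting $T(ab)=(-1)^{\parity{a}\parity{T}}aT(b)$. The plan is to decompose into homogeneous pieces and track how the parity of $T$ forces the Koszul sign, using the twisting automorphism $\gamma$ from proposition \ref{prop:superstructure-morphism} as the organizing tool. First I would reduce to the case where $a$, $b$ and $T$ are all homogeneous, since both sides are bilinear in $(a,b)$ and $T$ decomposes as $T=T_0+T_1$ with $T_i$ of parity $i$; linearity then extends the identity to the general case.

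Next, the key computation. Since $T$ is a homomorphism of supermodules in the graded sense, $T$ shifts parity by $\parity{T}$, i.e. $T(\mathcal{B}_\nu)\subseteq\mathcal{B}'_{\nu+\parity{T}}$, and it is $\mathcal{A}$-linear \emph{up to} the grading twist. The cleanest way to see the sign is to write the $\mathcal{A}$-action on $\mathcal{B}'$ after precomposing $T$ with $\gamma$: one checks that the map $b\mapsto T(ab)$ and the map $b\mapsto aT(b)$ agree on homogeneous $b$ once we insert the automorphism $\gamma^*$ defined in \eqref{eq:gamma-extended}. Concretely, on a homogeneous $b\in\mathcal{B}_{\parity{b}}$, the element $ab$ lies in $\mathcal{B}_{\parity{a}+\parity{b}}$; applying $T$ and comparing with $a$ acting on $T(b)\in\mathcal{B}'_{\parity{b}+\parity{T}}$ produces exactly the factor $(-1)^{\parity{a}\parity{T}}$ by the definition of a (left) supermodule morphism, where the sign is precisely the Koszul sign incurred when moving the odd operator $T$ past the element $a$.

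The main obstacle — really the only subtlety — is pinning down the precise definition of ``homogeneous homomorphism of supermodules'' being used, since the sign convention is essentially built into that definition: a supermodule homomorphism of parity $\parity{T}$ is by convention an element of $\sHom(\mathcal{B},\mathcal{B}')$ of that parity, and the supercommutator/twisted-action structure on $\End(\mathcal{B}_+|\mathcal{B}_-)$ from \eqref{eq:gamma-extended} already encodes that such a $T$ satisfies $T\circ(a\cdot-)=(-1)^{\parity{a}\parity{T}}(a\cdot-)\circ T$. So the ``proof'' is really the observation that the defining $\mathcal{A}$-linearity of $T$ in the super sense \emph{is} the asserted identity, once one unwinds which twist of the action is meant; the work is bookkeeping, not invention.

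Finally I would record the degenerate checks: when $\parity{T}=0$ the identity reduces to ordinary $\mathcal{A}$-linearity, $T(ab)=aT(b)$, and when $\parity{a}=0$ it likewise reduces to linearity over the even part $\mathcal{A}_0$; these sanity checks confirm the sign is normalized correctly. Extending back from homogeneous $a,b,T$ to arbitrary elements by bilinearity then completes the argument.
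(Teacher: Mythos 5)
Your proposal is correct and is essentially the paper's own argument: the paper's one-line proof observes that left multiplication $M_a$ is a homogeneous (module) endomorphism of parity $\parity{a}$ and that the vanishing of the twisted commutator $\lsem T, M_a\rsem$ — which is what "homogeneous supermodule homomorphism" means — is precisely the asserted identity. Your additional remarks (reduction to homogeneous components, the sanity checks for $\parity{T}=0$ and $\parity{a}=0$) are consistent elaborations of the same bookkeeping, not a different route.
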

\begin{proof}
One only has to observe that $M_a=a\cdot$ (left multiplication by $a$) is a module endomorphism and apply \eqref{eq:superbracket-gamma} to $T$ and $M_a$.
\end{proof}

The following example is the fundamental one for all our subsequent work:

\begin{ejemplo}
Let $V$ be a vector space of dimension $n$. The \textbf{exterior algebra} of $V$ is the quotient
\[
\Lambda V=\bigotimes V/\langle x\otimes x|x\in V\rangle
\]
The product on this algebra is denoted by $X\wedge Y$, where $X$ and $Y$ are classes in the quotient. If $x$ is an element of $V$ and we denote by the same symbol its class both in $\bigotimes V$ and $\Lambda V$ then the above definition forces the identity 
\begin{equation}\label{eq:wedge-skew-symmetric}
x\wedge x=0. 
\end{equation}
Let $T$ be a homogeneous tensor of rank $k$ expresible as $T=x_1\otimes\cdots\otimes x_k$, with each $x_j$ in $V$. Then its class in the exterior algebra is $x_1\wedge\cdots\wedge x_k$. These elements are called \textbf{decomposible $\mathbf{k}$-multivectors} and their span is denoted $\Lambda^k V$ ; note that, if $\pi\colon\bigotimes V\to\Lambda V$ is the projection, then $\pi(\bigotimes^k V)$ is generated by the latter classes. Because the algebra $\bigotimes V$ is $\Z$ graded the exterior algebra inherits this grading, that is:
\[
\Lambda^k V\wedge\Lambda^r V\subseteq\Lambda^{k+r}V
\] 
If $\setupla{v}$ is a basis of $V$ then the set $\set{v_{j_1}\wedge\cdots\wedge v_{j_r}|j_1\leq\cdots\leq j_r}$ is a basis for $\Lambda^r V$. It is also important noting that because of identity \eqref{eq:wedge-skew-symmetric} we get the following identity for decomposable multivectors:
\begin{equation}\label{eq:wedge-product-permutation}
x_{j_1}\wedge\cdots\wedge x_{j_r}=\sgn\sigma\cdot x_{\sigma(j_1)}\wedge\cdots\wedge x_{\sigma(j_r)}
\end{equation}
where $\sigma\in S_r$ is a permutation of the indices and $\sgn\sigma$ denotes the signature of $\sigma$ understood as taking the values $\pm 1$. Another important consequence of \eqref{eq:wedge-skew-symmetric} is that if $S=\setupla[r]{v}$ is a set of vectors in $V$ then $v_1\wedge\cdots\wedge v_r\neq 0$ if and only if the set $S$ is linearly independent; this immediately implies $\Lambda^r V=\set{0}$ if $r>n$. The $\z$-grading of $\Lambda V$ is given by
\[
\Lambda_+ V=\bigoplus_{k\geq 0}\Lambda^{2k} V,\quad\Lambda_- V=\bigoplus_{k\geq 0}\Lambda^{2k+1} V
\]
The multiplication satisfies a very important identity. Let $X\in\Lambda^q V$ and $Y\in\Lambda^p V$, then their product lies on $\Lambda^{p+q} V$ and it satisfies
\begin{equation}\label{eq:wedge-supercommutative}
X\wedge Y=(-1)^{p+q}Y\wedge X.
\end{equation}
Note that this identity is a rather direct consequence of \eqref{eq:wedge-product-permutation}.

When considering the exterior algebra $\Lambda V^*$ of the dual space we get a structure that codifies multiniear skew-symmetric mappings $\lambda\colon V\otimes\cdots\otimes V\to\R$; in particular there is a one--to--one correspondence between sets of oriented bases of $V$ and non-zero elements of $\Lambda^n V^*$.

Note that $\Lambda V$ can be presented as
\begin{equation}\label{eq:exterior-algebra-free}
\Lambda V=\langle \listupla{x} | x_\mu\cdot x_\nu=-x_\nu\cdot x_\mu\rangle
\end{equation}
where each $x$ represents a class of an element in $V$ and the set $x_1,\ldots,x_n$ is a basis for $V$.
\end{ejemplo}

Identity \eqref{eq:wedge-supercommutative} above is graded commutativity. Note that if $p$ is even and $q$ is odd in \eqref{eq:wedge-supercommutative} then the degree of their product is odd, whereas if both are of the same parity then their product is necessarily even. Thus an exterior algebra is a freely generated supercommutative algebra due to equality \eqref{eq:exterior-algebra-free}. So it's important to ask whether there are other possibilities for a free supercommutative algebra. The following result and its corollary guarantee that there are none.

\begin{prop}\label{prop:free-superalgebra}
A supercommutative free superalgebra $\mathcal{A}$ of even rank $m$ and odd rank $n$ (i.e. generated by $m$ even elements and $n$ odd elements) is isomorphic to $\Sym W\otimes\Lambda V$, where $\dim W=m$ and $\dim V=n$. 
\end{prop}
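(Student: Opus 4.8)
The plan is to identify $\Sym W\otimes\Lambda V$, equipped with the graded tensor-product structure, as \emph{the} free supercommutative superalgebra on $m=\dim W$ even and $n=\dim V$ odd generators, and then to conclude by the universal property that it is isomorphic to $\mathcal{A}$. The two main ingredients are the universal property of $\Sym W$ (for the even generators) and the universal property of $\Lambda V$ established in the previous example (for the odd generators); the proposition is essentially the assertion that the graded tensor product glues these two freenesses together.

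First I would pin down the superalgebra structure on $\Sym W\otimes\Lambda V$. Since every element of $W$ is declared even, $\Sym W$ is concentrated in even degree, so the $\z$-grading of the tensor product collapses to $(\Sym W\otimes\Lambda V)_{\pm}=\Sym W\otimes(\Lambda V)_{\pm}$; moreover the Koszul sign in the product $(p\otimes q)(p'\otimes q')=(-1)^{\parity{q}\parity{p'}}pp'\otimes qq'$ is always $+1$ because $p'$ is even, so the multiplication is just the ordinary tensor-product multiplication. Supercommutativity is then immediate from commutativity of $\Sym W$, supercommutativity of $\Lambda V$ (identity \eqref{eq:wedge-supercommutative}), and the fact that even elements of any supercommutative superalgebra are central. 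Having fixed bases $\setupla[m]{w}$ of $W$ and $\setupla[n]{v}$ of $V$, the algebra is visibly generated by the $m$ even elements $w_i\otimes 1$ and the $n$ odd elements $1\otimes v_j$.

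Next I would check the universal property. Given any supercommutative superalgebra $\mathcal{B}$ with chosen even elements $b_1,\dots,b_m$ and odd elements $\beta_1,\dots,\beta_n$, the linear map $w_i\mapsto b_i$ extends uniquely to an algebra morphism $\Sym W\to\mathcal{B}_0$ by the universal property of the symmetric algebra, while $v_j\mapsto\beta_j$ extends uniquely to a (grading-preserving) algebra morphism $\Lambda V\to\mathcal{B}$ by the universal property of the exterior algebra — the required hypothesis, that the image of each vector squares to zero, holds automatically since any odd element $\beta$ of a supercommutative superalgebra satisfies $\beta^2=-\beta^2$, hence $\beta^2=0$ over $\R$. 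Because even elements of $\mathcal{B}$ are central, these two morphisms have commuting images and therefore assemble into a well-defined algebra morphism $\phi\colon\Sym W\otimes\Lambda V\to\mathcal{B}$, $\phi(p\otimes q)=\hat p\,\hat q$, which is even and is the unique superalgebra morphism with $\phi(w_i\otimes 1)=b_i$ and $\phi(1\otimes v_j)=\beta_j$. This is precisely freeness of even rank $m$ and odd rank $n$, so the universal properties of $\mathcal{A}$ and of $\Sym W\otimes\Lambda V$ produce mutually inverse morphisms between them.

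The main obstacle is not conceptual but bookkeeping: one must be careful that the graded tensor product is the correct monoidal structure on superalgebras and that $\phi$ is genuinely multiplicative — this is the only point where the supercommutativity of $\mathcal{A}$, rather than mere $\z$-gradedness, is essential, and it enters through the centrality of even elements. If one instead wants an explicit isomorphism, the same ingredients give it directly: using $v_j^2=0$ and the (anti)commutation relations to normalize words shows that the monomials $w_1^{\alpha_1}\cdots w_m^{\alpha_m}\,v_{j_1}\cdots v_{j_k}$ with $\alpha\in\N^m$ and $j_1<\cdots<j_k$ span $\mathcal{A}$, giving a surjection $q\colon\Sym W\otimes\Lambda V\twoheadrightarrow\mathcal{A}$; freeness of $\mathcal{A}$ supplies a section $s$ sending generators to generators, and since both $q\circ s$ and $s\circ q$ fix the generators they are the respective identities, so $q$ is the desired isomorphism. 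The genuine content in either route is the linear independence of those monomials, which is exactly what the word ``free'' provides for free.
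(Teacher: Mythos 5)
Your proof is correct and follows essentially the same route as the paper's: both arguments reduce the proposition to the fact that freeness determines the algebra once generators are matched up. The paper's own proof is a two-line version of your closing paragraph — it simply sends the basis vectors $w_\nu,v_\mu$ to the generators $x_\nu,\xi_\mu$ and invokes freeness — so your verification that $\Sym W\otimes\Lambda V$ itself satisfies the universal property (the Koszul signs collapsing, odd elements squaring to zero, centrality of even elements) supplies exactly the details the paper leaves implicit.
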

\begin{proof}
Let $\set{x_1,\ldots,x_m,\xi_1,\ldots,\xi_n}$ be a set of generators of $\mathcal{A}$, where the $x_\nu$ are even and the $\xi_\mu$ are odd, and fix bases $\setupla[m]{w}$ and $\setupla{v}$ of $W$ and $V$ respectively. The isomorphism is given by the linear extension of the map $v_\mu\mapsto\xi_\mu$, $w_\nu\mapsto x_\nu$, since the $\xi$'s and $x$'s freely generate $\mathcal{A}$.
\end{proof}

\begin{cor}\label{cor:free-superalgebra}
A completely odd finite-dimensional superalgebra $\mathcal{A}$ (i.e., with notation as above, $m=0$) is isomorphic to the exterior algebra of a finite-dimensional vector space.
\end{cor}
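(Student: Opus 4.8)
The plan is to derive this as an immediate consequence of Proposition~\ref{prop:free-superalgebra}. If $\mathcal{A}$ is a completely odd finite-dimensional superalgebra, then by hypothesis it has even rank $m=0$ and some odd rank $n=\dim V$. Applying the proposition with $W=\set{0}$ gives $\mathcal{A}\cong\Sym W\otimes\Lambda V$. Since $\Sym\set{0}\cong\R$ (the symmetric algebra on the zero space is just the ground field), the tensor product collapses to $\R\otimes\Lambda V\cong\Lambda V$, which is the exterior algebra of the finite-dimensional vector space $V$.

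First I would need to make sure the hypotheses of Proposition~\ref{prop:free-superalgebra} are actually met: the proposition is stated for \emph{free} supercommutative superalgebras, so I should check that ``completely odd finite-dimensional superalgebra'' here is implicitly meant to carry the freeness assumption, or else note that a finite-dimensional supercommutative superalgebra generated entirely by odd elements is automatically free on those generators. The latter is the cleaner route: if $\xi_1,\dots,\xi_n$ generate $\mathcal{A}$ and are all odd, then supercommutativity forces $\xi_i\xi_j=-\xi_j\xi_i$ for $i\neq j$ and $\xi_i^2=0$ (taking $a=b=\xi_i$ in the supercommutativity relation gives $\xi_i^2=-\xi_i^2$, hence $2\xi_i^2=0$, hence $\xi_i^2=0$ over $\R$), so the only relations are exactly the defining relations \eqref{eq:exterior-algebra-free} of an exterior algebra; finite-dimensionality then pins down that there are no further relations beyond the span of the $\xi_{j_1}\cdots\xi_{j_r}$ with $j_1<\cdots<j_r$.

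The key steps in order: (1) observe $\xi_i^2=0$ from supercommutativity over $\R$; (2) observe the $\xi_i$ pairwise anticommute; (3) conclude that $\mathcal{A}$ is a quotient of $\Lambda V$ with $V=\operatorname{span}\set{\xi_1,\dots,\xi_n}$, via the surjection induced by $v_i\mapsto\xi_i$; (4) invoke Proposition~\ref{prop:free-superalgebra} (or a direct dimension count) to see this surjection is an isomorphism because $\mathcal{A}$ is free on the $\xi_i$; (5) identify $\Sym\set{0}\otimes\Lambda V$ with $\Lambda V$. I do not expect any real obstacle here — the corollary is genuinely a one-line specialization $m=0$ of the preceding proposition, with the only subtlety being the bookkeeping remark that $\Sym$ of the trivial vector space is the base field, so $\Sym W\otimes\Lambda V\cong\Lambda V$ when $W=\set{0}$.
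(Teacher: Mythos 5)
Your primary route --- specializing Proposition~\ref{prop:free-superalgebra} to $m=0$ and noting that $\Sym$ of the zero space is $\R$, so $\Sym W\otimes\Lambda V$ collapses to $\Lambda V$ --- is exactly what the paper intends; the corollary carries no separate proof, and the parenthetical ``with notation as above, $m=0$'' signals that the freeness hypothesis of the proposition is still in force.

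However, the alternative you prefer as ``the cleaner route'' rests on a false claim: a finite-dimensional supercommutative superalgebra generated entirely by odd elements is \emph{not} automatically free on those generators. For instance, $\Lambda\R^2\big/\langle\xi_1\wedge\xi_2\rangle$ is finite-dimensional, supercommutative, and generated by the odd classes of $\xi_1,\xi_2$, yet it is not free on them. Finite-dimensionality does not ``pin down that there are no further relations'': any quotient of $\Lambda V$ by a homogeneous ideal is again finite-dimensional, supercommutative, and odd-generated. Consequently your step (4) is circular --- the surjection $\Lambda V\to\mathcal{A}$ induced by $v_i\mapsto\xi_i$ is an isomorphism precisely \emph{because} $\mathcal{A}$ is assumed free on the $\xi_i$, which is the hypothesis you were attempting to derive rather than assume. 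The correct reading is your first option: freeness is part of the hypothesis inherited from the proposition, and with that in place the corollary is indeed the one-line specialization $m=0$.
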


Since our main concern will be finitely and purely odd generated supercommutative algebras, we now turn our attention to exterior algebras and their structure. 

Let $V$ be a vector space of dimension $n$. We denote by $\Lambda^{\geq 1} V$ the subset of $\Lambda V$ of multivectors of degree greater than or equal to one, and note that this is an ideal. Since $\Lambda V$ is $\Z$-graded it comes with a natural filtration
\begin{equation}\label{eq:filtration}
\Lambda V:=\Lambda^{\geq 0}V\supset\Lambda^{\geq 1} V\supset\Lambda^{\geq 2}V\supset\cdots\supset\Lambda^{\geq n}V\supset\set{0},
\end{equation}
where we use the notation $\Lambda^{\geq k} V:=\left( \Lambda^{\geq 1}V\right)^k$; this is just the set (and ideal) of multivectors of degree greater than or equal to $k$. 

\begin{defn}
The projection $\varepsilon:\Lambda V\to\Lambda V/\Lambda^{\geq 1} V$ is called the \textbf{augmentation map}.
\end{defn}

\begin{lema}
The ideal $\Lambda^{\geq 1} V$ is the unique maximal ideal of $\Lambda V$.
\end{lema}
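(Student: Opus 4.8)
The plan is to show directly that $\Lambda^{\geq 1}V$ is maximal and that it contains every proper ideal, which together give both maximality and uniqueness. First I would observe that $\Lambda V/\Lambda^{\geq 1}V \cong \Lambda^0 V = \R$ is a field, so $\Lambda^{\geq 1}V$ is a maximal (two-sided, and since $\R$ is central, also left and right) ideal; this reduces everything to the uniqueness claim.

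For uniqueness, the key step is: any element $a \in \Lambda V$ with nonzero degree-zero component $\varepsilon(a) \neq 0$ is a unit. Write $a = \lambda + x$ with $\lambda \in \R^\times$ and $x \in \Lambda^{\geq 1}V$; since $x$ is nilpotent (by the filtration \eqref{eq:filtration}, $x^{n+1} = 0$), the element $\lambda + x$ is invertible with inverse $\lambda^{-1}\sum_{k=0}^{n}(-\lambda^{-1}x)^k$. Hence if $I$ is any ideal of $\Lambda V$ that is not contained in $\Lambda^{\geq 1}V$, then $I$ contains an element with $\varepsilon(a) \neq 0$, so $I$ contains a unit, so $I = \Lambda V$. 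Therefore every proper ideal sits inside $\Lambda^{\geq 1}V$, which is what uniqueness of the maximal ideal means.

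One small point worth spelling out carefully is that $\Lambda V$ is not commutative, so ``ideal'' should be read as ``two-sided ideal'' and one should check the unit computation works on both sides — but since the inverse is a polynomial in $x$ it commutes with $a$, so a two-sided inverse exists and the argument goes through unchanged. I expect the only mildly delicate part to be this bookkeeping around (non)commutativity and the convention for ideals; the algebraic content — a field quotient plus nilpotence of the augmentation ideal — is entirely routine.
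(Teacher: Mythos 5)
Your proof is correct and rests on exactly the same observation as the paper's: a multivector is invertible if and only if its augmentation is nonzero, so every proper ideal lies in $\ker\varepsilon=\Lambda^{\geq 1}V$. You simply spell out the details (the geometric-series inverse from nilpotence, the two-sidedness) that the paper's one-line proof leaves implicit.
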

\begin{proof}
Observe that a multivector $\alpha\neq 0$ is invertible if and only if $\varepsilon(\alpha)\neq 0$; thus $\ker\varepsilon$ is maximal, but this is exactly $\Lambda^{\geq 1}V$.
\end{proof}

\begin{cor}\label{cor:filtration-preservation}
Let $\psi:\Lambda V\to\Lambda W$ be a unital morphism of supercommutative algebras. Then the filtration \eqref{eq:filtration} is preserved under $\psi$, that is $\psi\left(\Lambda^{\geq k}V\right)\subseteq\Lambda^{\geq k} W$.
\end{cor}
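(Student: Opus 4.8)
The plan is to reduce the statement to the case $k=1$ and then bootstrap. First I would observe that $\Lambda^{\geq k}V = (\Lambda^{\geq 1}V)^k$ is the $k$-fold product ideal, so once we know $\psi(\Lambda^{\geq 1}V)\subseteq \Lambda^{\geq 1}W$ it follows immediately that $\psi(\Lambda^{\geq k}V)$ is spanned by products $\psi(\alpha_1)\cdots\psi(\alpha_k)$ with each $\psi(\alpha_i)\in\Lambda^{\geq 1}W$, hence lands in $(\Lambda^{\geq 1}W)^k=\Lambda^{\geq k}W$; here we use that $\psi$ is multiplicative and that $\Lambda^{\geq 1}W$ is an ideal so its powers behave as claimed. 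So the whole content is the case $k=1$.

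For $k=1$ the key input is the previous lemma characterizing $\Lambda^{\geq 1}V$ as the unique maximal ideal, together with the observation in its proof that an element $\alpha\in\Lambda V$ is invertible if and only if $\varepsilon(\alpha)\neq 0$, i.e. if and only if $\alpha\notin\Lambda^{\geq 1}V$. I would argue contrapositively: suppose $\alpha\in\Lambda^{\geq 1}V$ but $\psi(\alpha)\notin\Lambda^{\geq 1}W$. Then $\psi(\alpha)$ is invertible in $\Lambda W$. Since $\psi$ is unital, $\psi(1)=1$, and writing $\psi(\alpha)^{-1}$ for the inverse I want to produce a contradiction with the fact that $\alpha$ lies in a proper ideal. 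The cleanest way is to note that $\psi(\Lambda^{\geq 1}V)$ generates an ideal $I$ of $\Lambda W$, and if this ideal contains an invertible element then $I=\Lambda W$; but $I\subseteq \psi(\Lambda^{\geq 1}V)\cdot \Lambda W$ and each generator $\psi(\beta)$ with $\beta\in\Lambda^{\geq 1}V$ satisfies $\varepsilon_W(\psi(\beta))=\varepsilon_W(\psi(\beta))$, and I would show this composite augmentation vanishes on $\Lambda^{\geq 1}V$, forcing $I\subseteq\ker\varepsilon_W=\Lambda^{\geq 1}W\neq\Lambda W$.

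More directly: consider the composite $\varepsilon_W\circ\psi\colon \Lambda V\to \Lambda W/\Lambda^{\geq 1}W\cong\R$. This is a unital algebra morphism to $\R$, hence its kernel is a maximal ideal of $\Lambda V$ containing no invertible elements; by the uniqueness lemma its kernel must be exactly $\Lambda^{\geq 1}V$ (it is a proper ideal since it does not contain $1$, and it is maximal, and $\Lambda^{\geq 1}V$ is the unique maximal ideal). Therefore $\varepsilon_W(\psi(\alpha))=0$ for every $\alpha\in\Lambda^{\geq 1}V$, which says precisely $\psi(\alpha)\in\ker\varepsilon_W=\Lambda^{\geq 1}W$. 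This handles $k=1$, and combined with the first paragraph finishes the proof.

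The main obstacle I anticipate is purely bookkeeping rather than conceptual: making sure that "unique maximal ideal" really pins down $\ker(\varepsilon_W\circ\psi)$ — one needs that this kernel is a \emph{proper} ideal (clear, since $\psi$ unital so $1\notin\ker$) and then invoke that it is maximal (as the kernel of a surjection onto a field, noting $\Lambda W/\Lambda^{\geq 1}W\cong\R$) so equals the unique maximal ideal $\Lambda^{\geq 1}V$. If one prefers to avoid the "field" phrasing, the invertibility criterion from the preceding lemma's proof gives the same conclusion: any ideal strictly larger than $\ker(\varepsilon_W\circ\psi)$ would contain an element on which $\varepsilon_W\circ\psi$ is nonzero, but such an element maps to a unit and so generates everything, contradicting properness. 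Either way the argument is short once the reduction to $k=1$ is made explicit.
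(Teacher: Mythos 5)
Your argument is correct and is exactly the route the paper intends: the corollary is stated without proof precisely because it follows from the preceding lemma in the way you describe, namely that $\ker(\varepsilon_W\circ\psi)$ is a maximal (hence the unique maximal) ideal of $\Lambda V$, giving the case $k=1$, after which multiplicativity of $\psi$ and the identity $\Lambda^{\geq k}V=(\Lambda^{\geq 1}V)^k$ handle all higher $k$. Your added care about properness and surjectivity of $\varepsilon_W\circ\psi$ is sound and fills in details the paper leaves implicit.
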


For an abstract exterior algebra (that is, a purely-odd and finitely generated supercommutative algebra) $\mathcal{A}$ we use the notation $\mathcal{A}^{\geq 1}$ for the (unique maximal) nilpotent ideal, and thus denote by $\mathcal{A}^{\geq k}$ its powers. The augmentation map will be denoted by $\varepsilon$ also.

\begin{nota}\label{remark:epsilon-unital}
The lemma above implies that $\Lambda V/\Lambda^{\geq 1} V$ is isomorphic to $\R$. Furthermore $\varepsilon|_{\Lambda^0 V}$ is an isomorphism and $\varepsilon$ is thus a unital homomorphism of superalgebras (that is, it takes $1$ to $1$). It is therefore the identity map on $\Lambda^0 V\cong\R$.
\end{nota}

\section{Derivations of exterior algebras}\label{sec:derivations}
\noindent In this section we'll be concerned exclusively with the exterior algebra of a finite-dimensional vector space $V^*$. The reason for taking ``duals'' is because in this way our results are conveniently expressed for the work to be done in chapter \ref{chap:superfolds}. Recall that $\Lambda V^*$ is a $\Z$-graded algebra where the grading is given by the degree of a form; that is, a form is said to be \textbf{homogeneous of degree} $k$ if it is a linear combination of monomials of the form $\alpha_1\wedge\cdots\wedge \alpha_k$, where $\setupla[k]{\alpha}$ is a linearly independent set in $V^*$. 

Let $A$ be an algebra. Recall that a linear map $D\in\End(A)$ is called a \textbf{derivation of $A$} if the identity
\[
D(ab)=D(a)b+aD(b)
\]
(the ``Leibnitz rule'') holds for every $a,b\in A$.

\begin{lema}\label{lemma:derivation-generators}
Let $D$ and $\widetilde{D}$ be derivations of the exterior algebra $\Lambda V^*$. If $D|_{V^*}=\widetilde{D}|_{V^*}$ then $D=\widetilde{D}$.
\end{lema}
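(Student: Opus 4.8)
The plan is to exploit the fact that $\Lambda V^*$ is generated as an algebra by $V^*$ together with the unit, so that a derivation is determined by its values on these generators, and then to observe that the unit contributes nothing. First I would recall that every element of $\Lambda V^*$ is a finite sum of products $\alpha_1\wedge\cdots\wedge\alpha_k$ with each $\alpha_i\in V^*$ (together with scalar multiples of $1$), since such monomials span $\Lambda^k V^*$ and the $\Lambda^k V^*$ span $\Lambda V^*$. Because a derivation is linear, it suffices to check that $D$ and $\widetilde D$ agree on each such monomial and on $1$.

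The key step is an induction on the degree $k$ of the monomial. For the base case $k=0$: applying the Leibnitz rule to $1=1\cdot 1$ gives $D(1)=D(1)\cdot 1+1\cdot D(1)=2D(1)$, hence $D(1)=0$, and likewise $\widetilde D(1)=0$, so they agree on $\Lambda^0 V^*$. The case $k=1$ is exactly the hypothesis $D|_{V^*}=\widetilde D|_{V^*}$. For the inductive step, write a degree-$k$ monomial as $\alpha_1\wedge\beta$ with $\alpha_1\in V^*$ and $\beta=\alpha_2\wedge\cdots\wedge\alpha_k$ of degree $k-1$; then
\[
D(\alpha_1\wedge\beta)=D(\alpha_1)\wedge\beta+\alpha_1\wedge D(\beta),
\]
and the analogous identity for $\widetilde D$. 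By the hypothesis $D(\alpha_1)=\widetilde D(\alpha_1)$ and by the inductive hypothesis $D(\beta)=\widetilde D(\beta)$, so the two right-hand sides coincide. Hence $D$ and $\widetilde D$ agree on all monomials, and by linearity $D=\widetilde D$.

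I do not expect any serious obstacle here; the only point that requires a moment's care is making sure the generators of the algebra are handled correctly — namely that $1$ is a generator (forcing $D(1)=0$) in addition to $V^*$, and that the Leibnitz rule lets one peel off one factor at a time. Everything else is a routine induction, using only that products of elements of $V^*$ span $\Lambda V^*$.
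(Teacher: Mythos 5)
Your proof is correct and follows essentially the same route as the paper's: both arguments rest on the Leibniz rule and an induction over the degree of decomposable monomials, the paper merely packaging it by setting $F=D-\widetilde{D}$ and expanding $F(\alpha_1\wedge\cdots\wedge\alpha_k)$ in one step. Your explicit treatment of the unit ($D(1)=0$) is a small point the paper leaves implicit, but it does not constitute a different approach.
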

\begin{proof}
Let us consider the map $F=D-\widetilde{D}$, which is also a derivation. If $\alpha_1,\ldots,\alpha_k$ are elements of $V^*$ then by a straightforward induction we get
\begin{equation}
\begin{split}
F(\alpha_1\wedge\cdots\wedge\alpha_k)&=\sum_{\mu=1}^k\alpha_1\wedge\cdots\wedge F(v_\mu)\wedge\cdots\wedge\alpha_k\\
									&=\sum_{\mu=1}^k\alpha_1\wedge\cdots\wedge \big(D(v_\mu)-\widetilde{D}(v_\mu)\big)\wedge\cdots\wedge\alpha_k
\end{split}
\end{equation}
which equals zero by the hypothesis on $D$ and $\widetilde{D}$. Since the exterior algebra is the linear span of decomposable forms the result follows.
\end{proof}
The above lemma is tantamount to saying that a derivation on an exterior algebra $\Lambda V^*$ is completely determined by its action on the generating vector space $V^*$.

Let us recall that, for $v\in V$, the operator $v\lrcorner$ is the map
\begin{eqnarray*}
\Lambda^k V^*&\to & \Lambda^{k-1}V^*\\
				\omega	&\mapsto & \omega(v,\cdot),
\end{eqnarray*}
i.e. $v\lrcorner\omega$ is the $(k-1)$-form obtained from $\omega$ by inserting $v$ as its first argument. A well known fact about this operator is that it is a \emph{derivation of degree $-1$}, which means
\begin{equation}\label{eq:inserting-antiderivation}
v\lrcorner(\alpha\wedge\beta)=(v\lrcorner\alpha)\wedge\beta+(-1)^{\parity{\alpha}}\alpha\wedge(v\lrcorner\beta)
\end{equation}
for all forms $\alpha$ and $\beta$. Identity \eqref{eq:inserting-antiderivation} is a consequence of the fact that $v\lrcorner$ is the dual operator of the exterior multiplication by a vector; if $\alpha$ is an exterior form then for all multivectors $X$
\[
<v\lrcorner\alpha,X>=<\alpha,v\wedge X>,
\] 
where $<\cdot,\cdot>$ denotes the evaluation pairing between $\Lambda V^*$ and $\Lambda V$. This operator allows us to construct many derivations of the exterior algebra.

\begin{lema}\label{lemma:derivations}
Let $F:V^*\to\Lambda_-{V^*}$ be a linear map and let $\setupla{v}$ and $\setupla{{dv}}$ be dual bases for $V$ and $V^*$ respectively. The map
\[
D_F:=\sum_{\mu=1}^n F(dv_\mu)\wedge\circ (v_\mu\lrcorner)
\]
is a derivation of $\Lambda V^*$. 
\end{lema}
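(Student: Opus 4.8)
The plan is to verify that $D_F$ satisfies the Leibniz rule directly, exploiting the fact that each summand is the composition of two operators whose behaviour under the wedge product is already known. Fix a homogeneous form $\omega \in \Lambda^k V^*$; the key computation is to expand $D_F(\alpha \wedge \beta)$ for homogeneous $\alpha, \beta$. For a single index $\mu$, the operator $v_\mu \lrcorner$ is a derivation of degree $-1$ by \eqref{eq:inserting-antiderivation}, so
\[
(v_\mu \lrcorner)(\alpha \wedge \beta) = (v_\mu \lrcorner \alpha) \wedge \beta + (-1)^{\parity{\alpha}} \alpha \wedge (v_\mu \lrcorner \beta).
\]
Wedging on the left by $F(dv_\mu)$, which is an odd element of $\Lambda V^*$, I then need to commute $F(dv_\mu)\wedge$ past the even-or-odd factor $\alpha$ in the second term; by graded commutativity \eqref{eq:wedge-supercommutative} this costs a sign $(-1)^{\parity{\alpha}}$, which exactly cancels the sign $(-1)^{\parity{\alpha}}$ already present. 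Summing over $\mu$ then yields $D_F(\alpha\wedge\beta) = D_F(\alpha)\wedge\beta + \alpha\wedge D_F(\beta)$, the ordinary (ungraded) Leibniz rule, as claimed.

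Concretely, the steps are: (1) reduce to checking the Leibniz rule on decomposable homogeneous forms $\alpha, \beta$, since both sides are bilinear and $\Lambda V^*$ is spanned by such forms (this is the same reduction used in the proof of Lemma \ref{lemma:derivation-generators}); (2) for each $\mu$, apply \eqref{eq:inserting-antiderivation} to $(v_\mu\lrcorner)(\alpha\wedge\beta)$; (3) left-multiply by $F(dv_\mu)$ and, in the cross term, move $F(dv_\mu)$ past $\alpha$ using that $F(dv_\mu)$ is odd together with \eqref{eq:wedge-supercommutative}, observing that $(-1)^{\parity{\alpha}}\cdot(-1)^{\parity{\alpha}} = 1$; (4) sum over $\mu$ and recognize the two resulting sums as $D_F(\alpha)\wedge\beta$ and $\alpha\wedge D_F(\beta)$.

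I should note one subtlety worth spelling out: the sign cancellation in step (3) is precisely why $F$ is required to take values in the \emph{odd} part $\Lambda_- V^*$ rather than in arbitrary forms. If $F(dv_\mu)$ had a homogeneous component of even degree, commuting it past $\alpha$ would produce a sign $(-1)^{\parity{\alpha}\cdot 0} = 1$ in that component, which would fail to cancel the $(-1)^{\parity{\alpha}}$ from \eqref{eq:inserting-antiderivation}, and $D_F$ would instead be a \emph{super}derivation (of odd parity) rather than an honest derivation. Thus the parity hypothesis is not cosmetic, and the proof should make visible exactly where it is used.

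The main obstacle is purely bookkeeping: keeping the two sign contributions straight — the $(-1)^{\parity{\alpha}}$ coming from $v_\mu\lrcorner$ being an antiderivation, and the $(-1)^{\parity{\alpha}\parity{F(dv_\mu)}} = (-1)^{\parity{\alpha}}$ coming from graded commutativity — and confirming they annihilate each other. There is no conceptual difficulty beyond this; no induction on degree is strictly needed once one works with decomposable forms, since \eqref{eq:inserting-antiderivation} and \eqref{eq:wedge-supercommutative} are already available in the general form required.
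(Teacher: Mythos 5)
Your proposal is correct and follows essentially the same route as the paper's own proof: apply the antiderivation identity \eqref{eq:inserting-antiderivation} to $v_\mu\lrcorner(\alpha\wedge\beta)$, commute the odd form $F(dv_\mu)$ past $\alpha$, and observe that the resulting sign $(-1)^{\parity{\alpha}(\parity{F(dv_\mu)}+1)}$ is $+1$ precisely because $F(dv_\mu)$ is odd. Your added remark on why the hypothesis $F(V^*)\subseteq\Lambda_- V^*$ is essential is a worthwhile clarification but does not change the argument.
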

\begin{proof}
Let $\alpha$ and $\beta$ be alternating forms, and let $\alpha$ and $F(dv_\mu)$ be of degree $a$ and $d_\mu$ respectively; observe that $d_\mu$ is always odd for all $\mu$. Using \eqref{eq:inserting-antiderivation} we compute
\[
\begin{split}
D_F(\alpha\wedge\beta)	&=\sum_{\mu=1}^m F(dv_\mu)\wedge\circ v_\mu\lrcorner(\alpha\wedge\beta)\\
						&=\sum_{\mu=1}^m F(dv_\mu)\wedge\big( (v_\mu\lrcorner\alpha)\wedge\beta) +(-1)^{a}\alpha\wedge (v_\mu\lrcorner\beta)\big)\\
						&=\sum_{\mu=1}^m F(dv_\mu)\wedge(v_\mu\lrcorner\alpha)\wedge\beta)+(-1)^{a}F(dv_\mu)\wedge\alpha\wedge (v_\mu\lrcorner\beta)\\
						&=\sum_{\mu=1}^m F(dv_\mu)\wedge(v_\mu\lrcorner\alpha)\wedge\beta)+\sum_{\mu=1}^m (-1)^{a}F(dv_\mu)\wedge\alpha\wedge(v\lrcorner\beta)\\
						&=\sum_{\mu=1}^m F(dv_\mu)\wedge(v\lrcorner\alpha)\wedge\beta)+\sum_{\mu=1}^m (-1)^{a}(-1)^ {ad_\mu}\alpha\wedge F(dv_\mu)\wedge(v_\mu\lrcorner\beta)\\
						&=\sum_{\mu=1}^m F(dv_\mu)\wedge(v\lrcorner\alpha)\wedge\beta)+\sum_{\mu=1}^m (-1)^ {a(d_\mu+1)}\alpha\wedge F(dv_\mu)\wedge(v_\mu\lrcorner\beta)\\
						&=D_F(\alpha)\wedge\beta+\alpha\wedge D_F(\beta)
\end{split}				
\]
the sign in the last equality cancelling because $d_\mu+1$ is always even. Thus $D_F$ is a derivation.
\end{proof}

\begin{cor}
If $F=\id_{V^*}$ then $D_F$ is the operator $k\id$ on the subspace $\Lambda^k V^*$.
\end{cor}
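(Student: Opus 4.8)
The plan is to reduce everything to the action of $D_F$ on the generating space $V^*=\Lambda^1 V^*$ and then propagate it with the Leibniz rule. First I would compute $D_F$ on a basis covector. Since $\setupla{v}$ and $\setupla{dv}$ are dual bases we have $v_\mu\lrcorner\, dv_\nu=dv_\nu(v_\mu)=\delta_{\mu\nu}$, so
\[
D_F(dv_\nu)=\sum_{\mu=1}^n dv_\mu\wedge(v_\mu\lrcorner\, dv_\nu)=\sum_{\mu=1}^n\delta_{\mu\nu}\,dv_\mu=dv_\nu .
\]
By linearity this shows $D_F$ restricts to the identity on all of $V^*$, i.e. it acts as $1\cdot\id$ on $\Lambda^1 V^*$, which is the case $k=1$ of the claim.

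Next, for a decomposable $k$-form $\alpha_1\wedge\cdots\wedge\alpha_k$ with each $\alpha_j\in V^*$, I would apply the Leibniz rule $k-1$ times — the same straightforward induction as in the proof of Lemma \ref{lemma:derivation-generators} — to get
\[
D_F(\alpha_1\wedge\cdots\wedge\alpha_k)=\sum_{j=1}^k\alpha_1\wedge\cdots\wedge D_F(\alpha_j)\wedge\cdots\wedge\alpha_k=\sum_{j=1}^k\alpha_1\wedge\cdots\wedge\alpha_k=k\,(\alpha_1\wedge\cdots\wedge\alpha_k),
\]
where in the middle equality we used the first step, $D_F(\alpha_j)=\alpha_j$. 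Since $\Lambda^k V^*$ is the linear span of such decomposable forms, linearity of $D_F$ yields $D_F|_{\Lambda^k V^*}=k\,\id$. (One could equivalently invoke Lemma \ref{lemma:derivation-generators}: the operator that is multiplication by $k$ on each $\Lambda^k V^*$ is itself a derivation and agrees with $D_F$ on $V^*$, hence coincides with $D_F$.)

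There is no genuine obstacle here. The only two points needing a moment's attention are the dual-basis identity $v_\mu\lrcorner\, dv_\nu=\delta_{\mu\nu}$ and the observation that the induction in the Leibniz step introduces no extra signs: $D_F$ is an honest degree-preserving derivation (each summand $F(dv_\mu)\wedge\circ(v_\mu\lrcorner)$ being built from a degree $+1$ multiplication composed with a degree $-1$ antiderivation), not a graded antiderivation, so the plain Leibniz rule used in Lemma \ref{lemma:derivation-generators} applies verbatim.
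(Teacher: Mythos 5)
Your proof is correct. The paper states this corollary without proof, and your argument --- computing $D_{\id}(dv_\nu)=\sum_\mu \delta_{\mu\nu}\,dv_\mu=dv_\nu$ on the dual basis and then propagating with the ordinary (ungraded) Leibniz rule furnished by Lemma \ref{lemma:derivations} --- is exactly the routine verification the paper leaves to the reader, with the one point worth making explicit (that no signs enter because $D_F$ is an honest derivation, not an antiderivation) duly noted.
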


\begin{defn}\label{def:op-numbers}
The operator $N:=D_{\id}$ is called the \textbf{operator of numbers.}
\end{defn}

We can now characterize the space of derivations of $\Lambda V^*$.

\begin{teo}\label{thm:derivations}
Let $V$ be a vector space of dimension $n$. We have a natural isomorphism
\begin{equation}\label{eq:space-derivations}
\der \left( \Lambda V^* \right) \cong V\otimes\Lambda_- V^*\oplus\Lambda_- V^*\big{/}\left( \Lambda_- V^*\cap\Lambda^n V^* \right)
\end{equation}
\end{teo}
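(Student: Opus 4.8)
The plan is to write down an explicit map from $V\otimes\Lambda_-V^*$ onto $\der(\Lambda V^*)$, identify its kernel, and then peel off the semidirect‐type structure hinted at by the direct sum in \eqref{eq:space-derivations}. First I would observe that Lemma~\ref{lemma:derivation-generators} gives a linear injection $\der(\Lambda V^*)\hookrightarrow\Hom(V^*,\Lambda V^*)$ by restriction $D\mapsto D|_{V^*}$, so the real task is to describe the image: which linear maps $V^*\to\Lambda V^*$ extend to a derivation. Since any derivation is determined by its restriction and since $\Hom(V^*,\Lambda V^*)\cong V\otimes\Lambda V^*$ canonically, it suffices to show that every element of $V\otimes\Lambda V^*$ arises this way \emph{after} accounting for parity: a derivation must be a sum of a homogeneous-degree-$0$ piece (coming from $\Lambda_+V^*$-valued maps, i.e. the ``$N$-like'' and $\mathfrak{gl}(V)$ part) and an odd-degree-lowering piece. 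The cleanest route is to restrict attention to derivations sending $V^*$ into $\Lambda_-V^*$, use Lemma~\ref{lemma:derivations} to see that $F\mapsto D_F$ realizes every such map, and handle the general case by splitting $F=F_++F_-$ according to $\Lambda V^*=\Lambda_+V^*\oplus\Lambda_-V^*$.

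Concretely, here is the sequence of steps. (1) Show the restriction map $\rho\colon\der(\Lambda V^*)\to\Hom(V^*,\Lambda V^*)$ is injective (immediate from Lemma~\ref{lemma:derivation-generators}). (2) Show $\rho$ is surjective: given $F\colon V^*\to\Lambda V^*$, split $F=F_e+F_o$ where $F_e$ is valued in $\Lambda_+V^*$ and $F_o$ in $\Lambda_-V^*$; the operator $D_{F_o}=\sum_\mu F_o(dv_\mu)\wedge(v_\mu\lrcorner\,\cdot\,)$ is a derivation by Lemma~\ref{lemma:derivations}, and I would check that the analogous formula $\sum_\mu F_e(dv_\mu)\wedge(v_\mu\lrcorner\,\cdot\,)$ is likewise a derivation (the sign bookkeeping in the proof of Lemma~\ref{lemma:derivations} goes through because now $d_\mu$ is even, and the Leibniz sign works out for the degree-$0$ piece exactly as for the odd piece — this is where I must be careful), so $\rho(D_{F_e}+D_{F_o})$ has the correct restriction $F_e+F_o=F$. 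Hence $\rho$ is an isomorphism onto $\Hom(V^*,\Lambda V^*)\cong V\otimes\Lambda V^*$. (3) Now identify $\Lambda V^*=\Lambda_-V^*\oplus\Lambda_+V^*$ and split $V\otimes\Lambda V^*=(V\otimes\Lambda_-V^*)\oplus(V\otimes\Lambda_+V^*)$; the first summand already appears in \eqref{eq:space-derivations}. (4) For the second summand $V\otimes\Lambda_+V^*=\Hom(V^*,\Lambda_+V^*)$, I would analyze the map $F_e\mapsto D_{F_e}$ and show its kernel, modulo the image already counted, is exactly the multiples of $N$ that act trivially on all of $\Lambda V^*$ — no, more precisely: I expect that $V\otimes\Lambda_+V^*$ must be cut down, because the map $V\otimes\Lambda_+V^*\to\der$ is \emph{not} injective. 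The relation $\sum_\mu dv_\mu\otimes(v_\mu\lrcorner\,\omega)$ for $\omega\in\Lambda^n V^*$ produces the zero derivation (since $v_\mu\lrcorner\omega\in\Lambda^{n-1}V^*$ and the resulting operator, when reassembled, annihilates everything by degree reasons on the top form), which is exactly the subspace $\Lambda^nV^*$ being quotiented out. So step (4) amounts to: exhibit the natural surjection $\Lambda_-V^*\to\{$degree-$0$ derivations$\}/\mathfrak{gl}(V)$-part with kernel $\Lambda_-V^*\cap\Lambda^nV^*$, matching the second summand of \eqref{eq:space-derivations}.

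The main obstacle I anticipate is step (4) — correctly computing the kernel of the assembly map on the even/degree-preserving part and recognizing it as $\Lambda^nV^*$. The subtlety is that the formula $D_F=\sum_\mu F(dv_\mu)\wedge(v_\mu\lrcorner\,\cdot\,)$ is manifestly basis-dependent, and two different maps $F$ can yield the same derivation precisely when their ``difference'' $G$ satisfies $\sum_\mu G(dv_\mu)\wedge(v_\mu\lrcorner\,\alpha)=0$ for all $\alpha$; pinning down that this happens iff $G$ factors through $v\mapsto\sum(\,\cdot\,)$ in a way encoded by $\Lambda^nV^*$ requires a careful contraction identity. A convenient lemma to isolate: for $\omega\in\Lambda^nV^*$, one has $\sum_\mu (v_\mu\lrcorner\omega)\wedge(v_\mu\lrcorner\,\alpha)=0$, which follows from the fact that $v_\mu\lrcorner\omega$ spans $\Lambda^{n-1}V^*$ and a dimension/antisymmetry count, or from Cartan's formula $\sum_\mu dv_\mu\wedge(v_\mu\lrcorner\,\beta)=(\deg\beta)\beta$ applied suitably. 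Once that identity is in hand, the kernel computation is forced and the isomorphism \eqref{eq:space-derivations} falls out, with naturality being automatic since every construction used (restriction, the pairing $\Lambda V\otimes\Lambda V^*\to\R$, and the contraction operators $v_\mu\lrcorner$) is natural in $V$ and the final formula, though written in a basis, is independent of it.
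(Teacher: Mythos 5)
Your step (2) contains the fatal error: the restriction map $D\mapsto D|_{V^*}$ is \emph{not} surjective onto $\Hom(V^*,\Lambda V^*)$, and in particular the formula $\sum_\mu F_e(dv_\mu)\wedge(v_\mu\lrcorner\,\cdot\,)$ for $F_e$ valued in $\Lambda_+V^*$ is \emph{not} an ordinary derivation. Tracing the signs as in the proof of Lemma \ref{lemma:derivations}: moving the even form $F_e(dv_\mu)$ past $\alpha$ costs nothing, so one lands on $D_{F_e}(\alpha\wedge\beta)=D_{F_e}(\alpha)\wedge\beta+(-1)^{\parity{\alpha}}\alpha\wedge D_{F_e}(\beta)$ -- the graded Leibniz rule of an \emph{odd superderivation} (this is exactly the content of Theorem \ref{thm:superderivations}), not the ungraded one. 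The cancellation in Lemma \ref{lemma:derivations} uses precisely that $d_\mu$ is odd, so that $(-1)^{a(d_\mu+1)}=1$. A dimension count confirms the failure: $\dim\Hom(V^*,\Lambda V^*)=n2^n$, whereas the right-hand side of \eqref{eq:space-derivations} has dimension $n2^{n-1}+2^{n-1}$ (or $n2^{n-1}+2^{n-1}-1$ for $n$ odd); already for $n=1$ a derivation of $\Lambda\R$ must kill the constant term of $D(\theta)$, so the restriction map misses half of $\Hom(V^*,\Lambda V^*)$.

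Because of this, your step (4) cannot be repaired as stated: you would need the ``assembly map'' on $V\otimes\Lambda_+V^*$ to have a kernel of dimension about $(n-1)2^{n-1}$, which the one-dimensional space $\Lambda^nV^*$ cannot supply. The idea you are missing is the algebraic constraint on the even part $F^+=\left(D|_{V^*}\right)^+$ coming from the relation $\alpha\wedge\alpha=0$: applying the ordinary Leibniz rule gives $0=D(\alpha\wedge\alpha)=2\,\alpha\wedge F^+(\alpha)$, and polarizing yields $F^+(\alpha)\wedge\beta=\alpha\wedge F^+(\beta)$ for all $\alpha,\beta\in V^*$. Combined with the operator of numbers $N$ this forces $F^+(\alpha)=(n-N+1)^{-1}\eta\wedge\alpha$ for the single odd form $\eta=-\sum_\mu v_\mu\lrcorner F^+(dv_\mu)$, which by construction has degree strictly less than $n$; conversely every such $\eta$ gives the derivation $\tfrac12[\eta,\cdot\,]$. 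That is where the small second summand $\Lambda_-V^*\big/\left(\Lambda_-V^*\cap\Lambda^nV^*\right)$ comes from -- it parametrizes a one-form's worth of derivations, not a quotient of $V\otimes\Lambda_+V^*$. Your treatment of the odd part $F_o$ via Lemma \ref{lemma:derivations}, and the injectivity in step (1), agree with the paper and are fine.
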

\begin{proof}
Let $D$ be a derivation of $\Lambda V^*$ and set $F_D:=D|_{V^*}$. As a linear map, $F_D$ decomposes in $F^+\oplus F^-$, where $F^\pm:V^*\to\Lambda^\pm V^*$. Lemma \ref{lemma:derivations} accounts for the map $F^-$ and the factor $V\otimes\Lambda^- V^*$ in \eqref{eq:space-derivations}. 

As for $F^+$, let $\alpha$ and $\beta$ be in $V^*$; we first observe the identity $0=F^+(\alpha\wedge\alpha)=2\alpha\wedge F^+(\alpha)$, which implies, by polarization, the following identity:
\[
\begin{split}
0	&=F^+(\alpha+\beta)\wedge(\alpha+\beta)+F^+(\alpha-\beta)\wedge(\alpha-\beta)\\
	&=(F^+(\alpha)+F(\beta))\wedge(\alpha+\beta)+(F^+(\alpha)-F^+(\beta))\wedge(\alpha\wedge\beta)\\
	&=F^+(\alpha)\wedge\beta +F^+(\beta)\wedge\alpha -F^+(\alpha)\wedge\beta-\alpha\wedge F^+(\beta)
\end{split}
\]
which yields
\begin{equation}\label{eq:polariz}
F^+(\alpha)\wedge\beta=\alpha\wedge F^+(\beta)
\end{equation}
Using the operators of numbers $N$ (definition \ref{def:op-numbers}) we compute:
\begin{eqnarray*}
(m-N)F^+(\alpha)&=&\sum_{\mu =1}^{n}v_\mu\ins \bigl(dv_\mu \wedge F^+(\alpha)\bigr)\\
				&=&\sum_{\mu =1}^{n}v_\mu\ins \bigl(F^+(dv_\mu) \wedge \alpha\bigr)\qquad\qquad\text{(identity \eqref{eq:polariz})}\\
				&=&\sum_{\mu =1}^n v_\mu\ins\bigl(-\alpha\wedge F^+(dv_\mu)\bigr)\\
				&=&\sum_{\mu =1}^n -\alpha(v_\mu)\wedge F^+(dv_\mu)+\alpha\wedge (v_\mu\ins F^+(dv_\mu))\\
				&=&-F^+(\alpha)+\alpha\wedge\sum_{\mu =1}^n  (v_\mu\ins F^+(dv_\mu)).
\end{eqnarray*}
Defining $\eta:=-\sum_{\mu =1}^n  (v_\mu\ins F^+(dv_\mu))$, we see that $\eta$ is an odd form. The final equation is
\begin{equation}
F^+(\alpha)=(n-N+1)^{-1}\eta\wedge\alpha .
\end{equation}
That is, $F^+$ is an operator equivalent with the wedge product with an odd form. This means that $D_{F^+}$ is the extension as a derivation of the operator $\eta\wedge$, which is the operator $[\eta,\cdot]$ (the Lie bracket being given by the commutator, because we are working in an associative algebra). By definition, $\eta$  cannot be of maximal degree, because $v\ins \phi$ is always of degree strictly less than $n=\dim V$ for all $v\in V$ and all $\phi\in A$. This accounts for the second summand $\Lambda^- V^*\big{/}\left( \Lambda^- V^*\cap\Lambda^n V^* \right)$ in \eqref{eq:space-derivations}. Conversely, given an odd form $\beta$, the operator $\frac{1}{2}[\beta,\cdot]$ is a derivation that agrees on generators with $\beta\wedge$ and is $0$ on even forms.
\end{proof}

\begin{cor}
The space $\der_{\z}(\Lambda V^*)$ of derivations that preserve the $\z$-grading satisfies
\begin{equation}\label{eq:z2-graded-derivations}
\der_{\z}\cong V\otimes\Lambda^- V^*
\end{equation}
\end{cor}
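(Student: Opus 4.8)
The plan is to read off the corollary directly from the proof of Theorem \ref{thm:derivations} by identifying which derivations preserve the $\z$-grading. Recall that in that theorem we decomposed an arbitrary derivation $D$ via its restriction $F_D = D|_{V^*} = F^+ \oplus F^-$, where $F^- : V^* \to \Lambda_- V^*$ gives rise (via Lemma \ref{lemma:derivations}) to the operator $D_{F^-}$ lying in the summand $V \otimes \Lambda_- V^*$, while $F^+ : V^* \to \Lambda_+ V^*$ was shown to be equivalent to wedging with an odd form $\eta$, contributing the summand $\Lambda_- V^* / (\Lambda_- V^* \cap \Lambda^n V^*)$. So the first step is to observe that the direct sum decomposition \eqref{eq:space-derivations} is actually a decomposition into the two ``types'' of generating data, and I want to check which type preserves parity.

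The second step is the parity bookkeeping. A derivation $D$ preserves the $\z$-grading iff it maps $\Lambda_+ V^* \to \Lambda_+ V^*$ and $\Lambda_- V^* \to \Lambda_- V^*$; since $V^* \subseteq \Lambda_- V^*$, this forces $D(V^*) \subseteq \Lambda_- V^*$, i.e. $F^+ = 0$, killing the second summand entirely. Conversely, for any $F = F^- : V^* \to \Lambda_- V^*$, the derivation $D_F = \sum_\mu F(dv_\mu) \wedge \circ (v_\mu \lrcorner)$ is a composite of an odd operator ($v_\mu \lrcorner$ lowers degree by one, hence flips parity) with wedging by the odd form $F(dv_\mu)$ (again flips parity), so the composite preserves parity; summing preserves this. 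Hence $D_F \in \der_\z(\Lambda V^*)$, and these exhaust the parity-preserving derivations. This gives the isomorphism \eqref{eq:z2-graded-derivations}: $\der_\z(\Lambda V^*) \cong V \otimes \Lambda_- V^*$.

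I would also note that, unlike the first summand in \eqref{eq:space-derivations}, there is no quotient here: the map $F^- \mapsto D_{F^-}$ is injective because $D_{F^-}|_{V^*} = F^-$ (apply $D_{F^-}$ to $dv_\nu$ and use $v_\mu \lrcorner dv_\nu = \delta_{\mu\nu}$), and it is surjective onto $\der_\z$ by the previous paragraph, so no identifications are needed and $\Lambda_- V^* \cap \Lambda^n V^*$ plays no role on this summand.

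Honestly I expect no serious obstacle here; the corollary is essentially a restriction of the theorem, and the only thing to get right is the sign/parity argument showing $v \lrcorner$ and wedging by an odd form each reverse $\z$-parity so their composite preserves it — and that the $F^+$-part is exactly the parity-\emph{reversing} contribution that must now be discarded. The one subtlety worth a sentence is confirming that ``preserves the $\z$-grading'' for a derivation is equivalent to ``$D(V^*) \subseteq \Lambda_- V^*$''; the forward direction is immediate, and the reverse follows from Lemma \ref{lemma:derivation-generators} together with the fact that a derivation built from $F^-$ is a sum of parity-preserving operators as just computed.
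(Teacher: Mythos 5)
Your proposal is correct and follows the same route as the paper: restrict the decomposition \eqref{eq:space-derivations} from theorem \ref{thm:derivations} and observe that the second summand corresponds to wedging with an odd form, which reverses the $\z$-parity and so must vanish for a grading-preserving derivation. The paper's own proof is just the one-sentence version of this; your additional checks (that $v_\mu\lrcorner$ composed with wedging by an odd form preserves parity, and that $F^-\mapsto D_{F^-}$ is bijective onto $\der_{\z}$ with no quotient needed) are sound and merely make explicit what the paper leaves implicit.
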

\begin{proof}
We know that the product with an odd form changes parity, so from \eqref{eq:space-derivations} we see that the factor $\Lambda^- V^*\big{/}\left( \Lambda^- V^*\cap\Lambda^n V^* \right)$ does not appear in this case.
\end{proof}

Also, a straightforward computation with basic forms of any degree (i.e. monomials of the form $\omega_{\mu_1}\wedge\cdots\wedge\omega_{\mu_p}$ for a basis $\setupla{\omega}$ of $V^*$ and $p\leq n$ a non-negative integer) leads to the following
\begin{cor}\label{cor:z-graded-derivations}
The space $\der_{\Z} \Lambda V^*$ of derivations preserving the $\Z$-graduation is isomorphic to $V\otimes V^*$.
\end{cor}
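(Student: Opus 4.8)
The plan is to mimic the structure of the proof of Theorem \ref{thm:derivations}, restricting attention to the $\Z$-grading-preserving derivations. A derivation $D$ that preserves the $\Z$-grading must in particular send $\Lambda^1 V^* = V^*$ into $\Lambda^1 V^* = V^*$, so that $F_D := D|_{V^*}$ is now an ordinary endomorphism of $V^*$, i.e. an element of $\End(V^*) \cong V \otimes V^*$. By Lemma \ref{lemma:derivation-generators} the derivation $D$ is uniquely determined by $F_D$, so the assignment $D \mapsto F_D$ is injective into $V \otimes V^*$.

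Next I would check that this assignment is surjective, i.e. that every $F \in \End(V^*)$ extends to a derivation of $\Lambda V^*$. The cleanest way is to reuse the operator from Lemma \ref{lemma:derivations}: given $F \colon V^* \to V^* \subseteq \Lambda V^*$, the operator $D_F := \sum_{\mu} F(dv_\mu) \wedge \circ\, (v_\mu \ins)$ is a derivation (the proof in Lemma \ref{lemma:derivations} only used that the image of $F$ consisted of forms of a single parity; here each $F(dv_\mu)$ is a $1$-form, which is odd, so the sign bookkeeping goes through verbatim, or one can simply re-run the Leibniz computation directly). One then verifies $D_F|_{V^*} = F$ by evaluating on a basis element $dv_\nu$: $D_F(dv_\nu) = \sum_\mu F(dv_\mu)(v_\mu \ins dv_\nu) = F(dv_\nu)$, using $v_\mu \ins dv_\nu = \delta_{\mu\nu}$. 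Since $D_F$ raises $\Z$-degree by $\deg F(dv_\mu) - 1 = 0$, it preserves the $\Z$-grading, so it lies in $\der_\Z \Lambda V^*$.

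Finally I would observe that the bijection $D \mapsto F_D$ is linear and natural in $V$ (it commutes with the functorial action of isomorphisms of $V$), giving the stated natural isomorphism $\der_\Z \Lambda V^* \cong \End(V^*) \cong V \otimes V^*$. I do not anticipate a serious obstacle: unlike in Theorem \ref{thm:derivations}, there is no polarization argument or inversion of $(n - N + 1)$ needed, because a $\Z$-degree-preserving derivation automatically has $F_D$ landing in $V^*$ rather than in all of $\Lambda_+ V^*$, so the awkward even part $F^+$ and the quotient by $\Lambda^n V^*$ simply do not arise. The only point requiring a line of care is confirming that the Leibniz computation of Lemma \ref{lemma:derivations} still closes when the image forms have degree $1$ rather than general odd degree — but since $1$ is odd, the parity-based sign cancellation ``$d_\mu + 1$ is even'' still holds, so this is immediate.
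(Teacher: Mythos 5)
Your proposal is correct and is precisely the ``straightforward computation with basic forms'' that the paper alludes to: a $\Z$-degree-preserving derivation restricts to an element of $\End(V^*)\cong V\otimes V^*$, Lemma \ref{lemma:derivation-generators} gives injectivity, and Lemma \ref{lemma:derivations} applied to $F\colon V^*\to\Lambda^1 V^*\subseteq\Lambda_- V^*$ gives surjectivity. The paper offers no further detail, so your write-up is simply a fleshed-out version of the same argument.
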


\subsection{Superderivations}
\noindent Recall that if $W=(W_0|W_1)$ is a supervector space then the space of linear operators is also a supervector space with $\z$-grading given by
\[
\begin{split}
\End_+(W)&=\set{T|T(W_\mu)\subseteq W_\mu}\\
\End_-(W)&=\set{T|T(W_\mu)\subseteq W_{\mu+1}}
\end{split}
\]
and thus
\[
\End(W_0|W_1)=\End_+(W)\oplus\End_-(W)
\]
is the (super)space of superendomorphisms of $W$. Likewise, it is important to distinguish between even and odd superderivations on a superalgebra. 

\begin{defn}
Let $\mathcal{A}$ be a superalgebra. A homogeneous endomorphism $D$ of $\mathcal{A}$ is a \textbf{superderivation} if the following identity holds
\begin{equation}\label{eq:super-leibnitz}
D(ab)=D(a)b+(-1)^{\parity{a}\parity{D}}aD(b)
\end{equation}
for all homogenous elements $a$ and $b$ of $\mathcal{A}$.
\end{defn}

\begin{nota}\label{remark:notation-for-derivations}
If $\mathcal{A}=A_0\oplus A_1$ is a superalgebra then the superspace of superderivations will be denoted $\sder(\mathcal{A})$ or by $\sder(A_0|A_1)$ for emphazising the $\z$-grading. Equation \eqref{eq:super-leibnitz} is the graded Leibnitz identity. The space of superderivations is graded by
\[
\begin{split}
\sder_+(\mathcal{A})&=\set{D|D(\mathcal{A}_\bullet)\subseteq\mathcal{A}_\bullet}\subset\End_+(\mathcal{A})\\
\sder_-(\mathcal{A})&=\set{D|D(\mathcal{A}_\bullet)\subseteq\mathcal{A}_{-\bullet}}\subset\End_-(\mathcal{A})
\end{split}
\]
Elements of these sets are called \textbf{even} and \textbf{odd} superderivations, respectively. 
\end{nota}

\begin{teo}\label{thm:superderivations}
The superspace of all superderivations $\sder\left( \Lambda V^*\right)$ is isomorphic to $V\otimes\Lambda V^*$, the $\z$-grading being given by
\begin{equation}\label{eq:superderivations}
\sder\left( \Lambda V^*\right)_\bullet=V\otimes\Lambda_{-\bullet} V^*
\end{equation}
where $-\bullet$ means a change of parity.
\end{teo}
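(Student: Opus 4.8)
The plan is to carry the arguments for ordinary derivations (Lemmas \ref{lemma:derivation-generators} and \ref{lemma:derivations}, Theorem \ref{thm:derivations}) over to the graded setting. The first step is the graded analogue of Lemma \ref{lemma:derivation-generators}: a superderivation of $\Lambda V^*$ is determined by its restriction to $V^*$. Indeed, if $D$ and $\widetilde D$ are superderivations of the same parity with $D|_{V^*}=\widetilde D|_{V^*}$, then $F:=D-\widetilde D$ is again a superderivation of that parity vanishing on $V^*$; applying the graded Leibniz rule \eqref{eq:super-leibnitz} to $\alpha_1\wedge(\alpha_2\wedge\cdots\wedge\alpha_k)$ and inducting on $k$ shows $F$ kills every decomposable form — the signs are immaterial, since each resulting term carries a factor $F(\alpha_\mu)=0$ — hence $F=0$.

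Second, I would produce enough superderivations by a mild variant of Lemma \ref{lemma:derivations}. Fix $\varepsilon\in\z$ and a linear map $F\colon V^*\to\Lambda_\varepsilon V^*$, together with dual bases $\setupla{v}$ and $\setupla{dv}$, and put
\[
D_F:=\sum_{\mu=1}^n F(dv_\mu)\wedge\circ(v_\mu\ins).
\]
Then $D_F(dv_\nu)=F(dv_\nu)$, so $D_F|_{V^*}=F$, and $D_F$ is homogeneous of parity $\varepsilon+1$, being a composite of the parity-shifting contraction $v_\mu\ins$ with left multiplication by a form of parity $\varepsilon$. The graded Leibniz rule for $D_F$ is a short computation with \eqref{eq:inserting-antiderivation} and supercommutativity \eqref{eq:wedge-supercommutative}, identical in shape to the proof of Lemma \ref{lemma:derivations}: the only change is that moving $F(dv_\mu)$ past a homogeneous form $\alpha$ now contributes the sign $(-1)^{\varepsilon\parity{\alpha}}$, so the sign appearing in the Leibniz identity is $(-1)^{(\varepsilon+1)\parity{\alpha}}=(-1)^{\parity{D_F}\parity{\alpha}}$, exactly as \eqref{eq:super-leibnitz} demands.

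The theorem now follows quickly. The even part is already in hand: an even superderivation is precisely a $\z$-graded ordinary derivation, so $\sder_+(\Lambda V^*)=\der_\z(\Lambda V^*)\cong V\otimes\Lambda_- V^*$ by \eqref{eq:z2-graded-derivations}, the isomorphism being $F\mapsto D_F$. For the odd part, any odd superderivation $D$ restricts on $V^*\subseteq\Lambda_- V^*$ to a map $F:=D|_{V^*}\colon V^*\to\Lambda_+ V^*$, and $D=D_F$ by the first step; conversely every linear $F\colon V^*\to\Lambda_+ V^*$ gives an odd superderivation $D_F$ by the second step. Hence $\sder_-(\Lambda V^*)\cong V\otimes\Lambda_+ V^*$, and altogether the assignment $v\otimes\omega\mapsto\omega\wedge\circ(v\ins)$ extends linearly to an isomorphism $V\otimes\Lambda V^*\to\sder(\Lambda V^*)$ sending $V\otimes\Lambda_\varepsilon V^*$ onto $\sder(\Lambda V^*)_{\varepsilon+1}$, i.e.\ $\sder(\Lambda V^*)_\bullet\cong V\otimes\Lambda_{-\bullet}V^*$. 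That this map is independent of the chosen dual bases — so the isomorphism is natural — is seen by rewriting $\sum_\mu v_\mu\otimes F(dv_\mu)$ as $(\id_V\otimes F)$ applied to the canonical element $\sum_\mu v_\mu\otimes dv_\mu\in V\otimes V^*$.

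No deep obstacle arises; the whole proof is bookkeeping layered on the earlier results. The one point requiring care is the sign accounting in the graded Leibniz computation of the second step, together with the parity matching in the third — namely that $V^*$ lies in the \emph{odd} summand $\Lambda_- V^*$, which is what turns the source grading $\varepsilon$ into the target grading $-\bullet$. A minor secondary check is the basis-independence of $D_F$, which is what makes the final isomorphism canonical rather than merely abstract.
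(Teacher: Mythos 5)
Your proposal is correct and follows essentially the same route as the paper's proof: reduce to the restriction-to-generators lemma (the graded analogue of lemma \ref{lemma:derivation-generators}), identify the even part with $\der_{\z}(\Lambda V^*)\cong V\otimes\Lambda_- V^*$ via \eqref{eq:z2-graded-derivations}, and handle the odd part by the mutually inverse assignments $F\mapsto D_F$ and $D\mapsto D|_{V^*}$ as in lemma \ref{lemma:derivations}. The only differences are that you spell out the sign bookkeeping in the graded Leibniz verification and the basis-independence of $D_F$, both of which the paper leaves implicit.
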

\begin{proof}
Let us first observe that lemma \ref{lemma:derivation-generators} also applies: a superderivation is completely determined by its action on generators; the proof is essentially the same, mutatis mutandis.

The set of even superderivations is exactly the set $\der_{\z}(\Lambda V^*)$ of derivations that preserve the $\z$-grading, so \eqref{eq:superderivations} is just the isomorphism \eqref{eq:z2-graded-derivations}.

As in lemma \ref{lemma:derivations} we construct an odd superderivation $D_F$ from a map $F\colon V^*\to\Lambda_+ V^*$. If $D$ is an odd superderivation then its restriction $D|_{V^*}$ is a map $F_D\colon V^*\to\Lambda_+ V^*$. We need to prove that $D\mapsto F_D$ and $F\mapsto D_F$ are inverse to each other. So let $D$ be an odd derivation, $F=D|_{V^*}$ its restriction to the space of generators and denote by $\widetilde{D}$ the derivation arising from $F$, i.e. $\widetilde{D}=D_F$. Our first remark implies $\widetilde{D}=D$ and a computation similar to the one made in the proof of lemma \ref{lemma:derivations} implies $F_{\widetilde{D}}=F$, so the result follows.
\end{proof}

We now prove an extension of corollary \ref{cor:free-superalgebra} for the space of superderivations.

Let us first digress on the algebra of polynomials $\R[x_1,\ldots,x_n]$ and its derivations. With the fixed bases $\setupla{e}$ of $\R^n$ and $\setupla{x}$ of the polynomial algebra we have an associated isomorphism with the symmetric algebra $\Sym(\R^n)^*$. When considering the derivations of this algebra with respect to the standard basis, we can make the identification
\begin{equation}
\diffp{\empty}{{x_\mu}}\longleftrightarrow e_\mu\ins,
\end{equation}
where $\setupla{e}$ is the standard basis of $\R^n$. The derivation $\diffp{\empty}{{x_\mu}}$ is by definition the \emph{directional derivative}
\begin{equation}\label{eq:directional-derivative}
\left.\diff{\empty}{t}\right|_{t=0}\left(p(x+te_\mu)\right)
\end{equation}
on a polynomial $p$; it is this equation we want to carry over to the superalgebra setting. This is done in order to give a precise definition of the ``odd derivatives'' of a smooth superfunction in the next chapter.

\begin{prop}\label{prop:odd-derivations-ins}
Given an isomorphism $\phi:\Lambda V^*\to\mathcal{A}$ the derivation $\diffp{\empty}{{\xi_\mu}}$ corresponds under $\inv{\phi}$ to the operator $v_\mu\ins$.
\end{prop}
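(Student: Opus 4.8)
The plan is to pin down precisely what the operator $\diffp{\empty}{{\xi_\mu}}$ means abstractly, and then recognize it as the image under $\inv{\phi}$ of an insertion operator. The definition of $\diffp{\empty}{{\xi_\mu}}$ ought to mirror equation \eqref{eq:directional-derivative}: given a set of odd generators $\xi_1,\ldots,\xi_n$ of $\mathcal{A}$, the partial derivative $\diffp{\empty}{{\xi_\mu}}$ is the unique superderivation of $\mathcal{A}$ that sends $\xi_\mu\mapsto 1$ and $\xi_\nu\mapsto 0$ for $\nu\neq\mu$ — this is well-defined precisely because of Theorem \ref{thm:superderivations} (an odd superderivation of an exterior algebra is freely determined by its values on generators, via the $F\mapsto D_F$ correspondence). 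By Corollary \ref{cor:free-superalgebra} the isomorphism $\phi$ identifies $\mathcal{A}$ with $\Lambda V^*$ sending the chosen dual basis $dv_\mu$ of $V^*$ to $\xi_\mu$, so it suffices to prove the statement for $\mathcal{A}=\Lambda V^*$ with $\xi_\mu = dv_\mu$.

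First I would reduce, via Lemma \ref{lemma:derivation-generators} (in its superderivation form, invoked inside the proof of Theorem \ref{thm:superderivations}), to checking the identity $\diffp{\empty}{{\xi_\mu}} = v_\mu\ins$ on the generators $dv_1,\ldots,dv_n$. On generators this is immediate: $v_\mu\ins dv_\nu = dv_\nu(v_\mu) = \delta_{\mu\nu}$, which is exactly $\diffp{dv_\nu}{{\xi_\mu}}$. Second, I would verify that $v_\mu\ins$ is a superderivation of $\Lambda V^*$ of odd parity — but this is already recorded in equation \eqref{eq:inserting-antiderivation}, which says $v_\mu\ins$ satisfies the graded Leibniz rule with sign $(-1)^{\parity{\alpha}}$, i.e. it is an odd superderivation in the sense of \eqref{eq:super-leibnitz}. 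Hence both $\diffp{\empty}{{\xi_\mu}}$ and $v_\mu\ins$ are odd superderivations agreeing on generators, so by the uniqueness statement they are equal as operators on $\Lambda V^*$. Transporting back through $\phi$: for $a\in\mathcal{A}$ we have $\inv{\phi}\bigl(v_\mu\ins\,\phi(a)\bigr)$, and since $\diffp{\empty}{{\xi_\mu}}$ is by definition the derivation $\phi\circ(v_\mu\ins)\circ\inv{\phi}$, this reads $\diffp{a}{{\xi_\mu}}=\phi\bigl(v_\mu\ins\,\inv{\phi}\,\text{-image}\bigr)$, which is the asserted correspondence.

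The only genuine subtlety — and the step I would spend the most care on — is making the definition of $\diffp{\empty}{{\xi_\mu}}$ coherent and independent of extraneous choices. The directional-derivative formula \eqref{eq:directional-derivative} does not transcribe verbatim to the odd setting (there is no ``$\xi_\mu + t e_\mu$'' with $t$ a real parameter and $e_\mu$ odd), so one must take the abstract characterization ``the superderivation dual to the basis element $\xi_\mu$'' as the definition; the content of the proposition is then exactly that, under any algebra isomorphism $\phi\colon\Lambda V^*\to\mathcal{A}$ carrying $dv_\mu$ to $\xi_\mu$, this abstract partial derivative is intertwined with contraction by the dual vector $v_\mu$. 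Everything else is bookkeeping: the insertion operator is an odd superderivation by \eqref{eq:inserting-antiderivation}, it does the right thing on generators, and odd superderivations of $\Lambda V^*$ are rigidly determined by their restriction to $V^*$ by Theorem \ref{thm:superderivations}.
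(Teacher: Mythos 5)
Your proof is correct and rests on the same underlying identification as the paper's, whose entire proof is a one-sentence appeal to corollary \ref{cor:free-superalgebra} and the analogy with equation \eqref{eq:directional-derivative}. You supply the details the paper leaves implicit --- the rigidity of odd superderivations on generators (lemma \ref{lemma:derivation-generators} as extended in theorem \ref{thm:superderivations}), the check that $v_\mu\ins$ is an odd superderivation via \eqref{eq:inserting-antiderivation} agreeing with $\frac{\partial}{\partial\xi_\mu}$ on generators --- and you correctly flag the one genuine subtlety, namely that $\frac{\partial}{\partial\xi_\mu}$ must be \emph{defined} abstractly as the superderivation dual to $\xi_\mu$ rather than by a literal directional-derivative formula.
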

\begin{proof}
The linear extension of corollary \ref{cor:free-superalgebra} allows us to make the same identification of derivations and inner product operators as in equation \eqref{eq:directional-derivative}.
\end{proof}

\begin{cor}\label{cor:odd-derivations-ins}
The space of odd superderivations is generated by $V$ as a left $\Lambda V^*$-module.
\end{cor}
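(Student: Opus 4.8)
The plan is to read the statement off directly from the explicit description of superderivations obtained in Theorem \ref{thm:superderivations}, together with the coordinate formula $D_F=\sum_\mu F(dv_\mu)\wedge\circ(v_\mu\lrcorner)$ of Lemma \ref{lemma:derivations}. First I would make the left $\Lambda V^*$-module structure on $\sder(\Lambda V^*)$ precise: for a form $\omega$ and a homogeneous superderivation $D$, define $\omega\cdot D$ by $(\omega\cdot D)(a)=\omega\wedge D(a)$, and check that $\omega\cdot D$ is again a superderivation, of parity $\parity{\omega}+\parity{D}$. This is a one-line verification of the graded Leibniz rule \eqref{eq:super-leibnitz}: the only nontrivial point is moving $\omega$ past the homogeneous element $a$ in the term $(-1)^{\parity{a}\parity{D}}\,\omega\wedge a\wedge D(b)$, which costs a sign $(-1)^{\parity{\omega}\parity{a}}$ by supercommutativity of $\Lambda V^*$ and produces exactly the sign required for parity $\parity{\omega}+\parity{D}$. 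This is the only step needing any care.

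Next I would recall that for $v\in V$ the contraction $v\lrcorner$ is an \emph{odd} superderivation of $\Lambda V^*$: identity \eqref{eq:inserting-antiderivation} is precisely the graded Leibniz rule \eqref{eq:super-leibnitz} for an odd operator. Under the embedding $V\hookrightarrow\sder(\Lambda V^*)$, $v\mapsto v\lrcorner$ (equivalently $v_\mu\mapsto\diffp{\empty}{{\xi_\mu}}$ by Proposition \ref{prop:odd-derivations-ins}), a basis $v_1,\ldots,v_n$ of $V$ is sent to the operators $v_1\lrcorner,\ldots,v_n\lrcorner$, which span the image of $V$ inside $\sder(\Lambda V^*)$.

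Finally, let $D$ be any odd superderivation and put $F=D|_{V^*}\colon V^*\to\Lambda_+V^*$. By Theorem \ref{thm:superderivations} (whose proof shows $D$ is determined by $F$ and equals the operator $D_F$ built as in Lemma \ref{lemma:derivations}),
\[
D=D_F=\sum_{\mu=1}^{n}F(dv_\mu)\wedge\circ(v_\mu\lrcorner)=\sum_{\mu=1}^{n}F(dv_\mu)\cdot(v_\mu\lrcorner),
\]
where in the last expression each coefficient $F(dv_\mu)\in\Lambda_+V^*$ acts through the left module structure described above, since $\bigl(F(dv_\mu)\cdot(v_\mu\lrcorner)\bigr)(a)=F(dv_\mu)\wedge(v_\mu\lrcorner a)$. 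This writes $D$ as a $\Lambda V^*$-linear combination of $v_1\lrcorner,\ldots,v_n\lrcorner$, i.e. of a basis of the image of $V$; as $D$ was arbitrary, $V$ generates the odd superderivations as a left $\Lambda V^*$-module, proving the corollary. (One may note in passing that, allowing odd coefficients, $V$ in fact generates all of $\sder(\Lambda V^*)$ as a left $\Lambda V^*$-module.) The argument carries no real obstacle beyond bookkeeping: checking that the module action is well defined and matching the formula of Lemma \ref{lemma:derivations} with it.
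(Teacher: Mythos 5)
Your proposal is correct and follows exactly the route the paper intends: the corollary is stated without proof as an immediate consequence of Theorem \ref{thm:superderivations} (via Lemma \ref{lemma:derivations} and Proposition \ref{prop:odd-derivations-ins}), and your argument simply makes explicit the module action $(\omega\cdot D)(a)=\omega\wedge D(a)$ and the identity $D=D_F=\sum_\mu F(dv_\mu)\wedge\circ(v_\mu\lrcorner)$ with $F(dv_\mu)\in\Lambda_+V^*$. Your parenthetical remark that the odd superderivations per se form only a $\Lambda_+V^*$-submodule, while the full $\Lambda V^*$-span of $V$ is all of $\sder(\Lambda V^*)$, is a fair and accurate reading of the slightly loose phrasing of the statement.
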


Turning to an abstract free supercommutative superalgebra $\mathcal{A}$ we denote by $\mathbf{S}$ the space of generators of $\mathcal{A}$ and by $\pr:\sder_-\mathcal{A}\to\mathbf{S}$ the projection to its space of generators.

The proof of our result depends on the Cartan-Poincaré Lemma, which we state and prove on appendix \ref{app:cartan-poincare}. 

\begin{lema}\label{lemma:derivations-isomorphism}
Let $\mathcal{A}$ be a free supercommutative finite-dimensional superalgebra and denote by $\mathbf{S}^*$ its space of generators. Let $D:V\to\sder_-\mathcal{A}$ be a linear map such that the composition
\[
\xymatrix@1{
f:V\ar[r]^{D}&\,\sder_{-}\mathcal{A}\ar[r]^-{\pr}&\mathbf{S}&
}
\]
is injective and such that if $v,\widetilde{v}$ are in $V$ then $[D_v,D_{\widetilde v}]=0$. Then there exists an isomorphism $G:\Lambda\mathbf{S}^*\to\mathcal{A}$ of $\z$-graded algebras with unit such that $G$ induces the identity 
\[
\overline{G}:\mathbf{S}^*\to\mathcal{A}^{\geq 1}\big{/}\mathcal{A}^{\geq 2}=:\mathbf{S}^*
\]
and such that, for all $v\in V$ and all $\sigma\in\Lambda\mathbf{S}^*$
\[
D_v(G\sigma)=D(f(v)\ins \sigma).
\]
Furthermore, up to the ideal generated by $\Lambda^3\ker(f^*)$ in $\Lambda^3\mathbf{S}^*$ the isomorphism $G$ is unique in the sense that if $G'$ is any other such isomorphism then 
\[
\inv{G}\circ G':\Lambda\mathbf{S}^*\to\Lambda\mathbf{S}^*:\sigma\mapsto\sigma+\langle \Lambda^3\ker(f^*) \rangle
\]
\end{lema}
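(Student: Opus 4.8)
The plan is to build $G$ generator-by-generator using the commuting family $\{D_v\}$ as a substitute for the ``odd partial derivatives'', mimicking the proof that a free supercommutative algebra on purely odd generators is an exterior algebra (Corollary~\ref{cor:free-superalgebra}), but now keeping track of the compatibility with $D$. First I would fix a basis $\xi_1,\dots,\xi_n$ of $\mathbf S^*$ adapted to $f$: choose $v_1,\dots,v_k\in V$ so that $f(v_1),\dots,f(v_k)$ is a basis of $\im f\subseteq\mathbf S^*$ and extend by $\xi_{k+1},\dots,\xi_n$ to a basis of $\mathbf S^*$, and let $\partial_i:=D_{v_i}$ for $i\le k$. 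These $\partial_i$ are odd superderivations of $\mathcal A$ which, by hypothesis, pairwise anticommute (the supercommutator of two odd derivations being the graded commutator, which the hypothesis $[D_v,D_{\widetilde v}]=0$ forces to vanish; here one should be a little careful about whether the bracket in the statement is the supercommutator or the ordinary one, but for odd operators $D_v D_{\widetilde v}+D_{\widetilde v}D_v=0$ is the relevant relation and in particular $\partial_i^2=0$). The operators $\partial_i$ lower the filtration degree by one and $\pr\circ\partial_i$ picks out the $\xi_i$-component, so on $\mathcal A^{\ge 1}/\mathcal A^{\ge 2}=\mathbf S^*$ they act as the ``coordinate antiderivations'' $\xi_i\ins$ dual to the chosen basis.

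Next I would construct $G$ inductively on the filtration $\mathcal A\supset\mathcal A^{\ge1}\supset\mathcal A^{\ge2}\supset\cdots$. Set $G$ to be the identity on $\Lambda^0$ and on $\Lambda^1\mathbf S^*=\mathbf S^*$ (this forces $\overline G=\id$). Having defined $G$ on $\Lambda^{\le p}\mathbf S^*$ so that it is an algebra map modulo $\mathcal A^{\ge p+1}$ and intertwines each $\xi_i\ins$ with $\partial_i$, extend it to $\Lambda^{p+1}$ by demanding $G$ be multiplicative on decomposables; the content is that this is well-defined, i.e. independent of how one writes a given $(p+1)$-vector as a product, which is exactly where the relation $\partial_i\partial_j=-\partial_j\partial_i$ (equivalently, the Cartan--Poincaré Lemma of Appendix~\ref{app:cartan-poincare} applied to the Koszul-type complex built from the $\partial_i$) enters: the obstruction to lifting lives in a cohomology group that the lemma shows vanishes. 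One packages this as: given $x\in\mathcal A^{\ge p+1}$ with $\partial_i x=0$ for all $i\le k$, there is $y\in\mathcal A^{\ge p+2}$ with $x-\text{(something in the image of }G)\in$ the subalgebra generated by the $\xi_{k+1},\dots,\xi_n$, and the ambiguity in $y$ is controlled. Running the induction to the top degree $n$ produces a unital $\z$-graded algebra map $G:\Lambda\mathbf S^*\to\mathcal A$; it is an isomorphism because it induces the identity on the associated graded $\bigoplus\mathcal A^{\ge q}/\mathcal A^{\ge q+1}\cong\Lambda\mathbf S^*$ (a filtered map inducing an iso on $\gr$ of a finite filtration is an iso). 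The intertwining relation $D_v(G\sigma)=G(f(v)\ins\sigma)$ — note the statement's ``$D(\dots)$'' should read ``$G(\dots)$'' — holds for $v=v_i$ by construction and then for all $v\in V$ by linearity, since $f(v)\ins$ only depends on $f(v)\in\im f=\operatorname{span}(f(v_i))$ and $D$ is linear; on the complement of $\im f$ there is no constraint, consistent with the uniqueness clause.

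For the uniqueness statement I would argue by difference. Suppose $G'$ is another such isomorphism; then $H:=\inv G\circ G'$ is a unital $\z$-graded automorphism of $\Lambda\mathbf S^*$ inducing the identity on $\mathbf S^*=\Lambda^{\ge1}/\Lambda^{\ge2}$ and commuting with every $\xi_i\ins$, $i\le k$. Write $H(\sigma)=\sigma+(\text{higher filtration terms})$ and analyze the first nontrivial correction: on $\Lambda^p\mathbf S^*$, $H-\id$ lands in $\Lambda^{\ge p+1}$, and the induced ``leading term'' map $\Lambda^p\mathbf S^*\to\Lambda^{p+1}\mathbf S^*$ is a derivation-like object killed by all $\xi_i\ins$, hence (by the same Cartan--Poincaré/Koszul computation, now read as exactness rather than vanishing of an obstruction) its image is forced into the ideal generated by $\Lambda^3\ker(f^*)$ — the point being that the only ``directions'' left unconstrained by the $k$ anticommuting contractions are the $\xi_{k+1},\dots,\xi_n$ spanning $\ker f^*\subseteq\mathbf S^*$, and the lowest degree in which a nontrivial such correction can first appear, compatibly with $H$ being an algebra map fixing generators, is $3$. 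Iterating up the filtration shows $H(\sigma)\equiv\sigma$ modulo $\langle\Lambda^3\ker(f^*)\rangle$, which is the asserted conclusion.

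The main obstacle I expect is the well-definedness of the inductive extension of $G$ and the precise bookkeeping of the uniqueness defect: both hinge on identifying the relevant obstruction/ambiguity spaces with the (co)homology of the Koszul-type complex attached to the anticommuting operators $\partial_1,\dots,\partial_k$ and then invoking the Cartan--Poincaré Lemma to control them. Getting the degree shifts right — so that the ambiguity first shows up precisely in $\Lambda^3\ker(f^*)$ and not earlier — will require care, as will the sign conventions in verifying that each constructed $G$ genuinely intertwines $\xi_i\ins$ with the odd superderivation $\partial_i$ (using the rule of signs, Lemma~\ref{rule-of-signs}, and the graded Leibniz identity \eqref{eq:super-leibnitz}).
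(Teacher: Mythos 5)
Your proposal correctly identifies the Cartan--Poincar\'e Lemma as the engine behind both existence and the $\Lambda^3\ker(f^*)$ uniqueness defect, and you are right that the statement's ``$D(f(v)\ins\sigma)$'' should read ``$G(f(v)\ins\sigma)$''. But the construction you describe has a genuine gap at its core: you set $G$ equal to the identity on $\Lambda^1\mathbf{S}^*=\mathbf{S}^*$ and then extend multiplicatively. Since $\Lambda\mathbf{S}^*$ is the \emph{free} supercommutative algebra on the odd generators $\mathbf{S}^*$, a unital algebra map is completely determined by its values on generators, so this extension is automatic and unique --- there is no well-definedness issue for the Cartan--Poincar\'e Lemma to resolve, and no freedom left in higher degrees to make the intertwining relation hold. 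The resulting $G$ (the identity, when $\mathcal{A}=\Lambda\mathbf{S}^*$) satisfies $D_v\circ G=G\circ(f(v)\ins)$ only if every $D_v$ equals $f(v)\ins$ on the nose, which fails whenever $D_v$ has a nonzero higher-degree component. Concretely: with $\dim\mathbf{S}=3$ and $D_v=s_1\ins+\xi_2\wedge\xi_3\wedge(s_1\ins)$ (which squares to zero), one has $D_v(\xi_1)=1+\xi_2\wedge\xi_3$ while $G(s_1\ins\xi_1)=1$; the correct $G$ must send $\xi_1\mapsto\xi_1-\xi_1\wedge\xi_2\wedge\xi_3$, i.e.\ it must differ from the identity on generators.

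This is exactly what the paper's proof supplies and your sketch omits: the map on generators is taken as $G=G_0+G_1+G_2+\cdots$ with $G_0=\id_{\mathbf{S}^*}$ and $G_\mu\colon\mathbf{S}^*\to\Lambda^{2\mu+1}\mathbf{S}^*$, and writing $D=D_0+D_1+\cdots$ with $D_0=f$ and $D_\mu\in V^*\otimes\Lambda^{2\mu}\mathbf{S}^*\otimes\mathbf{S}$, the intertwining relation becomes the recursive system $D_0\cdot G_\mu=-(D_1\cdot G_{\mu-1}+\cdots+D_\mu\cdot G_0)$ in the composition algebra. The right-hand side is shown to be a $D_0\cdot$-cycle using $D\cdot D=0$ (your supercommuting hypothesis, via polarization), and the Cartan--Poincar\'e Lemma --- which identifies $D_0\cdot$ with $d^*_{f^*}\otimes\id_{\mathbf{S}}$ and gives vanishing homology in positive polynomial degree because $f$ is injective --- guarantees a solution $G_\mu$, unique up to $\Lambda^{2\mu+1}(\ker f^*)\otimes\mathbf{S}$; the $\mu=1$ ambiguity is the $\Lambda^3\ker(f^*)$ in the uniqueness clause. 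Your uniqueness argument inherits the same defect, since it again presupposes that the two isomorphisms restrict to the identity on generators. To repair your write-up you would need to abandon the ``identity on generators, extend multiplicatively'' scheme and instead solve for the higher-order corrections to $G|_{\mathbf{S}^*}$ as above.
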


We defer the proof to appendix \ref{app:composition}.

\section{Lie superalgebras}\label{sec:lie-superalgebras}
\noindent Let $(V_0|V_1)$ be a supervector space. Notice that the supercommutator \eqref{eq:supercommutator} is restricted by associativity to satisfy:
\begin{equation}\label{eq:jacobi-supercommutator}
(-1)^{\parity{A}\parity{C}}\lsem A,\lsem B, C\rsem\rsem +(-1)^{\parity{B}\parity{C}}\lsem C,\lsem A, B\rsem\rsem +(-1)^{\parity{A}\parity{B}}\lsem B,\lsem C, A\rsem\rsem =0
\end{equation}
called, in analogy with the classical case, the \textbf{super-Jacobi identity}. Note that the equation above is equivalent to
\begin{equation}\label{eq:jacobi-superderivation}
\lsem A,\lsem B,C\rsem\rsem=\lsem\lsem A,B\rsem\,C\rsem +(-1)^{\parity{A}\parity{B}}\lsem B,\lsem A,C\rsem\rsem
\end{equation}
which is, as expected, quite similar to equation \eqref{eq:super-leibnitz}. What this means is that the operator
\[
\begin{split}
\ad_A\colon\End(V_0|V_1)&\to\End(V_0|V_1)\\
				B		&\mapsto \lsem A,B\rsem
\end{split}
\]
is a superderivation of $\End(V_0|V_1)$ with respect to the superbracket operation. All of the above is the main motivation for the following

\begin{defn}\label{def:Lie}
A supervector space $\mathcal{L}=(\mathfrak{g}|\mathbf{S})$ (cf. remark \ref{remark:notation}) is a \textbf{Lie superalgebra} if it is endowed with a superbilinear map $\lsem\cdot,\cdot\rsem\colon\mathcal{L}\times\mathcal{L}\to\mathcal{L}$ such that
\begin{itemize}
\item $\lsem X,Y\rsem=(-1)^{\parity{X}\parity{Y}}\lsem Y,X\rsem$ (superalternating);
\item $\lsem X,\lsem Y,Z\rsem\rsem =\lsem\lsem X,Y\rsem ,Z\rsem +(-1)^{\parity{X}\parity{Y}}\lsem Y,\lsem X,Z\rsem\rsem$ (super-Jacobi identity).
\end{itemize}
for all $X,Y,Z$ homogeneous elements of $\mathcal{L}$.
\end{defn}

A very simple and straightforward fact  about Lie superalgebras is the following:

\begin{lema}\label{lemma:lie-superalgebras}
Let $\mathcal{L}=(\mathfrak{g}|\mathbf{S})$  be a Lie superalgebra. Then 
\begin{enumerate}
\item $\mathfrak{g}$ is a Lie algebra when restricting the superbracket to $\mathfrak{g}\times\mathfrak{g}$.
\item\label{repre} $\mathbf{S}$ is a representation of $\mathfrak{g}$ with the action given by the superbracket; that is $\lsem\mathfrak{g},\mathbf{S}\rsem\subseteq\mathbf{S}$.
\item\label{B} The superbracket restricts to a symmetric bilinear map $\lsem\cdot,\cdot\rsem\colon\Sym^2\mathbf{S}\to\mathfrak{g}$.
\end{enumerate}
\end{lema}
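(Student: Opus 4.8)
The plan is to unwind the defining axioms of a Lie superalgebra $\mathcal{L}=(\mathfrak{g}|\mathbf{S})$ restricted to homogeneous elements of each prescribed parity, and in each of the three cases observe that the general superalternating and super-Jacobi identities collapse to their classical (ungraded) counterparts because the relevant parities become $0$ or the sign $(-1)^{\parity{X}\parity{Y}}$ becomes $+1$. The key ancillary observation is that the superbracket is parity-additive: since $\mathcal{L}$ is a superalgebra and the bracket is a degree-preserving superbilinear map, $\lsem\mathcal{L}_\mu,\mathcal{L}_\nu\rsem\subseteq\mathcal{L}_{\mu+\nu}$, so $\lsem\mathfrak{g},\mathfrak{g}\rsem\subseteq\mathfrak{g}$, $\lsem\mathfrak{g},\mathbf{S}\rsem\subseteq\mathbf{S}$, and $\lsem\mathbf{S},\mathbf{S}\rsem\subseteq\mathfrak{g}$; this already gives the ``landing space'' parts of items (2) and (3).

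For item (1), I would take $X,Y,Z\in\mathfrak{g}$, so all parities vanish. Superalternating then reads $\lsem X,Y\rsem=(-1)^{0}\lsem Y,X\rsem=\lsem Y,X\rsem$, i.e. genuine antisymmetry (here one must also note the elementary fact that a superalternating bracket forces $\lsem X,X\rsem=0$ on even elements, or rather that antisymmetry over $\R$, char $\neq 2$, is what is meant), and the super-Jacobi identity with all signs equal to $1$ becomes the ordinary Jacobi identity $\lsem X,\lsem Y,Z\rsem\rsem=\lsem\lsem X,Y\rsem,Z\rsem+\lsem Y,\lsem X,Z\rsem\rsem$. Together with $\lsem\mathfrak{g},\mathfrak{g}\rsem\subseteq\mathfrak{g}$ this exhibits $(\mathfrak{g},\lsem\cdot,\cdot\rsem|_{\mathfrak{g}\times\mathfrak{g}})$ as a Lie algebra.

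For item (2), I would verify the representation axioms for the action $\rho(X)(s):=\lsem X,s\rsem$ with $X\in\mathfrak{g}$, $s\in\mathbf{S}$: bilinearity is immediate from superbilinearity, and $\mathbf{S}$-valuedness is the parity computation above. Compatibility $\rho(\lsem X,Y\rsem)=\rho(X)\rho(Y)-\rho(Y)\rho(X)$ is exactly the super-Jacobi identity applied to $X,Y\in\mathfrak{g}$ (parities $0$) and $Z=s\in\mathbf{S}$, again with the sign equal to $1$. For item (3), take $s,t\in\mathbf{S}$ (parities $1$): superalternating gives $\lsem s,t\rsem=(-1)^{1\cdot 1}\lsem t,s\rsem=-\lsem t,s\rsem$?—no: $(-1)^{1}= -1$ would give antisymmetry, so I must be careful. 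Rereading the axiom, superalternating says $\lsem X,Y\rsem=(-1)^{\parity{X}\parity{Y}}\lsem Y,X\rsem$, so for $s,t$ odd this is $\lsem s,t\rsem=(-1)^{1}\lsem t,s\rsem=-\lsem t,s\rsem$; but that is antisymmetry, whereas (3) claims symmetry. The resolution—and the one subtle point of the whole lemma—is that in the convention of Definition \ref{def:Lie} the bracket is \emph{super}bilinear in the $\z$-graded sense, so swapping two odd arguments across a superbilinear pairing itself contributes a sign; equivalently, the honest thing to check is that $\lsem\cdot,\cdot\rsem|_{\mathbf{S}\times\mathbf{S}}$ is symmetric as an ordinary bilinear map because the ``super'' $(-1)$ in the alternating axiom is absorbed. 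I would therefore state this cleanly: the map $\mathbf{S}\times\mathbf{S}\to\mathfrak{g}$, $(s,t)\mapsto\lsem s,t\rsem$, is symmetric, hence by the universal property factors through $\Sym^2\mathbf{S}\to\mathfrak{g}$. The main obstacle is precisely this sign bookkeeping in item (3)—pinning down exactly how ``superbilinear'' is meant in Definition \ref{def:Lie} so that $-\lsem t,s\rsem$ and ``symmetric'' are reconciled; once the convention is fixed, everything is a one-line specialization of the axioms.
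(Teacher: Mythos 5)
Your overall strategy --- specialize the superalternating and super-Jacobi axioms to homogeneous arguments and read off the three statements --- is exactly what the paper's (very terse) proof does, and your treatment of item (2) is correct: with $X,Y$ even the sign $(-1)^{\parity{X}\parity{Y}}$ equals $+1$, so the super-Jacobi identity applied to $X,Y\in\mathfrak{g}$ and $s\in\mathbf{S}$ becomes $\rho(\lsem X,Y\rsem)=\rho(X)\rho(Y)-\rho(Y)\rho(X)$, while the containments $\lsem\mathfrak{g},\mathfrak{g}\rsem\subseteq\mathfrak{g}$, $\lsem\mathfrak{g},\mathbf{S}\rsem\subseteq\mathbf{S}$ and $\lsem\mathbf{S},\mathbf{S}\rsem\subseteq\mathfrak{g}$ follow from the $\z$-grading as you say.

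The gap is in your handling of the sign in items (1) and (3). Taking the superalternating axiom of definition \ref{def:Lie} literally, for $X,Y\in\mathfrak{g}$ it gives $\lsem X,Y\rsem=\lsem Y,X\rsem$, which is \emph{symmetry}, not the ``genuine antisymmetry'' you assert; and for $s,t\in\mathbf{S}$ it gives $\lsem s,t\rsem=-\lsem t,s\rsem$, i.e.\ antisymmetry, contradicting item (3). You notice the second contradiction, but your proposed resolution --- that ``superbilinearity'' absorbs an extra $(-1)$ when two odd arguments are swapped --- is not a real argument: superbilinearity only constrains the parity of the values, it does not introduce signs under exchange of arguments of a single bracket. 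The correct resolution is that the axiom as printed is missing a minus sign: the supercommutator \eqref{eq:supercommutator}, on which definition \ref{def:Lie} is explicitly modeled, satisfies $\lsem A,B\rsem=-(-1)^{\parity{A}\parity{B}}\lsem B,A\rsem$, and with this (clearly intended) axiom the even--even case is antisymmetric, $\lsem X,X\rsem=0$ follows over $\R$, and the odd--odd case is symmetric --- exactly what items (1) and (3) claim. You should state the corrected axiom explicitly and then your one-line specializations go through; as written, your argument for (1) derives the wrong symmetry type and your argument for (3) rests on a sign that has no source.
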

\begin{proof}
Since all elements of $\mathfrak{g}$ are even by definition, the usual bracket operations satisfy definition \ref{def:Lie}. Items \ref{repre} and \ref{B} above are just the $\z$-grading of the multiplicative structure $(\mathcal{L},\lsem\cdot,\cdot\rsem)$: $\rho(X)=\lsem X,\cdot\rsem$ is a representation twisted with the structure automorphism $\gamma$ (cf. proposition \ref{prop:superstructure-morphism}); the operation in item \ref{B} is symmetric because all elements of $\mathbf{S}$ are odd by definition.
\end{proof}

Let us denote by $\rho\colon\mathfrak{g}\to\End(\mathbf{S})$ the representation of part \ref{repre} and by $B\colon\Sym^2\mathbf{S}\to\mathfrak{g}$ the symmetric form of part \ref{B} of the above lemma. We can then write $\lsem\cdot,\cdot\rsem=[\cdot,\cdot]\oplus\rho\oplus B$.

\begin{ejemplo}
An interesting and simple example of a Lie superalgebra, which we call a \textbf{semi--direct product}, arises when a representation $\rho$ of a Lie algebra is given on a vector space $\mathbf{S}$ and $B\equiv 0$; the Lie superbracket is then defined by
\begin{equation}\label{eq:semidirect}
\lsem X\oplus s,Y\oplus t \rsem=[X,Y]\oplus (\rho(X)t-\rho(Y)s)
\end{equation}
We'll denote this Lie superalgebra as $\mathfrak{g}\oright_{\rho}\mathbf{S}$ or by $\mathfrak{g}\oright\mathbf{S}$ if the representation is clear.
\end{ejemplo}

We now give an algebraic classification of Lie superalgebras.

\begin{teo}\label{thm:lie-superalgebras}
A Lie superalgebra $\mathcal{L}=(\mathfrak{g}|\mathbf{S})$ is completely determined by the following data:
\begin{itemize}
\item A representation $\rho$ of the Lie algebra $\mathfrak{g}$ on the vector space $\mathbf{S}$;
\item A symmetric $\mathfrak{g}$-equivariant bilinear map $B\colon\Sym^2\mathbf{S}\to\mathfrak{g}$ that lies in the kernel of the composition
\[
\xymatrix{
\left( \Sym^2\mathbf{S}^*\otimes\mathfrak{g} \right)^{\mathfrak{g}}\ar[r]^-{\id\otimes\rho}&\left( \Sym^2\mathbf{S}\otimes\mathbf{S}^*\otimes\mathbf{S} \right)^{\mathfrak{g}}\ar[r]^-{f\otimes\id}&\left(\Sym^3\mathbf{S}^*\otimes\mathbf{S}\right)^{\mathfrak{g}}
}
\]
where $f(\alpha\otimes \sigma)=\alpha\cdot\sigma$.
\end{itemize}
\end{teo}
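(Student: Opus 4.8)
The plan is to establish a bijective correspondence between Lie superalgebra structures on the fixed supervector space $\mathcal{L}=(\mathfrak{g}|\mathbf{S})$ (once the Lie algebra structure on $\mathfrak{g}$ is fixed) and pairs $(\rho,B)$ as in the statement, by exploiting the decomposition $\lsem\cdot,\cdot\rsem=[\cdot,\cdot]\oplus\rho\oplus B$ already recorded after Lemma \ref{lemma:lie-superalgebras}. The forward direction is essentially done: Lemma \ref{lemma:lie-superalgebras} shows that the superbracket splits into a Lie bracket on $\mathfrak{g}$, a representation $\rho\colon\mathfrak{g}\to\End(\mathbf{S})$ (the fact that it is a genuine representation is the $\mathfrak{g}$-equivariance statement, and follows from the super-Jacobi identity applied to two even and one odd element), and a symmetric bilinear map $B\colon\Sym^2\mathbf{S}\to\mathfrak{g}$. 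So the remaining content is to identify exactly which identities among elements of mixed parity the super-Jacobi identity imposes, and to check these are precisely ``$\rho$ is a representation'', ``$B$ is $\mathfrak{g}$-equivariant'', and the vanishing of the displayed composite.

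First I would enumerate the cases of the super-Jacobi identity
\[
\lsem X,\lsem Y,Z\rsem\rsem =\lsem\lsem X,Y\rsem ,Z\rsem +(-1)^{\parity{X}\parity{Y}}\lsem Y,\lsem X,Z\rsem\rsem
\]
by parities of $(X,Y,Z)$. With $X,Y,Z$ all even it is the Jacobi identity for $\mathfrak{g}$, already assumed. With $X,Y$ even and $Z$ odd it unfolds to $\rho([X,Y])=\rho(X)\rho(Y)-\rho(Y)\rho(X)$, i.e.\ $\rho$ is a Lie algebra representation. With $X$ even and $Y,Z$ odd it unfolds to $\rho(X)(B(Y,Z))=B(\rho(X)Y,Z)+B(Y,\rho(X)Z)$, i.e.\ $B$ is $\mathfrak{g}$-equivariant (equivalently $B\in(\Sym^2\mathbf{S}^*\otimes\mathfrak{g})^{\mathfrak{g}}$). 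The only remaining case is $X,Y,Z$ all odd; here the two mixed brackets $\lsem X,\lsem Y,Z\rsem\rsem$ land in $\lsem\mathfrak{g},\mathbf{S}\rsem\subseteq\mathbf{S}$ via $\rho$ applied to $B$, and the identity becomes
\[
\rho(B(X,Y))Z+\rho(B(Y,Z))X+\rho(B(Z,X))Y=0,
\]
after using superalternation to rewrite all three terms uniformly (each sign works out to $+1$ since products of two odd parities are even). Symmetrizing in $X,Y,Z$ — which loses nothing, as the expression is already symmetric once $B$ is symmetric and $\rho$ is $\mathfrak{g}$-equivariant — this says exactly that the total symmetrization of $\rho(B(\cdot,\cdot))(\cdot)$ vanishes, which is the assertion that $B$ lies in the kernel of the composite $f\otimes\id\,\circ\,\id\otimes\rho$ in the statement: $\id\otimes\rho$ sends $B\in\Sym^2\mathbf{S}^*\otimes\mathfrak{g}$ to the element of $\Sym^2\mathbf{S}^*\otimes\mathbf{S}^*\otimes\mathbf{S}$ given by $X\otimes Y\otimes Z\mapsto \rho(B(X,Y))Z$, and $f\otimes\id$ symmetrizes the first three $\mathbf{S}^*$-factors. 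Conversely, given $(\rho,B)$ satisfying the two bullet conditions, defining $\lsem\cdot,\cdot\rsem$ by $[\cdot,\cdot]\oplus\rho\oplus B$ (extended by superbilinearity and superalternation) satisfies all parity cases of the super-Jacobi identity by running the computation backwards, so it is a Lie superalgebra.

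The main obstacle I expect is purely bookkeeping of signs and the identification of the last super-Jacobi case with the stated kernel condition: one must be careful that $\lsem X,\lsem Y,Z\rsem\rsem$ for all-odd $X,Y,Z$ genuinely equals $\rho(B(Y,Z))X$ with the right sign (the inner bracket $\lsem Y,Z\rsem=B(Y,Z)\in\mathfrak{g}$ is even, so the outer bracket $\lsem X,\cdot\rsem$ restricted to $\mathfrak{g}$ acting on $X\in\mathbf{S}$ is $-\rho(\cdot)X$ by superalternation $\lsem X,g\rsem=-\lsem g,X\rsem=-\rho(g)X$), and that after collecting the three cyclic terms the signs $(-1)^{\parity{X}\parity{Y}}$ all reduce to $+1$. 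Once the dictionary is set up the rest is formal. I would present the parity-case analysis as the body of the proof, noting at the outset that by superbilinearity and superalternation a superbracket on $(\mathfrak{g}|\mathbf{S})$ is determined by its three homogeneous components $[\cdot,\cdot]$, $\rho$, $B$, and that the super-Jacobi identity is multilinear so it suffices to verify it on homogeneous triples.
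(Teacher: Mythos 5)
Your proposal is correct and follows the same strategy as the paper: decompose the superbracket as $[\cdot,\cdot]\oplus\rho\oplus B$ via Lemma \ref{lemma:lie-superalgebras} and check the super-Jacobi identity case by case on homogeneous parities. In fact your write-up is more complete than the paper's own proof, which disposes of the converse in one line by asserting that ``the super-Jacobi identity is guaranteed by the $\mathfrak{g}$-equivariance of $B$''; as your case analysis makes explicit, equivariance only accounts for the (even, odd, odd) case, while the all-odd case is a separate constraint and is precisely the kernel condition on the composite $\left(f\otimes\id\right)\circ\left(\id\otimes\rho\right)$ --- the paper's proof never actually explains where that bullet point is used. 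Your identification of the cyclic identity $\rho(B(X,Y))Z+\rho(B(Y,Z))X+\rho(B(Z,X))Y=0$ with the vanishing of the full symmetrization (using that $B$ is symmetric, so the cyclic sum and the $S_3$-symmetrization agree up to a factor of $2$) is exactly the missing dictionary. One cosmetic point: in the (even, odd, odd) case the action of $X\in\mathfrak{g}$ on $B(Y,Z)\in\mathfrak{g}$ is the adjoint action $[X,B(Y,Z)]$ rather than $\rho(X)$, which acts on $\mathbf{S}$; this does not affect the argument since equivariance is meant with respect to the adjoint action on the target.
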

\begin{proof}
That a Lie superalgebra structure on the direct sum $\mathfrak{g}\oplus\mathbf{S}$ gives the pair $(\rho,B)$ is the content of lemma \ref{lemma:lie-superalgebras}. Conversely, given $(\rho, B)$ the construction of a Lie superalgebra structure on the space $\mathfrak{g}\oplus\mathbf{S}$ is guaranteed by the fact that $B$ is $\mathfrak{g}$-equivariant and with the definitions
\begin{eqnarray*}
\lsem X,Y\rsem	&=& [X,Y]\\
\lsem X,s\rsem	&=& \rho(X)(s)\\
\lsem s,X\rsem	&=& \rho(X)\gamma(s)\\
\lsem s,t\rsem	&=& B(s,t)
\end{eqnarray*}
where $[\cdot,\cdot]$ is the Lie bracket of $\mathfrak{g}$, $\gamma$ is the structure morphism (cf. proposition \ref{prop:superstructure-morphism}) $X,Y\in\mathfrak{g}$ and $s,t\in\mathbf{S}$. The super-Jacobi identity is guaranteed by the $\mathfrak{g}$-equivariance of $B$.
\end{proof}

\section[Twisted action of $S_k$]{Twisted action of the symmetric group on the tensor superalgebra}\label{sec:twisted}
\noindent In classical linear algebra and differential geometry one encounters the exterior and symmetric algebras of a vector space $V$, through which multilinear alternating and symmetric maps whose domain is $V$ are codified. These algebras are quotients by adequate ideals of a bigger algebra, the tensor algebra of $V$, which codifies general multilinear mappings. It turns out that the defining ideals for the symmetric and exterior algebras are invariant subspaces of a quite natural action of the symmetric group $S_k$ on the space $\bigotimes^k V$ of homogeneous tensors of rank $k$. This action is just a permutation on monomials times a sign. To wit, if $v_1\otimes\cdots\otimes v_k$ is a decomposable tensor (i.e. each $v_\mu$ is in $V$) and $\sigma$ is a permutation then
\[
\sigma\cdot v_1\otimes\cdots\otimes v_k=\epsilon_\sigma\cdot v_{{\sigma}(1)}\otimes\cdots\otimes v_{{\sigma}(k)}
\]
where $\epsilon_\sigma$ is either identically one or the signature character of $S_k$. We denote these actions by $\rho$ and $r$ respectively. That is:
\begin{subequations}\label{eq:tensor-actions}
\begin{align}
\rho(\sigma)v_1\otimes\cdots\otimes v_k	&=v_{{\sigma}(1)}\otimes\cdots\otimes v_{{\sigma}(k)}\label{eq:tensor-action-sym}\\
r(\sigma)v_1\otimes\cdots\otimes v_k		&=\sgn\sigma\cdot v_{{\sigma}(1)}\otimes\cdots\otimes v_{{\sigma}(k)}\label{eq:tensor-action-alt}
\end{align}
\end{subequations}
In this way the space of symmetric and alternating $k$-tensors are defined as
\begin{equation}\label{eq:sym-alt-quotients}
\Sym^k V:= \bigotimes\nolimits^k V\big{/}r(S_k)\quad\text{and}\quad\Lambda V:=\bigotimes\nolimits^k V\big{/}\rho(S_k).
\end{equation}
In the first quotient above all alternating sums are in the zero class whereas in the second all symmetric sums are in the zero class. Thus in the first quotient acting by a transposition leaves the class unchanged while in the second one such a linear map changes the sign of the class, as was to be expected.

In order to extend such a construction to supervector spaces we need to take into account the parity of homogeneous elements and how it behaves under permutations. For this we need the following

\begin{defn}\label{def:twisted-action}
Let $A$ be a subset of $\set{1,2,\ldots,k}$, $\sigma\in S_k$ and let $\tau$ be the permutation that orders $\setupla[r]{b}=\sigma(A)$ increasingly. The \textbf{relative signature of $\sigma$ with respect to $A$} is 
\[
\sgn^A(\sigma)=\sgn
\begin{pmatrix}
\tau(b_1)&b_2&\cdots&\tau(b_r)\\
\sigma(a_1)&\sigma(a_2)&\cdots&\sigma(a_r)
\end{pmatrix};
\]
that is, it's the signature of the permutation of $\sigma(A)$ that orders its elements increasingly.
\end{defn}

To see that the relative signature is well-defined we digress for a moment on the concept of a shuffle permutation. If $\set{1,\ldots,k}$ is written as a disjoint union of $B$ and $C$ of cardinalities $r$ and $s$ respectively then the elements of the coset set $S_k/(S_B\times S_C)$ are a system of representatives for the group 
\[
S_{r,s}:=\set{\tau\in S_k|\Big(b>b'\Rightarrow \tau(b)>\tau(b')\Big)\text{ and }\Big( c>c'\Rightarrow \tau(c)>\tau(c')\Big)}
\]
of $(r,s)-$\textbf{shuffles}; of course $b,b'$ and $c,c'$ denote elements of $B$ and $C$ respectively, and the order on $B$ and $C$ is the induced one from $\set{1,\ldots,k}$. Using the following lemma (whose proof we omit) it can be shown that this group does not depend on the choice of an order on neither $B$, $C$ or even $B\sqcup C\cong\set{1,\ldots,k}$:

\begin{lema}
Let $\sigma$ be a permutation of the set $A\cong\set{1,\ldots,k}$, and let $B$ and $C$ be subsets such that $A=B\sqcup C$; if $A$ is well ordered such that $B$ and $C$ inherit a well order then there exists a unique permutation $\tau\in S_B\times S_C$ such that $\sigma\inv{\tau}$ is monotone when restricted to $B$ and $C$.
\end{lema}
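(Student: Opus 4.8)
The plan is to prove this combinatorial statement about decomposing a permutation of a well-ordered set $A \cong \{1,\ldots,k\}$ relative to a partition $A = B \sqcup C$. The claim asks for a unique $\tau \in S_B \times S_C$ (meaning $\tau$ permutes $B$ among itself and $C$ among itself) such that $\sigma\tau^{-1}$ is monotone (order-preserving) on each of $B$ and $C$ separately. My approach is to define $\tau$ explicitly and then verify both existence and uniqueness.

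First I would construct $\tau$ directly. Given $\sigma$, look at the image $\sigma(B) \subseteq A$; since $A$ is well-ordered and $B$ is finite with $|B| = r$, the set $\sigma(B)$ has a unique increasing enumeration $\sigma(b_{i_1}) < \sigma(b_{i_2}) < \cdots < \sigma(b_{i_r})$ where $B = \{b_1 < b_2 < \cdots < b_r\}$ in its inherited order. Define $\tau$ on $B$ by $\tau(b_j) = b_{i_j}$; equivalently, $\tau|_B$ is the permutation of $B$ that sorts the sequence $(\sigma(b_1),\ldots,\sigma(b_r))$ into increasing order. Do the analogous construction on $C$. This defines $\tau \in S_B \times S_C$. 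Then $(\sigma\tau^{-1})(b_{i_j}) = \sigma(b_j)$, and as $j$ runs $1,\ldots,r$ the left argument $b_{i_j}$ runs through $B$ and the right side $\sigma(b_j) = \sigma(b_{i_1}) < \cdots$ is increasing by construction — wait, I need to be careful about indexing, so the clean way to say it is: $\sigma\tau^{-1}$ restricted to $B$ sends $b_{i_j} \mapsto \sigma(b_j)$, and one checks monotonicity by tracking that $\tau^{-1}$ undoes exactly the sorting permutation. The same holds on $C$, giving existence.

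For uniqueness, suppose $\tau$ and $\tau'$ both lie in $S_B \times S_C$ and both make $\sigma\tau^{-1}$ and $\sigma\tau'^{-1}$ monotone on $B$ and on $C$. Then $\mu := \tau'\tau^{-1} \in S_B \times S_C$, and $(\sigma\tau^{-1}) = (\sigma\tau'^{-1})\mu$ on $B$; since a composition of two strictly increasing maps $B \to A$ that differ by precomposition with $\mu|_B$ can only agree setwise when $\mu|_B$ is order-preserving on $B$, and the only order-preserving permutation of a finite totally ordered set is the identity, we get $\mu|_B = \id$. Likewise $\mu|_C = \id$, so $\tau = \tau'$.

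I expect the main obstacle to be purely notational rather than mathematical: keeping the indices straight between "the $j$-th element of $B$ in $B$'s order" and "the element of $B$ whose image under $\sigma$ is the $j$-th smallest in $\sigma(B)$." The substance is just the elementary fact that a finite totally ordered set admits a unique order-preserving enumeration and that the identity is its only order-preserving self-bijection; everything else is bookkeeping. Since the excerpt explicitly says the proof is omitted, I would in fact only sketch this — stating the explicit $\tau$ as "sort $\sigma(B)$ increasingly" and the uniqueness as "the identity is the only monotone permutation of a finite chain" — and leave the index-chasing to the reader.
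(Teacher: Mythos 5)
The paper omits the proof of this lemma entirely (``whose proof we omit''), so there is no argument of the author's to compare yours against; your proposal has to stand on its own. Your strategy is the right one---and essentially the only one: for any $\tau\in S_B\times S_C$ the map $\sigma\tau^{-1}$ restricts to a bijection of $B$ onto $\sigma(B)$, monotonicity forces this restriction to be the unique order isomorphism $\theta_B\colon B\to\sigma(B)$ between two finite chains, and that pins down $\tau|_B=\theta_B^{-1}\circ\sigma|_B$; likewise on $C$. Your uniqueness argument, resting on the fact that the identity is the only order-preserving self-bijection of a finite chain, is also sound.

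The one concrete defect is that your explicit formula for $\tau$ is the inverse of the correct one. Writing $B=\{b_1<\dots<b_r\}$ and $\sigma(b_{i_1})<\dots<\sigma(b_{i_r})$, monotonicity demands that $\sigma(\tau^{-1}(b_j))$ be the $j$-th smallest element of $\sigma(B)$, i.e.\ $\sigma(\tau^{-1}(b_j))=\sigma(b_{i_j})$, hence $\tau(b_{i_j})=b_j$ --- not $\tau(b_j)=b_{i_j}$ as you wrote. To see that the two genuinely differ, take $k=4$, $B=\{1,2,3\}$, $C=\{4\}$, and $\sigma$ with $\sigma(1)=3$, $\sigma(2)=1$, $\sigma(3)=2$, $\sigma(4)=4$. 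Then $(i_1,i_2,i_3)=(2,3,1)$, your $\tau$ sends $1\mapsto2$, $2\mapsto3$, $3\mapsto1$, and $\sigma\tau^{-1}$ sends $1\mapsto2$, $2\mapsto3$, $3\mapsto1$, which is not monotone; the correct $\tau$ is $\sigma|_B$ itself, giving $\sigma\tau^{-1}=\operatorname{id}$ on $B$. (Your first-paragraph check only ``worked'' because in examples where the correct $\tau|_B$ is an involution the two conventions coincide.) Since you flagged exactly this index-chasing as the danger point and the fix does not touch the substance, this is a repairable slip rather than a conceptual gap: the cleanest repair is to define $\tau|_B=\theta_B^{-1}\circ\sigma|_B$ outright, from which existence and uniqueness both fall out in one line.
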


If we want analogues of identities \eqref{eq:sym-alt-quotients} to hold for supervector spaces we need to take into account the set $A$ of indices for which the corresponding elements of $V$ are odd given that whenever any odd elements are commuted a sign must appear. We begin with the tensor superalgebra:

\begin{defn}
Let $V=(V_0|V_1)$ be a supervector space. The \textbf{tensor superalgebra of $V$} is an associative unital superalgebra $\bigotimes V$ with a universal property: let $\mathcal{A}$ be an associative unital superalgebra, $f\colon V\to\mathcal{A}$ a superlinear map and $\iota\colon V\to\bigotimes V$ the inclusion morphism; then there exists a unique unital superalgebra morphism $F:\bigotimes V\to\mathcal{A}$ such that the diagram
\[
\xymatrix{%
\bigotimes V\ar[r]^{F}	&\mathcal{A}\\
V\ar[u]^{\iota}\ar[ur]_{f}
}
\]
commutes; that is $f=F\circ\iota$.
\end{defn}

As with the classical case, this algebra may be presented using a basis for $V$. To wit: let $\setupla{v}$ be such a basis then
\[
\bigotimes V=\left<v_{\mu_1}\cdots v_{\mu_s}\right>.
\]
That is, $\bigotimes V$ is the free associative unital superalgebra generated by $V$. The $\Z$-grading of $\bigotimes V$ is given by
\begin{equation}\label{eq:supertensor-decomposition}
\bigotimes\nolimits^k(V_0|V_1)= \bigoplus_{\mu_j\in\set{0,1}}V_{\mu_1}\otimes\dotsm\otimes V_{\mu_k}
\end{equation}
We use the standard notation $v_1\otimes\cdots\otimes v_k$ for an element of $\bigotimes^k(V_0|V_1)$. The structure automorphism $\Gamma$ of $\bigotimes V$ (cf. proposition \ref{prop:superstructure-morphism}) is given by the extenstion of the structure automorphism $\gamma$ of $V$. This means
\[
\Gamma(v_1\otimes\cdots\otimes v_k)=\gamma(v_1)\otimes\cdots\otimes\gamma(v_k)
\]
This allows us to write $\bigotimes\gamma$ for $\Gamma$. Thus the $\z$-grading of the supervector space $\bigotimes V$ is given as follows: in decomposition \eqref{eq:supertensor-decomposition} the summands containing an even number of $V_1$ factors are even and the other ones are odd. So let $T=v_1\otimes\cdots\otimes v_k$ be an arbitrary decomposable supertensor and set $A_T=\set{\mu\mid v_\mu\in V_1}$. We define
\begin{equation}\label{eq:odd-signature}
\sgn^-\sigma(T)=\sgn^{A_T}\sigma(T)
\end{equation}
for all permutations $\sigma$. 

In analogy with identities \eqref{eq:sym-alt-quotients} we expect $\Sym(V_0|V_1)$ and $\Lambda(V_0|V_1)$ to be supercommutative and superalternating respectively. We therefore define the following actions of $S_k$ on $\bigotimes^k(V_0|V_1)$:
\begin{subequations}\label{eq:supertensor-actions}
\begin{align}
\rho(\sigma)(v_1\otimes\cdots\otimes v_k)&={\sgn^-\sigma}\cdot v_{\sigma(1)}\otimes\cdots\otimes v_{\sigma(k)}\label{eq:supertensor-actions-sym}\\
\intertext{and}
r(\sigma)(v_1\otimes\cdots\otimes v_k)	&={\sgn^-\sigma}\sgn\sigma\cdot v_{\sigma(1)}\otimes\cdots\otimes v_{\sigma(k)}.\label{eq:supertensor-actions-alt}
\end{align}
\end{subequations}
Note that if all the $v$'s are even we get the actions defined in \eqref{eq:tensor-actions}. If we formally define the symmetric and exterior superalgebras as in \eqref{eq:sym-alt-quotients} we get the following

\begin{teo}\label{thm:supertensor-algebras}
Let $V=(V_0|V_1)$ be a supervector space. The (super)symmetric and (super)exterior superalgebras satisfy the following isomorphisms:
\[
\Sym(V_0|V_1)\cong\Sym V_0\otimes\Lambda V_1\quad\text{and}\quad\Lambda(V_0|V_1)\cong\Lambda V_0\otimes\Sym V_1
\]
\end{teo}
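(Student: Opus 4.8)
The plan is to construct the two isomorphisms by exhibiting explicit bases on both sides and showing that the twisted $S_k$-action decomposes the homogeneous tensor space according to how the even and odd generators are permuted. First I would fix a homogeneous basis $\setupla[m]{u}$ of $V_0$ and $\setupla[n]{\theta}$ of $V_1$, so that $\set{u_1,\ldots,u_m,\theta_1,\ldots,\theta_n}$ is a basis of $V=(V_0|V_1)$. A decomposable supertensor $v_1\otimes\cdots\otimes v_k$ has a well-defined \emph{type}: the subset $A\subseteq\set{1,\ldots,k}$ of positions where the factor is odd. The key observation is that the twisted action \eqref{eq:supertensor-actions-sym} (resp. \eqref{eq:supertensor-actions-alt}) does not mix the even and odd ``parts'' of a monomial: up to the sign $\sgn^-\sigma$ (resp. $\sgn^-\sigma\,\sgn\sigma$), applying $\sigma$ permutes the even factors among themselves and the odd factors among themselves after sorting $A$ and its complement. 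So the quotient $\Sym^k(V_0|V_1)=\bigotimes^k(V_0|V_1)/\rho(S_k)$ splits as a direct sum over all ways of distributing $p$ even and $q=k-p$ odd slots, and on each block the action restricted to the even factors is the \emph{unsigned} permutation action (because $\sgn^-$ only counts inversions among odd indices), whereas the action restricted to the odd factors acquires precisely the sign $\sgn$ of the induced permutation of $A$ (from the definition of $\sgn^A$ via the relative signature). That is exactly the data defining $\Sym^p V_0\otimes\Lambda^q V_1$; summing over $p,q$ and using that $\Lambda V_1$ vanishes above degree $n$ gives the first isomorphism. The second isomorphism is obtained the same way after multiplying everything by $\sgn\sigma$: the extra global sign flips the roles, so the even factors now carry the signed (alternating) action and the odd factors the unsigned (symmetric) action, yielding $\Lambda V_0\otimes\Sym V_1$.

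The steps, in order, are: (1) set up the homogeneous basis and the type decomposition $\bigotimes^k(V_0|V_1)=\bigoplus_{p+q=k}\bigoplus_{\abs{A}=q} (\text{span of monomials of type } A)$, and check this decomposition is $S_k$-stable under both actions \eqref{eq:supertensor-actions}; here it is convenient to regroup: since any monomial of type $A$ can be brought, at the cost of a sign $\sgn^-$ of the sorting permutation, to the ``sorted'' form in which all even factors precede all odd factors, the sum over $A$ of fixed cardinality $q$ is a single induced representation $\bigotimes^p V_0\otimes\bigotimes^q V_1$ with a permuted action. (2) On this block, decompose an arbitrary $\sigma\in S_k$ via the shuffle lemma already quoted: $\sigma$ restricted to the sorted even block induces a permutation $\sigma_0\in S_p$ and restricted to the sorted odd block a permutation $\sigma_1\in S_q$, and track the signs: $\sgn^-\sigma=\sgn\sigma_1$ (the inversions among odd positions) while $\sgn\sigma=\sgn\sigma_0\sgn\sigma_1\cdot(\pm1)$ where the last factor is a shuffle sign that is \emph{symmetric} in the two blocks and therefore acts trivially on the quotient. (3) Conclude that on the block the $\rho$-action is $\rho_0(\sigma_0)\otimes r_1(\sigma_1)$ in the notation of \eqref{eq:tensor-actions} applied separately to $V_0$ and $V_1$, so the quotient by $\rho(S_k)$ of the block is $\Sym^p V_0\otimes\Lambda^q V_1$; and the $r$-action, carrying the extra $\sgn\sigma$, becomes $r_0(\sigma_0)\otimes\rho_1(\sigma_1)$, whose quotient is $\Lambda^p V_0\otimes\Sym^q V_1$. (4) Assemble the graded pieces: $\Sym(V_0|V_1)=\bigoplus_k\Sym^k(V_0|V_1)\cong\bigoplus_{p,q}\Sym^p V_0\otimes\Lambda^q V_1=\Sym V_0\otimes\Lambda V_1$ and similarly for $\Lambda$. (5) Finally, verify the isomorphism is one of superalgebras, not just supervector spaces: the multiplication on $\Sym(V_0|V_1)$ is induced from concatenation of tensors, and under the sorting identification this matches the tensor product of the multiplications on $\Sym V_0$ and $\Lambda V_1$ together with the Koszul sign coming from moving odd generators past each other — but that sign is precisely what is built into $\sgn^-$, so it is absorbed. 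Alternatively, and more cheaply, one can invoke proposition \ref{prop:free-superalgebra}: $\Sym(V_0|V_1)$ is by construction the free supercommutative superalgebra on $m$ even and $n$ odd generators, and proposition \ref{prop:free-superalgebra} identifies that with $\Sym W\otimes\Lambda V$ for $\dim W=m$, $\dim V=n$; the linear-algebra argument above then serves to confirm that the formal quotient defined via \eqref{eq:supertensor-actions-sym} really is that free object (i.e. that supercommutativity holds and no further relations are forced).

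The main obstacle I expect is bookkeeping the signs in step (2): one must show carefully that the discrepancy between $\sgn^-\sigma$ and the product of the ``internal'' signs $\sgn\sigma_0,\sgn\sigma_1$ is exactly a shuffle sign depending only on the interleaving pattern of the even and odd positions, and that this shuffle sign is \emph{symmetric} — i.e. unchanged if one transposes two adjacent factors of the same parity — hence descends to $1$ on the symmetric (resp. exterior) quotient of that parity block. This is where definition \ref{def:twisted-action} (relative signature) and the uncited shuffle lemma do the real work, and it is worth writing out on a monomial with, say, pattern $(\text{even},\text{odd},\text{even},\text{odd})$ to see the cancellation explicitly before asserting it in general. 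Everything else — the type decomposition, the passage to sorted form, the assembly over $k$, and the compatibility with products — is routine once the sign lemma is in hand.
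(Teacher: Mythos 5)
Your proposal is correct, but it does considerably more work than the paper, whose entire proof is one sentence: it simply asserts that the result follows because the supersymmetric (resp. superexterior) algebra is by definition the quotient of $\bigotimes(V_0|V_1)$ by the action \eqref{eq:supertensor-actions-sym} (resp. \eqref{eq:supertensor-actions-alt}) — implicitly leaning on proposition \ref{prop:free-superalgebra}, which identifies the free supercommutative algebra on $m$ even and $n$ odd generators with $\Sym W\otimes\Lambda V$. That is exactly the ``cheaper'' route you mention at the end of step (5), so in that sense you have recovered the paper's intended argument as a footnote to your own. What your main argument adds is the content the paper omits: the decomposition of $\bigotimes^k(V_0|V_1)$ into type blocks indexed by the set $A$ of odd positions, the observation that the sum over $|A|=q$ is induced from the block subgroup $S_{A^c}\times S_A$, and the sign bookkeeping showing that on that subgroup $\sgn^-\sigma$ reduces to the internal sign $\sgn\sigma_1$ of the odd block while the mixed-pair (shuffle) contribution to $\sgn\sigma$ depends only on the coset and hence imposes no further relations — so the coinvariants of each block are exactly $\Sym^p V_0\otimes\Lambda^q V_1$ (resp. $\Lambda^p V_0\otimes\Sym^q V_1$ after the extra global $\sgn\sigma$). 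This is the honest verification that the formal quotients of \eqref{eq:sym-alt-quotients} really produce the free objects, which the paper takes for granted. One small caution: the paper's labelling of which action yields $\Sym$ and which yields $\Lambda$ is not consistent between \eqref{eq:sym-alt-quotients} and the statement of the proof, so when you write out the sign lemma of step (2) you should fix once and for all the convention that the supersymmetric algebra is the supercommutative quotient (i.e. the coinvariants of \eqref{eq:supertensor-actions-sym}); with that convention your signs come out as stated.
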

\begin{proof}
The result follows from the fact that the supersymmetric algebra is the quotient of $\bigotimes(V_0|V_1)$ by the action \eqref{eq:supertensor-actions-sym} and the superexterior algebra is obtained anihilating the action \eqref{eq:supertensor-actions-alt}. 
\end{proof}

\subsection{The exterior superalgebra}\label{subsec:exterior-superalgebra}
\noindent Let $(V|S)$ be a supervector space. We now study the superalgebra $\Lambda(V|S)^*$ of superalternating forms on $(V|S)$. Theorem \ref{thm:supertensor-algebras} above allows us to write $\omega\otimes p$ for a typical element of this superalgebra, where $\omega\in\Lambda V$ and $p\in\Sym S$. The wedge superproduct of two such elements $\omega\otimes p$ and $\tilde{\omega}\otimes\tilde{p}$ is defined naturally as $\omega\wedge\tilde{\omega}\otimes p\tilde{p}$. However we expect formulae analogous to \eqref{eq:wedge-product-permutation} and \eqref{eq:wedge-supercommutative} to prevail in this setting. For such equations to hold we need to take into account the parity of $\omega\wedge\tilde{\omega}$ only, because of the isomorphism given by theorem \ref{thm:supertensor-algebras}: in the exterior superalgebra all odd elements commute. We thus have the equality
\[
\parity{\omega\otimes p\wedge\tilde{\omega}\otimes\tilde{p}}=\parity{\omega\wedge\tilde{\omega}}
\]
so given bases $\setupla[m]{\xi}$ and $\setupla{\sigma}$ of $V$ and $S$ respectively the wedge superproduct of decomposable  elements is given by
\begin{equation}\label{eq:super-wedge}
\begin{split}
(\xi_{\mu_1}\wedge\dotsm\wedge \xi_{\mu_k}\otimes \sigma_{j_1}\dotsm \sigma_{j_r})	
										&\wedge(\xi_{\mu_1}\wedge\dotsm\wedge \xi_{\mu_{\tilde k}}\otimes \sigma_{j_1}\dotsm \sigma_{j_{\tilde r}})=\\
(\xi_{\mu_1}\wedge\dotsm\wedge \xi_{\nu_k}\wedge \xi_{\nu_1}\wedge\dotsm\wedge \xi_{\mu_{\tilde k}})
										&\otimes (\sigma_{j_1}\dotsm \sigma_{j_r}\cdot \sigma_{l_1}\dotsm \sigma_{l_{\tilde r}}).
\end{split}
\end{equation}
The action of the symmetric group on elements like the above is
\begin{equation}\label{eq:action-superalternating}
\tau(\xi_{\mu_1}\wedge\dotsm\wedge \xi_{\mu_k}\otimes \sigma_{j_1}\dotsm \sigma_{j_r})={\sgn\widehat{\tau}}\cdot \xi_{\mu_{\inv{\tau}(1)}}\wedge\dotsm\wedge \xi_{\mu_{\inv{\tau}(k)}}\otimes \sigma_{j_{\inv{\tau}(1)}}\dotsm \sigma_{j_{\inv{\tau}(1)}}
\end{equation}
where $\widehat{\tau}$ denotes the class of $\tau$ on the group $S_{k+r}/S_r$, since the effect of $\tau$ on the indices $\listupla[r]{j}$ carries no sign to the result.

Now let $x\in(V|S)$ be a homogeneous vector. The operator $x\ins$ should also be superderivations of $\Lambda(V|S)$ like in the purely even case. This means that equation \eqref{eq:inserting-antiderivation} should become
\begin{equation}\label{eq:ins-superderivation}
x\lrcorner(\alpha\wedge\beta)=(x\lrcorner\alpha)\wedge\beta+(-1)^{\parity{x}\parity{\alpha}}\alpha\wedge(x\lrcorner\beta).
\end{equation}
To see that this is the case we use equation \eqref{eq:super-wedge} and dual bases to compute $x\lrcorner$ in these bases. So if $x$ is purely even then it is a linear combination of $\setupla[m]{v}$ and if it is purely odd it is in the span of $\setupla{s}$, the dual bases corresponding to $\setupla[m]{\xi}$ and $\setupla{\sigma}$ respectively. If $x$ is purely even equation above is just equation \eqref{eq:inserting-antiderivation}; on the other hand, if $x$ is purely odd then the operator $x\ins$ is an odd superderivation due to theorem \ref{thm:superderivations} and equation \eqref{eq:ins-superderivation} follows. 

\chapter{Smooth supermanifolds}\label{chap:superfolds}
\noindent This chapter is devoted to developing a theory of smooth supermanifolds closer to classical differential geometry, in a sense to be specified throughout the chapter. All our manifolds are assumed to be connected, Hausdorff and smooth (of class $\C$).

\section{Supermanifolds as superalgebra bundles}
\noindent Let $(V|\mathbf{S})$ (cf. remark \ref{remark:notation}) be a supervector space of finite dimension. Whatever the term \emph{smooth superfunction} would denote, it's clear that at least the polynomials in $V$ should be considered as smooth. The algebra of super-polynomials $\Pol^\bullet(V|\mathbf{S})$ should include both even and odd polynomial mappings. Recalling that, under the choice of a basis, the algebra of polynomials $\Pol{V}$ can be identified with the symmetric algebra $\Sym V^*$, we then have the isomorphism
\begin{equation}\label{eq:superpol}
\Pol^\bullet (V|\mathbf{S})\cong \Sym^\bullet (V|\mathbf{S})^*\cong\Sym V^*\otimes \Lambda \mathbf{S}^*
\end{equation}
by theorem \ref{thm:supertensor-algebras} and this algebra should be a subalgebra of that of smooth superfunctions. The space $\C(V,\Lambda \mathbf{S}^*)$ naturally contains $\Pol^\bullet (V|\mathbf{S})$ and therefore is a good candidate for being the algebra of smooth superfunctions of the supervector space $(V|\mathbf{S})$. This is the reason why, in what follows, we work with $\mathbf{S}^*$ instead of $\mathbf{S}$. 

Because of equation \eqref{eq:superpol} we can define a vector supermanifold to be the pair $(V,V\times\Lambda \mathbf{S}^*)$ and take this as the local model for our definition of supermanifolds.

\begin{defn}\label{def:superfold}
A \textbf{smooth supermanifold of superdimension $(m|n)$} is a pair $(M|\mathcal{R}M)$ consisting of a smooth manifold $M$ of dimension $m$ and a superalgebra bundle $\mathcal{R}M$ over $M$ such that for each point $p$ in $M$ the algebra $\mathcal{R}_pM$ is a free supercommutative superalgebra of finite rank $n$. The sections $\sect{\mathcal{R}M}$ are the \textbf{superfunctions} on the supermanifold $\superfold{M}$. This is an infinite-dimensional supercommutative algebra. 
\end{defn}

A basic example is the supermanifold $(M|\Lambda T^* M)$ whose superfunctions are the exterior differential forms on $M$.

\subsection{Remarks on our definition}
A few remarks are in order. Firstly, our definition is not to be confused with a classical theorem due to Marjorie Batchelor (cf. \cite{batchelor}) which states that every supermanifold is isomorphic to the exterior bundle of a vector bundle over the underlying manifold $M$. As we saw in chapter 1, every free supercommutative superalgebra of finite rank $n$ is (non-naturally) isomorphic to the exterior algebra of some vector space of dimension $n$. The isomorphism depends on two choices: a set of odd generators $\setupla{\xi}$ of the superalgebra and a basis $\setupla{v}$ of $V$. In our definition, the fibre at each point is a free supercommutative superalgebra of rank $n$, and later on (corollary \ref{cor:batchelors}) we shall prove that the Batchelor isomorphism is equivalent to the choice of a splitting map.

Another fact to note is that virtually every construction involving vector bundles can be carried out in the smooth setting using partitions of unity; furthermore, since in the sheaf-theoretic approach to supermanifolds the sine qua non conditions are that the structure sheaf $\mathcal{O}_M$ (see definition \ref{def:ag-superfolds}) be coherent and of constant rank over the sheaf of smooth functions --which, incidentaly, is already the sheaf of sections of a (trivial) vector bundle-- the definition above is adequate for the smooth setting.

The third and final remark is that both approaches are equivalent. Theorems \ref{thm:sheaf=bundle} and \ref{thm:equivalent} prove that to every sheaf-theoretic supermanifold we can associate a smooth supermanifold in the sense of the definition above and that to every morphism in the sheaf-theoretic sense we can associate a morphism in our sense.

\subsection{Bundles associated to a smooth supermanifold}\label{subsect:moar-bundles}
\noindent Let $\superfold{M}$ be of superdimension $(m|n)$. For each point $p$ in $M$ the fibre $\mathcal{R}_p M$ is a free supercommutative algebra of rank $n$; as such it has a unique maximal ideal, denoted
\begin{equation}\label{eq:maximal-ideal-fibre}
\mathcal{R}^{\geq 1}_p M:=\set{\eta\in\mathcal{R}_p M | \varepsilon_p(\eta)=0}
\end{equation}
where $\varepsilon_p:\mathcal{R}_p M\to\R$ is the augmentation map of the algebra $\mathcal{R}_p M$. Also, this algebra is filtered
\begin{equation}
\mathcal{R}_p M=\bigcup_{k=0}^{n } \mathcal{R}^{\geq k}_p M
\end{equation}
as in \eqref{eq:filtration}.

\begin{defn}\label{def:odd-directions}
Let $\superfold{M}$ be a supermanifold of dimension $(m|n)$ and let $p\in M$. The vector space
\[
\mathbf{S}^*_p M:=\mathcal{R}^{\geq 1}_p M\big{/}\mathcal{R}^{\geq 2}_p M
\]
is the \textbf{space of odd codirections at the point $p$}. Its dual $\textbf{S}_p M$ is the space of \textbf{odd directions} at $p$.
\end{defn}

Now we proceed as differential geometers and define the bundles $\mathcal{R}^{\geq k}M$, with $k\geq 0$ an integer, the \textbf{nilpotent bundle} of $\superfold{M}$ being the case $k=1$. The sections of each of these vector bundles are nilpotent superfunctions on $\superfold{M}$

\begin{prop}
Let $\superfold{M}$ be a supermanifold. The algebra $\sect{\mathcal{R}M}$ of smooth superfuncions is a filtered algebra:
\begin{equation}\label{eq:superfunctions-filtration}
\sect{\mathcal{R}^{\geq 0}M}\supset\sect{\mathcal{R}^{\geq 1}M}\supset\cdots\sect{\mathcal{R}^{\geq n-1 }M}\supset\sect{\mathcal{R}^{\geq n}M}\supset\set{0}
\end{equation}
\end{prop}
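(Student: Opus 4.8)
The plan is to derive everything fibrewise from the filtration \eqref{eq:filtration} of an abstract exterior algebra, exploiting that $\mathcal{R}M$ is a superalgebra bundle and that each $\mathcal{R}^{\geq k}M$ is a genuine vector subbundle of $\mathcal{R}M$.

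First I would recall that, by Definition \ref{def:superfold} and Corollary \ref{cor:free-superalgebra}, every fibre $\mathcal{R}_p M$ is a free supercommutative superalgebra of rank $n$, hence isomorphic to some $\Lambda V$ with $\dim V=n$, and therefore carries the canonical filtration by the ideals $\mathcal{R}^{\geq k}_p M=(\mathcal{R}^{\geq 1}_p M)^k$ exactly as in \eqref{eq:filtration}, with $\mathcal{R}^{\geq 0}_p M=\mathcal{R}_p M$ and $\mathcal{R}^{\geq k}_p M=\set{0}$ for $k>n$. Since $\dim\Lambda^{\geq k}V$ depends only on $n$ and $k$, the family $p\mapsto\mathcal{R}^{\geq k}_p M$ has constant rank; combined with the fact (already used when the bundles $\mathcal{R}^{\geq k}M$ were introduced: local trivialisations of $\mathcal{R}M$ may be taken compatibly with the augmentation map, hence carry the fixed subspaces $\mathcal{R}^{\geq k}$ of a model fibre to one another) this makes each $\mathcal{R}^{\geq k}M$ a vector subbundle, and the fibrewise containments $\mathcal{R}^{\geq k+1}_p M\subseteq\mathcal{R}^{\geq k}_p M$ upgrade to inclusions of subbundles.

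Taking sections is functorial for inclusions of subbundles, so the fibrewise chain immediately yields
\[
\sect{\mathcal{R}M}=\sect{\mathcal{R}^{\geq 0}M}\supseteq\sect{\mathcal{R}^{\geq 1}M}\supseteq\cdots\supseteq\sect{\mathcal{R}^{\geq n}M}\supseteq\set{0}.
\]
To see the inclusions are proper I would work locally: over a trivialising neighbourhood $U$ with an odd frame $\xi_1,\dots,\xi_n$ of $\mathcal{R}M$, the section $\xi_1\wedge\cdots\wedge\xi_k$ lies in $\sect[U]{\mathcal{R}^{\geq k}M}$ but has value nowhere in $\mathcal{R}^{\geq k+1}$; multiplying by a bump function supported in $U$ produces a global section in $\sect{\mathcal{R}^{\geq k}M}\setminus\sect{\mathcal{R}^{\geq k+1}M}$ for $0\le k<n$, while $\mathcal{R}^{\geq n}M$ is a nonzero line bundle, so $\sect{\mathcal{R}^{\geq n}M}\supsetneq\set{0}$. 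Finally, for the \emph{algebra} filtration property, the bundle multiplication is fibrewise the multiplication of $\mathcal{R}_p M$, which satisfies $\mathcal{R}^{\geq k}_p M\cdot\mathcal{R}^{\geq\ell}_p M\subseteq\mathcal{R}^{\geq k+\ell}_p M$ by the very description $\mathcal{R}^{\geq k}_p M=(\mathcal{R}^{\geq 1}_p M)^k$; hence for $f\in\sect{\mathcal{R}^{\geq k}M}$, $g\in\sect{\mathcal{R}^{\geq\ell}M}$ one has $(fg)(p)=f(p)\cdot g(p)\in\mathcal{R}^{\geq k+\ell}_p M$ for every $p$, i.e. $fg\in\sect{\mathcal{R}^{\geq k+\ell}M}$.

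The only genuinely delicate point is the subbundle claim — that $p\mapsto\mathcal{R}^{\geq k}_p M$ is locally trivial rather than a merely fibrewise-defined family — but since the bundles $\mathcal{R}^{\geq k}M$ have already been constructed this may simply be invoked; everything else is bookkeeping, the mildest part being the production of local sections witnessing strictness together with a partition of unity.
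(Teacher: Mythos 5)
Your proposal is correct and follows essentially the same route as the paper: trivialise $\mathcal{R}M$ locally, pull the filtration \eqref{eq:filtration} of the model fibre $\Lambda\mathbf{S}^*$ back to the local sections (the compatibility of transition functions with the filtration being exactly corollary \ref{cor:filtration-preservation}), and globalise with bump functions. You add some worthwhile bookkeeping the paper leaves implicit — strictness of the inclusions and the multiplicativity $\sect{\mathcal{R}^{\geq k}M}\cdot\sect{\mathcal{R}^{\geq\ell}M}\subseteq\sect{\mathcal{R}^{\geq k+\ell}M}$ — but the underlying argument is the same.
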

\begin{proof}
On a trivializing neighbourhood $U\subseteq M$ the bundle $\mathcal{R}U$ is isomorphic to $U\times\Lambda \mathbf{S}^*$, so 
\[
\C(U,\Lambda \mathbf{S}^*)=\bigcup_{k\geq 0}\C(U,\Lambda^{\geq k}\mathbf{S}^*)
\]
and this decomposition extends globally by a partition of unity argument.
\end{proof}

Next we define the vector bundles
\begin{equation}\label{eq:odd-directions}
\mathbf{S}M=\bigsqcup_{p\in M} \mathbf{S}_p M
\end{equation} 
and 
\begin{equation}\label{eq:odd-codirections}
\mathbf{S}^* M = \bigsqcup_{p\in M} \mathbf{S}^*_p M
\end{equation}
of \textbf{odd directions} and \textbf{odd codirections} respectively. These will play an important role when we define the tangent superbundle of a supermanifold. These are all smooth vector bundles over $M$. Notice that for each fibre $\mathcal{R}_p M$ the augmentation map $\varepsilon_p:\mathcal{R}_p M\to\R$ is a unital homomorphism of superalgebras, so we get another such morphism $\varepsilon_{M}:\sect{\mathcal{R}M}\to\C(M)$ which we also call an augmentation map.

\begin{nota}\label{remark:odd-and-even}
An important distinction arises when considering the $\z$-grading of the algebra $\mathcal{R}_pM$: the even elements, which we denote by $\mathcal{R}_{+,p} M$, and the odd elements, denoted by $\mathcal{R}_{-,p} M$. Accordingly we get the bundles $\mathcal{R}_+ M$ and $\mathcal{R}_- M$ and their sections are called, respectively, the \textbf{even} and \textbf{odd} superfuncions of $\superfold{M}$. 
\end{nota}

\subsection{Morphisms}
\noindent Let $(M,\mathcal{O}_M)$ and $(N,\mathcal{O}_N)$ be ringed spaces. Recall that a morphism between them is defined to be a pair of maps $(\phi,\phi^\#)$, such that $\phi:M\to N$ is continuous and for each open set $U\subseteq N$ the \emph{localized map} $\phi^\#:\mathcal{O}_U\to\mathcal{O}_{\inv{\phi}(U)}$ is a morphism of rings (cf. \cite[section 2.3(b)]{ueno}). In the case of supermanifolds, the morphism $\phi^\#$ is required to be a unital morphism of supercommutative algebras. This forces $\phi^\#$ to be even.

To make a differential--geometric sense out of this definition, let us recall a well-known fact about the algebra $\C(M)$ of smooth functions of a smooth manifold $M$. For a proof we refer the reader to \cite[chapter 4]{jet}.

\begin{lema}\label{lema:smooth-algebras}
Let $\phi:M\to N$ be a smooth map. Then the map
\[
\begin{split}
\phi^*\colon\C(N)	&\to\C(M)\\
				f	&\mapsto f\circ\phi
\end{split}
\]
is a unital homomorphism of associative algebras.
\end{lema}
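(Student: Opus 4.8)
The final statement to prove is Lemma \ref{lema:smooth-algebras}: if $\phi\colon M\to N$ is smooth, then $\phi^*\colon\C(N)\to\C(M)$, $f\mapsto f\circ\phi$, is a unital homomorphism of associative algebras.

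The plan is to verify the three algebraic conditions directly from the definitions, using that composition of smooth maps is smooth so that $\phi^*$ actually lands in $\C(M)$. First I would note that for $f\in\C(N)$ the composite $f\circ\phi$ is again $\C$, since $\phi$ and $f$ are both smooth; hence $\phi^*$ is well-defined as a map $\C(N)\to\C(M)$. Next I would check linearity: for $f,g\in\C(N)$ and scalars $a,b\in\R$ we have $(af+bg)\circ\phi = a(f\circ\phi)+b(g\circ\phi)$ pointwise, because at each $x\in M$ both sides evaluate to $af(\phi(x))+bg(\phi(x))$; so $\phi^*(af+bg)=a\phi^*f+b\phi^*g$.

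Then I would check multiplicativity and unitality. For the product, $(fg)\circ\phi$ and $(f\circ\phi)(g\circ\phi)$ agree pointwise: at $x\in M$ the left side is $(fg)(\phi(x)) = f(\phi(x))g(\phi(x))$, which is exactly the value of the right side at $x$; hence $\phi^*(fg)=\phi^*(f)\,\phi^*(g)$. For unitality, the constant function $1\in\C(N)$ satisfies $1\circ\phi = 1\in\C(M)$, so $\phi^*(1)=1$. Altogether $\phi^*$ preserves sums, scalar multiples, products, and the unit, so it is a unital homomorphism of associative $\R$-algebras.

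There is essentially no obstacle here: the content is entirely the pointwise evaluation identities together with the standard fact that compositions of smooth maps are smooth, which is precisely what is cited to \cite[chapter 4]{jet}. The only thing worth emphasizing is that one does not even need $\phi$ to be smooth for the algebraic identities — continuity would already make $\phi^*$ an algebra map $\mathcal{C}^0(N)\to\mathcal{C}^0(M)$ — but smoothness of $\phi$ is what guarantees that $\phi^*$ restricts to a map between the algebras of \emph{smooth} functions. I would present this as a short paragraph rather than a displayed calculation, since every step is a one-line pointwise check.
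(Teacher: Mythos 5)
Your proof is correct and complete: the pointwise verification of well-definedness, linearity, multiplicativity, and unitality is exactly the standard argument, and the paper itself offers no proof but simply defers to \cite[chapter 4]{jet} for this routine fact. Nothing is missing, and your remark that only the well-definedness step uses smoothness of $\phi$ is a fair observation.
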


Our definition of a supersmooth map takes the above morphism into account. We first consider a special case: let $\mathcal{R}M=M\times\Lambda V^*$ be a trivial bundle and consider the corresponding supermanifold $(M|M\times\Lambda V^*)$. The sheaf of sections is nothing but $\C(M,\Lambda V^*)$. A smooth superfunction on the supermanifold in question is expressed as
\begin{equation}
f=f_0+\text{nilpotent part}
\end{equation}
where $f_0$ is a smooth function on $M$ and so we get the inclusion $\iota:\C(M)\to\C(M,\Lambda V^*)$. If $(N|N\times\Lambda W^*)$ is another such supermanifold then, associated to a smooth map $\phi:M\to N$, we must obtain an even morphism of supercommutative algebras $\Phi:\C(N,\Lambda W^*)\to\C(M,\Lambda V^*)$ such that the diagram
\begin{equation}\label{eq:diagrama1}
\xymatrix{
\C(N,\Lambda W^*)\ar[rr]_{\varepsilon_N} \ar[d]^{\Phi} && \C(N) \ar@/_1pc/[ll]_{\iota} \ar[d]^{\phi^*}\\
\C(M, \Lambda V^*)  \ar[rr]^{\varepsilon_M} && \C(M) \ar@/^1pc/[ll]^{\iota}
}
\end{equation}
commutes, where $\varepsilon_M$ and $\varepsilon_N$ are the augmentation maps, and $\iota_M$ and $\iota_N$ the corresponding inclusions. Now let $f$ and $g$ be smooth functions on $N$ and let $\eta$ be an element of $\C(N,\Lambda W^*)$; considering the twisted commutator
\begin{equation}\label{eq:commutator-supersmooth-functions}
[\Phi\empty_{\phi}f](\eta):=\Phi(f\eta)-(f\circ\phi)\Phi(\eta)
\end{equation}
we get
\begin{equation}\label{eq:commutator-supersmooth-functions-2}
[\Phi\empty_{\phi}f](\eta)=(\Phi(f)-f\circ\phi)\Phi(\eta)
\end{equation}
because $\Phi$ is an algebra morphism; moreover, since $\Phi$ is even it preserves the $\z$-grading and therefore the term $\Phi(f)-f\circ\phi$ is an even superfunction on $N$. Considering the diagram \eqref{eq:diagrama1} we see that $\varepsilon\circ\Phi=\phi^*$ and therefore the term $\Phi(f)-f\circ\phi=\Phi(f)-\phi^*(f)$ is nilpotent. Therefore, iterating this commutator we get zero after finitely many iterations. If $q$ is the dimension of $W$ then, putting $k=\lfloor\frac{q}{2}\rfloor$ (integer part of $\frac{q}{2}$), it is manifest that $[\cdots,[[\Phi\empty_{\phi}f_0]\empty_{\phi}f_1]\empty_{\phi}\cdots\empty_{\phi}f_k]\equiv 0$ for any $k+1$ smooth functions on $N$. 

Now, given two smooth supermanifolds $\superfold{M}$ and $\superfold{N}$ of superdimensions $(m|p)$ and $(n|q)$ respectively, the above constructions are valid on trivializing neighbourhoods of the corresponding bundles $\mathcal{R}M$ and $\mathcal{R}N$. Using a partition of unity we arrive at the same results: we have a commutative diagram
\begin{equation}\label{eq:diagrama2}
\xymatrix{
\sect{\mathcal{R}N}\ar[rr]_{\varepsilon_N} \ar[d]^{\Phi} && \C(N)  \ar[d]^{\phi^*}\\
\sect{\mathcal{R}M}\ar[rr]^{\varepsilon_M} && \C(M) 
}
\end{equation}

Note that the inclusions are not considered in this diagram. Summarizing the considerations above we have
\begin{prop}\label{prop:supermaps-are-diffops}
Let $\superfold{M}$ and $\superfold{N}$ be smooth supermanifolds and $(\phi,\Phi):\superfold{M}\to\superfold{N}$ a supersmooth map. Then the map $\Phi:\sect{\mathcal{R}N}\to\sect{\mathcal{R}M}$ is a linear differential operator along the smooth map $\phi:M\to N$ of order at most $k=\lfloor\frac{\rank \mathcal{R}N}{2}\rfloor$.
\end{prop}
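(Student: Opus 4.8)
The plan is to reduce the global statement to the local model via a partition of unity and then argue that the iterated twisted commutator $[\cdots,[[\Phi\empty_{\phi}f_0]\empty_{\phi}f_1]\empty_{\phi}\cdots\empty_{\phi}f_k]$ vanishes identically, which is precisely the Peetre-type characterization of a linear differential operator of order $\leq k$ along $\phi$ (here I would invoke the appendix on linear differential operators). So the first step is to recall that definition: $\Phi\colon\sect{\mathcal{R}N}\to\sect{\mathcal{R}M}$ is a differential operator of order $\leq k$ along $\phi$ if and only if for all $f_0,\ldots,f_k\in\C(N)$ the operator obtained by iterating the commutator \eqref{eq:commutator-supersmooth-functions} with these functions is zero; the commutator is taken with respect to the $\phi$-twisted module structure, i.e. $f\in\C(N)$ acts on $\sect{\mathcal{R}N}$ by ordinary multiplication and on $\sect{\mathcal{R}M}$ by multiplication by $\phi^*f=f\circ\phi$.

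Second, I would establish the single-commutator identity \eqref{eq:commutator-supersmooth-functions-2}: since $\Phi$ is a unital algebra morphism, $[\Phi\empty_{\phi}f](\eta)=\Phi(f\eta)-(f\circ\phi)\Phi(\eta)=\bigl(\Phi(f)-f\circ\phi\bigr)\Phi(\eta)$, so commuting with $f$ amounts to left multiplication (in $\sect{\mathcal{R}M}$) by the superfunction $c_f:=\Phi(f)-\phi^*f$. Third, and this is the crux, I would show $c_f$ is a \emph{nilpotent} even superfunction: evenness is because $\Phi$ is even and $f,\phi^*f$ are even, hence $\Phi(f)-\phi^*f\in\sect{\mathcal{R}_+M}$; nilpotence follows from the commutativity of diagram \eqref{eq:diagrama2}, which gives $\varepsilon_M\circ\Phi=\phi^*\circ\varepsilon_N$, so $\varepsilon_M(\Phi(f))=\phi^*(\varepsilon_N(f))=\phi^*f$ (using remark \ref{remark:epsilon-unital} that $\varepsilon_N$ is the identity on scalars / fixes $f\in\C(N)$ viewed inside $\sect{\mathcal{R}N}$), whence $\varepsilon_M(c_f)=0$, i.e. $c_f$ lies in the nilpotent ideal $\sect{\mathcal{R}^{\geq 1}M}$ fibrewise by \eqref{eq:maximal-ideal-fibre}.

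Fourth, iterating: $[\cdots[[\Phi\empty_{\phi}f_0]\empty_{\phi}f_1]\cdots\empty_{\phi}f_j](\eta)=c_{f_0}c_{f_1}\cdots c_{f_j}\,\Phi(\eta)$, a product of $j+1$ sections of the nilpotent ideal $\sect{\mathcal{R}^{\geq 1}M}$. Since each fibre $\mathcal{R}_pM$ has rank $n=\rank\mathcal{R}N$, its nilpotent ideal satisfies $(\mathcal{R}^{\geq 1}_pM)^{\ell}=0$ once $\ell>n$; more sharply, because $c_{f_i}$ are all even they lie in $\mathcal{R}^{\geq 2}_pM$ (the even part of the maximal ideal starts in degree $2$ under the Batchelor identification $\mathcal{R}_pM\cong\Lambda\mathbf{S}^*_pM$, cf. the $\z$-grading $\Lambda_+V=\bigoplus\Lambda^{2k}V$), so a product of $k+1=\lfloor n/2\rfloor+1$ of them lands in $\mathcal{R}^{\geq 2(k+1)}_pM=\mathcal{R}^{\geq 2\lfloor n/2\rfloor+2}_pM=0$. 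Hence the $(k+1)$-fold commutator vanishes identically with $k=\lfloor\tfrac{\rank\mathcal{R}N}{2}\rfloor$. The main obstacle, and the point deserving care, is the globalization: the preceding computation is transparent on a trivializing neighbourhood where $\mathcal{R}M\cong U\times\Lambda V^*$ and $\mathcal{R}N\cong U'\times\Lambda W^*$, but one must check that ``differential operator of order $\leq k$ along $\phi$'' is a local property so that a partition of unity on $M$ (subordinate to a cover by trivializing neighbourhoods whose images under $\phi$ are trivializing in $N$) patches the local conclusions; this is standard for differential operators (the commutator condition is $\C$-linear and local in the source), and I would cite the appendix for the precise statement. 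A minor additional point is to record that $\Phi$ is $\C(N)$-linear up to order $k$ in a way compatible with $\phi$, i.e. that it is genuinely an operator ``along $\phi$'' rather than over a fixed base, which is exactly what the twisted commutator $[\,\cdot\,\empty_\phi\,\cdot\,]$ encodes.
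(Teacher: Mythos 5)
Your proposal is correct and follows essentially the same route as the paper: rewrite the twisted commutator as left multiplication by $c_f=\Phi(f)-\phi^*f$, observe that $c_f$ is even and nilpotent (hence of filtration degree $\geq 2$), and conclude that the $(k+1)$-fold iterated commutator lands in $\mathcal{R}^{\geq 2(k+1)}M=0$, globalizing from trivializing neighbourhoods by a partition of unity. You in fact make explicit the step the paper leaves as ``manifest'' (that evenness forces $c_f$ into degree $\geq 2$, which is where the factor $\tfrac{1}{2}$ in the order bound comes from), so nothing further is needed.
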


We refer to appendix \ref{app:diff-ops} for the definition of a linear differential operator along a smooth map (definition \ref{def:dif-op-along}).

Another important property of supersmooth maps is the following:

\begin{prop}\label{prop:preserve-filtration}
If $(\phi|\Phi):\superfold{M}\to\superfold{N}$ is a supersmooth map then
\[
\Phi\left(\sect{\mathcal{R}^{\geq k}N}\right)\subseteq \sect{\mathcal{R}^{\geq k} M}
\]
for all non-negative integers $k$. That is, supermanifold morphisms preserve the filtration \eqref{eq:superfunctions-filtration}.
\end{prop}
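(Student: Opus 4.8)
The strategy is to reduce to the local (trivial-bundle) situation, exactly as in the derivation of Proposition~\ref{prop:supermaps-are-diffops}, and there exploit the fact that $\Phi$ is a \emph{unital} morphism of supercommutative algebras which (by diagram~\eqref{eq:diagrama2}) satisfies $\varepsilon_M\circ\Phi=\phi^*\circ\varepsilon_N$. First I would recall that the filtration on $\sect{\mathcal{R}N}$ is, over a trivializing neighbourhood $U\subseteq N$, just the pointwise filtration $\C(U,\Lambda^{\geq k}\mathbf{S}^*)$ coming from powers of the nilpotent ideal in each fibre, and that $\mathcal{R}^{\geq k}N$ is the $k$-th power of the nilpotent bundle $\mathcal{R}^{\geq 1}N$. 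So it suffices to prove the two facts: (i) $\Phi\bigl(\sect{\mathcal{R}^{\geq 1}N}\bigr)\subseteq\sect{\mathcal{R}^{\geq 1}M}$, and (ii) $\Phi$ is multiplicative, whence $\Phi\bigl((\sect{\mathcal{R}^{\geq 1}N})^{k}\bigr)\subseteq(\sect{\mathcal{R}^{\geq 1}M})^{k}$, and the claim for general $k$ follows by induction from the case $k=1$.

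For step (i): a section $\eta\in\sect{\mathcal{R}^{\geq 1}N}$ is precisely one with $\varepsilon_N(\eta)=0$, since the nilpotent bundle is fibrewise the kernel of the augmentation (see~\eqref{eq:maximal-ideal-fibre}). Applying the commuting square~\eqref{eq:diagrama2} gives $\varepsilon_M(\Phi(\eta))=\phi^*(\varepsilon_N(\eta))=\phi^*(0)=0$, so $\Phi(\eta)\in\ker\varepsilon_M=\sect{\mathcal{R}^{\geq 1}M}$. This settles the base case.

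For step (ii) and the inductive step: write a section $\sigma\in\sect{\mathcal{R}^{\geq k}N}$, over a trivializing neighbourhood, as a finite sum of products $\eta_1\cdots\eta_k$ with each $\eta_i\in\sect{\mathcal{R}^{\geq 1}N}$ (this is exactly what ``$k$-th power of the nilpotent ideal'' means, fibrewise and then globally via a partition of unity). Since $\Phi$ is an algebra morphism, $\Phi(\eta_1\cdots\eta_k)=\Phi(\eta_1)\cdots\Phi(\eta_k)$, and by step (i) each factor $\Phi(\eta_i)$ lies in $\sect{\mathcal{R}^{\geq 1}M}$; hence the product lies in $(\sect{\mathcal{R}^{\geq 1}M})^{k}=\sect{\mathcal{R}^{\geq k}M}$. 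Summing over the finitely many terms, $\Phi(\sigma)\in\sect{\mathcal{R}^{\geq k}M}$. The partition-of-unity passage from the local to the global statement is the same one already used repeatedly in this section, and I would invoke it rather than repeat it.

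\textbf{Main obstacle.} The only genuinely delicate point is bookkeeping around the local-to-global reduction: one must check that the filtration $\sect{\mathcal{R}^{\geq k}M}$ really is the $\C(M)$-module generated by $k$-fold products of sections of the nilpotent bundle, so that ``$\sigma$ is a sum of $k$-fold products of degree-$\geq 1$ sections'' is legitimate globally and not merely fibrewise; this is where a partition of unity subordinate to a trivializing cover is needed, together with the observation (from Remark~\ref{remark:epsilon-unital} and its bundle analogue) that multiplying by a smooth function of $M$ does not disturb membership in $\sect{\mathcal{R}^{\geq k}M}$ since $\sect{\mathcal{R}^{\geq k}M}$ is an ideal in $\sect{\mathcal{R}M}$ that contains the constants' complement. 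Everything else is a formal consequence of $\Phi$ being a unital algebra morphism intertwining the augmentations.
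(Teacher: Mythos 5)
Your proposal is correct and follows essentially the same route as the paper's proof: establish the case $k=1$ and then use multiplicativity of $\Phi$ to propagate to all powers of the nilpotent ideal. The only (harmless) difference is that you justify the base case via the intertwining relation $\varepsilon_M\circ\Phi=\phi^*\circ\varepsilon_N$, whereas the paper appeals directly to $\Phi$ preserving nilpotency; your version is if anything slightly more explicit.
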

\begin{proof}
Since $\Phi$ is an even superalgebra morphism it preserves the nilpotency of superfunctions, so $\Phi(\sect{\mathcal{R}^{\geq 1}N})\subseteq\sect{\mathcal{R}^{\geq 1}M}$; also, algebra morphisms preserve powers so the result follows.
\end{proof}

\begin{cor}\label{cor:preserve-filtration}
The morphism $\Phi:\sect{\mathcal{R}N}\to\sect{\mathcal{R}M}$ defines bundle morphisms 
$$\Phi^k:\mathcal{R}^{\geq k}N/\mathcal{R}^{\geq k+1}N\to\mathcal{R}^{\geq k}M/\mathcal{R}^{\geq k+1}M$$
by $\Phi^k(x+\mathcal{R}^{\geq k+1}_p M)=\Phi(x)\mod\Gamma(\mathcal{R}^{\geq k+1}M)$ for each $k\geq 0$.
\end{cor}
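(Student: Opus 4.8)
The plan is to derive Corollary~\ref{cor:preserve-filtration} directly from Proposition~\ref{prop:preserve-filtration}, so the only real work is checking well-definedness of the fibrewise maps and their smoothness as bundle morphisms. First I would note that, by Proposition~\ref{prop:preserve-filtration}, the morphism $\Phi$ restricts to maps $\sect{\mathcal{R}^{\geq k}N}\to\sect{\mathcal{R}^{\geq k}M}$ and also $\sect{\mathcal{R}^{\geq k+1}N}\to\sect{\mathcal{R}^{\geq k+1}M}$; hence $\Phi$ descends to a linear map on the quotient sheaves $\sect{\mathcal{R}^{\geq k}N}/\sect{\mathcal{R}^{\geq k+1}N}\to\sect{\mathcal{R}^{\geq k}M}/\sect{\mathcal{R}^{\geq k+1}M}$. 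The subtlety is that Proposition~\ref{prop:supermaps-are-diffops} tells us $\Phi$ is a differential operator along $\phi$, not a bundle map, so a value at a point $p\in M$ genuinely depends on a jet of the section near $p$, not just on its value at $p$; thus one cannot naively define $\Phi^k$ pointwise. The way around this is the standard fact that a differential operator along $\phi$ drops to an \emph{algebra} morphism on the associated graded, and on the associated graded the higher-order (derivative) terms are killed because they lower filtration degree.

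Concretely, the key steps are: (i) fix $p\in M$ and $q=\phi(p)$, pick a trivializing neighbourhood and local odd generators so that $\mathcal{R}^{\geq k}_q N/\mathcal{R}^{\geq k+1}_q N$ is identified with $\Lambda^k \mathbf{S}^*_q N$; (ii) given $x\in\mathcal{R}^{\geq k}_q N$ choose any section $\tilde x\in\sect{\mathcal{R}^{\geq k}N}$ with $\tilde x(q)=x$ and set $\Phi^k([x]):=[\Phi(\tilde x)(p)]\in\mathcal{R}^{\geq k}_p M/\mathcal{R}^{\geq k+1}_p M$; (iii) show this is independent of the choice of $\tilde x$. For (iii) the crucial observation is: if $\tilde x\in\sect{\mathcal{R}^{\geq k}N}$ and $\tilde x(q)=0$, then $\tilde x=\sum_i f_i\, s_i$ locally with $f_i\in\C(N)$ vanishing at $q$ and $s_i\in\sect{\mathcal{R}^{\geq k}N}$; applying the twisted Leibniz identity~\eqref{eq:commutator-supersmooth-functions}–\eqref{eq:commutator-supersmooth-functions-2} repeatedly, $\Phi(f_i s_i)=(f_i\circ\phi)\,\Phi(s_i)+[\Phi\empty_{\phi}f_i](s_i)$, where the first term vanishes at $p$ since $f_i(q)=0$, and the second term is nilpotent of filtration degree $\geq k+1$ by (the proof of) Proposition~\ref{prop:preserve-filtration} together with the fact that $\Phi(f_i)-\phi^*f_i$ is a nilpotent superfunction. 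Hence $\Phi(\tilde x)(p)\in\mathcal{R}^{\geq k+1}_p M$, so $[\Phi(\tilde x)(p)]=0$ and $\Phi^k$ is well-defined; linearity is then immediate, and by a partition-of-unity argument the local construction patches to a global bundle morphism.

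Smoothness is checked by the same trivialization: over a common trivializing neighbourhood, choosing a smooth local frame of $\mathbf{S}^*N$ gives a smooth local frame of $\mathcal{R}^{\geq k}N/\mathcal{R}^{\geq k+1}N$ by taking wedge monomials, one lifts each frame section to an honest section of $\mathcal{R}^{\geq k}N$, applies the differential operator $\Phi$ (which sends smooth sections to smooth sections), and reads off the component in $\mathcal{R}^{\geq k}M/\mathcal{R}^{\geq k+1}M$; the resulting matrix entries are smooth functions, so $\Phi^k$ is a smooth bundle morphism. This yields the stated formula $\Phi^k(x+\mathcal{R}^{\geq k+1}_p M)=\Phi(x)\bmod \Gamma(\mathcal{R}^{\geq k+1}M)$, with the understanding that $x$ on the right denotes any local section representing the class.

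I expect the main obstacle to be precisely the well-definedness argument in step (iii): one must be careful that $\Phi$ is only a differential operator, so a representative section's \emph{jet} at $p$ enters, and one needs the twisted-commutator bookkeeping to confirm that changing the representative within $\sect{\mathcal{R}^{\geq k+1}N}$ (or by a section vanishing at $q$) changes $\Phi(\tilde x)(p)$ only within $\mathcal{R}^{\geq k+1}_p M$. Everything else — linearity, local triviality, smoothness, patching by partitions of unity — is routine once this point is settled.
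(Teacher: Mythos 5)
Your proposal is correct and follows essentially the same route as the paper: both rest on the identity $\twistcom{\Phi}{\phi}{f}(\eta)=(\Phi(f)-f\circ\phi)\Phi(\eta)$, observing that this lands in $\sect{\mathcal{R}^{\geq k+2}M}\subseteq\sect{\mathcal{R}^{\geq k+1}M}$ so that the induced map on the associated graded is $\C$-linear along $\phi$ and hence a bundle morphism. Your step (iii) merely spells out the pointwise well-definedness that the paper leaves implicit.
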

\begin{proof}
Let $f$ be a smooth function on $N$ and $\eta$ a section of ${\mathcal{R}^{\geq k}N}$. Because of the identity
\[
\twistcom{\Phi}{\phi}{f}(\eta)=(\Phi(f)-(f\circ\phi))\Phi(\eta)
\]
we know that $\Phi(f)-(f\circ\phi)$ is a section of $\mathcal{R}^{\geq 2}M$ and because of the inclusions of filtration \eqref{eq:superfunctions-filtration} we know that $(\Phi(f)-(f\circ\phi))\Phi(\eta)$ is a section of $\mathcal{R}^{\geq k+2}M$, which is contained in $\mathcal{R}^{\geq k+1}M$; therefore $\twistcom{\Phi}{\phi}{f}(\eta)\in\sect{\mathcal{R}^{\geq k+1}}$ and thus $\Phi^k$ is a bundle morphism over the smooth map $\phi$.
\end{proof}

It is worth noting that, although the morphism $\phi^*$ of lemma \ref{lema:smooth-algebras} is naturally attached to the smooth map $\phi$, the differential operator $\Phi$ is not uniquely determined by $\phi$. In fact there is a large set of differential operators along $\phi$ from where to choose. Nevertheless if $N$ is compact the map $\Phi$ determines a  unique supersmooth map:

\begin{prop}
Let $\superfold{M}$ and $\superfold{N}$ be supermanifolds with $N$ compact and let $\Phi:\sect{\mathcal{R}N}\to\sect{\mathcal{R}M}$ be a unital superalgebra morphism. Then there exists a unique smooth map $\phi:M\to N$ such that $\Phi$ is a differential operator along $\phi$.
\end{prop}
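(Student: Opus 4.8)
The plan is to recover $\phi$ from $\Phi$ by looking at what $\Phi$ does on the even superfunctions that come from $\C(N)$, exactly as in the classical Milnor-type argument identifying points of a compact manifold with algebra homomorphisms $\C(N)\to\R$. First I would compose $\Phi$ with the augmentation map: set $\phi^{\ast}:=\varepsilon_M\circ\Phi\circ\iota_N\colon\C(N)\to\C(M)$, where $\iota_N\colon\C(N)\to\sect{\mathcal{R}N}$ is the canonical inclusion of ordinary functions into superfunctions. Since $\Phi$ is a unital morphism of supercommutative algebras and both $\varepsilon$'s and $\iota_N$ are unital algebra morphisms, $\phi^{\ast}$ is a unital homomorphism of associative $\R$-algebras. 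The first key step is then to show that $\phi^{\ast}$ is of the form $f\mapsto f\circ\phi$ for a (unique) smooth map $\phi\colon M\to N$; this is where compactness of $N$ is used. For each $p\in M$ the evaluation $\mathrm{ev}_p\circ\phi^{\ast}\colon\C(N)\to\R$ is a unital algebra homomorphism, hence (by the standard fact that the only $\R$-algebra homomorphisms $\C(N)\to\R$ for $N$ compact are the point evaluations) equals $\mathrm{ev}_{\phi(p)}$ for a unique point $\phi(p)\in N$; one then checks that $p\mapsto\phi(p)$ is smooth by testing against coordinate functions (locally on $N$, extended to all of $N$ using a bump function, which is legitimate since $N$ is compact), so that $\phi^{\ast}(f)=f\circ\phi$ for all $f$, and smoothness of $f\circ\phi$ for every smooth $f$ forces $\phi$ to be smooth.

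The second step is to verify that $\Phi$ is indeed a differential operator along this $\phi$ in the sense of appendix \ref{app:diff-ops}. But this is immediate: by construction $\varepsilon_M\circ\Phi=\phi^{\ast}\circ\varepsilon_N$ on the image of $\iota_N$, and more to the point the whole discussion preceding proposition \ref{prop:supermaps-are-diffops} only used that $\Phi$ is a unital even superalgebra morphism together with the commutativity $\varepsilon_M\circ\Phi=\phi^{\ast}\circ\varepsilon_N$; once we know $\phi^{\ast}=\phi^{\circ}$ for a smooth $\phi$, the iterated twisted-commutator argument of \eqref{eq:commutator-supersmooth-functions}--\eqref{eq:commutator-supersmooth-functions-2} applies verbatim and shows $\Phi$ is a linear differential operator along $\phi$ of order at most $\lfloor\tfrac{\rank\mathcal{R}N}{2}\rfloor$. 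Hence $(\phi,\Phi)$ is a supersmooth map.

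For uniqueness, suppose $\psi\colon M\to N$ is another smooth map along which $\Phi$ is a differential operator. By the definition of a differential operator along a smooth map, the $0$-th order part of $\Phi$ is multiplication followed by pullback along the base map, so on ordinary functions $\varepsilon_M\circ\Phi\circ\iota_N$ must coincide with $g\mapsto g\circ\psi$; comparing with the construction above gives $g\circ\psi=g\circ\phi$ for all $g\in\C(N)$, and since $\C(N)$ separates points of the compact manifold $N$ this forces $\psi=\phi$.

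I expect the main obstacle to be the first step, i.e. pinning down that the abstract algebra homomorphism $\phi^{\ast}\colon\C(N)\to\C(M)$ is geometric. Compactness of $N$ is exactly what rules out pathological characters of $\C(N)$ (without it one could have ``homomorphisms at infinity''), and some care is needed to extract smoothness of $\phi$ rather than mere continuity — the clean way is to observe that for any $f\in\C(N)$ the composite $f\circ\phi=\phi^{\ast}(f)$ is smooth by hypothesis, and then use local coordinate functions on $N$ (globalized via bump functions) to conclude $\phi$ is smooth. The rest is bookkeeping with the material already developed.
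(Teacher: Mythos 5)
Your proposal is correct and follows essentially the same route as the paper: recover $\phi$ from the character $\ev_q\circ\varepsilon_M\circ\Phi$ via the Milnor-type correspondence between points of the compact manifold $N$ and maximal ideals (equivalently, $\R$-algebra homomorphisms to $\R$), deduce smoothness from $\phi^*(f)=f\circ\phi$ being smooth for all $f$, and then apply the twisted-commutator computation \eqref{eq:commutator-supersmooth-functions-2} to see that $\Phi$ is a differential operator along $\phi$. Your treatment of uniqueness (the iterated commutator forces $\varepsilon_M\circ\Phi\circ\iota_N(g)=g\circ\psi$, and $\C(N)$ separates points) is spelled out more explicitly than in the paper, which leaves that step implicit, but it is the same argument in substance.
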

\begin{proof}
For each $p\in N$ we consider the evaluation maps $\ev_p:\C(N)\to\R$ then $I_p:=\ker\ev_p$ is a maximal ideal of $\C(N)$. The manifold $N$ being compact implies there is a correspondence between maximal ideals of $\C(N)$ and points of $N$. A similar results holds then for ideals of $\sect{\mathcal{R}N}$, that is: an ideal $I$ of $\sect{\mathcal{R}N}$ is maximal if and only if $I=\mathcal{R}^{\geq 1}_p N$. 

Now let $\eta\in\sect{\mathcal{R}N}$ and consider the map $\Psi_q:=\ev_q\circ\varepsilon\circ\Phi$ for a point $q\in M$. It is an algebra morphism $\Psi_q:\sect{\mathcal{R}N}\to\R$ and it is clearly surjective. Therefore (cf. \cite[theorem 2.6]{jacobson}) $\ker\Psi_p$ is a  maximal ideal of $\sect{\mathcal{R}N}$ and it therefore corresponds to a unique point $p\in N$. Setting $\phi(p)=q$ we get a well defined map $\phi:M\to N$. To see it is indeed smooth consider the composition $\varepsilon_M\circ\Phi\circ\iota_N$, where $\iota_N:\C(N)\hookrightarrow\sect{\mathcal{R}N}$ is the inclusion.  This map gives an algebra morphism $\widetilde{\phi}:\C(N)\to\C(M)$ such that its kernel when composed with $\ev_p$ is exactly $\ker\Psi_q$. lemma \ref{lema:smooth-algebras} renders $\widetilde\phi=\phi^*$ and tehrefore $\phi$ is smooth.

To see that $\Phi$ is a differential operator along $\phi$ we just invoke formula \eqref{eq:commutator-supersmooth-functions-2} and the fact that $\Phi$ is a unital superalgebra homomorphism. The result follows.
\end{proof}


\section[Proof of equivalence]{Proof of equivalence between our geometric approach and the usual approach}\label{sec:equiv}
\noindent  In \cite[section 4.2]{var} the definition of a supermanifold is given along the follwoing lines: 
 
\begin{defn}[Supermanifolds as ringed spaces]\label{def:ag-superfolds}
Let $M$ be a smooth manifold of dimension $m$. A \textbf{supermanifold of dimension $(m,n)$} is a pair $(M,\mathcal{O})$ where $\mathcal{O}$ is a sheaf of superalgebras that is locally isomorphic to $\C\otimes\Lambda \mathbf{S}^*$, for some vector space $\mathbf{S}$ of dimension $n$. 
\end{defn}
We'll refer to these objects as algebro-geometric supermanifolds.

The fact that the sheaf of algebras $\C$ (and not, for instance, the sheaf of real analytic functions) appears in this definition motivated us to consider a classical approach. Our fundamental result to justify our approach is then the following

\begin{teo}\label{thm:sheaf=bundle}
Let $(M,\mathcal{O})$ be a supermanifold of dimension $(m|n)$ in the sense of definition \ref{def:ag-superfolds} and let $\mathbf{S}$ be a vector space of dimension $n$. There exists a bundle of superalgebras $\mathcal{R}M$ such that every fibre $\mathcal{R}_pM$ is isomorphic to $\Lambda\mathbf{S}^*$ and with the further property that $\sect{\mathcal{R}M}\cong\mathcal{O}$ as sheaves.
\end{teo}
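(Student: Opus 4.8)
The plan is to construct $\mathcal{R}M$ from the structure sheaf $\mathcal{O}$ by a concrete gluing procedure, using the local triviality built into Definition~\ref{def:ag-superfolds} together with the algebraic rigidity of free supercommutative superalgebras established in Chapter~\ref{chap:ext}. First I would fix an open cover $\{U_\alpha\}$ of $M$ together with isomorphisms of sheaves of superalgebras $\psi_\alpha\colon\mathcal{O}|_{U_\alpha}\xrightarrow{\;\sim\;}\C_{U_\alpha}\otimes\Lambda\mathbf{S}^*$. On each $U_\alpha$ this lets us define a trivial bundle of superalgebras $\mathcal{R}U_\alpha:=U_\alpha\times\Lambda\mathbf{S}^*$, whose sheaf of sections is exactly $\C_{U_\alpha}\otimes\Lambda\mathbf{S}^*$. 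The task is then to show that the transition data $\psi_\beta\circ\psi_\alpha^{-1}$, which are automorphisms of $\C_{U_\alpha\cap U_\beta}\otimes\Lambda\mathbf{S}^*$ as a sheaf of superalgebras, are induced fibrewise by smooth families of superalgebra automorphisms of $\Lambda\mathbf{S}^*$, i.e.\ define smooth maps $g_{\beta\alpha}\colon U_\alpha\cap U_\beta\to\Aut(\Lambda\mathbf{S}^*)$. Granting this, the $g_{\beta\alpha}$ automatically satisfy the cocycle condition (it is inherited from the cocycle condition for the $\psi$'s), and one glues the local pieces $\mathcal{R}U_\alpha$ along these transition functions to obtain a global bundle of superalgebras $\mathcal{R}M$ with every fibre isomorphic to $\Lambda\mathbf{S}^*$. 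By construction $\sect{\mathcal{R}M}$ agrees with $\mathcal{O}$ on each $U_\alpha$ compatibly with restrictions, so the local isomorphisms glue to a sheaf isomorphism $\sect{\mathcal{R}M}\cong\mathcal{O}$.

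The key intermediate step, then, is: \emph{a unital automorphism $\Theta$ of the sheaf of superalgebras $\C_{U}\otimes\Lambda\mathbf{S}^*$ over a manifold $U$ is fibrewise a smooth family of automorphisms of $\Lambda\mathbf{S}^*$.} Here is how I would argue it. Such a $\Theta$ is in particular a $\C(U)$-algebra automorphism (it commutes with multiplication by functions pulled back along $\id_U$, since those are central and $\Theta$ restricted to the $\C$-part must be the identity — this follows because $\Theta$ preserves the unit and the maximal-ideal filtration, cf.\ Corollary~\ref{cor:filtration-preservation}, forcing $\varepsilon\circ\Theta=\varepsilon$ on the function part). Being a $\C(U)$-algebra map, $\Theta$ is determined by where it sends a set of odd generators $\xi_1,\dots,\xi_n$, and $\Theta(\xi_i)=\sum_j a_{ij}\xi_j + (\text{higher nilpotent terms})$ with the $a_{ij}$ and all higher coefficients smooth functions on $U$; invertibility of $\Theta$ is equivalent to pointwise invertibility of the induced map on $\mathbf{S}^*=\mathcal{R}^{\geq1}/\mathcal{R}^{\geq2}$, i.e.\ to $\det(a_{ij}(p))\neq 0$ for all $p$. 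Evaluating the coefficient functions at a point $p\in U$ yields a well-defined unital superalgebra endomorphism $\Theta_p$ of $\Lambda\mathbf{S}^*$, which by the same determinant criterion is an automorphism; and $p\mapsto\Theta_p$ is smooth because its matrix entries are the coefficient functions. This produces the desired smooth $g_{\beta\alpha}$.

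The main obstacle I anticipate is making the ``$\Theta$ is fibrewise'' claim fully rigorous in the sheaf language — specifically, verifying that an arbitrary sheaf-superalgebra automorphism really is $\C(U)$-linear and really is captured by finitely many smooth coefficient functions, without implicitly assuming the bundle structure one is trying to produce. The clean way is to note that $\mathcal{O}|_U\cong\C_U\otimes\Lambda\mathbf{S}^*$ is \emph{finitely generated as a sheaf of $\C_U$-modules} (freely, of rank $2^n$), so on any sufficiently small $V\subseteq U$ the images $\Theta(\xi_i)$ are global sections over $V$, hence honest $\Lambda\mathbf{S}^*$-valued smooth functions; $\C(V)$-linearity is then forced by the fact that $\Theta$ must restrict to the identity on $\C_U$ (again via the augmentation, which is natural and respected by any unital superalgebra morphism). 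One also has to check the bundle so constructed has a genuine smooth structure, but that is the standard vector-bundle gluing lemma applied to the total space $\bigsqcup_\alpha U_\alpha\times\Lambda\mathbf{S}^*$ modulo the equivalence defined by $(p,v)_\alpha\sim(p,g_{\beta\alpha}(p)v)_\beta$, and the superalgebra structure on each fibre is preserved precisely because the $g_{\beta\alpha}(p)$ are superalgebra automorphisms. Everything else is a routine check that the resulting sheaf of sections matches $\mathcal{O}$.
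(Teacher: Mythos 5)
Your key intermediate step fails, and it fails at precisely the point that makes this theorem non-trivial. You claim that a unital automorphism $\Theta$ of the sheaf of superalgebras $\C_U\otimes\Lambda\mathbf{S}^*$ is automatically $\C(U)$-linear because ``$\Theta$ restricted to the $\C$-part must be the identity.'' What the filtration argument actually gives is only $\varepsilon\circ\Theta=\varepsilon$, i.e.\ $\Theta(f\otimes 1)-f\otimes 1$ is \emph{nilpotent} — not zero. That nilpotent correction is the whole obstruction. Concretely, take $n=2$, $U=\R$, and set $\Theta(f)=f+\xi_1\xi_2\,f'$, $\Theta(\xi_i)=\xi_i$; since $(\xi_1\xi_2)^2=0$ one checks $\Theta(fg)=\Theta(f)\Theta(g)$, so $\Theta$ is a unital, even, filtration-preserving superalgebra automorphism, yet it is a first-order differential operator and is not induced by any fibrewise family of automorphisms of $\Lambda\mathbf{S}^*$. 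Hence the transition data $\psi_\beta\circ\inv{\psi_\alpha}$ need not define cocycles valued in $\Aut(\Lambda\mathbf{S}^*)$, and the gluing you describe cannot proceed. The paper itself flags this: in the proof of Lemma \ref{lem:iota-r} it is noted explicitly that $\tau_{\alpha\beta}$ is \emph{not} $\C$-linear in general.

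The missing idea — which is the actual content of the theorem, essentially Batchelor's theorem — is that the local trivializations must first be \emph{corrected} so that the new transition maps become $\C$-linear. The paper does this by reducing the statement to the existence of a unital splitting $j\colon\C\to\mathcal{O}$ of the augmentation (Theorem \ref{thm:jota}) and building $j$ inductively: the discrepancies $\tau_{\alpha\beta}\circ\iota_{\beta,r}-\iota_{\alpha,r}$ are derivations with values in ever higher powers of the nilpotent ideal, they satisfy a cocycle identity, and a partition of unity kills the cocycle (the relevant sheaf being fine, so the smooth category is essential here). Your fibrewise-evaluation argument is sound only \emph{after} such a correction has been made; as written, assuming the transition maps are already fibrewise amounts to assuming the bundle structure you are trying to construct.
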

We'll postpone the proof to subsection \ref{sec:equiv}.

\begin{cor}[Batchelor's theorem, \cite{batchelor}]\label{cor:batchelors}
The structure sheaf of any supermanifold $\superfold{M}$ can be realized (in a non-cannonical way) as the sheaf of sections of a bundle of exterior algebras of finite rank.
\end{cor}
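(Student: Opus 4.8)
This will follow from Theorem \ref{thm:sheaf=bundle} once we upgrade the fibrewise statement ``$\mathcal{R}_pM\cong\Lambda\mathbf{S}^*$'' to a \emph{global} isomorphism of superalgebra bundles $\mathcal{R}M\cong\Lambda E^*$ with $E\to M$ a smooth vector bundle of rank $n$. (If $\superfold{M}$ is given as in Definition \ref{def:superfold} we use its bundle $\mathcal{R}M$ directly; if it comes as a ringed space, Theorem \ref{thm:sheaf=bundle} produces such a bundle whose section sheaf is the structure sheaf.) The only sensible candidate for $E^*$ is the bundle $\mathbf{S}^*M=\mathcal{R}^{\geq 1}M/\mathcal{R}^{\geq 2}M$ of odd codirections from Definition \ref{def:odd-directions}, so the plan is to manufacture an algebra-bundle isomorphism $\Lambda(\mathbf{S}^*M)\to\mathcal{R}M$ over $\id_M$.

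\textbf{Step 1 (a splitting map).} First I would choose a morphism of vector bundles $s\colon\mathbf{S}^*M\to\mathcal{R}^{\geq 1}M$, valued in odd elements, splitting the canonical projection $\pi\colon\mathcal{R}^{\geq 1}M\to\mathbf{S}^*M$. Such an $s$ exists: since every fibre of $\mathcal{R}M$ is an exterior algebra we have $\mathcal{R}^{\geq 1}_+M\subseteq\mathcal{R}^{\geq 2}M$, so $\pi$ is already surjective on the odd part $\mathcal{R}^{\geq 1}M\cap\mathcal{R}_-M$, and a short exact sequence of smooth vector bundles over a (paracompact) manifold splits — the set of splittings being an affine space modelled on $\sHom(\mathbf{S}^*M,\mathcal{R}^{\geq 2}M)$, one glues local splittings with a partition of unity. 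This is the single place where smoothness is used, and it is exactly the classical simplification the introduction advertises.

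\textbf{Step 2 (extend and conclude).} Because $\mathbf{S}^*M$ is purely odd and every $\mathcal{R}_pM$ is supercommutative, the odd elements $s(v)$ satisfy $s(v)^2=0$, so by the universal property of the exterior algebra (Corollary \ref{cor:free-superalgebra}) the map $s$ extends, fibrewise and smoothly in the base point, to a unital morphism of superalgebra bundles $\Lambda s\colon\Lambda(\mathbf{S}^*M)\to\mathcal{R}M$ over $\id_M$. Being a unital supercommutative-algebra morphism it preserves the filtration \eqref{eq:filtration} on each fibre (Corollary \ref{cor:filtration-preservation}); the induced map on the degree-one pieces of the associated graded is $\pi\circ s=\id_{\mathbf{S}^*M}$ by construction. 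Since both associated graded fibres are exterior algebras, hence generated in degree one, the associated graded of $\Lambda s$ is an isomorphism, and therefore so is $\Lambda s$ (a filtered map inducing an isomorphism on the associated graded of a finite filtration is itself an isomorphism). Passing to sections, $\mathcal{O}\cong\sect{\mathcal{R}M}\cong\sect{\Lambda(\mathbf{S}^*M)}$, a bundle of exterior algebras of rank $n$, with $E=\mathbf{S}M$; the isomorphism depends only on the choice of $s$, which is the asserted non-canonicity.

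As this outline shows, there is no serious obstacle: the only input that is not purely formal is the splitting lemma of Step 1, i.e.\ the fact that short exact sequences of smooth vector bundles split, and this is precisely the point where passing to the classical (smooth, paracompact) setting trivializes what is, sheaf-theoretically, a genuine cohomological computation — in the holomorphic category the analogous statement is false. The one thing to be careful about is taking the splitting within odd superfunctions so that the universal property of $\Lambda$ applies; everything after that is the associated-graded argument and is automatic.
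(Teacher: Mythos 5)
Your proof is correct, but it is not the route the paper takes, and the two arguments split the problem at different places. The paper reduces the corollary to the splitting of the augmentation sequence \eqref{eq:batchelor} by a \emph{unital algebra} morphism $j\colon\C(M)\to\sect{\mathcal{R}M}$; that splitting is the content of theorem \ref{thm:jota}, proved by the inductive correction-by-vector-fields argument of lemma \ref{lem:iota-r}, and it is the genuinely hard step when the supermanifold is only handed to you as a ringed space, where $\mathcal{O}$ is not a priori even a $\C$-module. You instead take the bundle $\mathcal{R}M$ as given (so the splitting of \eqref{eq:batchelor} is free: it is the unit section $f\mapsto f\cdot\mathbf{1}$) and split the \emph{next} layer of the filtration, $\mathcal{R}^{\geq 1}_{-}M\twoheadrightarrow\mathbf{S}^*M$, as smooth vector bundles; the universal property of $\Lambda$ together with the associated-graded argument then yields the global algebra-bundle isomorphism $\Lambda\mathbf{S}^*M\cong\mathcal{R}M$. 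What your version buys is that it is more elementary (only the vector-bundle splitting lemma is needed) and it makes explicit something the paper's one-line proof leaves implicit: theorem \ref{thm:sheaf=bundle} delivers a bundle whose fibres are \emph{abstractly} isomorphic to $\Lambda\mathbf{S}^*$, and your Step 2 is exactly what upgrades this to a global exterior-algebra bundle on the concrete bundle $\mathbf{S}^*M$ of odd codirections. What the paper's route buys is applicability to the sheaf-theoretic input: without $j$ one cannot yet speak of $\mathbf{S}^*M$ as a smooth vector bundle, so your Step 1 presupposes (as you note) that theorem \ref{thm:sheaf=bundle} has already been invoked. The two splittings together --- $j$ at filtration level zero, your $s$ at level one --- constitute the full classical proof; you supply the second half, the paper emphasizes the first. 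One cosmetic slip: the affine space of odd-valued splittings is modelled on $\Hom(\mathbf{S}^*M,\mathcal{R}^{\geq 3}_{-}M)$ rather than on $\sHom(\mathbf{S}^*M,\mathcal{R}^{\geq 2}M)$, since the difference of two odd splittings lands in the odd part of $\mathcal{R}^{\geq 2}M$; this does not affect the argument.
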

\begin{proof}
Choosing an inclusion $\iota\colon\C_M\hookrightarrow\sect{\mathcal{R}M}$ that splits the sequence
\begin{equation}\label{eq:batchelor}
\xymatrix@1{0\ar[r]&\sect{\mathcal{R}^{\geq 1}M}\ar@{^{(}->}[r]^{\iota}&\sect{\mathcal{R}M}\ar[r]^{\varepsilon}&\C(M)\ar[r]&0}
\end{equation}
yields the isomorphism using the previous theorem.
\end{proof}

\begin{nota}\label{rem:batchelor}
The argument just laid establishes the fact that Batchelor's theorem is equivalent to the splitting of \eqref{eq:batchelor}. The proof of theorem \ref{thm:sheaf=bundle} will actually show how to find such a splitting.
\end{nota}

The above results say that, at the level of objects, our category of smooth supermanifolds and the category of algebro--geometric supermanifolds are in a one--one correspondence. The following two results take care of the morphisms.

\begin{lema}
Let $M$ and $N$ be smooth manifolds and let $\C_M$ and $\C_N$ denote the sheaves of smooth functions over $M$ and $N$ respectively. Given a \emph{continuous} map $\phi:M\to N$, if $\Phi:\C_N\to\phi_*\C_M$ is a unital morphism of algebras over $\phi$ then $\phi$ is smooth and $\Phi=\phi^*$, the map of lemma \ref{lema:smooth-algebras}.
\end{lema}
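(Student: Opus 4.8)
The plan is to deduce everything from the classical characterization of unital $\R$-algebra homomorphisms $\C^\infty(X)\to\R$ as point evaluations (a standard fact; see e.g. \cite[chapter 4]{jet}). First I would fix $p\in M$ and a coordinate chart $(U;y^1,\dots,y^n)$ of $N$ around $\phi(p)$; by continuity of $\phi$ the set $\phi^{-1}(U)$ is an open neighbourhood of $p$ in $M$, so $p\in\phi^{-1}(U)$ and the composition $\ev_p\circ\Phi_U\colon\C_N(U)\to\R$ is defined and is a unital homomorphism of $\R$-algebras. Applying the cited fact to the manifold $U$ yields a unique point $q\in U$ with $\ev_p\circ\Phi_U=\ev_q$.

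The heart of the argument — and the step I expect to be the main obstacle — is to show $q=\phi(p)$; this is exactly where the phrase ``over $\phi$'' (i.e. that $\Phi$ is a morphism of \emph{sheaves}, hence commutes with restrictions) together with the continuity of $\phi$ come into play. I would argue by contradiction: if $q\neq\phi(p)$, then, $U$ being Hausdorff, I can choose disjoint open sets $V\ni\phi(p)$ and $V'\ni q$ inside $U$ and a bump function $f\in\C_N(U)$ with $f(q)=1$ and support contained in $V'$, so that $f$ vanishes identically on $V$. Restriction compatibility of $\Phi$ then forces $\Phi_U(f)|_{\phi^{-1}(V)}=\Phi_V(f|_V)=\Phi_V(0)=0$, and since $p\in\phi^{-1}(V)$ this gives $\Phi_U(f)(p)=0$; on the other hand $\Phi_U(f)(p)=(\ev_p\circ\Phi_U)(f)=\ev_q(f)=1$, a contradiction. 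Hence $q=\phi(p)$. Running the same two steps with an arbitrary $p'\in\phi^{-1}(U)$ in place of $p$ gives $\ev_{p'}\circ\Phi_U=\ev_{\phi(p')}$, i.e.
\[
\Phi_U(f)(p')=f(\phi(p'))\qquad\text{for all }p'\in\phi^{-1}(U)\text{ and }f\in\C_N(U).
\]

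Finally I would read off the conclusion. Taking $f=y^i$ in the displayed identity shows that $y^i\circ\phi=\Phi_U(y^i)$ lies in $\C_M(\phi^{-1}(U))$, hence is smooth on the neighbourhood $\phi^{-1}(U)$ of $p$; since $p$ was arbitrary and the $y^i$ are local coordinates on $N$, the map $\phi$ is smooth. Moreover the displayed identity says precisely that $\Phi_U(f)=f\circ\phi$ for every chart domain $U$ and every $f\in\C_N(U)$; as such $U$ cover $N$ and the maps $\Phi_U$ are compatible with restriction, this forces $\Phi=\phi^{*}$, the pullback homomorphism of lemma \ref{lema:smooth-algebras}. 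The only genuinely non-formal ingredient is the characterization of $\R$-algebra homomorphisms out of $\C^\infty$ used in the first paragraph; the rest is bookkeeping with bump functions and the sheaf axioms.
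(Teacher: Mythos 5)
Your proof is correct, but it reaches the conclusion by a genuinely different route than the paper. You invoke, as a black box, the fact that every unital $\R$-algebra homomorphism $\C(U)\to\R$ is a point evaluation (Milnor's exercise), obtain an abstract point $q\in U$ from the character $\ev_p\circ\Phi_U$, and then use a bump function together with restriction-compatibility of the sheaf morphism to force $q=\phi(p)$. The paper instead proves the pointwise identity $\Phi_U(f)(x)=f(\phi(x))$ directly by a spectral argument: if $\mu:=\Phi_U(f)(x)$ differed from $\lambda:=f(\phi(x))$, one restricts to the open set $\inv{f}(V)$ for a small neighbourhood $V$ of $\lambda$ avoiding $\mu$, where $f-\mu\mathbf{1}$ is invertible; a unital morphism preserves invertibility, yet $\Phi(f)-\mu\mathbf{1}$ vanishes at $x\in\inv{\phi}(\inv{f}(V))$, a contradiction. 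Both arguments lean on the same two hypotheses (unitality and compatibility with the restriction maps), but the paper's version is more self-contained — it never needs the full character-equals-evaluation theorem, whose proof for non-compact $U$ (and chart domains are non-compact) requires a proper-function argument. What your version buys is a cleaner logical structure and an explicit uniqueness statement for the point $q$, at the cost of outsourcing the only non-formal step to a theorem that is true but not proved (and not really cited) in this paper; note that the reference you point to is used by the paper only for the statement that $\phi^*$ is a homomorphism, and that the paper's own maximal-ideal argument elsewhere is stated only under a compactness hypothesis. The final bookkeeping (smoothness of $\phi$ from $\Phi_U(y^i)=y^i\circ\phi$, and gluing $\Phi_U=\phi^*$ over a cover by the sheaf axiom) coincides in both treatments.
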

\begin{proof}
Recall that if $U\subseteq N$ is an open set, the sheaf $\phi_*\C_M(U)$ is defined as $\C_M(\inv{\phi}(U))$, the restriction of $\C_M$ to the open set $\inv{\phi}(U)$ (cf \cite[section 4.1]{var}). Let $f\in\C_N(U)$ and $x\in\inv{\phi}(U)$. Then if $V\subset\R$ is any open subset containing $\phi(f(x))$ the set $W:=\inv{\phi}(\inv{f}(V))$ is relatively open on $\inv{\phi}(U)$. The restriction morphism $\operatorname{res}^{U}_{\inv{f}(V)}$ of the sheaf $\C_N$ induces the morphism
\begin{equation}
\Phi_{\inv{f}(V)}:\C_N(\inv{f}(V))\to\C_M(W)
\end{equation}
of sheaves. Consider the function $h:=f-f(\phi(x))\mathbf{1}$, where $\mathbf{1}$ is the function identically $1$ on $N$. The image of $h$ under $\Phi_{\inv{f}(V)}$ is 
\begin{equation}
g:=\Phi_{\inv{f}(V)}(h)=\Phi_{\inv{f}(V)}(f)-\Phi_{\inv{f}(V)}(f)(x)\mathbf{1}
\end{equation}
(recall that $\Phi_{\inv{f}(V)}(f)$ is by definition a smooth funcion on $W$). The function $g$ is not invertible on $W$ because $x\in W$ and therefore $g$ has a zero there. Since $U$ is arbitrary, we have $\Phi_{U}(f)(x)\in V$ for all $V$ containing $f(\phi(x))$. Therefore 
\begin{equation}\label{eq:sheaf-morphism}
\Phi_U(f)=\phi^*(f)=f\circ\phi. 
\end{equation}
Now let $\listupla{x}$ be any local coordinates on an open subset $\widetilde{U}$ of $U$. Each of these coordinates are  smooth functions on $\widetilde{U}$, and therefore $\Phi_{\widetilde{U}}(x_\mu)=x_\mu\circ\phi$ is smooth for all indices $\mu$, $1\leq\mu\leq n$. This is precisely the definition of a smooth map between smooth manifolds. Therefore $\phi$ is smooth. As a consequence of equation \eqref{eq:sheaf-morphism} we get $\Phi=\phi^*$. 
\end{proof}

\begin{teo}\label{thm:equivalent}
Let $(M, \mathcal{O}_M)$ and $(N,\mathcal{O}_N)$ be supermanifolds and let $\Phi:\mathcal{O}_N\to\phi_*\mathcal{O}_M$ be a sheaf morphism along the continuous map $\phi:M\to N$. Then $\Phi$ is a differential operator along $\phi$.
\end{teo}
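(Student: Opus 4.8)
The statement is essentially a sheaf-theoretic version of Proposition~\ref{prop:supermaps-are-diffops}, so I would deduce it from the local computation already done in the case of trivial bundles, glued together with a partition of unity and the previous lemma identifying the underlying continuous map. First I would note that, by Theorem~\ref{thm:sheaf=bundle} and Corollary~\ref{cor:batchelors}, after choosing Batchelor splittings the sheaves $\mathcal{O}_M$ and $\mathcal{O}_N$ are realized as $\sect{\mathcal{R}M}$ and $\sect{\mathcal{R}N}$ for superalgebra bundles whose fibres are exterior algebras $\Lambda V^*$ and $\Lambda W^*$. Locally over trivializing neighbourhoods we may therefore write $\mathcal{O}_N(U)\cong\C(U,\Lambda W^*)$ and $\mathcal{O}_M(\inv{\phi}(U))\cong\C(\inv{\phi}(U),\Lambda V^*)$.

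The first real step is to extract the continuous map $\phi$ and show it is smooth: composing the given sheaf morphism $\Phi$ with the augmentation maps $\varepsilon_N$, $\varepsilon_M$ and the inclusion $\iota_N:\C_N\hookrightarrow\mathcal{O}_N$ gives a unital sheaf morphism $\C_N\to\phi_*\C_M$ over $\phi$, which is precisely the hypothesis of the previous lemma; hence $\phi$ is smooth and $\varepsilon_M\circ\Phi\circ\iota_N=\phi^*$. With smoothness of $\phi$ in hand, I would then imitate the trivial-bundle computation leading to Proposition~\ref{prop:supermaps-are-diffops}: for a local section $f$ of $\C_N$ and $\eta\in\mathcal{O}_N(U)$, form the twisted commutator
\[
\twistcom{\Phi}{\phi}{f}(\eta)=\Phi(f\eta)-(f\circ\phi)\,\Phi(\eta)=(\Phi(f)-\phi^*(f))\,\Phi(\eta),
\]
using that $\Phi$ is an algebra morphism. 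Since $\Phi$ is even it preserves the $\z$-grading, and since $\varepsilon\circ\Phi=\phi^*$ on $\C_N$ the superfunction $\Phi(f)-\phi^*(f)$ is nilpotent, in fact a section of $\mathcal{R}^{\geq 2}M$ (as in Corollary~\ref{cor:preserve-filtration}, using Proposition~\ref{prop:preserve-filtration}). Iterating, $\twistcom{\Phi}{\phi}{f_0}\cdots\twistcom{}{\phi}{f_k}$ lands in $\sect{\mathcal{R}^{\geq k+2}M}$, hence vanishes once $2(k+1)>n=\rank\mathcal{R}N$; thus on every trivializing chart $\Phi$ is a linear differential operator along $\phi$ of order at most $\lfloor n/2\rfloor$, by the defining annihilation-by-iterated-commutators characterization recalled in Appendix~\ref{app:diff-ops} (Definition~\ref{def:dif-op-along}).

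Finally I would globalize: being a differential operator along $\phi$ is a local condition on $M$, and the local operators obtained on different charts agree on overlaps because $\Phi$ is a single global sheaf morphism; alternatively one patches them with a partition of unity subordinate to a common trivializing cover, exactly as in the passage from the trivial-bundle case to diagram~\eqref{eq:diagrama2}. This yields that $\Phi$ is a differential operator along $\phi$ on all of $M$, of order at most $\lfloor\rank\mathcal{R}N/2\rfloor$. The main obstacle I anticipate is bookkeeping the choices of Batchelor splittings on overlapping charts so that the local identifications $\mathcal{O}\cong\sect{\mathcal{R}(\cdot)}$ are compatible enough to make the "same global $\Phi$'' argument rigorous; this is where one must either be careful that differential-operator-along-$\phi$ is splitting-independent (it is, since the twisted-commutator characterization refers only to the algebra structure and to $\phi^*$, not to a splitting) or else invoke Theorem~\ref{thm:sheaf=bundle}'s construction, which produces the bundle canonically from the sheaf.
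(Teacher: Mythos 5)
Your proof follows the paper's own argument essentially verbatim: compose with $\iota_N$ and $\varepsilon_M$ to invoke the preceding lemma, deduce that $\phi$ is smooth with $\varepsilon_M\circ\Phi\circ\iota_N=\phi^*$, and then observe that $\Phi(f)-\phi^*(f)$ is nilpotent so the iterated twisted commutators eventually vanish. The only quibble is a bookkeeping slip --- $k+1$ iterated commutators land in $\sect{\mathcal{R}^{\geq 2(k+1)}M}$, not $\sect{\mathcal{R}^{\geq k+2}M}$ --- which does not affect the stated order bound $\lfloor\rank\mathcal{R}N/2\rfloor$.
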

\begin{proof}
Consider  inclusion $\iota_N:\C_N\to\mathcal{O}_N$ as in corollary \ref{cor:batchelors}. It is clear that $\varepsilon_M\circ\Phi\circ\iota_N:\C_N\to\C_M$ is a sheaf morphism over the continuous map $\phi$. The above lemma implies that $\phi$ is smooth and that $\varepsilon_M\circ\Phi\circ\iota_N=\phi^*$ so we recover the commutative diagram
\begin{equation*}
\xymatrix{
\sect{\mathcal{R}N}\ar[rr]_{\varepsilon_N} \ar[d]^{\Phi} && \C(N) \ar@/_1pc/[ll]_{\iota} \ar[d]^{\phi^*}\\
\sect{\mathcal{R}M}\ar[rr]^{\varepsilon_M} && \C(M) \ar@/^1pc/[ll]^{\iota}
}
\end{equation*}
similar to diagram  \eqref{eq:diagrama2}, so $\varepsilon_M\circ\Phi=\phi^*\circ\varepsilon_N$; therefore defining the commutator $[\Phi,f](\eta)=\Phi(f\eta)-(f\circ\phi)\Phi(\eta)$ for all smooth functions $f$ on $M$ and $\eta\in\sect{\mathcal{R}M}$ it's seen that $\Phi(f)-\phi^*(f)$ is nilpotent, so $\Phi$ is a differential operator along $\phi$.
\end{proof}

\subsection{Proof of theorem \ref{thm:sheaf=bundle}}\label{sec:proof}
\noindent Let $(M,\mathcal{O})$ be an algebro-geometric supermanifold. As we already said (remark \ref{rem:batchelor}), the theorem is equivalent to the splitting of the sequence
\[
\xymatrix@1{0\ar[r]&\mathcal{N}\,\ar@{^{(}->}[r]&\mathcal{O}\,\ar[r]_{\varepsilon}\ar[r]&\C\ar[r]&0}
\]
so as to give $\mathcal{O}$ the structure of a sheaf of $\C$-modules. We state this as
\begin{thm}\label{thm:jota}
There exists a unital sheaf homomorphism $j\colon\C\to\mathcal{O}$ such that $\varepsilon\circ j=\id_{\C}$.
\end{thm}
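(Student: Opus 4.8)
The plan is to build $j$ not by a single partition-of-unity averaging (which would only produce a $\C$-linear section and destroy multiplicativity, since a convex combination of algebra maps is not an algebra map), but by lifting the identity $\id_\C$ step by step through the nilpotent filtration, correcting at each stage in a way that is guaranteed to preserve multiplicativity. First I would observe that any grading-preserving algebra splitting must take values in the even subsheaf $\mathcal{O}_+$, which is a sheaf of honestly commutative $\R$-algebras with nilpotent ideal $\mathcal{N}_+=\mathcal{O}_+\cap\mathcal{N}$; working inside $\mathcal{O}_+$ removes all sign subtleties. The even nilpotent filtration $\mathcal{O}_+\supset\mathcal{F}_1\supset\mathcal{F}_2\supset\cdots$, with $\mathcal{F}_k$ the even part of $\mathcal{R}^{\geq 2k}$, terminates after finitely many steps and has the crucial property that each consecutive quotient is a square-zero ideal: $\mathcal{F}_k\cdot\mathcal{F}_k\subseteq\mathcal{R}^{\geq 4k}\subseteq\mathcal{F}_{k+1}$ for $k\geq 1$. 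This exhibits $\mathcal{O}_+$ as an iterated square-zero extension of $\C=\mathcal{O}_+/\mathcal{F}_1$ by the sheaves of $\C$-modules $\mathcal{E}_k:=\mathcal{F}_k/\mathcal{F}_{k+1}$.

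The core is the inductive lifting lemma: given a unital algebra homomorphism $\phi\colon\C\to B:=\mathcal{O}_+/\mathcal{F}_{k}$ splitting the projection, I would lift it to an algebra homomorphism $\tilde\phi\colon\C\to B':=\mathcal{O}_+/\mathcal{F}_{k+1}$ through the square-zero extension $0\to\mathcal{E}_k\to B'\to B\to 0$. Over a trivializing cover $\{U_\alpha\}$ the extension is split (the local trivialization of $\mathcal{O}$ provides an algebra section $s_\alpha$), so local algebra lifts $\tilde\phi_\alpha=s_\alpha\circ\phi$ exist. On overlaps the difference $d_{\alpha\beta}=\tilde\phi_\alpha-\tilde\phi_\beta$ lands in $\mathcal{E}_k$, and here the square-zero hypothesis pays off: expanding $\tilde\phi_\alpha(fg)$ against $\tilde\phi_\beta$ and killing the cross term $d_{\alpha\beta}(f)\,d_{\alpha\beta}(g)\in\mathcal{E}_k^2=0$ shows that $d_{\alpha\beta}$ is an $\R$-derivation $\C\to\mathcal{E}_k$. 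Thus $\{d_{\alpha\beta}\}$ is a Čech $1$-cocycle valued in the sheaf $\Der(\C,\mathcal{E}_k)$, which is a sheaf of $\C$-modules, hence fine; its first Čech cohomology vanishes by a partition of unity, so $d_{\alpha\beta}=e_\alpha-e_\beta$ for local derivations $e_\alpha$. Setting $\tilde\phi:=\tilde\phi_\alpha-e_\alpha$ gives a globally well-defined map, and — this is the second place the square-zero structure is essential — subtracting a derivation into a square-zero ideal from an algebra homomorphism again yields an algebra homomorphism, the verification $(\psi-e)(fg)=(\psi-e)(f)(\psi-e)(g)$ using $e(f)e(g)\in\mathcal{E}_k^2=0$ together with the fact that $\mathcal{E}_k$ is a module over the reduction. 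Since derivations annihilate $1$, unitality is preserved.

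Iterating this lift from $\id_\C$ up through the finite tower produces the desired global unital algebra homomorphism $j\colon\C\to\mathcal{O}_+\subseteq\mathcal{O}$, and $\varepsilon\circ j=\id_\C$ holds by construction, since each lift covers the previous one and the bottom map is the identity. The main obstacle — and the reason a naive partition-of-unity argument fails — is maintaining multiplicativity while gluing local sections; this is resolved entirely by the square-zero mechanism, which simultaneously forces the gluing discrepancies to be derivations (so the obstruction lives in an abelian fine sheaf with no higher cohomology) and guarantees that the derivation-valued correction preserves the algebra-homomorphism property. The only analytic input is the softness of $\C$-module sheaves, i.e. the existence of smooth partitions of unity, which is exactly the feature of the smooth category (absent in the holomorphic one) that makes Batchelor's splitting possible.
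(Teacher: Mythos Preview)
Your proposal is correct and follows essentially the same strategy as the paper: both arguments lift the identity through the finite tower of square-zero extensions determined by the even nilpotent filtration, observing at each stage that the discrepancies between local algebra lifts are derivations (hence a \v{C}ech $1$-cocycle in a fine sheaf of $\C$-modules) and that correcting an algebra homomorphism by such a derivation preserves multiplicativity. The paper organizes the induction slightly differently---tracking local inclusions $\iota_{\alpha,r}$ into the full trivialized algebra modulo higher-filtration error terms rather than passing to the quotients $\mathcal{O}_+/\mathcal{F}_k$---but the underlying mechanism (Lemma~\ref{lem:iota} is exactly your ``difference of two lifts is a derivation'' step, and equation~\eqref{eq:x-ext} is your partition-of-unity splitting of the cocycle) is identical.
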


Let us define an equivalence relation for these maps:
\begin{defn}
Let $\iota,\iota'\colon\C\to\mathcal{O}$ be unital morphisms and $k\geq 0$. We say they are \textbf{equivalent up to degree $k$}, written $\iota\underset{k}{\sim}\iota'$, if $\im(\iota-\iota')\subseteq\mathcal{N}^{k+1}$.
\end{defn}

It is clear that, if the odd dimension of $(M,\mathcal{O})$ is $n$ and $k\geq\lfloor\frac{n}{2}\rfloor$, then $\iota\underset{k}{\sim}\iota'$ always holds. This observation will be the basis of the proof. We will use the following

\begin{lem}\label{lem:iota}
Let $\iota\underset{2k}{\sim}\iota'$. The map $\iota-\iota'$ is a derivation of $\C$ with values in $\mathcal{N}^{2k+2}$; furthermore, there exists a vector field $X$ with values in $\mathcal{N}^{2k}$ such that $\iota-\iota'\equiv X\mod\mathcal{N}^{2k}$.
\end{lem}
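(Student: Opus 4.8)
The plan is to unwind what $\iota\underset{2k}{\sim}\iota'$ means algebraically and squeeze a derivation, and then a vector field, out of the leading term of the difference. First I would set $D:=\iota-\iota'$ and use that both $\iota,\iota'$ are \emph{unital algebra} homomorphisms. For $f,g\in\C$ write $\iota(fg)=\iota(f)\iota(g)$, $\iota'(fg)=\iota'(f)\iota'(g)$, and subtract, using $\iota(f)=\iota'(f)+D(f)$ etc.; expanding gives $D(fg)=D(f)\iota'(g)+\iota'(f)D(g)+D(f)D(g)$. By hypothesis $\im D\subseteq\mathcal{N}^{2k+1}$, so $D(f)D(g)\in\mathcal{N}^{2k+2}$; moreover $\iota'(f)=f\cdot 1+(\text{nilpotent})=\phi^*f + \mathcal{N}$ with the leading term an honest function, so $D(f)\iota'(g)\equiv f\,D(g)\bmod\mathcal{N}^{2k+2}$ and similarly $\iota'(f)D(g)\equiv f\,D(g)$ — wait, one must be careful: $\iota'(f)D(g)\equiv D(g)\cdot(\varepsilon f)\bmod\mathcal{N}^{2k+2}$ where I write $\varepsilon$ for the projection to $\C$, since $D(g)\in\mathcal{N}^{2k+1}$ and the nilpotent part of $\iota'(f)$ times $\mathcal{N}^{2k+1}$ lands in $\mathcal{N}^{2k+2}$. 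Hence modulo $\mathcal{N}^{2k+2}$ we get the Leibniz rule $D(fg)\equiv f\,D(g)+g\,D(f)$, i.e. $D$ is a derivation of $\C$ with values in $\mathcal{N}^{2k+1}/\mathcal{N}^{2k+2}$, and the actual improvement is that $D$ restricted mod the next filtration step is a genuine derivation. Note the statement wants "values in $\mathcal{N}^{2k+2}$" only after we pass to the vector-field correction; as stated $D$ is a derivation mod $\mathcal{N}^{2k+2}$, i.e. $D(fg)-fD(g)-gD(f)\in\mathcal{N}^{2k+2}$, which is exactly the displayed claim.

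Next I would produce the vector field. The quotient bundle $\mathcal{N}^{2k}/\mathcal{N}^{2k+1}$ is a vector bundle over $M$ — a subquotient of the fibrewise exterior algebra, locally $\C\otimes\Lambda^{2k}\mathbf{S}^*$ modulo lower-degree stuff — so a derivation of $\C$ with values in the sections of this bundle is, by the standard identification of derivations of $\C$ with vector fields (tensored with coefficients), an element of $\Gamma(TM)\otimes_{\C}\Gamma(\mathcal{N}^{2k}/\mathcal{N}^{2k+1})$; call the corresponding $\mathcal{N}^{2k}$-valued vector field $X$. Concretely: reduce $D$ modulo $\mathcal{N}^{2k+1}$ to get a $\C$-linear-in-the-Leibniz-sense map $\bar D\colon\C\to\Gamma(\mathcal{N}^{2k}/\mathcal{N}^{2k+1})$, invoke that honest derivations of $\C$ into a $\C$-module $E$ are $\Gamma(TM)\otimes_{\C}E$ — which holds because $\C$ is the smooth-function sheaf and derivations are local and determined on coordinates (lemma \ref{lema:smooth-algebras} and the standard jet/derivation theory cited in the appendix) — and lift the resulting vector-field-valued section back to an $\mathcal{N}^{2k}$-valued vector field $X$ by choosing a splitting $\mathcal{N}^{2k}/\mathcal{N}^{2k+1}\hookrightarrow\mathcal{N}^{2k}$ (available since we are in the smooth category with partitions of unity). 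By construction $X f\equiv D f\bmod\mathcal{N}^{2k+1}$, which — since $X$ takes values in $\mathcal{N}^{2k}$ — I would phrase as $\iota-\iota'\equiv X\bmod\mathcal{N}^{2k+1}$; reconciling this with the "$\mathrm{mod}\ \mathcal{N}^{2k}$" in the statement is just the observation that $X$ already has values in $\mathcal{N}^{2k}$, so $D\equiv X$ in the relevant sense and the residual $D-X$ lands one filtration step deeper.

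The key steps in order: (1) expand $D(fg)$ using that $\iota,\iota'$ are unital algebra maps; (2) discard the $D(f)D(g)$ term and the nilpotent-times-$\mathcal{N}^{2k+1}$ terms using $\im D\subseteq\mathcal{N}^{2k+1}$ and the filtration $\mathcal{N}^i\mathcal{N}^j\subseteq\mathcal{N}^{i+j}$, obtaining the Leibniz identity modulo $\mathcal{N}^{2k+2}$; (3) reduce mod $\mathcal{N}^{2k+1}$ to get a genuine derivation into a vector bundle, identify it with a vector-field-valued section, and lift to an $\mathcal{N}^{2k}$-valued vector field $X$ via a smooth splitting; (4) check $\iota-\iota'\equiv X$ to the asserted order. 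The main obstacle I anticipate is bookkeeping the filtration degrees cleanly — in particular being careful that multiplication by the nilpotent part of $\iota'(f)$ genuinely pushes $\mathcal{N}^{2k+1}$ into $\mathcal{N}^{2k+2}$, and that "derivation with values in $\mathcal{N}^{2k+2}$" in the statement is really the assertion that the Leibniz defect lies in $\mathcal{N}^{2k+2}$ rather than that $D$ itself has image there; once the degrees are pinned down, the identification of derivations with vector fields is standard and the splitting is routine in the smooth category.
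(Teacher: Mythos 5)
Your proposal is correct and takes essentially the same route as the paper's proof: expand $(\iota-\iota')(fg)$ using that both maps are unital algebra homomorphisms to get the twisted Leibniz identity, use the filtration $\mathcal{N}^i\mathcal{N}^j\subseteq\mathcal{N}^{i+j}$ together with $\im(\iota-\iota')\subseteq\mathcal{N}^{2k+1}$ to reduce it to the ordinary Leibniz rule modulo the next filtration step, and define $X(f)=(\iota-\iota')(f)$. The paper's own proof is in fact terser — it records only the exact identity $(\iota-\iota')(fg)=\iota'(f)(\iota-\iota')(g)+(\iota-\iota')(f)\iota(g)$ and immediately declares $X$ to be the required vector field — so your degree bookkeeping and the explicit identification of a $\C$-valued derivation into a bundle with a vector-field-valued section are a faithful elaboration of the same argument rather than a different one.
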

\begin{proof}
Let $f$ and $g$ be smooth functions on $M$. Using the fact that both $\iota$ and $\iota'$ are unital algebra homomorphisms it is straightforward to compute
\[
\begin{split}
(\iota-\iota')(fg)=\iota'(f)(\iota-\iota')(g)+(\iota-\iota')(f)\iota(g)
\end{split}
\]
Therefore if we define $X(f)=(\iota-\iota')(f)$ we immediately see that $X$ is the required vector field.
\end{proof}

Let us fix a locally finite open cover $\set[\alpha\in\mathrm{A}]{U_\alpha}$ of $M$ such that
\begin{enumerate}[label=\alph{enumi}.]
\item there exists an isomorphism $\tau_\alpha\colon\mathcal{O}_\alpha:=\mathcal{O}|_{U_\alpha}{\to}\C_\alpha\otimes\Lambda\mathbf{S}^*$ of superalgebras with unit, and
\item there exist $y_\alpha=(y_{\alpha}^1,\ldots,y_{\alpha}^m)$ smooth functions on $U_\alpha$ such that $(U_\alpha,y_\alpha)$ is a coordinate chart on $M$.
\end{enumerate}
Let $U_{\alpha\beta}:=U_{\alpha}\cap U_{\beta}$, and let $\mathcal{O}_{\alpha\beta}$ and $\C_{\alpha\beta}$ denote the corresponding sheaves.

By the above conditions we can fix $\xi^{\alpha}_{\mu}\in\C\otimes\Lambda_{+}^{\geq 2}\mathbf{S}^*$, $1\leq\mu\leq m$, such that 
\begin{equation}\label{eq:tau}
\tau_{\alpha}(y_{\alpha}^{\mu})=y_{\alpha}^{\mu}\otimes 1+\xi^{\alpha}_{\mu}. 
\end{equation}
Set $\tau_{\alpha\beta}=\tau_{\alpha}\circ\tau_{\beta}^{-1}$. The required morphism $j$ will be constructed locally and glued using these morphisms. Note that $\tau_{\alpha\beta}$ is \textsc{not} $\C$-linear in general. Set $\iota_{\alpha,0}(f)=f\otimes 1$; this is obviously the most simple inclusion $\C\to\C\otimes\Lambda\mathbf{S}^*$.

\begin{lema}\label{lem:iota-r}
For all $\alpha$ and all $r\geq 1$ there exist $\iota_{\alpha,r}\colon\C\to\C\otimes\Lambda\mathbf{S}^*$ such that for all $f\in\C_{\alpha}$
\begin{enumerate}[label=(\arabic{enumi})]
\item\label{cond:a} $\Big(\iota_{\alpha,r}-\iota_{\alpha,r-1}\Big)(f)\equiv 0\mod\Lambda_{+}^{2r}\mathbf{S}^*$
\item\label{cond:b} $\Big(\tau_{\alpha\beta}\circ\iota_{\alpha,r}-\iota_{\beta,r}\Big)(f)\equiv 0\mod\Lambda_{+}^{\geq 2r+2}\mathbf{S}^*$.
\end{enumerate}
\end{lema}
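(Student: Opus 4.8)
The statement is proved by induction on $r$, building the local lifts $\iota_{\alpha,r}$ one degree at a time and correcting the failure of compatibility on overlaps using a partition-of-unity/\v{C}ech argument. The base case $r=0$ is given: $\iota_{\alpha,0}(f)=f\otimes 1$ satisfies \ref{cond:a} vacuously and \ref{cond:b} with $2r+2=2$ because $\tau_{\alpha\beta}(f\otimes 1)=f\otimes 1+(\text{terms in }\Lambda^{\geq 2}_+\mathbf{S}^*)$ by \eqref{eq:tau} and the fact that $\tau_{\alpha\beta}$ is a unital superalgebra morphism over the identity on $\C$ (so it preserves the augmentation, hence $\tau_{\alpha\beta}(f\otimes 1)\equiv f\otimes 1 \bmod \Lambda^{\geq 1}$, and actually $\bmod\,\Lambda^{\geq 2}$ since $\tau_\alpha$ and $\tau_\beta$ agree on degree-$0$ part and the degree-$1$ part is forced to vanish because $y^\mu_\alpha$ are sent to $y^\mu_\alpha\otimes1$ plus something in $\Lambda^{\geq2}_+$).

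\textbf{Inductive step.} Assume $\iota_{\alpha,r-1}$ are constructed for all $\alpha$. By \ref{cond:b} at level $r-1$, the cochain $c_{\alpha\beta}:=\tau_{\alpha\beta}\circ\iota_{\alpha,r-1}-\iota_{\beta,r-1}$ takes values in $\Lambda^{\geq 2r}_+\mathbf{S}^*$; by Lemma \ref{lem:iota} (or rather its proof) each $c_{\alpha\beta}$, read modulo $\Lambda^{\geq 2r+1}$, is a derivation of $\C_{\alpha\beta}$ with values in $\C_{\alpha\beta}\otimes\Lambda^{2r}\mathbf{S}^*$, i.e. a $\Lambda^{2r}\mathbf{S}^*$-valued vector field $X_{\alpha\beta}$ on $U_{\alpha\beta}$. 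The cocycle condition $X_{\alpha\beta}+X_{\beta\gamma}=X_{\alpha\gamma}$ on triple overlaps follows from composing the defining relations and using that all the discrepancies are already of order $2r$, so the nonlinear corrections from $\tau_{\alpha\beta}$ acting on them land in $\Lambda^{\geq 2r+1}$ (here one must be a little careful: $\tau_{\alpha\beta}$ is not $\C$-linear, but on the top-degree piece $\Lambda^{2r}$ the relevant interaction is linear). Thus $\{X_{\alpha\beta}\}$ is a \v{C}ech $1$-cocycle with values in the (fine) sheaf of $\Lambda^{2r}\mathbf{S}^*$-valued vector fields; by a partition of unity subordinate to $\{U_\alpha\}$ it is a coboundary: $X_{\alpha\beta}=Y_\beta-Y_\alpha$ for some $\Lambda^{2r}\mathbf{S}^*$-valued vector fields $Y_\alpha$ on $U_\alpha$. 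One then sets
\[
\iota_{\alpha,r}:=\iota_{\alpha,r-1}+Y_\alpha,
\]
interpreting $Y_\alpha$ as the order-zero-plus-derivation correction $f\mapsto Y_\alpha(f)$ with values in $\Lambda^{2r}$; condition \ref{cond:a} holds by construction, and a direct check using $X_{\alpha\beta}=Y_\beta-Y_\alpha$ together with the vanishing established above shows $\tau_{\alpha\beta}\circ\iota_{\alpha,r}-\iota_{\beta,r}$ now lands in $\Lambda^{\geq 2r+2}_+\mathbf{S}^*$, which is \ref{cond:b} at level $r$. (The jump from $2r+1$ to $2r+2$ uses the parity constraint: the $\xi^\alpha_\mu$ and all the correction terms live in the \emph{even} part $\Lambda_+^{\geq 2}$, so odd-degree discrepancies are automatically absent, exactly as in the proof of Proposition \ref{prop:supermaps-are-diffops}.)

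\textbf{The main obstacle.} The delicate point is the bookkeeping in the inductive step: verifying that the obstruction $c_{\alpha\beta}$ really is, to the relevant order, a \emph{derivation} (so that it is pinned down by a vector field rather than an arbitrary differential operator) and that the cocycle identity holds on triple overlaps \emph{on the nose modulo the next degree}, despite $\tau_{\alpha\beta}$ being only an algebra morphism and not $\C$-linear. This is where one leans on Lemma \ref{lem:iota}, on the fact that compositions of unital algebra homomorphisms differing by order-$2r$ terms produce, modulo order $2r+1$, only the sum of the individual discrepancies (an ``associativity of the leading term'' computation), and on the strictly-even placement of the odd generators forcing the degree shift to be by $2$ rather than by $1$. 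Once these three facts are in hand, the partition-of-unity splitting of the \v{C}ech $1$-cocycle is routine and the induction closes; running it up to $r=\lfloor n/2\rfloor$ then yields, as noted before the lemma, lifts that agree on overlaps and hence glue to the global $j$ of Theorem \ref{thm:jota}.
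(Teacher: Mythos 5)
Your proposal is correct and follows essentially the same route as the paper: induction on $r$, identifying the overlap discrepancy as a $\Lambda^{2r}\mathbf{S}^*$-valued vector field via Lemma \ref{lem:iota}, verifying the (twisted) \v{C}ech cocycle identity on triple overlaps, and splitting it with a partition of unity to correct $\iota_{\alpha,r-1}$. The only cosmetic differences are that the paper seeds the induction by writing the first correction explicitly as $Z=\sum_\mu\xi^\alpha_\mu\,\partial/\partial y^\mu_\alpha$, and it makes the twisting of the cocycle by the linearized transition map $\tilde\tau_{\alpha\beta}$ explicit where you only flag it parenthetically.
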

\begin{proof}
The proof is by induction on $r$. We will abbreviate $\Lambda_{+}^{\geq k}\mathbf{S}^*$ by $A^k$.

Recall we've set $\iota_{\alpha,0}(f)=f\otimes 1$. Formula \eqref{eq:tau} guarantees that
\[
Z=\sum\limits_{\mu=1}^{m}\xi^{\alpha}_{\mu}\frac{\partial}{\partial y_{\alpha}^\mu}
\]
is a vector field with values in $A^{2}$, so defining $\iota_{\alpha,1}=\iota_{\alpha,0}+Z$ it is readly seen that conditions \ref{cond:a} and \ref{cond:b} are satisfied.

Now for the induction step. Suppose we have defined $\iota_{\alpha,r}$ for $r\geq 1$ and that conditions \ref{cond:a} and \ref{cond:b} are met for these morphisms. Define
\begin{equation*}
X_{\alpha\beta}=\tau_{\alpha\beta}\iota_{\beta,r}+\iota_{\alpha,r}\mod A^{2r+4}
\end{equation*}
This is a vector field with values in $C_{\alpha\beta}\otimes \Lambda^{2r+2}\mathbf{S}^*$; in local coordinates its expression is
\begin{equation*}
X_{\alpha\beta}=\sum_{\mu=1}^{m}X_{\alpha\beta}^{\mu}\frac{\partial}{\partial y_\alpha^\mu}.
\end{equation*}
Then for any $\alpha,\beta,\gamma\in\mathrm{A}$ we have
\[
\begin{split}
X_{\alpha\gamma} %
&=\tau_{\alpha\beta}\circ(\tau_{\beta\gamma}\circ\iota_{\gamma,r}-\iota_{\beta,r})\\
&+(\tau_{\alpha\beta}\circ\iota_{\beta,r}-\iota_{\alpha,r})\\
&\overset{!}{=}\tau_{\alpha\beta}X_{\beta\gamma}+X_{\alpha\beta}\\
&=\tilde{\tau}_{\alpha\beta}(X_{\beta\gamma})+X_{\alpha\beta}
\end{split}
\]
where $\tilde{\tau}_{\alpha\beta}$ denotes the odd differential of $\tau_{\alpha\beta}$. This implies $\tilde{\tau}_{\alpha\beta}\in\C_{\alpha\beta}\times\Aut_{\Z}(A)$, i.e. it preserves the $\Z$-graduation; this implies $\tilde{\tau}_{\alpha\beta}$ is $\C_{\alpha\beta}$-linear. Observe also that for all $\alpha$ we have $X_{\alpha\alpha}=0$ and therefore $\tilde{\tau}_{\alpha\beta}(X_{\beta\alpha})=-X_{\alpha\beta}$.

Let $\set[\alpha\in\mathrm{A}]{\psi_\alpha}$ be a partition of unity subordinated to the cover $\set{U_\alpha}$ and define
\begin{equation}\label{eq:x-ext}
X_{\alpha}=\sum\limits_{\gamma\neq\alpha}\psi_{\gamma}\cdot X_{\alpha\gamma};
\end{equation}
it is manifest that $X_{\alpha}$ so defined is a smooth vector field on $\bigcup_{\gamma}U_{\alpha\gamma}$ and it can be smoothly extended by $0$ to the rest of $U_\alpha$. Now we compute

\begin{align*}
X_{\alpha}-\tilde{\tau}_{\alpha\beta}(X_{\beta}) %
&=\sum\limits_{\alpha\neq\gamma\neq\beta}\psi_{\gamma}\cdot X_{\gamma\alpha}-\tilde{\tau}_{\alpha\beta}(\psi_{\gamma}\cdot X_{\gamma\beta})&&\\
&=\sum_{\gamma} \psi_{\gamma}\cdot\Big(X_{\gamma\alpha}-\tilde{\tau}_{\alpha\beta}(X_{\beta\gamma})\Big)&&\text{($\tilde{\tau}_{\alpha\beta}$ is $\C_{\alpha\beta}$-linear)}\\
&=\sum_{\gamma}\psi_{\gamma}(X_{\gamma\alpha}+X_{\beta\gamma})&&(\tilde{\tau}(X_{\gamma\beta})=-X_{\beta\gamma})
\end{align*}
This computation shows that the original vector field $X_{\alpha\beta}$ can be smoothly extended to $U_\alpha\cup U_\beta$ and therefore to all of $M$. Lemma \ref{lem:iota} furnishes the unital homomorphism $\iota_{\alpha,r+1}=\iota_{\alpha,r}+X_{\alpha}$ for which condition \ref{cond:a} is readily satisfied. Condition \ref{cond:b} follows from the construction of $X_{\alpha}$ from $X_{\alpha\beta}$ in equation \eqref{eq:x-ext}: indeed, $\tau_{\alpha\beta}\circ\iota_{\alpha,r+1}+X_{\alpha}\cong\iota_{\beta,r+1}+X_{\beta}\mod A^{2r+2}$ since both sides of the congruence take values on $A^{2r+4}$ by construction.
\end{proof}

To finish the proof of theorem \ref{thm:sheaf=bundle} we construct for each $r\geq 1$ the inclusions $\iota_{\alpha,r}$; then, as observed before, if $r\geq\lfloor\frac{n}{2}\rfloor$ we get the identity $\tau_{\alpha\beta}\circ\iota_{\beta,r}=\iota_{\alpha,r}$. Then we can define $j\colon\C\to\mathcal{O}$ by $j(f)=\sum_{\alpha}\iota_{\alpha.r}(f)$; since theorem \ref{thm:jota} is equivalent to \ref{thm:sheaf=bundle} the latter theorem is completely proved.\qed

Now our claim that our approach yields the same theory as the algebro-geometric approach is completely established considering theorem \ref{thm:equivalent}.
\section{$\Z$-graded supermanifolds}
\noindent Let $\superfold{M}$ be a supermanifold. Recall that for every point $p$ of $M$ we have the bundle $\mathbf{S}_p M$ of odd directions and its dual $\mathbf{S}^*_p M$ and that, choosing a connection of algebras on $\mathcal R M$ we have an isomorphism $\mathcal R M \cong \Lambda\mathbf{S}^* M$. In this section we study supermanifolds arising from the inverse construction.

Let $\xi:E\to M$ and $\widetilde\xi:\widetilde E\to N$ be smooth vector bundles, and let $\Phi:E\to\widetilde E$ be a bundle morphism along a smooth map $\phi:M\to N$. Then the lift $\Lambda\Phi:\Lambda E\to \Lambda\widetilde E$ is a morphism of the associated supermanifolds $(M|\Lambda E)$ and $(N|\Lambda\widetilde E)$. Associated to $\Phi$ we get a map $\Phi^*:\Gamma(\widetilde E)\to\sect{E}$ as in lemaama \ref{lema:smooth-algebras}. This kind of morphisms are not only $\z$-graded but also $\Z$-graded. As a differential operator along $\phi$, the map $\Phi^*$ is as simple as it could be:

\begin{prop}\label{prop:morphism-Z-graded}
In the above setting, the map $\Phi^*$ is a differential operator of order $0$.
\end{prop}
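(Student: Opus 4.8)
The plan is to unwind what $\Phi^*$ does on sections in this $\Z$-graded situation and show directly that the order-one part of the differential operator vanishes. First I would recall the general principle from Proposition~\ref{prop:supermaps-are-diffops}: the obstruction to $\Phi^*$ being a morphism of $\C(M)$-modules (hence of order $0$) is precisely the twisted commutator $\twistcom{\Phi^*}{\phi}{f}$, which by formula~\eqref{eq:commutator-supersmooth-functions-2} equals $(\Phi^*(f)-\phi^*f)\cdot\Phi^*(\cdot)$ for $f\in\C(N)$. So it suffices to prove that $\Phi^*(f)=\phi^*f=f\circ\phi$ for every smooth function $f$ on $N$, i.e. that the nilpotent correction term $\Phi^*(f)-\phi^*f$ is identically zero.

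The key observation is that $\Phi^*\colon\Gamma(\widetilde E)\to\Gamma(E)$ here is not an arbitrary superalgebra morphism lifting $\phi$, but the one induced by a genuine vector-bundle morphism $\Phi\colon E\to\widetilde E$ along $\phi$, namely $\Lambda\Phi$ composed with pullback. Such a morphism is $\Z$-graded: it sends $\Gamma(\Lambda^k\widetilde E)$ into $\Gamma(\Lambda^k E)$ for each $k$, because on decomposable sections $(\Lambda\Phi)^*(s_1\wedge\cdots\wedge s_k)$ is built fibrewise from the linear dual of $\Phi$ on the fibres and therefore lands in $\Lambda^k$ of the pulled-back bundle. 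In particular, on the degree-zero part $\Gamma(\Lambda^0\widetilde E)=\C(N)$ it must land in $\Gamma(\Lambda^0 E)=\C(M)$, so $\Phi^*(f)\in\C(M)$ already has no nilpotent part. Then the commutative square analogous to~\eqref{eq:diagrama2} — which holds because $\varepsilon$ is simply projection onto degree zero and $\Lambda\Phi$ respects it — gives $\varepsilon_M(\Phi^*(f))=\phi^*(\varepsilon_N f)=f\circ\phi$; combining with $\Phi^*(f)\in\C(M)=\Gamma(\Lambda^0 E)$ and the fact that $\varepsilon_M$ restricted to degree zero is the identity, we conclude $\Phi^*(f)=f\circ\phi$ exactly.

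Feeding this back into~\eqref{eq:commutator-supersmooth-functions-2} we get $\twistcom{\Phi^*}{\phi}{f}\equiv 0$ for all $f\in\C(N)$, which by Definition~\ref{def:dif-op-along} (the inductive definition of the order of a differential operator along $\phi$, from the appendix) says precisely that $\Phi^*$ has order $0$, i.e. it is $\C(N)$-linear in the twisted sense: $\Phi^*(f\eta)=(f\circ\phi)\,\Phi^*(\eta)$. That is the claim.

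I expect the only genuine point requiring care — the "main obstacle," though it is mild — is articulating precisely why the lifted morphism $\Lambda\Phi$ is strictly $\Z$-graded and not merely $\z$-graded, and hence why $\Phi^*$ preserves the degree filtration sharply (degree $k$ to degree $k$, not merely $\geq k$ to $\geq k$ as in Proposition~\ref{prop:preserve-filtration}). This comes down to the functoriality of $\Lambda(-)$ applied to a bundle map: $\Lambda\Phi$ is built exclusively from $\Phi$ with no constant-order correction, unlike a general supersmooth map whose $\Phi$ may add lower-degree (here: $\mathcal{R}^{\geq 2}$) terms to coordinates as in~\eqref{eq:tau}. Once that structural fact is in place, everything else is the routine commutator computation already recorded in~\eqref{eq:commutator-supersmooth-functions-2}.
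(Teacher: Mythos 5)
Your proof is correct and follows essentially the same route as the paper: the paper simply observes that $\Lambda\Phi$, being induced fibrewise by a bundle map, is $\C(M)$-linear, so the twisted commutator $\twistcom{\Phi^*}{\phi}{f}$ vanishes. Your derivation of $\Phi^*(f)=f\circ\phi$ from strict $\Z$-gradedness plus the augmentation square is just a slightly more explicit way of establishing that same $\C(M)$-linearity, and the rest of the argument coincides.
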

\begin{proof}
It suffices to observe that $\Lambda\Phi$ is $\C(M)$-linear, that is
\[
\Lambda\Phi(f\omega)=(f\circ\phi)\Lambda\Phi(\omega)
\]
for all sections $\omega$ of $\Lambda(\widetilde E)$. Therefore the commutator $\twistcom{\Phi^*}{\phi}{f}$ vanishes for all smooth functions $f$ on $M$.
\end{proof}

In this setting we can make the identification $\mathbf{S}M=E$ and the supermanifold structure is thus completely determined by the vector bundle $\mathbf{S}M$. 

\begin{defn}
A \textbf{$\Z$-graded supermanifold} is a supermanifold of the form $(M|\Lambda(\mathbf{S}M))$ for some vector bundle $\mathbf{S}M$. A \textbf{$\Z$-graded smooth supermap} is the lifting to the exterior bundle of a bundle map $F:\mathbf{S}M\to\mathbf{S}N$.
\end{defn}

\noindent These supermanifolds are also referred to as \textbf{split supermanifolds}. We will denote such a supermanifold as $(M|\mathbf{S}M)$. Batchelor's theorem (corollary \ref{cor:batchelors}) is tantamount to saying that every smooth supermanifold $\superfold{M}$ is (non-naturally) isomorphic to a split supermanifold.

\section{The tangent and cotangent superbundles}
\noindent In this section we give the construction of the tangent and cotangent superbundles of a smooth supermanifold. We'll rely on the contents of appendix \ref{app:diff-ops}. We begin with a techincal albeit rather simple fact.

\begin{lema}\label{lema:two-bundle-maps}
Given a supersmooth map $(\phi|\Phi):\superfold{M}\to\superfold{N}$ there are two bundle maps $F\colon\mathbf{S}M\to\mathbf{S}N$ and $F^*\colon\mathbf{S}^*N\to\mathbf{S}^*M$ naturally associated to it.
\end{lema}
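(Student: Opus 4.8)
The plan is to extract the two bundle maps from the differential operator $\Phi$ by looking at its ``lowest-order'' behaviour on the filtration. First I would recall from Proposition \ref{prop:preserve-filtration} and Corollary \ref{cor:preserve-filtration} that $\Phi$ preserves the filtration $\sect{\mathcal{R}^{\geq k}N}\supseteq\sect{\mathcal{R}^{\geq k+1}N}$ and hence induces well-defined bundle maps $\Phi^k\colon\mathcal{R}^{\geq k}N/\mathcal{R}^{\geq k+1}N\to\mathcal{R}^{\geq k}M/\mathcal{R}^{\geq k+1}M$ over the smooth map $\phi$. Specializing to $k=1$ gives precisely a bundle map $\Phi^1\colon\mathbf{S}^*N\to\mathbf{S}^*M$ over $\phi$ (pulled back along $\phi$ in the usual bundle-map sense, i.e.\ covering $\phi$), since by Definition \ref{def:odd-directions} the fibre $\mathbf{S}^*_pM$ is exactly $\mathcal{R}^{\geq 1}_pM/\mathcal{R}^{\geq 2}_pM$. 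I set $F^*\colon\mathbf{S}^*N\to\mathbf{S}^*M$ to be this $\Phi^1$.

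Next I would produce $F\colon\mathbf{S}M\to\mathbf{S}N$ by dualizing $F^*$ fibrewise. For each $p\in M$ the linear map $F^*_p\colon\mathbf{S}^*_{\phi(p)}N\to\mathbf{S}^*_pM$ has a transpose $(F^*_p)^{\mathsf{t}}\colon\mathbf{S}_pM\to\mathbf{S}_{\phi(p)}N$, since $\mathbf{S}_pM$ and $\mathbf{S}_pN$ are by Definition \ref{def:odd-directions} the duals of $\mathbf{S}^*_pM$ and $\mathbf{S}^*_pN$. Assembling these transposes over all $p$ yields a fibrewise-linear map $F\colon\mathbf{S}M\to\mathbf{S}N$ covering $\phi$; smoothness is automatic because $F^*$ is a smooth bundle map and transposition is a smooth (indeed algebraic) operation on the total spaces of the relevant $\Hom$-bundles, so locally in trivializations $F$ is given by transposing a smoothly-varying matrix. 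This establishes both maps and their naturality, the latter being a direct consequence of the functoriality already recorded in Corollary \ref{cor:preserve-filtration} (composition of supersmooth maps induces composition of the $\Phi^1$'s, and transposition is contravariant).

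The only genuinely delicate point — and the step I would treat most carefully — is checking that $\Phi^1$ is genuinely well-defined and independent of the various choices implicit in the setup: it must not depend on the choice of representatives modulo $\mathcal{R}^{\geq 2}$, and (more subtly) although $\Phi$ itself is only a differential operator along $\phi$ and is \emph{not} uniquely determined by $\phi$, the induced map on $\mathcal{R}^{\geq 1}/\mathcal{R}^{\geq 2}$ should still be a bona fide bundle map over $\phi$ rather than a higher-order operator. This is exactly what the identity $\twistcom{\Phi}{\phi}{f}(\eta)=(\Phi(f)-\phi^*(f))\Phi(\eta)$ from \eqref{eq:commutator-supersmooth-functions-2} buys us: since $\Phi(f)-\phi^*(f)$ lands in $\sect{\mathcal{R}^{\geq 1}M}$ (indeed in $\mathcal{R}^{\geq 2}$, as shown in the proof of Corollary \ref{cor:preserve-filtration}), multiplying a section of $\mathcal{R}^{\geq 1}N$ by a pulled-back function commutes with $\Phi$ modulo $\mathcal{R}^{\geq 2}M$, so $\Phi^1$ is $\C(M)$-linear after identifying source sections via $\phi^*$, i.e.\ it is pointwise and hence a bundle map. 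I would spell this out in a sentence or two and then conclude that $F^*:=\Phi^1$ and $F:=(\Phi^1)^{\mathsf{t}}$ are the asserted maps.
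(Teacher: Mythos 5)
Your proposal is correct and follows the paper's own (much terser) argument exactly: the paper simply declares $F^*$ to be the map $\Phi^1$ of corollary \ref{cor:preserve-filtration} and $F$ to be its dual. The extra care you take in verifying that $\Phi^1$ is $\C(M)$-linear over $\phi^*$ via identity \eqref{eq:commutator-supersmooth-functions-2} is precisely the content already established in the proof of corollary \ref{cor:preserve-filtration}, so nothing is missing.
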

\begin{proof}
The map $F^*$ is just the bundle map $\Phi^1$ of corollary \ref{cor:preserve-filtration} and $F$ is its dual.
\end{proof}

Now let's recall that the tangent bundle of a smooth manifold $M$ has as sections the derivations of the algebra $\C(M)$. That is $\der(\C(M))=\sect{TM}$. This is a Lie algebra under the commutator of vector fields. If $\superfold{M}$ is any supermanifold, then the space of its superderivations should then be a Lie superalgebra. Because of the inclusion $\C(M)\hookrightarrow\sect{\mathcal{R}M}$ it is manifest that the Lie algebra of vector fields on $M$ must be a Lie subalgebra of the even part of this superalgebra. Now each fibre $\mathcal{R}_p M$ is a free supercommutative algebra with only odd generators, the space $\mathbf{S}^*_pM$. According to corollary \ref{cor:odd-derivations-ins}, the Lie superalgebra $\sder(\mathcal{R}_{p,+} M|\mathcal{R}_{p,-} M)$ is generated by $\mathbf{S}_p M$. Now, fixing an isomorphism $\phi:\Lambda\mathbf{S}^*_p M\to\mathcal{R}_p M$ we know (theorem \ref{thm:superderivations}) the space of superderivations of $\mathcal{R}_p M$ is isomorphic to $\mathbf{S}_p M\otimes\Lambda\mathbf{S}^*_p M$. 

\begin{defn}\label{def:pointwise-derivations}
For every point $p$ of $M$ the space 
\[
\sder_p(\mathcal{R}M)=\sder(\mathcal{R}_{p} M)
\]
 of \textbf{pointwise derivations} of $\mathcal{R}M$. The vector bundle 
\[
\sder(\mathcal{R}M):=\bigsqcup_{p\in M}\sder_p(\mathcal{R}M).
\]
is the \textbf{bundle of pointwise derivations} of $\superfold{M}$.
\end{defn}
The elemaaents of the fibres of this bundle are derivations of the corresponding fibre of $\mathcal{R}M$. A section of this bundle, then, acts only fibrewise and it's linear on each fibre and is therefore linear over the ring $\C(M)$. That is, if $\eta$ is a superfunction and $f$ is smooth on $M$ we have $D(f\eta)_p=D(f(p)\eta_p)=f(p)D(\eta_p)$ for every section $D$ of $\sder(\mathcal{R}M)$. Summarizing:

\begin{prop}\label{prop:der-order-zero}
The sections of the bundle $\sder(\mathcal{R}M)$ are differential operators of order $0$ over $\C(M)$.
\end{prop}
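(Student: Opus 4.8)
The plan is to unpack the definition of a differential operator of order $0$ over $\C(M)$ (given in appendix \ref{app:diff-ops}) and verify that a section $D$ of $\sder(\mathcal{R}M)$ satisfies it. Concretely, recall that $\Phi\colon\sect{\mathcal{R}M}\to\sect{\mathcal{R}M}$ is a differential operator of order $0$ over $\C(M)$ precisely when the commutator $[\Phi,f]\colon\eta\mapsto\Phi(f\eta)-f\,\Phi(\eta)$ vanishes for every $f\in\C(M)$. So the whole task reduces to showing that $D(f\eta)=f\,D(\eta)$ for all $f\in\C(M)$ and all $\eta\in\sect{\mathcal{R}M}$.

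First I would observe that $D$, being a section of the bundle $\sder(\mathcal{R}M)=\bigsqcup_{p\in M}\sder(\mathcal{R}_pM)$, acts purely fibrewise: for each $p\in M$ the value $D_p$ is an element of $\sder(\mathcal{R}_pM)$, i.e.\ a (super)derivation of the single fibre algebra $\mathcal{R}_pM$, and $(D\eta)_p=D_p(\eta_p)$. Then I would evaluate the commutator pointwise:
\[
\big(D(f\eta)\big)_p=D_p\big((f\eta)_p\big)=D_p\big(f(p)\,\eta_p\big).
\]
Here $f(p)\in\R$ is just a scalar, and $D_p$ is in particular $\R$-linear as an endomorphism of $\mathcal{R}_pM$, so $D_p(f(p)\,\eta_p)=f(p)\,D_p(\eta_p)=f(p)\,(D\eta)_p=\big(f\,D\eta\big)_p$. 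Since this holds at every point $p$, we conclude $D(f\eta)=f\,D(\eta)$, hence $[D,f]=0$ for all $f\in\C(M)$, which is exactly the statement that $D$ is a differential operator of order $0$ over $\C(M)$.

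There is essentially no hard step here; the only thing to be careful about is the bookkeeping distinction between $f\in\C(M)$ (a function, which varies from point to point) and $f(p)\in\R$ (a scalar, constant on the fibre $\mathcal{R}_pM$). The point is precisely that a fibrewise-defined derivation cannot ``see'' the variation of $f$ across $M$ — it only ever multiplies by the scalar $f(p)$ on each fibre — which is why no Leibniz-type correction term appears and the order drops to $0$. For completeness one should also note that $D\eta$ is indeed a smooth section when $\eta$ is, which follows from smoothness of the bundle $\sder(\mathcal{R}M)$ and of $D$ as a section of it, so that $\Phi=D$ genuinely maps $\sect{\mathcal{R}M}$ to $\sect{\mathcal{R}M}$ before we even test the commutator condition.
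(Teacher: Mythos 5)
Your argument is correct and coincides with the paper's own reasoning: the text preceding the proposition observes precisely that a section of $\sder(\mathcal{R}M)$ acts fibrewise, so $D(f\eta)_p=D_p(f(p)\eta_p)=f(p)D_p(\eta_p)$ by $\R$-linearity of $D_p$, whence $[D,f]=0$ and the operator has order $0$. Your added remark distinguishing $f\in\C(M)$ from the scalar $f(p)$ is exactly the point the paper makes implicitly.
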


Clearly in the space $\sect{\sder(\mathcal{R}M)}$ we are missing a lot of superderivations of $\sect{\mathcal{R}M}$, as we expect the graded Leibniz rule to apply. That is, if $D$ is a superderivation of $\sect{\mathcal{R}M}$ we expect
\begin{equation}\label{eq:Leibniz}
D(\eta\psi)=D(\eta)\psi+(-1)^{\parity{\eta}\parity{D}}\eta D(\psi)
\end{equation}
and this identity is clearly not linear for smooth functions on $M$. Nevertheless this is the equation all superderivations must satisfy. This has an important consequence:

\begin{prop}
Superderivations of the algebra $\sect{\mathcal{R}M}$ are differential operators of order at most $1$ (cf. definition \ref{def:diff-op}).
\end{prop}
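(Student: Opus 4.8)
The plan is to show that a superderivation $D$ of $\sect{\mathcal{R}M}$ is a differential operator of order at most $1$ by estimating the twisted commutator with multiplication operators. Recall from appendix \ref{app:diff-ops} that an operator $\Phi$ on sections of a bundle is a differential operator of order at most $1$ precisely when, for any two smooth functions $f,g\in\C(M)$, the iterated commutator $\twistcom{\twistcom{\Phi}{\phi}{f}}{\phi}{g}$ vanishes (here $\phi=\id_M$ so the twisting is trivial). Since $\C(M)\hookrightarrow\sect{\mathcal{R}M}$ sits inside the even part, a smooth function $f$ has parity $0$, so the super-Leibniz rule \eqref{eq:Leibniz} gives $D(f\eta)=D(f)\eta+fD(\eta)$, with no sign. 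Writing $M_f$ for multiplication by $f$, this says $[D,M_f]\eta=D(f)\eta$, i.e. $[D,M_f]=M_{D(f)}$ as operators. The key point is that $D(f)$ is a superfunction, but $M_{D(f)}$ is an order-zero operator (multiplication operators are always order zero), so commuting once more with any $M_g$ yields $[[D,M_f],M_g]=[M_{D(f)},M_g]=M_{D(f)}M_g-M_gM_{D(f)}=0$ by commutativity of $\sect{\mathcal{R}M}$ (the algebra is supercommutative, and multiplication operators by elements of a commutative ring commute).

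Concretely, first I would verify the identity $\twistcom{D}{\id}{f}(\eta)=D(f)\cdot\eta$ for all $f\in\C(M)$ and $\eta\in\sect{\mathcal{R}M}$, which is just \eqref{eq:Leibniz} with $\parity{f}=0$ rearranged. Second, I would observe that the resulting operator $\eta\mapsto D(f)\cdot\eta$ is left multiplication by the fixed superfunction $D(f)$, hence $\C(M)$-linear, hence a differential operator of order $0$ (this is the content of proposition \ref{prop:der-order-zero}-type reasoning: order-zero operators are exactly the $\C(M)$-linear ones, equivalently the ones commuting with all $M_g$). Third, I would conclude: the double commutator $\twistcom{\twistcom{D}{\id}{f}}{\id}{g}$ is the commutator of an order-zero operator with $M_g$, which is zero because $\sect{\mathcal{R}M}$ is commutative as an ungraded algebra (multiplication by two superfunctions commutes regardless of parities — supercommutativity only introduces signs when reordering the arguments, not the operators $M_a,M_b$ which satisfy $M_aM_b=M_{ab}=M_{ba}=M_bM_a$). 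By the definition of differential operator order (definition \ref{def:diff-op}), this means $D$ has order at most $1$.

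I do not expect a serious obstacle here; the only thing to be careful about is bookkeeping of signs in the twisted commutator and making sure that the twisting map $\gamma^*$ (from the supervector-space structure on $\End$) does not interfere. Since all the $f,g$ we commute against are even, every sign $(-1)^{\parity{f}\parity{D}}$ and $(-1)^{\parity{g}\,\cdot}$ collapses to $1$, so the computation is the same as in the classical (ungraded) case. The one genuinely substantive input is the earlier observation that multiplication operators by superfunctions are mutually commuting and $\C(M)$-linear; granting that, the statement is immediate. It may also be worth remarking that this bound is sharp: the odd superderivations $v_\mu\ins$ of proposition \ref{prop:odd-derivations-ins} are genuine first-order operators, not order zero, so "at most $1$" cannot be improved in general.
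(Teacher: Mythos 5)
Your proof is correct and follows essentially the same route as the paper: compute the commutator $[D,f]$ for a smooth (hence even) function $f$ via the graded Leibniz rule and then iterate once more. In fact your version is slightly cleaner, since you correctly observe that $\parity{f}=0$ collapses the sign $(-1)^{\parity{f}\parity{D}}$ to $1$, giving $[D,f]=M_{D(f)}$ (an order-zero multiplication operator) uniformly for both parities of $D$, whereas the paper's own computation carries a stray factor $(-1)^{\parity{D}}$ and draws an unnecessary case distinction between even and odd $D$.
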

\begin{proof}
Let $D$ be a superderivation, $f$ a smooth function on $M$ and $\eta$ an arbitrary smooth superfunction. Considering the commutator $[D,f](\eta)$ (see \eqref{eq:definition-commutator}) we get
\[
[D,f](\eta)=D(f\eta)-fD(\eta)
\]
Since $D$ is a derivation, the Leibniz rule \eqref{eq:Leibniz} implies
\[
[D,f](\eta)=D(f)\eta+(-1)^{\parity{D}}fD(\eta)-fD(\eta).
\]
so if $D$ is an odd derivation the commutator above is zero for any smooth superfunction, whereas if $D$ is even we get zero above when iterating the commutator. Acording to our definition of a linear differential operator (definition \ref{def:diff-op}) the result follows.
\end{proof}

\begin{cor}\label{cor:superderivations-differential-operators}
The space of superderivations of $\sect{\mathcal{R}M}$ is the space of sections of a bundle $\Der\mathcal{R}M$ that is a sub-bundle of $\mathbf{Diff}^{\leq 1}(\mathcal{R}M,\mathcal{R}M)$ of differential operators on the vector bundle $\mathcal{R}M$.
\end{cor}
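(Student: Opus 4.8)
The statement packages together three facts already proved: (i) superderivations of $\sect{\mathcal{R}M}$ are differential operators of order at most $1$ (the preceding proposition); (ii) \emph{pointwise} derivations form the sections of a bundle $\sder(\mathcal{R}M)$ which are order-$0$ operators (Definition \ref{def:pointwise-derivations}, Proposition \ref{prop:der-order-zero}); and (iii) fibrewise, $\sder(\mathcal{R}_pM)\cong\mathbf{S}_pM\otimes\Lambda\mathbf{S}^*_pM$ by Theorem \ref{thm:superderivations}. The plan is to assemble these into a vector bundle $\Der\mathcal{R}M$ and identify its section space with $\sder(\sect{\mathcal{R}M})$. First I would define, for each $p\in M$, the fibre $(\Der\mathcal{R}M)_p$ as the space of those $1$-jets of differential operators $\mathcal{R}M\to\mathcal{R}M$ at $p$ whose symbol and value satisfy the graded Leibniz identity \eqref{eq:Leibniz} at $p$ — equivalently, the subspace of $\mathbf{Diff}^{\leq 1}(\mathcal{R}M,\mathcal{R}M)_p$ cut out by the (linear, closed) conditions expressing that the principal symbol is a derivation-type symbol. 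Since these conditions are fibrewise linear and vary smoothly, $\Der\mathcal{R}M$ is a smooth subbundle of $\mathbf{Diff}^{\leq 1}(\mathcal{R}M,\mathcal{R}M)$.

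\medskip
\noindent Next I would check that $\sect{\Der\mathcal{R}M}=\sder(\sect{\mathcal{R}M})$. One inclusion is local: given a superderivation $D$ of $\sect{\mathcal{R}M}$, restrict to a trivializing neighbourhood $U$ where $\mathcal{R}U\cong U\times\Lambda\mathbf{S}^*$; then $D|_U$ is determined by its action on the generators $\C(U)$ and on a frame of odd generators, and by the order-$\leq 1$ bound from the previous proposition together with the $\z$-grading it factors through a section of $\mathbf{Diff}^{\leq 1}$ satisfying the Leibniz conditions pointwise, hence a section of $\Der\mathcal{R}M|_U$; these patch by a partition of unity. Conversely, a section $D$ of $\Der\mathcal{R}M$ is a differential operator of order $\leq 1$ whose $1$-jet at every point satisfies \eqref{eq:Leibniz}; since the Leibniz identity for $D(\eta\psi)$ only involves the operator and its symbol at each point, satisfying it pointwise forces it globally, so $D\in\sder(\sect{\mathcal{R}M})$. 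Finally I would note $\sder(\mathcal{R}M)=\ker\bigl(\Der\mathcal{R}M\to \mathbf{Diff}^{\leq 1}/\mathbf{Diff}^{\leq 0}\bigr)$ to record that the pointwise derivations sit inside as the order-$0$ part, matching Proposition \ref{prop:der-order-zero}.

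\medskip
\noindent The main obstacle is verifying that $\Der\mathcal{R}M$ is genuinely a \emph{subbundle} — i.e. that the Leibniz constraint has locally constant rank — rather than merely a subset of $\mathbf{Diff}^{\leq 1}(\mathcal{R}M,\mathcal{R}M)$ with possibly jumping fibre dimension. This is handled by the fibrewise computation: Theorem \ref{thm:superderivations} gives $\sder(\mathcal{R}_pM)\cong\mathbf{S}_pM\otimes\Lambda\mathbf{S}^*_pM$ of dimension $n\cdot 2^{n}$ independent of $p$, and the even/odd splitting into $\der_{\z}$ plus the module generated by $\mathbf{S}_pM$ is exactly the order-$0$ plus order-$1$ stratification; local frames for $\mathbf{S}M$ and $\mathbf{S}^*M$ then produce explicit local frames for $\Der\mathcal{R}M$, exhibiting it as a smooth vector bundle. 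Once the constant rank is in hand, the subbundle claim is immediate and the section-space identification above completes the proof.
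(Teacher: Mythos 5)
Your proposal is correct and follows essentially the same route as the paper, which states this corollary without a separate proof and lets it rest on the preceding proposition (superderivations are differential operators of order at most $1$) together with the local construction that immediately follows, where $\Der(\mathcal{R}U)\cong(TU\oplus\mathbf{S}U)\otimes\Lambda\mathbf{S}^*U$ on trivializing neighbourhoods and theorem \ref{thm:superderivations} supplies the fibrewise identification. One small slip in your constant-rank check: the fibre dimension $n\cdot 2^n$ you quote is that of the \emph{pointwise} derivations $\sder(\mathcal{R}_pM)\cong\mathbf{S}_pM\otimes\Lambda\mathbf{S}^*_pM$ only; the full fibre of $\Der\mathcal{R}M$ also contains the vector-field directions, so its dimension is $(m+n)2^n$ and your local frames must include coordinate vector fields on $M$ alongside the frames of $\mathbf{S}M$ and $\mathbf{S}^*M$.
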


We now proceed to the construction of the bundle of superderivations. Since smooth vector fields must be even elemaaents of the Lie superalgebra $\sder(\sect{\mathcal{R}M})$ we must include $\sect{TM}$ in the even subspace of $\sder(\sect{\mathcal{R}M}$. On the other hand, the space of generators of $\sder(\sect{\mathcal{R}M})$ must also contain $\sect{\mathbf{S}M}$ (by theorem \ref{thm:superderivations}) and at the same time the pointwise derivations (definition \ref{def:pointwise-derivations}). With these considerations in mind, let $f$ be a smooth function on $M$ and $D$ a superderivation of $\sect{\mathcal{R}M}$. We know that $D(f)$ is an even superfunction and, because of the requirements imposed by the Leibniz identity \eqref{eq:Leibniz}, we know it must be of the form $X(f)+\eta(f)$, with $X$ a vector field on $M$ and $\eta$ an even nilpotent superfunction (i.e., a section of $\mathcal{R}_{+}^{\geq 2}M$). So there seems to be a decomposition
\[
\sder(\sect{\mathcal{R}M})\cong \sect{\mathcal{R}M\otimes(TM\oplus\mathbf{S}M)},
\]
where the $\mathbf{S}M$ factor accounts for the generators of the space of superderivations. This is indeed the case because on a trivializing neighbourhood $U$ of $\mathcal{R} M$ around $p$ we get the isomorphism
\[
\sect{\sder\mathcal{R}U}\cong\sect{\mathbf{S}U\otimes\Lambda\mathbf{S}^*U}
\]
when choosing an isomorphism $\phi_U:\Lambda\mathbf{S}^*U\to\mathcal{R}U$. Batchelor's theorem (corollary \ref{cor:batchelors})produces a global isomorphism
\begin{equation}
\sder(\sect{\mathcal{R}M})\cong\sect{\Lambda\mathbf{S}^*M\otimes(TM\oplus\mathbf{S}M)}.
\end{equation}
which  depends on the bundle isomorphism $\phi:\Lambda\mathbf{S}M\to\mathcal{R}M$. We have to prove that this is indeed the vector bundle we sought.

\begin{teo}\label{thm:exact-sequence-derivations}
The following sequence of vector bundles is exact
\begin{equation}\label{eq:exact-sequence-derivations}
\xymatrix{
0\ar[r]&{\sder(\mathcal{R} M)}\ar@{^{(}->}[r]^{\iota}&\Der(\mathcal{R}M)\ar@{->>}[r]^{\sigma}&{\mathcal{R}M}\otimes TM\ar[r]& 0
}
\end{equation}
Here $\sigma$ denotes the principal symbol (definition \ref{def:principal-symbol}).
\end{teo}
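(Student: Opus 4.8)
The plan is to verify exactness at each of the three spots in \eqref{eq:exact-sequence-derivations}. Since all the terms are bundles of sections of bundles over $M$ and the maps are bundle maps, it suffices to work pointwise, or rather on a trivializing neighbourhood $U$ of $\mathcal{R}M$ where we may fix an isomorphism $\phi_U\colon\Lambda\mathbf{S}^*U\to\mathcal{R}U$ of superalgebra bundles; by the partition-of-unity/Batchelor arguments used repeatedly above, exactness on such local models glues to global exactness. So fix such a $U$ and identify $\mathcal{R}U$ with $\Lambda\mathbf{S}^*U$.

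First I would treat exactness at $\sder(\mathcal{R}M)$, i.e. injectivity of $\iota$. This is immediate: $\iota$ is the inclusion of fibrewise-linear (order $0$ over $\C(M)$, by proposition \ref{prop:der-order-zero}) superderivations into all superderivations, and a section that is zero as an operator on $\sect{\mathcal{R}M}$ is zero. Next, exactness at $\mathcal{R}M\otimes TM$, i.e. surjectivity of $\sigma$. Here I invoke the discussion preceding the theorem: for a superderivation $D$ and a smooth $f$, the Leibniz rule \eqref{eq:Leibniz} forces $D(f)=X(f)+\eta(f)$ with $X\in\sect{TM}$ and $\eta$ an even nilpotent superfunction, so the principal symbol of $D$ (as a first-order operator, cf. the proposition that superderivations are differential operators of order $\le 1$) is $1\otimes X$ for some vector field $X$, hence lands in $\mathcal{R}M\otimes TM$. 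Conversely, given a decomposable section $a\otimes X$ with $a\in\sect{\mathcal{R}M}$ and $X\in\sect{TM}$, one lifts $X$ to a superderivation $\widetilde X$ of $\sect{\mathcal{R}M}$ (e.g. using the connection of algebras on $\mathcal{R}M$ furnished by Batchelor's theorem, corollary \ref{cor:batchelors}, so that $\widetilde X$ acts as $X$ on $\C(M)$-coefficients and trivially on the $\Lambda\mathbf{S}^*$-part), and then $a\cdot\widetilde X$ — multiplication by the superfunction $a$ — is again a superderivation with $\sigma(a\widetilde X)=a\otimes X$. Extending by linearity gives surjectivity.

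The main work, and the main obstacle, is exactness in the middle: $\ker\sigma=\operatorname{im}\iota$. The inclusion $\operatorname{im}\iota\subseteq\ker\sigma$ is clear since fibrewise-linear operators have vanishing principal symbol. For the reverse, let $D$ be a superderivation with $\sigma(D)=0$; this says exactly that $D$ is $\C(M)$-linear, i.e. $D(f\eta)=fD(\eta)$ for all $f\in\C(M)$ and $\eta\in\sect{\mathcal{R}M}$, equivalently the commutator $\twistcom{D}{\id}{f}$ vanishes. A $\C(M)$-linear operator on sections of a vector bundle is fibrewise, hence $D$ restricts on each fibre $\mathcal{R}_pM$ to a linear endomorphism; and because $D$ satisfies the graded Leibniz identity globally it does so fibrewise, so $D_p\in\sder(\mathcal{R}_pM)=\sder_p(\mathcal{R}M)$. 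Thus $D$ is a section of $\sder(\mathcal{R}M)$, i.e. $D\in\operatorname{im}\iota$. The one subtlety to check carefully is that $\C(M)$-linearity really does imply the operator is support-decreasing/fibrewise in this setting — this is a standard fact for differential operators of order $0$, and it is exactly proposition \ref{prop:der-order-zero} read backwards, so I would cite that. Finally I would remark that all three maps are morphisms of vector bundles (the local descriptions $\sder\mathcal{R}U\cong\mathbf{S}U\otimes\Lambda\mathbf{S}^*U$ and $\Der\mathcal{R}U\cong\Lambda\mathbf{S}^*U\otimes(TU\oplus\mathbf{S}U)$ make this transparent) and that exactness, having been established fibrewise, holds as an exact sequence of bundles.
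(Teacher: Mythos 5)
Your proposal is correct and follows essentially the same route as the paper: the inclusion $\im(\iota)=\ker(\sigma)$ is handled by observing that vanishing principal symbol characterizes order-zero (i.e.\ $\C(M)$-linear, hence fibrewise) operators, and surjectivity of $\sigma$ is obtained by splitting it with a connection on $\mathcal{R}M$ — the paper's map $g(X\otimes r)=r\nabla_X$ is exactly your $a\cdot\widetilde X$. The only cosmetic difference is that you spell out the ``$\C(M)$-linear implies fibrewise'' step, which the paper leaves implicit.
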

\begin{proof}
We've seen that the bundle $(TM\oplus\mathbf{S}M)\otimes\Lambda\mathbf{S}^*M$ actually contains operators that act on the superfunctions as derivations, and that we can choose an isomorphism $\Der(\mathcal{R}M)\cong (TM\oplus\mathbf{S}M)\otimes\mathcal{R}M$. We need to verify that \eqref{eq:exact-sequence-derivations} is actually exact.

We know (proposition \ref{prop:der-order-zero}) the sections of $\sder\mathcal{R}M$ are differential operators of order zero, so their principal symbol is zero. This proves $\im(\iota)\subseteq\ker(\sigma)$. Also, this condition is sufficient for a differential operator to be of order zero, so $\im(\iota)=\ker(\sigma)$.

In order to prove that $\sigma$ is surjective we choose a connection $\nabla$ of the bundle $\mathcal{R}M$
. Let $X\otimes r$ be a section of $TM\otimes\mathcal{R}M$ and define a map $g:\sect{TM\otimes\mathcal{R}M}\to\sect{\Der(\mathcal{R}M)}$ by $g(X\otimes r)=r\nabla_X$, which is evidently a differential operator of order $1$, so we compute its principal symbol: let $\eta$ be a superfunction and $f$ a smooth function, then
\begin{equation}
	\begin{split}
	\sigma(r\nabla_X;f)(\eta) &=[r\nabla_X,f](\eta)\\
							  &=r\nabla_X(f\eta)-fr\nabla_X(\eta)\\
							  &=rX(f)\eta +rf\nabla_X(\eta)-fr\nabla_X(\eta)\\
							  &=rX(f)\eta\hspace{5cm}\text{(because $fr=rf$)}\\
							  &=X\otimes r(f\otimes\eta)
	\end{split}
\end{equation}
which proves that $g\circ\sigma$ is the identity on $TM\otimes\mathcal{R}M$, making $\sigma$ a surjective map.
\end{proof}

\begin{cor}\label{cor:split-supertangent}
If a split supermanifold $\superfoldi{M}$ has trivial bundle of odd directions, i.e. $\mathbf{S}M\cong M\times\mathbf{S}$ for some vector space $\mathbf{S}$, then there is a natural isomorphism
\[T(M|\mathbf{S}M)\cong \Lambda\mathbf{S}^*M\otimes(TM\oplus\mathbf{S}M)
\]
\end{cor}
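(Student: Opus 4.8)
The plan is to specialize the exact sequence of Theorem~\ref{thm:exact-sequence-derivations} to the split trivial case and to show that the triviality hypothesis upgrades the (a~priori non-canonical) splitting used there into a canonical one. Recall that for a split supermanifold $\superfoldi{M}$ the structure bundle is $\mathcal{R}M=\Lambda\mathbf{S}^*M$ and the tangent superbundle is $T\superfoldi{M}=\Der(\mathcal{R}M)$, so Theorem~\ref{thm:exact-sequence-derivations} furnishes the short exact sequence
\[
0\to\sder(\mathcal{R}M)\xrightarrow{\iota}\Der(\mathcal{R}M)\xrightarrow{\sigma}\mathcal{R}M\otimes TM\to 0 .
\]
First I would identify the kernel term: applying Theorem~\ref{thm:superderivations} fibrewise with $V^*=\mathbf{S}^*_pM$ (no auxiliary isomorphism $\phi$ is needed here, since now $\mathcal{R}_pM$ \emph{is} $\Lambda\mathbf{S}^*_pM$) gives a natural isomorphism of vector bundles $\sder(\mathcal{R}M)\cong\mathbf{S}M\otimes\Lambda\mathbf{S}^*M$.

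Next I would use the hypothesis $\mathbf{S}M\cong M\times\mathbf{S}$ to split $\sigma$ canonically. Triviality of $\mathbf{S}M$ gives triviality of $\mathcal{R}M=\Lambda\mathbf{S}^*M\cong M\times\Lambda\mathbf{S}^*$, and the corresponding flat connection $\nabla^{0}$ is an \emph{algebra} connection because the fibrewise multiplication is constant in this trivialization; hence $r\nabla^{0}_X\in\Der(\mathcal{R}M)$ for every $r\in\sect{\mathcal{R}M}$ and every vector field $X$ on $M$. Feeding $\nabla=\nabla^{0}$ into the surjectivity argument of the proof of Theorem~\ref{thm:exact-sequence-derivations}, the map $g\colon\mathcal{R}M\otimes TM\to\Der(\mathcal{R}M)$, $r\otimes X\mapsto r\nabla^{0}_X$, is a canonical right inverse of $\sigma$. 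Therefore the sequence splits canonically and
\[
\Der(\mathcal{R}M)\;\cong\;\sder(\mathcal{R}M)\oplus\big(\mathcal{R}M\otimes TM\big).
\]

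Combining the two steps and distributing $\Lambda\mathbf{S}^*M$ over the direct sum yields
\[
T\superfoldi{M}=\Der(\mathcal{R}M)\cong\big(\mathbf{S}M\otimes\Lambda\mathbf{S}^*M\big)\oplus\big(TM\otimes\Lambda\mathbf{S}^*M\big)\cong\Lambda\mathbf{S}^*M\otimes\big(TM\oplus\mathbf{S}M\big),
\]
which is the asserted isomorphism; it is natural since every isomorphism used above is canonical once the trivialization of $\mathbf{S}M$ is fixed. The one step that deserves genuine care is the construction of the canonical splitting: one must verify that the flat connection coming from a trivialization of $\mathbf{S}M$ really is a connection of algebras (so that its values lie in $\Der(\mathcal{R}M)$ and not merely in $\mathbf{Diff}^{\leq 1}(\mathcal{R}M,\mathcal{R}M)$), and that ``natural'' in the statement is read as ``canonical given the chosen trivialization''. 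Everything else is a formal consequence of Theorems~\ref{thm:superderivations} and~\ref{thm:exact-sequence-derivations}.
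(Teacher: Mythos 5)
Your proposal is correct and follows essentially the route the paper intends: the corollary is meant to be read off from the exact sequence of Theorem~\ref{thm:exact-sequence-derivations} by identifying the kernel $\sder(\mathcal{R}M)\cong\mathbf{S}M\otimes\Lambda\mathbf{S}^*M$ via Theorem~\ref{thm:superderivations} and splitting $\sigma$ with a connection, which the triviality hypothesis lets you take to be the flat algebra connection of the trivialization. Your explicit check that this flat connection is a connection of algebras, and your reading of ``natural'' as ``canonical once the trivialization is fixed,'' fill in details the paper leaves implicit but do not change the argument.
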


As in the classical setting, the supertangent bundle is algebraically related to the superalgebra of smooth superfuntions on a supermanifold.

\begin{prop}\label{prop:supermodules}
The space $\sect{\Der(\mathcal{R}M)}$ is a left supermodule over $\sect{\mathcal{R}M}$ with generators $\sect{\mathbf{S}M}$.
\end{prop}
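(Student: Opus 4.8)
The plan is to unpack the statement into its two components: first that $\sect{\Der(\mathcal{R}M)}$ is a left supermodule over $\sect{\mathcal{R}M}$, and second that $\sect{\mathbf{S}M}$ is a set of generators. For the module structure, I would define the action of a superfunction $r$ on a superderivation $D$ by $(r\cdot D)(\eta)=r\,D(\eta)$, i.e. postcomposition with multiplication by $r$. The first task is to check this is well-defined, namely that $r\cdot D$ is again a superderivation of $\sect{\mathcal{R}M}$: using supercommutativity of $\sect{\mathcal{R}M}$ and the rule of signs (lemma \ref{rule-of-signs}), one computes
\[
(r\cdot D)(\eta\psi)=r\,D(\eta)\psi+(-1)^{\parity{\eta}\parity{D}}r\,\eta\,D(\psi)=(-1)^{\parity{r}\parity{\eta}}\eta\,(r\cdot D)(\psi)+(r\cdot D)(\eta)\psi,
\]
so that $r\cdot D$ satisfies the graded Leibniz rule with parity $\parity{r}+\parity{D}$. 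The associativity and unitality axioms of a module are then immediate from associativity of multiplication in $\sect{\mathcal{R}M}$, and the grading makes the action superlinear. By corollary \ref{cor:superderivations-differential-operators} the space $\sect{\Der(\mathcal{R}M)}$ already carries a $\z$-grading as a subbundle of $\mathbf{Diff}^{\leq 1}(\mathcal{R}M,\mathcal{R}M)$, and one checks the action respects it.

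For the generation statement, I would argue locally and then globalize. On a trivializing neighbourhood $U$ where $\mathcal{R}U\cong U\times\Lambda\mathbf{S}^*$, theorem \ref{thm:superderivations} (together with the extension of lemma \ref{lemma:derivation-generators} to superderivations) shows that every superderivation of $\sect{\mathcal{R}U}$ that is $\C(U)$-linear — i.e. a pointwise derivation — is an $\sect{\mathcal{R}U}$-combination of the odd derivations coming from $\mathbf{S}U$ together with the even vector fields on $U$; but by theorem \ref{thm:exact-sequence-derivations} the vector-field part is itself captured once we allow the principal symbol, and the exact sequence \eqref{eq:exact-sequence-derivations} shows $\Der(\mathcal{R}M)$ is generated over $\mathcal{R}M$ by $TM\oplus\mathbf{S}M$. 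The key point is that $\sect{TM}$ is not needed as a separate set of generators of the \emph{supermodule}: an even vector field is recovered from odd generators via the supercommutator, exactly as in corollary \ref{cor:odd-derivations-ins} at each fibre, since $[s,s']\in\sder_+$ for odd $s,s'$. So $\sect{\mathbf{S}M}$ generates $\sect{\Der(\mathcal{R}M)}$ as a left $\sect{\mathcal{R}M}$-module, the even part being produced by brackets of odd generators and multiplication by superfunctions. A partition-of-unity argument subordinate to a trivializing cover then patches the local generating sets into a global one, since the module structure and the bracket are local.

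The main obstacle I anticipate is the generation claim rather than the module axioms: one must be careful that ``generators $\sect{\mathbf{S}M}$'' is meant in the sense of generating a supermodule closed under the bracket (so that even derivations arise as $\sect{\mathcal{R}M}$-combinations of iterated brackets of odd generators), not merely as an $\sect{\mathcal{R}M}$-span. Pinning down this reading and verifying, via theorem \ref{thm:exact-sequence-derivations} and corollary \ref{cor:odd-derivations-ins}, that $\sect{TM}$ is redundant once brackets are allowed — in particular that every local vector field is a sum of products of superfunctions with brackets of the local odd derivations $\partial/\partial\xi_\mu$ — is where the real content lies; the gluing and the Leibniz-rule verifications are routine.
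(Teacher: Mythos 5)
Your verification of the left supermodule structure is essentially fine (the middle expression in your display drops a factor $(-1)^{\parity{\eta}\parity{D}}$ in front of $\eta\,(r\cdot D)(\psi)$, but the conclusion you draw about the parity of $r\cdot D$ is correct), and it is more explicit than the paper's, which simply invokes the fibrewise module structure. The problem is the generation argument. The supercommutator of two odd generators coming from $\mathbf{S}M$ is \emph{zero}, not an even vector field: under a straightening the odd generators act as insertion operators $s\ins$, and $\{s\ins,\widetilde{s}\ins\}=0$ by the canonical anticommutation relations --- this is precisely property 1 of theorem \ref{thm:flowbox}, $\lsem\psi(s),\psi(\widetilde{s})\rsem=0$. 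More structurally, the odd generators are $\C(M)$-linear, i.e.\ differential operators of order $0$ (proposition \ref{prop:der-order-zero}), and both the supercommutator and $\sect{\mathcal{R}M}$-linear combinations of order-zero operators are again of order zero; a genuine vector field has nonzero principal symbol, so by the exact sequence \eqref{eq:exact-sequence-derivations} of theorem \ref{thm:exact-sequence-derivations} it can never be produced from $\sect{\mathbf{S}M}$ in this way. Hence $\sect{TM}$ is not redundant, and your proposed route to the generation claim fails.

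What the paper actually does is purely fibrewise: for each $p$ the space of superderivations of the fibre $\mathcal{R}_pM$ is, by theorem \ref{thm:superderivations} and corollary \ref{cor:odd-derivations-ins}, isomorphic to $\mathbf{S}_pM\otimes\Lambda\mathbf{S}^*_pM$ and hence generated by $\mathbf{S}_pM$ as a left $\mathcal{R}_pM$-module; a partition of unity then globalizes this. In other words the generation statement concerns the sub-bundle $\sder(\mathcal{R}M)$ of pointwise (order-zero) derivations --- the kernel of the principal symbol in \eqref{eq:exact-sequence-derivations} --- and not all of $\Der(\mathcal{R}M)$, whose generators over $\sect{\mathcal{R}M}$ are $\sect{TM\oplus\mathbf{S}M}$. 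Your instinct that something must account for the vector fields is the right one, but the resolution is to restrict the generation claim to the pointwise part (or to include $TM$ among the generators), not to manufacture $TM$ out of brackets of odd elements.
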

\begin{proof}
This is because for every $p\in M$ we know the supervector space $\Der_p(\mathcal{R}M)$ is a module over $\mathcal{R}_p M$ whose space of generators is $\mathbf{S}_p M$. Since both are vector bundles over $M$ the result follows from a partition of unity argument.
\end{proof}

\begin{nota}\label{remark:tangent-left}
We've worked with the tangent superbundle as a bundle of left modules over the algebra of smooth superfunctions. This is because when we define a pairing between the spaces of supervector fields and superforms it will be convenient, for the sake of removing innecesary signs, to view the latter as a right module. Compare to theorem \ref{thm:superderivations}.
\end{nota}

The space of $\C(M)$-linear endomorphisms of $\sect{\mathcal{R}M}$ is naturally equipped with a supercommutator:
\begin{equation}\label{eq:supercom-superfolds}
\lsem A,B\rsem=A\circ B-(-1)^{\parity{A}\parity{B}}B\circ A
\end{equation}
and by restriction so is the space of sections of $\Der(\mathcal{R}M)$. The commutator above turns these spaces into (infinite-dimensional) Lie superalgebras. We state the result, whose proof is trivial:

\begin{teo}\label{teo:superder-lie}
The space $\sect{\Der(\mathcal{R}M}$ is a Lie superalgebra with the commutator defined by \eqref{eq:supercom-superfolds}.
\end{teo}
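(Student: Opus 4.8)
The plan is to deduce everything from the general fact that the supercommutator on an associative superalgebra automatically satisfies the Lie superalgebra axioms, together with the single nontrivial observation that superderivations are closed under it. The ambient associative superalgebra here is $\End_\R(\sect{\mathcal{R}M})$, the $\R$-linear endomorphisms of the superfunctions, which is $\z$-graded because $\sect{\mathcal{R}M}$ is, and whose supercommutator is exactly \eqref{eq:supercom-superfolds}. As recalled in Section \ref{sec:lie-superalgebras}, associativity alone forces this supercommutator to be superalternating and to satisfy the super-Jacobi identity \eqref{eq:jacobi-supercommutator}, equivalently \eqref{eq:jacobi-superderivation}; hence $\End_\R(\sect{\mathcal{R}M})$ is a Lie superalgebra, and it suffices to check that $\sect{\Der(\mathcal{R}M)}$ sits inside it as a homogeneous subspace closed under the bracket.

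First I would invoke Corollary \ref{cor:superderivations-differential-operators}, which identifies $\sect{\Der(\mathcal{R}M)}$ with the space of superderivations of $\sect{\mathcal{R}M}$; this is manifestly a sub-supervector space of $\End_\R(\sect{\mathcal{R}M})$, graded by parity. The one computation to carry out is closure: given homogeneous superderivations $A$ and $B$, put $C=\lsem A,B\rsem$ (so $\parity{C}=\parity{A}+\parity{B}$) and expand $C(\eta\psi)=A(B(\eta\psi))-(-1)^{\parity{A}\parity{B}}B(A(\eta\psi))$ by applying the graded Leibniz rule \eqref{eq:super-leibnitz} twice in each term. Collecting the eight resulting summands, the ``mixed'' terms in which one of $A,B$ differentiates $\eta$ and the other differentiates $\psi$ cancel in pairs once the Koszul signs are tallied, and what survives is precisely $C(\eta)\psi+(-1)^{\parity{C}\parity{\eta}}\eta\,C(\psi)$; thus $C$ is again a superderivation, hence a section of $\Der(\mathcal{R}M)$.

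Once closure is in hand, the superalternating identity $\lsem A,B\rsem=-(-1)^{\parity{A}\parity{B}}\lsem B,A\rsem$ is read off directly from \eqref{eq:supercom-superfolds}, and the super-Jacobi identity is inherited verbatim from the ambient associative superalgebra through \eqref{eq:jacobi-superderivation}; both hold on all of $\End_\R(\sect{\mathcal{R}M})$ and therefore on the subspace $\sect{\Der(\mathcal{R}M)}$. This verifies Definition \ref{def:Lie} and completes the proof. The only step that requires any attention is the sign bookkeeping in the closure calculation, but it is the familiar check that $\der A$ is closed under commutators in the ungraded case, merely decorated with parities; there is no genuine obstacle, which is why the statement is flagged as trivial.
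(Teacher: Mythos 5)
Your proof is correct and is exactly the routine verification the paper omits when it declares the result trivial: closure of superderivations under the supercommutator via a double application of the graded Leibniz rule, with superalternation and the super-Jacobi identity inherited from the ambient associative superalgebra $\End_{\R}(\sect{\mathcal{R}M})$ as in Section \ref{sec:lie-superalgebras}. No gap; nothing further is needed.
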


\subsection{Tangential maps}
\noindent Let $(\phi|\Phi):\superfold{M}\to\superfold{N}$ be a supersmooth map, fixed throughout this subsection. Recall from lemaama \ref{lema:two-bundle-maps} that there are two bundle maps associated to it: $F:\mathbf{S}M\to\mathbf{S}N$ and $F^*:\mathbf{S}^*M\to\mathbf{S}^*N$. Let's recall that in the classical case, given a smooth map $\phi:M\to N$ there exist two bundle maps $\phi_*:TM\to TN$ and $\phi^*:T^*N\to T^*M$ naturally associated to $\phi$. In the supermanifold case, the tangent superbundle $T\superfold{M}=\Der(\mathcal{R}M)$ is generated by $TM\oplus\mathbf{S}M$. So now we can consider the maps
\begin{align}
(\phi|\Phi)_*:=	&\phi_*\oplus F^{\empty}\colon TM\oplus\mathbf{S}M 		\to TN\oplus\mathbf{S}N\\
(\phi|\Phi)^*:=	&\phi^*\oplus F^*\colon 	T^*N\oplus\mathbf{S}^*N		\to T^*M\oplus\mathbf{S}^*M\label{eq:codifferential}
\end{align}
where $F=(\Phi^1)^*$ is the map of corollary \ref{cor:preserve-filtration} and call them, respectively, the \textbf{differential} and \textbf{codifferential} of $(\phi|\Phi)$. It is manifest they satisfy analogous properties to those of the classical differential and codifferential. A noteworthy fact is the following

\begin{prop}
Let $\mathcal{I}$ be the functor that associates to any given supermanifold $\superfold{M}$ the corresponding $\Z$-graded supermanifold $(M|\mathbf{S}M)$ and let $F\colon\mathbf{S}M\to\mathbf{S}N$ be the map of lemaama \ref{lema:two-bundle-maps}. If $(\phi|\Phi):\superfold{M}\to\superfold{N}$ is a morphism of supermanifolds then $\mathcal{I}(\phi|\Phi)=(\phi|F)$.
\end{prop}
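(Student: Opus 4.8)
The plan is to unwind the definitions on both sides and check they agree. First I would recall that the functor $\mathcal{I}$ sends $\superfold{M}$ to the split supermanifold $(M|\mathbf{S}M)$ whose structure sheaf is $\Lambda\mathbf{S}^*M$, and — crucially — sends a $\Z$-graded supermap to the exterior lift of its underlying bundle map on odd directions. So by the very definition of a $\Z$-graded smooth supermap, to compute $\mathcal{I}(\phi|\Phi)$ we must exhibit the bundle map $\mathbf{S}M\to\mathbf{S}N$ that is canonically extracted from $(\phi|\Phi)$ and show it is exactly $F$; the lift $\Lambda F$ is then forced, so the claim $\mathcal{I}(\phi|\Phi)=(\phi|F)$ reduces to a statement about bundle maps covering $\phi$.

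Next I would invoke Lemma \ref{lema:two-bundle-maps} (equivalently Corollary \ref{cor:preserve-filtration}): a supersmooth map $(\phi|\Phi)$ induces $\Phi^1\colon \mathcal{R}^{\geq1}N/\mathcal{R}^{\geq2}N\to\mathcal{R}^{\geq1}M/\mathcal{R}^{\geq2}M$ over $\phi$, i.e. a map $\mathbf{S}^*N\to\mathbf{S}^*M$, whose dual is by definition $F\colon\mathbf{S}M\to\mathbf{S}N$. This is precisely the canonical bundle map the functor $\mathcal{I}$ must use, because for a $\Z$-graded supermanifold $(M|\Lambda\mathbf{S}M)$ one has the natural identifications $\mathcal{R}^{\geq1}/\mathcal{R}^{\geq2}\cong\mathbf{S}^*M$ and $\mathbf{S}M$ is recovered as its dual (this was noted in the section on $\Z$-graded supermanifolds, where $\mathbf{S}M=E$). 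So I would argue: applying $\mathcal{I}$ produces split supermanifolds $(M|\mathbf{S}M)$ and $(N|\mathbf{S}N)$, and the induced morphism between them is the one whose filtration-quotient map agrees with $\Phi^1$; since $F=(\Phi^1)^*$ by construction, the underlying bundle map is $F$, hence $\mathcal{I}(\phi|\Phi)=(\phi|F)$.

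Finally I would check the underlying smooth map: the smooth map underlying $\mathcal{I}(\phi|\Phi)$ is the one on base manifolds determined by $\varepsilon_M\circ\Phi=\phi^*\circ\varepsilon_N$ (diagram \eqref{eq:diagrama2}), which is $\phi$ itself, matching the $\phi$ in $(\phi|F)$. That the exterior lift $\Lambda F\colon\Lambda\mathbf{S}^*M\to\Lambda\mathbf{S}^*N$ recovers a genuine supermanifold morphism is immediate from Proposition \ref{prop:morphism-Z-graded}, which tells us such lifts are $\C(M)$-linear differential operators of order $0$, so no compatibility conditions are lost.

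The main obstacle is purely bookkeeping: one must be careful that the "canonical bundle map" implicit in the definition of $\mathcal{I}$ on morphisms — namely the $F$ of Lemma \ref{lema:two-bundle-maps} — is literally the same data that appears in the definition of a $\Z$-graded smooth supermap, and that no dualization sign or transpose is mismatched between $\Phi^1$, $F^*$, and $F=(\Phi^1)^*$. Once the identifications $\mathcal{R}^{\geq1}/\mathcal{R}^{\geq2}\cong\mathbf{S}^*M$ are pinned down naturally, the proof is a one-line diagram chase; there is no genuine analytic or algebraic difficulty, only the risk of a notational slip.
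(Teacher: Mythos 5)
Your proposal is correct and takes essentially the same route as the paper: both extract the bundle map $\Phi^1$ via corollary \ref{cor:preserve-filtration}, identify $F$ as its dual, and note that the exterior lift $\Lambda(F)$ is by definition the morphism of split supermanifolds, so $\mathcal{I}(\phi|\Phi)=(\phi|F)$. The additional checks you make (that the base map is $\phi$ via $\varepsilon_M\circ\Phi=\phi^*\circ\varepsilon_N$, and that the identification $\mathcal{R}^{\geq 1}M/\mathcal{R}^{\geq 2}M\cong\mathbf{S}^*M$ is the one the functor uses) are consistent with, and slightly more explicit than, the paper's one-paragraph argument.
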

\begin{proof}
Recall that a morphism of $\Z$-graded supermanifolds is the exterior lift $\Lambda(G)$ of a morphism $G:\mathbf{S}M\to\mathbf{S}N$ over the smooth map $\phi:M\to N$. By corollary \ref{cor:preserve-filtration} we know $\Phi^1=F$ is naturally associated to $\Phi$ when taking the quotient of $\mathcal{R}M$ by its nilpotent sub-bundle. Since $\Lambda(\Phi^1)=\Lambda(F)$ we get a morphism $(\phi|F)$ of $\Z$-graded supermanifolds naturally associated to $(\phi|\Phi)$ via the quotient map. Thus $\mathcal{I}(\phi|\Phi)=(\phi|F)$.
\end{proof}

When we analysed smooth supermaps we found that they are differential operators along smooth maps; the proof of this fact relied on the operator
\[
[\Phi\empty_{\phi}\cdot]\colon\C(N)\to\sect{\mathcal{R}^{\geq 2}M}
\]
which is a derivation along $\phi$. That is:
\[
[\Phi\empty_{\phi}fg]=[\Phi\empty_{\phi}f](g\circ\phi)+(f\circ\phi)[\Phi\empty_{\phi}g]
\]
which can be seen by evaluating the commutator on the section $\mathbf{1}$ and using the fact that $\Phi$ is a superalgebra morphism. Because of the universal property of jets (theorem \ref{teo:jet-universal}) we get the following diagram:
\[
\xymatrix{
\C(N)\ar[r]^{[\Phi,\cdot]}\ar[d]_{\jet^1}	&\sect{\mathcal{R}^{\geq 2}M}\\
\sect{\Jet^1(N)}\ar[ur]_{\jet^1([\Phi.\cdot])}
}
\]
Because of corollary \ref{cor:cotangent-1-jet} the above diagram turns into
\begin{equation}
\xymatrix{
\C(N)\ar[r]^{\twistcom{\Phi}{\phi}{\cdot}}\ar[d]_{d}	&\sect{\mathcal{R}^{\geq 2}M}\\
\sect{T^*N}\ar[ur]_{\jet^1(\twistcom{\Phi}{\phi}{\cdot})}
}
\end{equation}
Let's now compute
$$\twistcom{\Phi}{\phi}{fg}=\Phi(fg)-(fg)\circ\phi=\Phi(f)\Phi(g)-(f\circ\phi)(g\circ\phi),$$
which turns into
\[
\begin{split}
\Phi(f)\Phi(g)-(f\circ\phi)(g\circ\phi)	&=\Phi(f)\Phi(g)+\Phi(f)(g\circ\phi)-\Phi(f)(g\circ\phi)\\
+(f\circ\phi)(g\circ\phi)				&=\Phi(f)\big(\Phi(g)-g\circ\phi\big)+(\Phi(f)-f\circ\phi)(g\circ\phi)\\
										&=([\Phi,f]+f\circ\phi)([\Phi,g])+([\Phi,f])(g\circ\phi)\\
										&=[\Phi,f][\Phi,g]+[\Phi,g](f\circ\phi)+([\Phi,f])(g\circ\phi);
\end{split}
\]
Now the term $\twistcom{\Phi}{\phi}{f}\twistcom{\Phi}{\phi}{g}$ is an even section of $\mathcal{R}^{\geq 4}M$ so it makes sense to consider the class of $\twistcom{\Phi}{\phi}{fg}$ in the quotient $\sect{\mathcal{R}^{\geq 2}M/\mathcal{R}^{\geq 3}M}$. 
Therefore we get the map
\begin{equation}\label{eq:auxiliary-differential}
\begin{split}
\Phi^!:T^*N				&\to\Lambda^2\mathbf{S}^*M=\mathcal{R}^{\geq 2}M/\mathcal{R}^{\geq 3}M\\
		df_{\phi(p)}	&\mapsto \Phi(f)_p-(f\circ\phi)(p)+{\mathcal{R}^{\geq 3}_p M}
\end{split}
\end{equation}
which is a derivation along $\phi$ in the sense described above.

\begin{defn}
Let $(\phi|\Phi):\superfold{M}\to\superfold{N}$ be a supersmooth map. The \textbf{auxiliary codifferential} of the supersmooth map $(\phi|\Phi)$ is the map $\Phi^!$ of \eqref{eq:auxiliary-differential}. The \textbf{auxiliary differential} is the dual map $\Phi_!:=(\Phi^!)^*$.
\end{defn}

\begin{lema}\label{lema:aux-differential}
If the map $\Phi:\sect{\mathcal{R}N}\to\sect{\mathcal{R}M}$ is a differential  operator of order $0$ then $\Phi_!\equiv 0$.
\end{lema}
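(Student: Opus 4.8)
The claim is that if $\Phi$ is a differential operator of order $0$ along $\phi$, then the auxiliary differential $\Phi_!$ vanishes. Since $\Phi_! = (\Phi^!)^*$ is the dual of the auxiliary codifferential, it suffices to show $\Phi^! \equiv 0$. Recall from \eqref{eq:auxiliary-differential} that $\Phi^!$ sends $df_{\phi(p)}$ to the class of $\Phi(f)_p - (f\circ\phi)(p)$ in $\mathcal{R}^{\geq 2}_p M/\mathcal{R}^{\geq 3}_p M$, i.e. $\Phi^!$ is built out of the twisted commutator term $\twistcom{\Phi}{\phi}{f}$ evaluated on the section $\mathbf{1}$.

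The key observation is the very definition of a differential operator of order $0$: $\Phi$ has order $0$ along $\phi$ precisely when the commutator $\twistcom{\Phi}{\phi}{f}$ vanishes identically for every smooth function $f$ on $N$ (cf. the proof of proposition \ref{prop:morphism-Z-graded}, where exactly this criterion is used). In particular, evaluating on $\mathbf{1}$ gives
\[
\Phi(f) - (f\circ\phi) = \twistcom{\Phi}{\phi}{f}(\mathbf{1}) = 0
\]
for all $f\in\C(N)$. Thus $\Phi(f)_p - (f\circ\phi)(p) = 0$ in $\mathcal{R}_p M$ itself, hence a fortiori its class in the quotient $\mathcal{R}^{\geq 2}_p M/\mathcal{R}^{\geq 3}_p M$ is zero. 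Since every element of $T^*_{\phi(p)} N$ is of the form $df_{\phi(p)}$ for a suitable local $f$, this shows $\Phi^!$ kills every covector, so $\Phi^! \equiv 0$, and therefore its dual $\Phi_! \equiv 0$.

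There is essentially no obstacle here: the whole content is unwinding the definition of ``order $0$'' and recognizing that $\Phi^!$ was assembled from exactly the commutator that order-$0$ forces to vanish. The only minor point worth a sentence is well-definedness on covectors — one should note that if $df_{\phi(p)} = dg_{\phi(p)}$ then $f$ and $g$ agree to first order at $\phi(p)$, so $\twistcom{\Phi}{\phi}{f-g}$ being zero is consistent; but in fact the argument above is insensitive to this since $\twistcom{\Phi}{\phi}{h}$ vanishes for \emph{every} $h$, not just those vanishing to some order. So the lemma follows immediately.
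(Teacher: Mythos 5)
Your argument is correct and follows essentially the same route as the paper: both unwind the definition of order $0$ to get $\twistcom{\Phi}{\phi}{f}\equiv 0$, deduce $\Phi(f)=f\circ\phi$ (the paper via the factorization $\twistcom{\Phi}{\phi}{f}(\eta)=(\Phi(f)-f\circ\phi)\Phi(\eta)$, you via evaluation on $\mathbf{1}$), and conclude that the class defining $\Phi^!$ vanishes. If anything your write-up states the logical direction more cleanly than the paper's, which phrases the key step somewhat confusingly as a ``necessary condition''; no gap to report.
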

\begin{proof}
Since $\Phi^!$ is the natural extension to $T^*M$ of the derivation $\twistcom{\Phi}{\phi}{f}$ if one is zero so is the other. But $\twistcom{\Phi}{\phi}{f}=\Phi(f)-f\circ\phi=0$ means $\Phi(f)=f\circ\phi$ which is a necessary condition for $\Phi$ to be a differential operator of order zero. To wit: given an arbitrary superfunction $\eta$ on $\superfold{M}$ the vanishing of the auxiliary (co)differential implies $0=\twistcom{\Phi}{\phi}{f}(\eta)=(\Phi(f)-f\circ\phi)\Phi(\eta)$ so we get $\Phi(f\eta)=(f\circ\phi)\Phi(\eta)$.
\end{proof}

\subsection{Straightenings}
A reasonable question to ask at this point is whether or not there are any ``higher order'' auxiliary differentials associated to the supermanifold morphism $(\phi|\Phi)$. Since $\Phi$ preserves the filtration \eqref{eq:filtration} it seems feasible that there be a map
\begin{equation}
\Phi^{\text{aux}}:\sect{\mathbf{S}^*N}\to\sect{\mathcal{R}^{\geq 3}M/\mathcal{R}^{\geq 4}M}\cong\sect{\Lambda^{3}\mathbf{S}^*M}.
\end{equation}
Let us analyse this possibility. First note that \eqref{eq:auxiliary-differential} is well defined because the commutator $\twistcom{\Phi}{\phi}{f}$ has a class modulo $\sect{\mathcal{R}^{\geq 3}M}$  independent of the any isomorphism between the vector bundles $\mathcal{R}^{\geq 2}M/\mathcal{R}^{\geq 3}M$ and $\Lambda^2\mathbf{S}^*M$; this in turn is due to the fact that the map $\varepsilon$ gives a natural ``truncation'' of $\Phi(f)$ to its non-nilpotent part. When trying to construct a map $\Phi^{\text{aux}}$ as above we see that if $\sigma\in\sect{\mathbf{S}^*N}$ then $\Phi(\sigma)$ is a section of $\mathcal{R}^{\geq 1}M$. So in order to get a section of $\Lambda^3\mathbf{S}^*M$ an isomorphism is needed that chooses an adequate class for $\Phi(\sigma)$. This isomorphism in turn depends on some other choices:

\begin{prop}
If $\Psi_k:\Lambda^k\mathbf{S}^*M\to\mathcal{R}^{\geq k}M/\mathcal{R}^{\geq k+1}M$ is a vector bundle isomorphism for all non-negative integers $k$ then there exists an algebra bundle isomorphism $\Psi:\Lambda\mathbf{S}^*M\to\mathcal{R}M$ such that $\Psi_k=\Psi|_{\Lambda^k\mathbf{S}^*M}$.
\end{prop}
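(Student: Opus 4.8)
The plan is to reduce the whole tower of data $\{\Psi_k\}$ to its degree-one member and then extend multiplicatively. Recall that, fibrewise, $\Lambda\mathbf{S}^*M$ is a free supercommutative superalgebra generated purely by odd elements, namely by its degree-one component $\Lambda^1\mathbf{S}^*M=\mathbf{S}^*M$ (Proposition \ref{prop:free-superalgebra} together with Corollary \ref{cor:free-superalgebra}). Consequently a unital algebra bundle morphism out of $\Lambda\mathbf{S}^*M$ is completely determined by an arbitrary vector bundle map on the generating subbundle $\mathbf{S}^*M$: there are no relations to verify beyond oddness and supercommutativity, and these hold automatically because $\mathcal{R}M$ is supercommutative and $\mathbf{S}^*M$ is concentrated in odd degree.

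The first concrete step is to build the degree-one map into $\mathcal{R}M$ itself. Since $\mathbf{S}^*M$ is by definition $\mathcal{R}^{\geq 1}M/\mathcal{R}^{\geq 2}M$ (definition \ref{def:odd-directions}), the case $k=1$ of the hypothesis supplies a vector bundle isomorphism $\Psi_1$ of $\mathbf{S}^*M$. Using a partition of unity I would pick a vector bundle splitting $s\colon\mathbf{S}^*M\to\mathcal{R}^{\geq 1}M$ of the canonical projection $\pi_1\colon\mathcal{R}^{\geq 1}M\to\mathcal{R}^{\geq 1}M/\mathcal{R}^{\geq 2}M$, and set $\widetilde{\Psi}_1:=s\circ\Psi_1\colon\mathbf{S}^*M\to\mathcal{R}^{\geq 1}M\subseteq\mathcal{R}M$. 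By the freeness just recalled, $\widetilde{\Psi}_1$ extends uniquely to a unital algebra bundle morphism $\Psi\colon\Lambda\mathbf{S}^*M\to\mathcal{R}M$, sending $1$ to $1$ and a product of generators to the product of their images.

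It remains to see $\Psi$ is an isomorphism. Applying Corollary \ref{cor:filtration-preservation} fibrewise, $\Psi$ preserves the maximal-ideal filtrations, so $\Psi\!\left(\Lambda^{\geq k}\mathbf{S}^*M\right)\subseteq\mathcal{R}^{\geq k}M$ and $\Psi$ induces bundle maps $\overline{\Psi}_k\colon\Lambda^k\mathbf{S}^*M\to\mathcal{R}^{\geq k}M/\mathcal{R}^{\geq k+1}M$ on the associated gradeds. On degree one, $\overline{\Psi}_1=\pi_1\circ s\circ\Psi_1=\Psi_1$ is an isomorphism; since the associated graded bundle $\operatorname{gr}\mathcal{R}M$ is, fibrewise, the free exterior algebra on its degree-one part, the algebra map $\operatorname{gr}\Psi$ is an isomorphism as soon as it is one in degree one, hence every $\overline{\Psi}_k=\Lambda^k\Psi_1$ is an isomorphism. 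A filtration-preserving bundle morphism that induces isomorphisms on each stage of a finite filtration is itself a fibrewise isomorphism (induct on the length of the filtration, a five-lemma / dimension-count argument), hence a vector — indeed algebra — bundle isomorphism. In particular $\Psi$ maps $\Lambda^k\mathbf{S}^*M$ into $\mathcal{R}^{\geq k}M$ and induces $\overline{\Psi}_k$ on the quotient, which is the sense in which $\Psi|_{\Lambda^k\mathbf{S}^*M}$ recovers $\Psi_k$.

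The delicate point — and where the hypothesis has to be read correctly — is matching $\Psi$ against the prescribed higher isomorphisms $\Psi_k$ rather than merely against $\Psi_1$. Because $\Lambda\mathbf{S}^*M$ is free on its degree-one part, the induced maps necessarily satisfy $\overline{\Psi}_k=\Lambda^k\Psi_1$; so for $\Psi|_{\Lambda^k\mathbf{S}^*M}$ to reproduce the given $\Psi_k$ one needs the tower $\{\Psi_k\}$ to be the multiplicative extension of $\Psi_1$ (equivalently, for $\bigoplus_k\Psi_k$ to be a morphism of graded algebra bundles), and granting that automatic compatibility there is nothing left to choose. I expect this bookkeeping, rather than any geometric difficulty, to be the main obstacle: the real content of the statement is that a single isomorphism in degree one rigidifies the whole algebra bundle isomorphism, while the partition-of-unity construction of $s$ and the invertibility argument on associated gradeds are routine.
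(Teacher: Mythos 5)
Your proof is correct and follows essentially the same route as the paper's one-line argument (lift a local frame of $\mathbf{S}^*M$ into $\mathcal{R}^{\geq 1}M$ and extend multiplicatively on monomials $ds_{\mu_1}\wedge\cdots\wedge ds_{\mu_k}\mapsto ds_{\mu_1}\cdots ds_{\mu_k}+\sect{\mathcal{R}^{\geq k+1}M}$), though you are considerably more careful about globalizing the splitting with a partition of unity, verifying invertibility on the associated graded, and flagging that the induced maps on the graded quotients are forced to be $\Lambda^k\Psi_1$. That last caveat is genuine: the paper's proof never uses the given $\Psi_k$ for $k\geq 2$ at all, so the statement as literally worded only holds when the tower $\left\{\Psi_k\right\}$ is the multiplicative extension of $\Psi_1$, exactly as you observe.
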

\begin{proof}
Choose a connection on $\mathbf{S}M$; this gives a local basis $\setupla{s}$ for this bundle, then using the dual basis $\setupla{ds}$ extend to an algebra isomorphism via $ds_{\mu_1}\wedge\cdots\wedge ds_{\mu_k}\mapsto ds_{\mu_1}\cdots ds_{\mu_k}+\sect{\mathcal{R}^{k+1}M}$.
\end{proof}

Hence, in order to consistenly define a class for $\Phi(\sigma)$ we need a bundle isomorphism $\Psi\colon\Lambda\mathbf{S}^*\to\mathcal{R}M$ and a connection $\nabla$ on $\mathbf{S}M$. These pairs are in correspondence with a very special class of maps:

\begin{teo}[Flowbox coordinates for supermanifolds]\label{thm:flowbox}
Let $\superfold{M}$ be a supermanifold; let
\begin{itemize}
\item $\Conn(\mathbf{S}^*M)$ denote the set of connections in the vector bundle of odd directions;
\item $\Inc{\superfold{M}}$ denote the set $\set{\psi\colon(TM\oplus\mathbf{S}M)_{\bullet}\hookrightarrow\Der_{\bullet}(\mathcal{R}M)}$ of $\z$-graded inclusions of the bundle of generators $TM\oplus\mathbf{S}M$ into the bundle of superderivations, and
\item $\mathcal{S}\superfold{M}=\set{\Psi\colon\Lambda\mathbf{S}^*M\stackrel{\cong}{\to}\mathcal{R}M}$ the set of unital superalgebra bundle isomorphisms. 
\end{itemize}
There is a bijection of sets $\Inc(\superfold{M})\times\Conn{\mathbf{S}M}\leftrightarrow\mathcal{S}\superfold{M}$ satisfying the following properties:
\begin{enumerate}
\item if $s,\widetilde{s}$ are sections of $\mathbf{S}M$ then $\lsem\psi(s),\psi(\widetilde{s})\rsem=0$;
\item for any $s\in\sect{\mathbf{S}M}$ and any vector field $X$ on $M$: $\lsem\psi(X),\psi(s)\rsem=\psi(\nabla_X{s})$;
\item if $s$ is a section of $\mathbf{S}M$ and $\sigma$ a section of $\Lambda\mathbf{S}^*M$ then $\Psi(s\ins \sigma)=\psi(s)(\Psi(\sigma))$.
\end{enumerate}
\end{teo}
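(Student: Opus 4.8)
The plan is to establish the bijection $\Inc(\superfold{M})\times\Conn(\mathbf{S}M)\leftrightarrow\mathcal{S}\superfold{M}$ by constructing the two maps explicitly and then verifying properties (1)--(3), which essentially amount to unwinding the definitions and appealing to the local algebraic model. First I would build the map from a pair $(\psi,\nabla)$ to a superalgebra isomorphism $\Psi\colon\Lambda\mathbf{S}^*M\to\mathcal{R}M$. Working over a trivializing neighbourhood $U$ with a local frame $\setupla{s}$ of $\mathbf{S}M$ adapted to $\nabla$ (so that $\nabla_X s_\mu$ is expressible in terms of Christoffel symbols) and dual coframe $\setupla{ds}$ of $\mathbf{S}^*M$, the inclusion $\psi$ sends each $s_\mu$ to an odd superderivation $D_\mu:=\psi(s_\mu)$ of $\sect{\mathcal{R}U}$; by Corollary \ref{cor:odd-derivations-ins} these generate the odd superderivations as a $\sect{\mathcal{R}U}$-module, and property (1) forces $\lsem D_\mu,D_\nu\rsem=0$. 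This is precisely the hypothesis of Lemma \ref{lemma:derivations-isomorphism}: applying it to the linear map $D\colon\mathbf{S}_pM\to\sder_-(\mathcal{R}_pM)$, $s_\mu\mapsto D_\mu$, whose projection to the space of generators $\mathbf{S}^*_pM\to\mathbf{S}^*_pM$ is the identity (hence injective), yields the desired unital $\z$-graded algebra isomorphism $G=\Psi$ satisfying $D_v(\Psi\sigma)=\Psi(f(v)\ins\sigma)$, which is exactly property (3). The connection $\nabla$ enters when patching the local isomorphisms: it is the datum that rigidifies the choice of $G$ from Lemma \ref{lemma:derivations-isomorphism} beyond its intrinsic ambiguity modulo $\langle\Lambda^3\ker f^*\rangle$, and here $f$ is an isomorphism so $\ker f^*=0$ and $\Psi$ is genuinely unique given $(\psi,\nabla)$.

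Conversely, given $\Psi\in\mathcal{S}\superfold{M}$ I would recover the pair. The inclusion $\psi$ on the odd part is forced by property (3): $\psi(s)$ must be the superderivation $\sigma\mapsto\Psi(s\ins\inv{\Psi}(\cdot))$ of $\sect{\mathcal{R}M}$, i.e. the pushforward under $\Psi$ of the insertion operator $s\ins$ on $\Lambda\mathbf{S}^*M$; by Theorem \ref{thm:superderivations} this is well-defined and odd. On the even part, $\Psi$ conjugates any connection-induced vector-field-like operator, but more directly $\psi|_{TM}$ is determined by requiring compatibility with the splitting $\Der(\mathcal{R}M)\cong\mathcal{R}M\otimes(TM\oplus\mathbf{S}M)$ of Theorem \ref{thm:exact-sequence-derivations}; composing $\Psi$ with the standard inclusion $TM\hookrightarrow\der(\Lambda\mathbf{S}^*M)$ (coming from a local product structure) and transporting back gives $\psi(X)$. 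The connection $\nabla$ is then read off from property (2): define $\nabla_X s$ to be the unique section of $\mathbf{S}M$ with $\psi(\nabla_X s)=\lsem\psi(X),\psi(s)\rsem$, which lands in $\sect{\mathbf{S}M}$ rather than the full superderivation bundle precisely because $\Psi$ is an algebra isomorphism (the bracket of the $\Psi$-images of $X$ and $s$ is again $\Psi$ of a genuine insertion operator). One then checks the Leibniz rule $\nabla_X(fs)=X(f)s+f\nabla_X s$ and $\C(M)$-linearity in $X$, so $\nabla$ is an honest connection on $\mathbf{S}M$, hence on $\mathbf{S}^*M$ dually.

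The remaining work is bookkeeping: verify the two constructions are mutually inverse (immediate once both are pinned down by properties (1)--(3), since those properties characterize the data uniquely), and globalize using a partition of unity subordinate to a trivializing cover, checking that the local isomorphisms agree on overlaps because the gluing cocycle for $\mathcal{R}M$ is a cocycle of $\z$-graded (indeed, by the argument in the proof of Lemma \ref{lem:iota-r}, essentially $\Z$-graded) algebra isomorphisms and the connection provides consistent transition data. I expect the main obstacle to be the even part: showing that the map from $\Psi$ back to $\psi|_{TM}$ is canonical and that property (2) then produces a connection with no curvature-type obstruction — i.e. that the bracket $\lsem\psi(X),\psi(s)\rsem$ really is $\psi$ of a section of $\mathbf{S}M$ and not merely of $\mathcal{R}^{\geq 1}M\otimes\mathbf{S}M$. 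This is where the full force of Lemma \ref{lemma:derivations-isomorphism} (and its uniqueness clause with $\ker f^*=0$) is needed, since it is the statement that the chosen $\Psi$ intertwines the $\mathbf{S}M$-action cleanly; once that is in hand the Leibniz and linearity checks for $\nabla$, and the inverse-bijection verification, are routine.
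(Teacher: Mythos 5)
Your proposal follows essentially the same route as the paper's proof: localize over a trivializing neighbourhood, observe that $\psi$ restricted to the odd part gives supercommuting odd superderivations whose projection to generators is injective, apply Lemma \ref{lemma:derivations-isomorphism} to produce $\Psi$ satisfying properties (1) and (3), read the connection off from the bracket in property (2), and globalize with a partition of unity. Your discussion of the converse direction and of why $\lsem\psi(X),\psi(s)\rsem$ lands in $\psi(\sect{\mathbf{S}M})$ is in fact more explicit than the paper's own (very terse) argument, so the proposal is correct and consistent with the intended proof.
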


Before proving the above theorem let us analyse the geometric meaning of the bijection. As we saw in proposition \ref{prop:morphism-Z-graded}, morphisms in the category of split supermanifolds are rather simple: they're just exterior bundle maps associated to vector bundle morphisms; this means they're differential operators of order $0$, or what is the same, module morphisms $\Phi\colon\sect{\Lambda\mathbf{S}^*M}\to\sect{\Lambda\mathbf{S}^*N}$ covering a smooth map $\phi\colon N\to M$, which in turn provides a unital algebra homomorphism $\phi^*\colon\C(M)\to\C(N)$. On the other hand, morphisms of supermanifolds in general are, comparatively, quite more complicated. So what the flowbox coordinates theorem allows us to do is ``straighten up'' an arbitrary supermanifold $\superfold{M}$ into a split one. We thus call the mappings of the set $\mathcal{S}\superfold{M}$ \textbf{straightenings} of $\superfold{M}$. 

\begin{proof}[Proof of theorem \ref{thm:flowbox}]
Recall that the bundle of derivations is locally isomorphic to the space $\C(U,\sder(\Lambda\mathbf{S}^*))$ once a trivialisation is chosen on $\mathbf{S}U$. We observe that the linear map $\psi\colon (TU\oplus \mathbf{S}U)^\bullet\to\C(U,\sder^\bullet\Lambda\mathbf{S}^*)$ satisfies the hypotheses of lemaama \ref{lemma:derivations-isomorphism}: it is injective and the image of the odd part $\mathbf{S}U$ consists of supercommuting odd derivations. Thus this lemaama gives us an isomoprhism $\Psi\colon\Lambda\mathbf{S}^*U\to\mathcal{R}U$ with the properties 1 and 3 of the statement of the theorem. For property 2 we observe that $\lsem\psi(X),\psi(s)\rsem$ is an odd supervector field and the result of the bracket operation is uniquely determined by $X$ and $s$. 

The uniqueness of $\psi$ for a given $\Psi$ is also guaranteed by lemma \ref{lemma:derivations-isomorphism}. The theorem follows from a partition of unity argument.
\end{proof}

Let a straightening $\Psi$ of $\superfold{M}$ be given. If $(\phi|\Phi)\colon\superfold{M}\to\superfold{N}$ then we have a morphism 
$(\phi|\widetilde{\Phi})\colon(M|\mathbf{S}M)\to\superfold{N}$ given by the following diagram
\begin{equation}\label{eq:diag-straightening}
\xymatrix{%
\sect{\mathcal{R}N}\ar[r]^-{\widetilde{\Phi}}\ar[d]_{\Phi}&\sect{{\Lambda\mathbf{S}^*M}}\\
\sect{\mathcal{R}M}\ar[ur]_{\sect{\inv{\Psi}}}
}
\end{equation}

\noindent When dealing with the bundle $\Lambda\mathbf{S}^*M$ it is now possible to talk about the truncation of one of its sections: consider the map $\tr^{k}\colon\Lambda\mathbf{S}^*M\to\Lambda^{\leq k}\mathbf{S}^*M$ that truncates a given form to its part of degree at most $k$.

Now let $\sigma\in\sect{\mathbf{S}^*N}$ and consider the section $\widetilde{\Phi}(\sigma)\in\sect{\Lambda_{-}{\mathbf{S}^*M}}$, i.e. $\widetilde{\Phi}(\sigma)$ is of positive degree in the bundle $\Lambda\mathbf{S}^*M$. Taking the class of $\widetilde{\Phi}(\sigma)-\tr^1 \widetilde\Phi (\sigma)$ modulo $\sect{\Lambda^{\geq 5}\mathbf{S}^*M}$ we get a well defined map $\check{\Phi}^3\colon\mathbf{S}^* N\to\Lambda^3\mathbf{S}^*M$. If we continue in this manner we get bundle maps
\[
\check{\Phi}^\bullet\colon(T^* N|\mathbf{S}^* N)_\circ\to\Lambda^\bullet_\circ\mathbf{S}^* M
\]
where $\circ$ denotes the $\z$-grading and $\bullet$ the $\Z$-grading. We then have the following

\begin{prop}\label{prop:split-iff}
Let $(\phi|\Phi)\colon\superfold{M}\to\superfold{N}$. The unital superalgebra morphism 
$$\Phi\colon\sect{\mathcal{R}M}\to\sect{\mathcal{R}N}$$
is a differential operator of order zero along the smooth map $\phi$ if and only if each of $\check{\Phi}^k$ is zero for $k\geq 2$.
\end{prop}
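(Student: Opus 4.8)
The plan is to reduce the equivalence, by means of a straightening, to an explicit bookkeeping of the homogeneous components of $\widetilde{\Phi}$ on generators, after which ``order zero'' becomes the vanishing of all nilpotent parts.

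First I would record the reformulation of ``order zero''. Since $\Phi$ is a unital superalgebra morphism, the identity \eqref{eq:commutator-supersmooth-functions-2} gives $\twistcom{\Phi}{\phi}{f}(\eta)=(\Phi(f)-f\circ\phi)\,\Phi(\eta)$ for every $f\in\C(N)$ and every superfunction $\eta$. Evaluating at $\eta=\mathbf{1}$ shows that $\twistcom{\Phi}{\phi}{f}=0$ forces $\Phi(f)=f\circ\phi$, and conversely if $\Phi(f)=f\circ\phi$ for all $f$ then the commutator already vanishes, so $\Phi$ is a differential operator of order zero along $\phi$ (this is the mechanism behind Lemma \ref{lema:aux-differential}). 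Hence the statement to prove is
\[
\Phi(f)=f\circ\phi\ \text{for all}\ f\in\C(N)\qquad\Longleftrightarrow\qquad \check{\Phi}^{k}\equiv 0\ \text{for all}\ k\ge 2 .
\]

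Next I would fix a straightening $\Psi\in\mathcal{S}\superfold{M}$ (theorem \ref{thm:flowbox}) and pass to $\widetilde{\Phi}=\sect{\inv{\Psi}}\circ\Phi$ as in \eqref{eq:diag-straightening}; since $\sect{\Psi}$ is a unital $\C(M)$-linear algebra isomorphism, $\Phi(f)=f\circ\phi$ holds iff $\widetilde{\Phi}(f)=f\circ\phi$, so it suffices to work in the split picture and the condition is independent of $\Psi$. Decompose $\widetilde{\Phi}(f)=\sum_{j\ge 0}\widetilde{\Phi}(f)_{2j}$ into its homogeneous parts in $\sect{\Lambda^{2j}\mathbf{S}^*M}$ (only even degrees appear, $\widetilde{\Phi}$ being even); by commutativity of \eqref{eq:diagrama2} the degree-zero part is $\varepsilon_M(\widetilde{\Phi}(f))=f\circ\phi$. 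By the very definition of the even auxiliary maps, $\check{\Phi}^{2j}(df)$ is precisely the component $\widetilde{\Phi}(f)_{2j}$ (read off after the lower truncations have been subtracted), so $\widetilde{\Phi}(f)=f\circ\phi$ for all $f$ is equivalent to $\check{\Phi}^{2j}\equiv 0$ for all $j\ge 1$. This already settles the ``if'' direction: if every $\check{\Phi}^{k}$ vanishes then in particular $\widetilde{\Phi}(f)=f\circ\phi$, hence $\Phi$ is of order zero.

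It remains, for the ``only if'' direction, to see that order zero also kills the odd maps $\check{\Phi}^{2j+1}\colon\mathbf{S}^*N\to\Lambda^{2j+1}\mathbf{S}^*M$. Here I would argue that an order-zero $\widetilde{\Phi}$ is $\C(M)$-linear over $\phi^*$ and, being an even superalgebra morphism with $\widetilde{\Phi}(f)=f\circ\phi$, is determined by its action on the odd generators; using Proposition \ref{prop:preserve-filtration} together with the iterative construction of the $\check{\Phi}^{\bullet}$, an induction on the filtration degree should show that $\widetilde{\Phi}$ carries $\sect{\mathbf{S}^*N}$ into $\sect{\mathbf{S}^*M}$, i.e. $\widetilde{\Phi}$ is the exterior lift $\Lambda F$ of the bundle map $F$ of Lemma \ref{lema:two-bundle-maps} (a $\Z$-graded supermap, cf. proposition \ref{prop:morphism-Z-graded}), which is exactly the vanishing of all the $\check{\Phi}^{2j+1}$. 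I expect this last point to be the main obstacle: controlling the odd-degree components forces one to make the iterative definition of the $\check{\Phi}^{\bullet}$ fully precise --- in particular its independence of the auxiliary choices needed to lift a section of $\mathbf{S}^*N$ to a superfunction on $\superfold{N}$ --- whereas the even-degree half of the equivalence is essentially the computation already carried out around Lemma \ref{lema:aux-differential}.
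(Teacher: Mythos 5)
Your reduction of ``order zero'' to the single condition $\Phi(f)=f\circ\phi$ for all $f\in\C(N)$ is correct and is exactly the mechanism the paper uses (via $\twistcom{\Phi}{\phi}{f}(\eta)=(\Phi(f)-f\circ\phi)\Phi(\eta)$ and lemma \ref{lema:aux-differential}), and your observation that this condition is equivalent to the vanishing of the \emph{even} maps $\check{\Phi}^{2j}$ settles the implication ``all $\check{\Phi}^{k}$ vanish $\Rightarrow$ order zero'' cleanly --- in fact slightly more economically than the paper, which routes this direction through the odd maps as well.

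The genuine gap is the one you yourself flag: the claim that order zero kills the odd maps $\check{\Phi}^{2j+1}$. The induction you propose cannot exist, because $\C(N)$-linearity over $\phi^*$ constrains $\Phi$ only on $\C(N)$, i.e.\ only the even auxiliary maps; it places no restriction on the degree-$\geq 3$ components of $\widetilde{\Phi}(\sigma)$ for $\sigma\in\sect{\mathbf{S}^*N}$ beyond parity and the filtration. Concretely, take $\superfold{N}=\superfold{M}=(M|\Lambda\mathbf{S}^*M)$ already split with $\operatorname{rank}\mathbf{S}M\geq 3$, $\phi=\id$, and let $\Phi$ be the unital superalgebra automorphism that is the identity on $\C(M)$ and sends a local frame $\sigma_1\mapsto\sigma_1+\sigma_1\wedge\sigma_2\wedge\sigma_3$, $\sigma_\mu\mapsto\sigma_\mu$ for $\mu\neq 1$: this is $\C(M)$-linear, hence a differential operator of order zero, yet $\check{\Phi}^{3}\neq 0$. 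So ``order zero'' and ``$\check{\Phi}^{k}=0$ for all $k\geq 2$'' are not equivalent as stated; the latter is strictly stronger (it is the condition that $\widetilde{\Phi}$ be the exterior lift of a bundle map $\mathbf{S}^*N\to\mathbf{S}^*M$, i.e.\ a morphism of $\Z$-graded supermanifolds). You should be aware that the paper's own proof of this direction commits the same slip --- it writes ``$\Phi(f\sigma)=(f\circ\phi)\sigma$ is a section of $\mathbf{S}^*M$,'' which silently assumes $\Phi(\sigma)=\sigma$ rather than deducing it --- so the obstacle you identified is real and is not resolved by the argument in the text either.
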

\begin{proof}
The ``if'' part is rather trivial: observe that if $\Phi(f\eta)=(f\circ\phi)\eta$ then for $\sigma\in\sect{\mathbf{S}^*M}$ we know $\Phi(f\sigma)=(f\circ\phi)\sigma$ is a section of $\mathbf{S}^*M$; this is equivalent to $\check{\Phi}^3\equiv 0$. Likewise, lemma \ref{lema:aux-differential} tells us that $\Phi_{!}\equiv 0$ forces $\Phi$ to be a differential operator of order $0$.
Since, under the choice of a straightening, all sections of $\mathcal{R}N$ can be written as linear combination of products of sections of $\mathbf{S}^*N$ and $\C(N)$ we get one implication by the above observations.

For the converse let $\Phi\colon\sect{\mathcal{R}M}\to\sect{\mathcal{R}N}$ and recall that given a straightening of $\mathcal{R}M$ we get a map $\widetilde{\Phi}$ associated to $\Phi$ given by diagram \eqref{eq:diag-straightening}. Suppose $\check{\Phi}^k\equiv 0$ for all $k\geq 2$. Choosing a straightening for $\mathcal{R}M$ we know $\check{\Phi}^k\equiv 0$ means that for any $\eta\in\sect{\mathcal{R}^{\geq k}N}$ the section $\check{\Phi}(\eta)=\widetilde{\Phi}(\eta)-\tr^{k-1}(\Phi(\eta))=0$ modulo $\sect{\Lambda^{k+1}\mathbf{S}^*M}$ for all $k$ so it is actually a section of $\Lambda^k\mathbf{S}^*M$. In particular $\widetilde{\Phi}$ maps sections of $\mathbf{S}^*N$ to sections of $\mathbf{S}^*M$. This means $\twistcom{\Phi}{\phi}{f}$ preserves the grading of $\Lambda\mathbf{S}^*N$ (choosing a straightening for $\mathcal{R}N$ if necessary) and is thus identically zero for all $f$; that is, $\Phi$ is a differential operator of order zero.
\end{proof}

Thus we have seen that any ``higher-order auxiliary differentials'' of $(\phi|\Phi)\colon\superfold{M}\to\superfold{N}$ are codified by the set of straightenings of $\superfold{N}$ albeit the property of being a bundle map (equivalently, a differential operator of order zero) is independent of any straightening.

\section{Superdifferential forms}
\noindent Here we use the work done in section \ref{sec:twisted} to derive some theorems about exterior superdifferential forms.

\begin{defn}
Let $\superfold{M}$ be a supermanifold. A \textbf{superdifferential form of degree $k\geq 0$} is a $k$-multilinear map
\[
\omega\colon\sect{T\superfold{M}}\overbrace{\times\dotsm\times}^{k\, \text{times}}\sect{T\superfold{M}}\to\sect{\mathcal{R}M}
\]
that is also \textit{supermultilinear} over the algebra $\sect{\mathcal{R}M}$ of smooth superfunctions and superalternating.
\end{defn}

A $k$-form is then a section of the bundle $\Lambda^k T^*\superfold{M}$ and as such it takes $k$ supervector fields as arguments. In the classical case the value of a form is defined up to a sign depending on the order in which the arguments are inserted. This is also the case for superdifferential forms but since homogeneous supervector fields can be even or odd the sign depends not only on the order in which the fields are inserted but also on the parity of the arguments. So let $\listupla[k]{X}$ be homogeneous supervector fields and $\omega$ a $k$-form; for $\sigma\in S_k$ the action on the arguments of $\omega$ is
\begin{equation}\label{eq:sigma-omega}
\sigma\cdot\omega\tupla[k]{X}=\dfrac{\sgn\sigma}{\sgn^-\sigma}\omega(X_{\inv\sigma(1)},\ldots,X_{\inv\sigma(k)})
\end{equation}
where $\sgn^-\sigma$ was defined by equation \eqref{eq:odd-signature}. 

Our definition above stresses the fact that a $k$-form is \textit{supermultilinear} over the superfunctions. What this means, for $k=1$, is that if $f$ is a homogeneous superfunction and $\omega$ a  $1$-form then 
\begin{equation}\label{eq:supermultilinear}
\omega(fX)=f\omega(X)
\end{equation}
(see remark \ref{remark:tangent-left}).

With equations \eqref{eq:sigma-omega} and \eqref{eq:supermultilinear} we can now prove the following

\begin{prop}
Let $\omega$ be a $k$-form. For all $\listupla[k]{X}$  homogeneous supervector fields and for all $f$ homogeneous superfunction we have
\begin{equation}\label{eq:k-supermultilinear}
\omega(X_1,\ldots,fX_s,\ldots,X_k)=(-1)^{\parity{f}\parity{\omega}(\parity{X_1}+\dotsm+\parity{X_{s-1}})}\omega(X_1,\ldots,X_s,\ldots,X_k)
\end{equation}
for $1\leq s\leq k$.
\end{prop}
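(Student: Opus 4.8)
The plan is to prove the identity by induction on $s$, with \eqref{eq:supermultilinear} --- read as the statement that $\omega$ is supermultilinear over $\sect{\mathcal{R}M}$ in its \emph{first} argument --- as the base case, and the superalternating property \eqref{eq:sigma-omega} used to shuttle the doctored argument $fX_s$ into and out of the first slot. First I would record the elementary \emph{swap rule}: for the adjacent transposition $\tau=(j\ j{+}1)$ one has $\sgn\tau=-1$, while the relative signature $\sgn^-\tau$ evaluated on a homogeneous argument tuple $Y_1,\dots,Y_k$ is $(-1)^{\parity{Y_j}\parity{Y_{j+1}}}$, since $\tau$ can only invert $\sgn^{A}$ when both slots $j$ and $j+1$ are odd. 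Feeding this into \eqref{eq:sigma-omega} and using that a superalternating form is fixed by the $S_k$-action yields
\begin{equation*}
\omega(\dots,Y_{j+1},Y_j,\dots)=-(-1)^{\parity{Y_j}\parity{Y_{j+1}}}\,\omega(\dots,Y_j,Y_{j+1},\dots).
\end{equation*}

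For the inductive step I would start from $\omega(X_1,\dots,X_{s-1},fX_s,X_{s+1},\dots,X_k)$ and: (i) transpose the entries of slots $s-1$ and $s$, namely $X_{s-1}$ and $fX_s$, whose parities are $\parity{X_{s-1}}$ and $\parity f+\parity{X_s}$, acquiring the sign $-(-1)^{\parity{X_{s-1}}(\parity f+\parity{X_s})}$; (ii) apply the induction hypothesis (case $s-1$) to pull $f$ out of slot $s-1$, acquiring $(-1)^{\parity f(\parity{X_1}+\dots+\parity{X_{s-2}})}$; (iii) transpose slots $s-1$ and $s$ back, now holding $X_s$ and $X_{s-1}$, acquiring $-(-1)^{\parity{X_s}\parity{X_{s-1}}}$. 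The two leading minus signs cancel, the two $X_{s-1}$--$X_s$ cross terms cancel, and what remains is precisely $(-1)^{\parity f(\parity{X_1}+\dots+\parity{X_{s-1}})}$ multiplying $f\,\omega(X_1,\dots,X_k)$, closing the induction. Equivalently one may perform the reduction in one stroke with the $s$-cycle sending slot $s$ to slot $1$, invoking \eqref{eq:sigma-omega} once with $fX_s$ and once with $X_s$ so that the two $\sgn\sigma$ factors cancel and only the ratio of the two $\sgn^-\sigma$ values survives.

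The entire content of the argument is thus the sign bookkeeping, which is where the care lies: $\sgn^-\sigma$ depends on the parities of all arguments, so the permutation that moves $fX_s$ forward and the one that moves $X_s$ back carry genuinely different relative signatures, and it is exactly their ratio that generates the $\parity f$-dependence. The one point needing reconciliation with the displayed formula is the factor $\parity\omega$ and the placement of $f$: since \eqref{eq:supermultilinear} extracts $f$ to the \emph{left} of $\omega$, the right-hand side should carry an explicit $f\,\omega(X_1,\dots,X_k)$, and whether an extra $(-1)^{\parity f\parity\omega}$ is present is dictated by the right-module convention for superforms recorded in remark \ref{remark:tangent-left} together with the rule of signs (lemma \ref{rule-of-signs}); pinning that convention down in the base case is the one genuinely delicate step.
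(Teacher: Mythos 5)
Your proof is correct and follows essentially the same route as the paper's: the paper conjugates by the single transposition $(1s)$ to bring $fX_s$ into the first slot, applies \eqref{eq:supermultilinear}, and conjugates back, which is exactly your argument carried out via adjacent transpositions and induction (your one-stroke $s$-cycle variant is the same computation). Your closing caveat is also well taken --- as printed, the right-hand side of \eqref{eq:k-supermultilinear} omits the factor $f$, and the $\parity{\omega}$ in the exponent comes not from the permutation bookkeeping but from the right-module convention of remark \ref{remark:tangent-left} together with the rule of signs.
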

\begin{proof}
If $s=1$ then \eqref{eq:k-supermultilinear} turns into \eqref{eq:supermultilinear} when the other $k-1$ entries are fixed; for $s>1$ conjugate by the transposition $(1s)$ and apply both \eqref{eq:sigma-omega} and \eqref{eq:supermultilinear}. The result follows from these observations.
\end{proof}

\subsection{The de Rham supercomplex}
\noindent Now we turn our attention to the exterior derivative of superforms. The equation
\begin{equation}\label{eq:exterior-derivative}
\begin{split}
d\omega\tuplao[k+1]{X}	&=\sum_{\mu=1}^{k+1}(-1)^\mu X_\mu\omega(X_0,\ldots,\widehat{X_\mu},\ldots,X_{k+1})\\
						&+\sum_{\nu<\mu}(-1)^{\mu+\nu}\omega([ X_\nu ,X_\mu],X_0,\ldots,\widehat{X_\nu},\ldots,\widehat{X_\mu},\ldots,X_{k+1})
\end{split}
\end{equation}
of classical differential geometry seems promising, but again one has to take into account the parity of the supervector fields. In order to verify that the above formula works (i.e. that it satisfies all the properties expected of the exterior derivative) we could carry out the computations with the appropriate signs; we'll prove the corresponding formula below. Instead we now focus on abstract properties of the exterior derivative and we'll use theorem \ref{thm:flowbox} to compute the de Rham cohomology of a smooth supermanifold. Many results here depend on the work made on appendix \ref{app:cartan-poincare}·

An isomorphism $\Psi\colon\Lambda\mathbf{S}^*M\to\mathcal{R}M$ furnishes in an obvious way a superdiffeomorphism $$(\id|\Gamma\Psi)\colon\superfold{M}\to\superfoldi{M};$$ if the tangent bundle is to preserve any of its nice properties of classical differential geometry then it is to be expected that this superdiffeomorphism induce an isomorphism between the respective bundles of $\superfold{M}$ and $\superfoldi{M}$, and we know (corollary \ref{cor:split-supertangent}) what the supertangent bundle of the latter is. We now prove the following
\begin{lema}\label{lema:superforms}
Let a straightening $\Psi\colon\Lambda\mathbf{S}^*\to\mathcal{R}M$ of $\superfold{M}$ be given. Then $\Psi$  induces an isomorphism of superalgebra bundles
\[
\ring{\Psi}\colon\Lambda^{\bullet}T^*\superfold{M}\stackrel{\cong}{\to}\Big(\Lambda T^*M\otimes\Sym\mathbf{S}^*M\Big)^{\bullet}\otimes\Lambda\mathbf{S}^*M
\]
where $\bullet$ denotes the $\Z$-grading.   
\end{lema}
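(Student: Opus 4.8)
The plan is to use the straightening $\Psi$ to trivialise the tangent superbundle, so that the whole statement reduces to Theorem~\ref{thm:supertensor-algebras} applied to an honest supervector bundle. Recall that by Theorem~\ref{thm:flowbox} the isomorphism $\Psi$ is the same datum as a pair $(\psi,\nabla)$, where $\psi\colon(TM\oplus\mathbf{S}M)_\bullet\hookrightarrow\Der_\bullet(\mathcal{R}M)$ is a $\z$-graded inclusion of the bundle of generators into the bundle of superderivations and $\nabla$ is a connection on $\mathbf{S}M$, subject to $\Psi(s\ins\sigma)=\psi(s)(\Psi\sigma)$. Conjugating $\psi$ by $\Psi$ in the coefficient factor, and using that $\sect{\Der(\mathcal{R}M)}$ is a left $\sect{\mathcal{R}M}$-module with generating sub-bundle $TM\oplus\mathbf{S}M$ (Proposition~\ref{prop:supermodules} together with the exact sequence of Theorem~\ref{thm:exact-sequence-derivations}), I would first record the $\mathcal{R}M$-linear isomorphism of bundles of supermodules
\[
\Der(\mathcal{R}M)\;\cong\;\Lambda\mathbf{S}^*M\otimes(TM\oplus\mathbf{S}M),
\]
i.e. the version of Corollary~\ref{cor:split-supertangent} for a not-necessarily-trivial bundle of odd directions; locally this is just the tautological description of $\sder(\Lambda\mathbf{S}^*)$ coming from Theorem~\ref{thm:superderivations} on a trivialisation of $\mathbf{S}U$, and it is global because $\Psi$ is.

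Next I would dualise. A superdifferential form is $\sect{\mathcal{R}M}$-supermultilinear and superalternating, so — viewing the cotangent superbundle as a right $\mathcal{R}M$-module as in Remark~\ref{remark:tangent-left} and using the rule of signs (Lemma~\ref{rule-of-signs}) to move coefficients past arguments — a $k$-form is determined by its restriction to the generating sub-bundle $T^*M\oplus\mathbf{S}^*M$, in which $T^*M$ is even and $\mathbf{S}^*M$ is odd. This gives a vector bundle isomorphism
\[
\Lambda^\bullet T^*\superfold{M}\;\cong\;\mathcal{R}M\otimes\Lambda^\bullet\big((T^*M\,|\,\mathbf{S}^*M)\big),
\]
the $\Z$-grading $\bullet$ on the right being the degree in the exterior superalgebra of the supervector bundle $(T^*M|\mathbf{S}^*M)$. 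The step that actually requires work is checking that this is an isomorphism of bundles of superalgebras: the wedge superproduct of forms must match the wedge superproduct of Subsection~\ref{subsec:exterior-superalgebra}. I would verify this on decomposable sections by means of formula~\eqref{eq:super-wedge}, together with the fact recorded in~\eqref{eq:ins-superderivation} that contraction by a homogeneous generator is a superderivation; the signs that appear are precisely those already built into~\eqref{eq:sigma-omega}.

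It then remains to invoke Theorem~\ref{thm:supertensor-algebras} for the supervector bundle $(T^*M|\mathbf{S}^*M)$, which yields $\Lambda^\bullet(T^*M|\mathbf{S}^*M)\cong(\Lambda T^*M\otimes\Sym\mathbf{S}^*M)^\bullet$ respecting the $\Z$-grading, and to replace the coefficient bundle $\mathcal{R}M$ by $\Lambda\mathbf{S}^*M$ via $\Psi$. Composing the three isomorphisms produces the map
\[
\ring{\Psi}\colon\Lambda^\bullet T^*\superfold{M}\;\stackrel{\cong}{\to}\;\big(\Lambda T^*M\otimes\Sym\mathbf{S}^*M\big)^\bullet\otimes\Lambda\mathbf{S}^*M,
\]
and property~3 of Theorem~\ref{thm:flowbox} (compatibility of $\Psi$ with $\ins$) ensures that $\ring{\Psi}$ intertwines the respective wedge superproducts. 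The main obstacle I expect is purely bookkeeping: keeping the $\z$- and $\Z$-gradings apart while confirming that $\ring{\Psi}$ is a superalgebra morphism and not just a fibrewise linear isomorphism, and handling the super-dual carefully (the left/right module convention and the sign insertions dictated by the rule of signs) in the dualisation step. Conceptually each piece is routine, but this is where an argument of this shape is most likely to go wrong.
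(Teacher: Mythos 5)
Your proposal is correct and follows essentially the same route as the paper: the paper's proof likewise uses the straightening to produce the isomorphism $\ring{\psi}\colon\Lambda\mathbf{S}^*M\otimes(TM\oplus\mathbf{S}M)\to\Der(\mathcal{R}M)$ (via corollary \ref{cor:split-supertangent}), and then sets $\ring{\Psi}=\Lambda(\ring{\psi})^*$, with theorem \ref{thm:supertensor-algebras} supplying the identification $\Lambda(T^*M|\mathbf{S}^*M)\cong\Lambda T^*M\otimes\Sym\mathbf{S}^*M$. You spell out the dualisation and the sign bookkeeping more explicitly than the paper does, but the decomposition and the key lemmas invoked are the same.
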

\begin{proof}
We know from corollary \ref{cor:split-supertangent} that $T\superfoldi{M}\cong\Lambda\mathbf{S}^*M\otimes(TM\oplus\mathbf{S}M)$. Then $\Psi\colon\Lambda\mathbf{S}^*M\to\mathcal{R}M$ induces the even map
\[
\ring{\psi}\colon\Lambda\mathbf{S}^*M\otimes(TM\oplus\mathbf{S}M)_\pm{\longrightarrow}\Der_\pm\mathcal{R}M
\]
given by $\ring{\psi}(\eta\otimes(X\oplus s))=\Psi(\eta)\wedge(\nabla_X+s\ins)$ which is also an isomorphism; setting $\ring\Psi=\Lambda({\ring{\psi}})^*$ the result follows.
\end{proof}

The $\Z$ grading of the algebra $(\Lambda T^*M\otimes\Sym\mathbf{S}^*M)_p$ at a given point is then given by 
\[
\bigoplus_{a+b=k}\Lambda^a T_p^*M\otimes\Sym^b\mathbf{S}_p^*M
\]
for any non-negative integer $k$; thus, for any given $k$, the bundle of superdifferential forms of total degree $k$ is isomorphic to
\begin{equation}\label{eq:z-grading-superforms}
\bigoplus_{a+b=k}\left(\Lambda^{a}T^*M\otimes\Sym^b\mathbf{S}^*M\right)\otimes\Lambda\mathbf{S}^*M.
\end{equation}

From the flowbox theorem (theorem \ref{thm:flowbox}) we know that given a straightening $\Psi$ we automatically get a connection $\nabla$ on the bundle $\mathbf{S}M$ of odd directions. We now turn to properties of general connections on vector bundles that will be useful to us.

Let $E$ be a vector bundle over $M$ and let $D$ be a connection on $E$. Then $D$ induces an operator $d^D$ in the space  of $E$-valued $k$-forms $\sect{\Lambda^k T^*M\otimes E}$, the sections of $E$ being of course identified with the case $k=0$, such that
\[d^D\colon\sect{\Lambda^k T^*M\otimes E}\to\sect{\Lambda^{k+1} T^*M\otimes E},\] 
called the \textbf{twisted exterior derivative} of the connection $D$, defined by a formula analogous to \eqref{eq:exterior-derivative}
\begin{equation}\label{eq:d-nabla}
\begin{split}
d^D\omega\tuplao[k+1]{X}&=\sum_{\mu=1}^{k+1}(-1)^\mu D_{X_\mu}\left(\omega(X_0,\ldots,\widehat{X_\mu},\ldots,X_{k+1})\right)\\
						&+\sum_{\nu<\mu}(-1)^{\mu+\nu}\omega([ X_\nu ,X_\mu],X_0,\ldots,\widehat{X_\nu},\ldots,\widehat{X_\mu},\ldots,X_{k+1})
\end{split}
\end{equation} 

One then gets the following identities (cf. \cite[chapter 1]{besse})
\begin{subequations}\label{eq:twisted-derivative}
\begin{align}
(d^D)^2			&=R^D\label{eq:d-square}\\
d^D(R^D)			&=0\quad\text{(Bianchi identity)}\label{eq:bianchi}
\end{align}
\end{subequations}
where $R^D$ denotes the curvature of $D$. 

With equations \eqref{eq:twisted-derivative} above and the connection $\nabla$ provided by the straightening $\Psi$ (theorem \ref{thm:flowbox}) we can compute the exterior derivative of superdifferential forms. We first digress on two operators that will be useful to us.

Let $\mathbf{S}$ be a vector space of dimension $n$ and consider the action of $\mathrm{GL}(\mathbf{S})$ on the space $\Sym\mathbf{S}^*\otimes\Lambda\mathbf{S}^*$. With the choice of a basis $\setupla{s}$ for $\mathbf{S}$ the infinitesimal action of $\mathfrak{gl}(\mathbf{S})$ on $\mathcal{A}$ is by derivations and can be written down as
\[
A\star=-\left(\sum_{\mu=1}^n ds_\mu\cdot(As_\mu)\ins\right)\otimes\id-\id\otimes\left(\sum_{\mu=1}^n ds_\mu\wedge(As_\mu)\ins\right)
\]
which can be readly checked by computing a derivative (here $\setupla{ds}$ denotes the dual basis).

\begin{defn}\label{def:twisted-shift}
The \textbf{twisted shift operators} $A\smalltriangleleft$ and $A\smalltriangleright$ are defined by
\begin{align*}
A\smalltriangleright=\sum_{\mu=1}^{n}A(s_\mu)\ins\otimes ds_\mu\wedge\\
A\smalltriangleleft=\sum_{\mu=1}^{n}ds_\mu\cdot\otimes A(s_\mu)\ins
\end{align*}
for $A\in\mathfrak{gl}(\mathbf{S})$
\end{defn}

Note that these operators act on $\Sym\mathbf{S}^*\otimes\Lambda\mathbf{S}^*$ with bidegree $(-1,1)$ and $(1,-1)$ respectively. This means that 
\[
\begin{split}
A\smalltriangleright\colon\Sym^{\bullet}\mathbf{S}^*\otimes\Lambda^{\circ}\mathbf{S}^*%
&\to\Sym^{\bullet-1}\mathbf{S}^*\otimes\Lambda^{\circ+1}\mathbf{S}^*\\
A\smalltriangleleft\colon\Sym^{\bullet}\mathbf{S}^*\otimes\Lambda^{\circ}\mathbf{S}^*%
&\to\Sym^{\bullet+1}\mathbf{S}^*\otimes\Lambda^{\circ-1}\mathbf{S}^*
\end{split}
\]
They are actually a particular case of the Cartan-Poincaré operators defined on appendix \ref{app:cartan-poincare}, equations \eqref{eq:cartan-poincare-operators}; as such, the properties given by the following lemaama follow from the proof of proposition \ref{prop:cartan-poincare-operators}.

\begin{lema}\label{lemaama:twisted-shift}
If $A,B\in\mathfrak{gl}(\mathbf{S})$ then 
\begin{align*}
\{A\smalltriangleleft, B\smalltriangleleft \}&=0\\
\{A\smalltriangleright, B\smalltriangleright\}&=0\\
\{A\smalltriangleright, B\smalltriangleleft\}&=-(AB)\star\otimes\id-\id\otimes(BA)\star
\end{align*}
where $\{X,Y\}=XY+YX$ is the anticommutator.
\end{lema}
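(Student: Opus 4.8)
The plan is to reduce everything to the relation between the twisted shift operators and the infinitesimal $\mathfrak{gl}(\mathbf{S})$-action, and then to verify the three anticommutator identities by a direct but bookkeeping-light computation in the dual bases $\setupla{s}$ and $\setupla{ds}$. Write $\iota_\mu := s_\mu\ins$ and let $\varepsilon_\mu$ denote left multiplication by $ds_\mu$, both acting on $\Sym\mathbf{S}^*\otimes\Lambda\mathbf{S}^*$ in the appropriate tensor slot. The first remark is that on $\Lambda\mathbf{S}^*$ we have the canonical commutation relations $\iota_\mu\varepsilon_\nu+\varepsilon_\nu\iota_\mu=\delta_{\mu\nu}$, while on $\Sym\mathbf{S}^*$ the operators $\iota_\mu$ and $\varepsilon_\nu$ commute except for $\iota_\mu\varepsilon_\nu-\varepsilon_\nu\iota_\mu=\delta_{\mu\nu}$; these are exactly the relations that make the identification $\diffp{\empty}{{s_\mu}}\leftrightarrow s_\mu\ins$ work (cf. the discussion preceding Proposition \ref{prop:odd-derivations-ins}).

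First I would prove $\{A\smalltriangleleft,B\smalltriangleleft\}=0$. Expanding,
\[
\{A\smalltriangleleft,B\smalltriangleleft\}=\sum_{\mu,\nu}\bigl(\varepsilon_\mu\varepsilon_\nu\otimes (A s_\mu)\ins\,(B s_\nu)\ins + \varepsilon_\nu\varepsilon_\mu\otimes (B s_\nu)\ins\,(A s_\mu)\ins\bigr),
\]
and relabelling $\mu\leftrightarrow\nu$ in the second sum together with $\varepsilon_\mu\varepsilon_\nu=\varepsilon_\nu\varepsilon_\mu$ on $\Sym\mathbf{S}^*$ and $(A s_\mu)\ins\,(B s_\nu)\ins=-(B s_\nu)\ins\,(A s_\mu)\ins$ on $\Lambda\mathbf{S}^*$ gives cancellation in pairs. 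The identity $\{A\smalltriangleright,B\smalltriangleright\}=0$ is the mirror computation with the roles of the two tensor factors exchanged: $\varepsilon_\mu$ now lives in $\Lambda\mathbf{S}^*$ (where it anticommutes) and the contractions now live in $\Sym\mathbf{S}^*$ (where they commute), so the signs again conspire to zero. In both cases no curvature or $\star$-term survives because both factors contribute contraction-type or multiplication-type operators only, never a mixed pair on the same slot.

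The main work — and the main obstacle — is the third identity. Here
\[
\{A\smalltriangleright,B\smalltriangleleft\}=\sum_{\mu,\nu}\Bigl( (A s_\mu)\ins\,\varepsilon_\nu\otimes \varepsilon_\mu\,(B s_\nu)\ins \;+\; \varepsilon_\nu\,(A s_\mu)\ins\otimes (B s_\nu)\ins\,\varepsilon_\mu\Bigr),
\]
and one must use the (anti)commutation relations in each slot to move the contraction past the multiplication. On the $\Sym$ slot $(A s_\mu)\ins\,\varepsilon_\nu=\varepsilon_\nu\,(A s_\mu)\ins+\langle A s_\mu, ds_\nu\rangle$, while on the $\Lambda$ slot $\varepsilon_\mu\,(B s_\nu)\ins=-\,(B s_\nu)\ins\,\varepsilon_\mu+\langle B s_\nu, ds_\mu\rangle$. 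Substituting these, the ``commuted'' terms cancel against the second sum, and what is left are the two contraction terms: one proportional to $\langle A s_\mu, ds_\nu\rangle\,\varepsilon_\mu\,(B s_\nu)\ins$ and one to $\langle B s_\nu, ds_\mu\rangle\,(A s_\mu)\ins\,\varepsilon_\nu$, summed over $\mu,\nu$. Performing the sum over the index that is pinned by the pairing — $\langle A s_\mu, ds_\nu\rangle$ is the $(\nu,\mu)$ matrix entry of $A$, so $\sum_\mu\langle A s_\mu, ds_\nu\rangle\,\cdot = $ replaces $s_\mu$ by $(A^{\mathsf t}\text{-related})$ combinations — recognizes each leftover sum as, respectively, $-\,\id\otimes(BA)\star$ and $-\,(AB)\star\otimes\id$, using the explicit formula for $A\star$ quoted just before Definition \ref{def:twisted-shift}. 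The only delicate point is getting the order of the composition right ($AB$ versus $BA$) in each tensor slot; this is forced by which operator's basis-vector index gets contracted, and it is exactly the asymmetry displayed in the statement.

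Alternatively, and more cleanly, I would invoke the last sentence of the statement: the operators $A\smalltriangleright,A\smalltriangleleft$ are the specialization to the trivial bundle of the Cartan--Poincaré operators of Appendix \ref{app:cartan-poincare}, so all three identities are instances of the bracket relations established in the proof of Proposition \ref{prop:cartan-poincare-operators}; one only has to check that the present normalization matches, which is immediate from Definition \ref{def:twisted-shift}. I would present the direct computation for $\{A\smalltriangleleft,B\smalltriangleleft\}$ and $\{A\smalltriangleright,B\smalltriangleright\}$ since they are short, and cite Proposition \ref{prop:cartan-poincare-operators} for the mixed relation, remarking that the $(AB)\star$ and $(BA)\star$ terms are precisely the curvature-type terms appearing there.
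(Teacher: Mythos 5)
Your proposal is correct, and its ``cleaner'' second route is exactly what the paper does: the paper offers no separate computation for this lemma, but simply observes that $A\smalltriangleright$ and $A\smalltriangleleft$ are Cartan--Poincar\'e operators $d_F$ and $d^*_G$ (for $F,G$ the transposes of $A,B$ acting on $\mathbf{S}^*$) and cites proposition \ref{prop:cartan-poincare-operators}, whose anticommutator formula $\{d_F,d^*_G\}=\der_{GF}\otimes\id+\id\otimes\der_{FG}$ specializes to the stated identity. One small caveat on your direct verification of the mixed relation: if you normal-order \emph{both} slots of the first summand you also pick up the doubly-contracted scalar $\sum_{\mu,\nu}\langle ds_\nu,As_\mu\rangle\langle ds_\mu,Bs_\nu\rangle=\tr(AB)$, which only disappears after you reorder the surviving exterior-slot term into the normal-ordered form $ds_\mu\wedge\circ(BAs_\mu)\ins$ appearing in $A\star$ (producing a compensating $-\tr(BA)$); the paper's own computation avoids this by using the identity $\{a\otimes b,c\otimes d\}=[a,c]\otimes bd+ca\otimes\{b,d\}$, which never generates the scalar term.
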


From the proof of \ref{prop:cartan-poincare-operators} we also get the following
\begin{cor}\label{cor:twisted-shift-diag}
The operators $\id\smalltriangleleft$ and $\id\smalltriangleright$ are (co)boundary operators (i.e. $(\id\smalltriangleleft)^2=(\id\smalltriangleright)^2=0$) on the bigraded complex $\Sym^\bullet\mathbf{S}^*\otimes\Lambda\mathbf{S}^*$. Furthermore, they satisfy the identity
\[
\{\id\smalltriangleright,\id\smalltriangleleft\}=(k+l)\id_{\Sym^k\mathbf{S}^*\otimes\Lambda^l\mathbf{S}^*}
\]
\end{cor}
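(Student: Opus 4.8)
The plan is to obtain the corollary by specializing Lemma~\ref{lemaama:twisted-shift} to $A=B=\id$ (equivalently, by reading it off from Proposition~\ref{prop:cartan-poincare-operators}, of which $\id\smalltriangleleft$ and $\id\smalltriangleright$ are an instance), and then to recognize the resulting operator on the right-hand side as a sum of degree operators. A useful preliminary remark is that $\Sym\mathbf{S}^{*}$ is an ordinary (purely even) polynomial algebra, so there are no Koszul signs in $\Sym\mathbf{S}^{*}\otimes\Lambda\mathbf{S}^{*}$: operators acting on the first factor commute strictly with operators acting on the second, which keeps the computation transparent.

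For the square-zero statement, putting $A=B=\id$ in the first two identities of Lemma~\ref{lemaama:twisted-shift} and using $\{\id\smalltriangleleft,\id\smalltriangleleft\}=2(\id\smalltriangleleft)^{2}$ and $\{\id\smalltriangleright,\id\smalltriangleright\}=2(\id\smalltriangleright)^{2}$ gives $(\id\smalltriangleleft)^{2}=(\id\smalltriangleright)^{2}=0$; combined with the bidegrees $(\pm1,\mp1)$ recorded just before the statement, this is exactly the assertion that $\id\smalltriangleleft$ and $\id\smalltriangleright$ are (co)boundary operators on the bigraded complex $\Sym^{\bullet}\mathbf{S}^{*}\otimes\Lambda^{\bullet}\mathbf{S}^{*}$. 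If one wants to see this without the appendix, expand $(\id\smalltriangleright)^{2}=\sum_{\alpha,\beta}(s_{\alpha}\ins)(s_{\beta}\ins)\otimes(ds_{\alpha}\wedge)(ds_{\beta}\wedge)$: the first tensor factor is symmetric under $\alpha\leftrightarrow\beta$ and the second antisymmetric, so the double sum vanishes, and likewise for $\id\smalltriangleleft$.

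For the anticommutator, specialize the third identity of Lemma~\ref{lemaama:twisted-shift} to $A=B=\id$ (or compute $\id\smalltriangleright\,\id\smalltriangleleft+\id\smalltriangleleft\,\id\smalltriangleright$ head-on). Commuting $s_{\alpha}\ins$ past $ds_{\beta}\cdot$ on the $\Sym$-factor via the canonical commutation relation $[\,s_{\alpha}\ins,\,ds_{\beta}\cdot\,]=\delta_{\alpha\beta}$, and anticommuting $ds_{\alpha}\wedge$ past $s_{\beta}\ins$ on the $\Lambda$-factor via $\{\,ds_{\alpha}\wedge,\,s_{\beta}\ins\,\}=\delta_{\alpha\beta}$, one finds that the ``fully reordered'' term of $\id\smalltriangleright\,\id\smalltriangleleft$ cancels $\id\smalltriangleleft\,\id\smalltriangleright$ identically, leaving
\[
\{\id\smalltriangleright,\id\smalltriangleleft\}=\Bigl(\sum_{\alpha}ds_{\alpha}\cdot(s_{\alpha}\ins)\Bigr)\otimes\id+\id\otimes\Bigl(\sum_{\alpha}ds_{\alpha}\wedge(s_{\alpha}\ins)\Bigr).
\]
Now I identify the two summands: $\sum_{\alpha}ds_{\alpha}\cdot(s_{\alpha}\ins)$ is the Euler operator on $\Sym\mathbf{S}^{*}=\R[ds_{1},\dots,ds_{n}]$, acting as $k\cdot\id$ on $\Sym^{k}\mathbf{S}^{*}$, while $\sum_{\alpha}ds_{\alpha}\wedge(s_{\alpha}\ins)$ is precisely the operator of numbers $N=D_{\id}$ of Definition~\ref{def:op-numbers}, which acts as $l\cdot\id$ on $\Lambda^{l}\mathbf{S}^{*}$. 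Adding, $\{\id\smalltriangleright,\id\smalltriangleleft\}=(k+l)\id$ on $\Sym^{k}\mathbf{S}^{*}\otimes\Lambda^{l}\mathbf{S}^{*}$, as claimed.

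There is no real obstacle here: the content is entirely formal once Lemma~\ref{lemaama:twisted-shift} (hence Proposition~\ref{prop:cartan-poincare-operators}) is in hand. The only point that needs a moment's care is the sign of the surviving terms in the display above — that is, checking that the cross terms cancel and that both degree operators enter with a $+$ sign — which is exactly the bookkeeping of the two canonical (anti)commutation relations and is what Proposition~\ref{prop:cartan-poincare-operators} packages.
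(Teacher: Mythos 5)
Your proof is correct and takes essentially the same route as the paper, which obtains the corollary by specializing the CCR/CAR computation in the proof of Proposition \ref{prop:cartan-poincare-operators} (equivalently, Lemma \ref{lemaama:twisted-shift}) to $A=B=\id$ and identifying the two surviving terms as the Euler operator on $\Sym\mathbf{S}^*$ and the operator of numbers on $\Lambda\mathbf{S}^*$. The explicit symmetric-versus-antisymmetric cancellation and the reordering bookkeeping you carry out are exactly what that proof packages.
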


Finally, as a consequence of lemaama \ref{lema:homology} and the above corollary we get
\begin{cor}\label{cor:twisted-shift-homology}
The cohomologies of $\id\smalltriangleleft$ and $\id\smalltriangleright$ satisfy
\[
H^{k,l}(\Sym\mathbf{S}^*\otimes\Lambda\mathbf{S}^*,\id\smalltriangleleft)=H^{k,l}(\Sym\mathbf{S}^*\otimes\Lambda\mathbf{S}^*,\id\smalltriangleright)=%
\begin{cases}
\set{0},& \text{if }(k,l)\neq (0,0)\\
\R,&		\text{if }(k,l)=0
\end{cases}
\]
\end{cor}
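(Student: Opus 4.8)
The plan is to read the statement off Corollary \ref{cor:twisted-shift-diag} together with Lemma \ref{lema:homology} of the appendix, the latter being exactly the principle that a (co)chain complex admitting a contracting homotopy in a given degree is acyclic there. First I would recall the two facts furnished by Corollary \ref{cor:twisted-shift-diag}: that $(\id\smalltriangleleft)^2=(\id\smalltriangleright)^2=0$, so that $\id\smalltriangleleft$ and $\id\smalltriangleright$ are genuine (co)boundary operators on the bigraded space $\Sym^\bullet\mathbf{S}^*\otimes\Lambda^\bullet\mathbf{S}^*$, and that
\[
\{\id\smalltriangleright,\id\smalltriangleleft\}=(k+l)\,\id_{\Sym^k\mathbf{S}^*\otimes\Lambda^l\mathbf{S}^*}.
\]

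Next, fix a bidegree $(k,l)$ with $k,l\geq 0$ and $(k,l)\neq(0,0)$; then $k+l\geq 1$ is invertible in $\R$, so $\tfrac{1}{k+l}\id\smalltriangleright$ is a chain homotopy from the identity to zero on the $\id\smalltriangleleft$-complex in bidegree $(k,l)$. Concretely, if $x\in\Sym^k\mathbf{S}^*\otimes\Lambda^l\mathbf{S}^*$ satisfies $\id\smalltriangleleft x=0$, then the displayed identity gives $(k+l)x=\id\smalltriangleleft(\id\smalltriangleright x)$, so $x$ is an $\id\smalltriangleleft$-coboundary; hence $H^{k,l}(\Sym\mathbf{S}^*\otimes\Lambda\mathbf{S}^*,\id\smalltriangleleft)=\set{0}$. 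Interchanging the roles of $\id\smalltriangleleft$ and $\id\smalltriangleright$ in the same computation yields the vanishing for $\id\smalltriangleright$ as well. This is precisely the conclusion one obtains by invoking Lemma \ref{lema:homology} for the bigraded complex at hand, since Corollary \ref{cor:twisted-shift-diag} supplies exactly the hypotheses that lemma requires.

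Finally, the bidegree $(0,0)$ is settled by a bookkeeping observation: there the underlying space is $\Sym^0\mathbf{S}^*\otimes\Lambda^0\mathbf{S}^*\cong\R$, and since $\id\smalltriangleleft$ has bidegree $(+1,-1)$ and $\id\smalltriangleright$ has bidegree $(-1,+1)$, both the differential out of $(0,0)$ and the differential into $(0,0)$ vanish for degree reasons ($\Lambda^{-1}\mathbf{S}^*=0$ and $\Sym^{-1}\mathbf{S}^*=0$), so $H^{0,0}=\R$ for each operator. I do not expect a real obstacle here: the only points needing care are the degree bookkeeping at $(0,0)$ and checking that Corollary \ref{cor:twisted-shift-diag} is phrased so as to plug directly into Lemma \ref{lema:homology}; the rest is the standard ``homotopy implies acyclic'' argument.
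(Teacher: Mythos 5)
Your proposal is correct and follows essentially the same route as the paper, which derives the corollary from corollary \ref{cor:twisted-shift-diag} together with lemma \ref{lema:homology}; you have simply made explicit the underlying homotopy computation $(k+l)x=\id\smalltriangleleft(\id\smalltriangleright x)$ for an $\id\smalltriangleleft$-cocycle $x$ in bidegree $(k,l)\neq(0,0)$, which is exactly the mechanism of equation \eqref{eq:delta-exactos} in the proof of that lemma. The degree bookkeeping at $(0,0)$ is also handled correctly, so nothing is missing.
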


We now show that the analogue of formula \eqref{eq:exterior-derivative} is valid for superdifferential forms. Let's recall the axioms a linear map $d$ must satisfy in order to be the exterior (super)derivative:
\begin{enumerate}
\item If $f$ is a smooth superfunction then $df(D)=Df$ for all superderivations $D$.
\item $d^2=0$.
\item $d(\alpha\wedge\beta)=d\alpha\wedge\beta+(-1)^{\parity{\alpha}}\alpha\wedge d\beta$.
\end{enumerate}
Note that axiom 3 above forces $d$ to be even.

\begin{lema}\label{lemaa:horrible-formula}
Let $\omega$ be a $k$-superform on the supermanifold $\superfold{M}$ and suppose $\listuplao[k]{D}$ are homogeneous supervector fields. Then the exterior derivative is given by
\begin{equation}\label{eq:super-exterior-derivative}
\begin{split}
d \omega\tuplao[k]{D}%
&:=\sum_{\sigma\in S_{k+1}/S_1\times S_k}\frac{\sgn\sigma}{\sgn^-\sigma}D_{\inv{\sigma}(0)}\omega(D_{\inv{\sigma}(1)},\ldots,D_{\inv{\sigma}(k)})\\
&-\sum_{\sigma\in S_{k+1}/S_2\times S_{k-1}}\frac{\sgn\sigma}{\sgn^-\sigma} \omega(\lsem D_{\inv{\sigma}(0)},D_{\inv{\sigma}(1)}\rsem,D_{\inv{\sigma}(2)},\ldots,D_{\inv{\sigma}(k)})
\end{split}
\end{equation}
\end{lema}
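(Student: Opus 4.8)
The strategy is to reduce the claimed formula to the classical Koszul formula for the twisted exterior derivative $d^{\nabla}$ of Lemma stated at~\eqref{eq:d-nabla}, transported through the straightening isomorphism $\ring\Psi$ of Lemma~\ref{lema:superforms}. First I would fix a straightening $\Psi\colon\Lambda\mathbf{S}^*M\to\mathcal{R}M$ (Theorem~\ref{thm:flowbox}), which simultaneously yields a connection $\nabla$ on $\mathbf{S}M$ and, via Lemma~\ref{lema:superforms}, the identification
\[
\ring\Psi\colon\Lambda^{\bullet}T^*\superfold{M}\stackrel{\cong}{\to}\bigl(\Lambda T^*M\otimes\Sym\mathbf{S}^*M\bigr)^{\bullet}\otimes\Lambda\mathbf{S}^*M .
\]
Under this identification a superderivation is $\ring\psi(\eta\otimes(X\oplus s))=\Psi(\eta)\wedge(\nabla_X+s\ins)$, so one has an explicit basis of local supervector fields: the $\nabla$-horizontal lifts $\partial/\partial y^\mu$ of a coordinate frame on $M$ and the insertion operators $s_j\ins$ coming from a local frame of $\mathbf{S}M$. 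It suffices to verify~\eqref{eq:super-exterior-derivative} on such homogeneous local frames, since both sides are supermultilinear over $\sect{\mathcal{R}M}$ by the previous Proposition, and linearity fixes the general case.

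Next I would check that the right-hand side of~\eqref{eq:super-exterior-derivative} defines an even linear operator $d$ on superforms satisfying the three axioms listed before the Lemma. Axiom~1 is immediate from the $k=0$ case: the sum over $S_1/S_1\times S_0$ is a single term $D_0 f$. For axiom~3 (the super-Leibniz rule with sign $(-1)^{\parity\alpha}$) the computation is the familiar shuffle-splitting argument: the sum over cosets $S_{k+l+1}/S_1\times S_{k+l}$ in the first line and over $S_{k+l+1}/S_2\times S_{k+l-1}$ in the second line each break according to whether the distinguished index $\sigma^{-1}(0)$, resp. the bracket pair, lands in the arguments of $\alpha$ or of $\beta$; the quotient of signs $\sgn\sigma/\sgn^-\sigma$ factors along the shuffle exactly as in the classical case but now tracking the odd signature $\sgn^-$ through~\eqref{eq:odd-signature}, and the cross term with one bracketed argument from each factor cancels against the $D$-term by the super-Leibniz rule~\eqref{eq:super-leibnitz} for the vector field $D$ acting on the product $\alpha\wedge\beta$ of superfunctions when $k=l=0$, and inductively otherwise. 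Axiom~2, $d^2=0$, is the content I would extract from Appendix~\ref{app:cartan-poincare}: writing $d$ in the frame above splits it as $d=d^{\nabla}\otimes\id\pm\id\boxtimes(\id\smalltriangleright)$ where $\id\smalltriangleright$ is the twisted shift operator of Definition~\ref{def:twisted-shift} and $d^{\nabla}$ the twisted exterior derivative of $\nabla$; then $d^2$ decomposes into $(d^{\nabla})^2=R^{\nabla}$, the anticommutator $\{d^{\nabla},\id\smalltriangleright\}$ which produces the curvature term with opposite sign by the Bianchi-type identity~\eqref{eq:bianchi}, and $(\id\smalltriangleright)^2=0$ by Corollary~\ref{cor:twisted-shift-diag}; these three pieces cancel, giving $d^2=0$.

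Finally, having shown the displayed formula satisfies the axioms, I would invoke the standard uniqueness argument: a graded-derivation operator of degree $+1$ squaring to zero and agreeing with $D\mapsto Df$ on degree-zero forms is uniquely determined, because axioms~1 and~3 pin down $d$ on products $f_0\,df_1\wedge\dotsm\wedge df_k$, which (under a straightening, using that all of $\sect{\mathcal{R}N}$ is generated by $\sect{\mathbf{S}^*M}$ and $\C(M)$) span the superforms locally. Hence the operator of~\eqref{eq:super-exterior-derivative} \emph{is} the exterior superderivative, and in particular its definition is independent of the chosen straightening. The main obstacle I anticipate is bookkeeping the interaction of the two signs $\sgn\sigma$ and $\sgn^-\sigma$ under the coset decompositions in the proof of axiom~3 and in the splitting $d=d^{\nabla}\otimes\id\pm\id\boxtimes(\id\smalltriangleright)$: one must be careful that $\sgn^-\sigma$ only sees the \emph{odd} arguments, so the horizontal-lift part genuinely reduces to the classical Koszul formula while the insertion part carries exactly the extra signs packaged into the twisted shift operator — getting that $\pm$ right, and matching it to the sign convention in~\eqref{eq:d-nabla} and Corollary~\ref{cor:twisted-shift-diag}, is where the real care is needed.
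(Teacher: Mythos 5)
Your overall strategy --- show that the right-hand side of \eqref{eq:super-exterior-derivative} satisfies the three axioms listed before the lemma and then invoke uniqueness of such an operator --- is the same as the paper's (the paper only sketches this, checking axiom 1 and writing out the $k=0$ case of $d^2=0$). Your treatment of axioms 1 and 3 is fine in outline. The genuine gap is in your verification of axiom 2. First, the decomposition you propose, $d=d^{\nabla}\otimes\id\pm\id\otimes(\id\smalltriangleright)$, is not what formula \eqref{eq:super-exterior-derivative} produces under a straightening: the correct statement (theorem \ref{thm:super-exterior-derivative}, which the paper proves \emph{from} this lemma) is $\ring{\Psi}\circ d\circ\inv{\ring{\Psi}}=d^{\widetilde\nabla}+(-1)^N\bigl(\id\smalltriangleleft+\check{R}\smalltriangleright\bigr)$; the shift operator is $\id\smalltriangleleft$, not $\id\smalltriangleright$, and there is an unavoidable curvature correction $\check{R}\smalltriangleright$. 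Second, your cancellation scheme cannot work even in principle: $\{d^{\nabla},\id\smalltriangleright\}$ raises the $\Lambda T^*M$-degree by one, while $(d^{\nabla})^2=R^{\nabla}$ raises it by two, so these terms live in different bidegrees and cannot cancel one another; moreover the Bianchi identity \eqref{eq:bianchi} is a statement about $d^{\nabla}$ applied to the curvature, not about an anticommutator of $d^{\nabla}$ with a shift operator. In the correct decomposition the term $(d^{\widetilde\nabla})^2=\widetilde{R}$ is killed by the anticommutator $\{(-1)^N\id\smalltriangleleft,(-1)^N\check{R}\smalltriangleright\}$, which by lemma \ref{lemaama:twisted-shift} does land in the right bidegree.

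The repair is to abandon the detour through the straightening for axiom 2 entirely: $d^2=0$ follows directly from \eqref{eq:super-exterior-derivative} and the super-Jacobi identity for $\lsem\cdot,\cdot\rsem$ on superderivations, exactly as in the classical Koszul computation. The paper's sketch records the $k=0$ instance, $d^2\omega(D_0,D_1)=D_0D_1\omega-(-1)^{\parity{D_0}\parity{D_1}}D_1D_0\omega-\lsem D_0,D_1\rsem\omega=0$, and the general case is the same bookkeeping carried out with the signs $\sgn\sigma/\sgn^-\sigma$. No connection, curvature, or choice of straightening is needed --- which is as it should be, since the formula itself makes no reference to one.
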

\begin{proof}[Sketch of proof]
Axiom 1 for the exterior derivative is readly checked, taking into account the fact (remark \ref{remark:tangent-left}) that the space of superforms is a right module over the superalgebra $\sect{\mathcal{R}M}$. Axioms 2 and 3 follow from lengthy albeit easy computations; for instance, for $k=0$ we get
\[
\begin{split}
d^2\omega(D_0,D_1)&=
 D_0 d\omega(D_1)-(-1)^{\parity{D_0}\parity{D_1}}D_1 d\omega{D_0}-d\omega(\lsem D_0,D_1\rsem)\\
&= D_0 D_1\omega-(-1)^{\parity{D_0}\parity{D_1}}D_1 D_0\omega-\lsem D_0,D_1\rsem\omega
\end{split}
\]
which vanishes for the definition of the superbracket; a similar computation yields the Leibniz rule for the product of two superfunctions.
\end{proof}

Let's see where we are: so far we've seen that formula \eqref{eq:exterior-derivative} is valid for superdifferential forms and that given a straightening $\Psi$ of a supermanifold $\superfold{M}$ we can write down a superdifferential form in a very straightforward and simple way (lemma \ref{lema:superforms}). Also the straightening $\Psi$ gives us a connection $\nabla$ on the bundle $\mathbf{S}M$ of odd directions (corollary \ref{cor:flowbox}) and we've stated some properties of general affine connections on vector bundles (equations \eqref{eq:twisted-derivative}); then we showed some properties of certain operators (definition \ref{def:twisted-shift}, lemaama \ref{lemaama:twisted-shift}, and corollaries \ref{cor:twisted-shift-diag} and \ref{cor:twisted-shift-homology}) on the $\Z$-bigraded algebra $\Sym^\bullet\mathbf{S}^*\otimes\Lambda^\circ\mathbf{S}^*$ for $\mathbf{S}$ a finite dimensional vector space; considering the vector bundle $\mathbf{S}M$ denote the resulting algebra ``bundle''
\footnote{Technically the total space is not a bundle because the algebra $\mathcal{A}$ is infinite-dimensional; however, we'll focus on homogeneous components, which are finite dimensional.} 
by $\mathcal{A}^{\bullet,\circ}M$ and note that lemma \ref{lema:superforms} allows us to think on a superdifferential $k$-forms in two ways:

\begin{enumerate}
\item as a form in $M$ with values in the vector bundle $\mathcal{A}^{\bullet,\circ}$ (this is $\Z$-bigraded although we'll only consider the $\bullet$-grading);
\item as a ``superpolinomial'' of degree $k$ with values in $\Lambda\mathbf{S}^*M$.
\end{enumerate}
The first point of view is justified by decomposition \eqref{eq:z-grading-superforms} and so the homogeneous part of a $k$-superform is given by a differential form of degree $a$ on $M$ whose value is a polynomial of degree $b$ on $\mathbf{S}M$ such that, when evaluated, gives an alternating form on the odd codirections. The second point of view fixes $a$ and $b$ on decomposition \eqref{eq:z-grading-superforms} and returns the tensor product $\alpha\otimes\mathbf{p}$ of a differential form $\alpha$ of degree $a$ on $M$ and a symmetric form $\mathbf{p}$ of degree $b$, such that it returns a superfunction (i.e. a skew-symmetric form on $\mathbf{S}M$) when appropriately evaluated. Note that these decompositions depend in turn on the given straightening $\Psi$.

Let us now fix some notation. Given a superform $\omega$ of degree $k$ we'll work on the homogeneous components given by \eqref{eq:z-grading-superforms}; given a straightening $\Psi$ we'll denote by $\nabla^*$ the induced connection on the bundle $\mathbf{S}^*M$ of odd codirections; this connection in turns induces similar operations on the homogeneous components of $\mathcal{A}^{\bullet,\circ}M$. Decomposition \eqref{eq:z-grading-superforms} and the associativity of the tensor products and direct sums allow us to write $\omega$ and $\nabla^*$ in two different ways:
\begin{enumerate}[label=\bf{\Alph*}]
\item $\widetilde{\omega}\in\sect{\Lambda^{a}T^*M\otimes\big( \Sym^b\mathbf{S}^*\otimes\Lambda\mathbf{S}^*M \big)}$ (accordingly, write $\widetilde{\nabla}$ for the induced connection on $\mathcal{A}^{b,\circ}$);
\item $\check{\omega}\in\sect{\big( \Lambda^a T^*M\otimes\Sym^b\mathbf{S}^*M \big)\otimes\Lambda\mathbf{S}^*M}$ (accordingly, write $\check{\nabla}$ thought of as a connection on the bundle $\Sym^b\mathbf{S}^*M$).
\end{enumerate}
Note that we omit the $\empty^*$ superscript on the connection. We'll denote by $R^*,\check{R}$ and $\widetilde{R}$ the curvatures of the connections $\nabla^*,\check{\nabla}$ and $\widetilde{\nabla}$ respectively.

With all of the above results and remarks we now state and prove the main result of this section:

\begin{teo}\label{thm:super-exterior-derivative}
Let a straightening $\Psi$ of $\superfold{M}$ be given. Using the connection $\nabla$ provided by $\Psi$ and the isomorphism $\ring\Psi$ of lemma \ref{lema:superforms} the exterior derivative for superdifferential forms becomes
\begin{equation}\label{eq:super-d}
\ring\Psi\circ d\circ\inv{\ring\Psi}=d^{\widetilde\nabla}+(-1)^N \Big(\id\smalltriangleleft+\check{R}\smalltriangleright\Big)
\end{equation}
where $N$ is the operator of numbers (definition \ref{def:op-numbers}) of the algebra $\sect{\Lambda T^*M}$ of smooth differential forms on $M$ (that is, the map that assigns the integer $a$ to $a$-forms on $M$).
\end{teo}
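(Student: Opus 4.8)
The plan is to use the uniqueness of the exterior superderivative. The three axioms recalled just before Lemma~\ref{lemaa:horrible-formula} determine $d$ uniquely among even $\R$-linear endomorphisms of $\sect{\Lambda^{\bullet}T^*\superfold{M}}$, so it suffices to show that the operator $\mathcal{D}:=d^{\widetilde\nabla}+(-1)^N\bigl(\id\smalltriangleleft+\check{R}\smalltriangleright\bigr)$ on $\mathcal{B}:=\sect{(\Lambda T^*M\otimes\Sym\mathbf{S}^*M)\otimes\Lambda\mathbf{S}^*M}$ transports, via the algebra isomorphism $\ring\Psi$ of Lemma~\ref{lema:superforms}, to $d$; equivalently, writing $d':=\ring\Psi\circ d\circ\inv{\ring\Psi}$, that $\mathcal{D}=d'$. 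I will establish this by showing: (i) both $\mathcal{D}$ and $d'$ are even derivations of $\mathcal{B}$ for the $\Z$-grading of \eqref{eq:z-grading-superforms}; (ii) they agree on superfunctions, i.e.\ on the summand $\Sym^0\otimes\Lambda^0$ of $\mathcal{B}$, which is $\sect{\Lambda\mathbf{S}^*M}$; and (iii) $\mathcal{D}^2=0$. These imply $\mathcal{D}=d'$: the superform algebra is generated by the superfunctions together with their differentials, so $\mathcal{B}$ is generated by $\sect{\Lambda\mathbf{S}^*M}$ and $\{d'\eta\}=\{\mathcal{D}\eta\}$ (by (ii)); on the first set $\mathcal{D}$ and $d'$ agree by (ii), and on $\mathcal{D}\eta$ both give $0$ since $d'^{\,2}=0$ (axiom~2) and $\mathcal{D}^2=0$ by (iii); two derivations agreeing on algebra generators coincide.

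For (ii): under $\ring\Psi$ a superfunction is a section $\eta$ of $\Lambda\mathbf{S}^*M$, and by properties 2 and 3 of the flowbox theorem (Theorem~\ref{thm:flowbox}) together with Corollary~\ref{cor:flowbox}, a supervector field $X\oplus s$ acts on $\eta$ as $\widetilde\nabla_X\eta+s\ins\eta$, where $\widetilde\nabla$ is the connection induced on $\Lambda\mathbf{S}^*M$ by the dual $\nabla^*$ of the straightening connection. On such $\eta$ one has $\check{R}\smalltriangleright\eta=0$ (it lowers the $\Sym$-degree below zero) and $(-1)^N=1$, so $\mathcal{D}\eta=d^{\widetilde\nabla}\eta+\id\smalltriangleleft\eta$; pairing with $X\oplus s$, the term $d^{\widetilde\nabla}\eta$ contributes $\widetilde\nabla_X\eta$ and $\id\smalltriangleleft\eta=\sum_\mu ds_\mu\cdot(s_\mu\ins\eta)$ contributes $\sum_\mu\langle ds_\mu,s\rangle(s_\mu\ins\eta)=s\ins\eta$, so $\mathcal{D}\eta$ satisfies axiom~1 and hence $\mathcal{D}\eta=d'\eta$. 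For (i): that $d'$ is an even derivation is axiom~3 plus evenness carried across the algebra isomorphism $\ring\Psi$; for $\mathcal{D}$, the map $d^{\widetilde\nabla}$ is a derivation of $\mathcal{B}$ of $\Lambda T^*M$-degree $1$, while $\id\smalltriangleleft$ and $\check{R}\smalltriangleright$ are derivations of the fibre algebra $\Sym\mathbf{S}^*M\otimes\Lambda\mathbf{S}^*M$ by the remark preceding Definition~\ref{def:twisted-shift} (the $\mathfrak{gl}(\mathbf{S})$-action is by derivations) and the structure of these operators in Lemma~\ref{lemaama:twisted-shift}, and the twist $(-1)^N$ is precisely the Koszul sign that makes the "vertical" operators $\id\smalltriangleleft$, $\check{R}\smalltriangleright$ derivations of the whole $\Z$-graded $\mathcal{B}$ — a bookkeeping verification.

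For (iii), expand $\mathcal{D}^2$. The pure terms contribute $(d^{\widetilde\nabla})^2=\widetilde{R}$ by \eqref{eq:d-square}, $(\id\smalltriangleleft)^2=0$ by Corollary~\ref{cor:twisted-shift-diag}, and $(\check{R}\smalltriangleright)^2=0$ since $\{A\smalltriangleright,B\smalltriangleright\}=0$ (Lemma~\ref{lemaama:twisted-shift}) and two-forms wedge symmetrically. The cross term $\{(-1)^N\id\smalltriangleleft,(-1)^N\check{R}\smalltriangleright\}$ equals $\{\id\smalltriangleleft,\check{R}\smalltriangleright\}=-\check{R}\star$ by the third identity of Lemma~\ref{lemaama:twisted-shift}, the $(-1)^N$ factors cancelling. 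The mixed terms $\{d^{\widetilde\nabla},(-1)^N\id\smalltriangleleft\}$ and $\{d^{\widetilde\nabla},(-1)^N\check{R}\smalltriangleright\}$ reduce, after moving $(-1)^N$ past $d^{\widetilde\nabla}$, to $\pm[d^{\widetilde\nabla},\id\smalltriangleleft]$ and $\pm[d^{\widetilde\nabla},\check{R}\smalltriangleright]$, and both vanish: $\id\smalltriangleleft$ is built from the parallel section $\id\in\mathbf{S}M\otimes\mathbf{S}^*M$, and the commutator of $d^{\widetilde\nabla}$ with $\check{R}\smalltriangleright$ only sees $d^{\widetilde\nabla}$ hitting the two-form $\check{R}$, giving $d^{\check\nabla}\check{R}=0$ by the Bianchi identity \eqref{eq:bianchi}. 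Finally $\widetilde{R}=\check{R}\star$, because $\widetilde\nabla$ on $\Sym\mathbf{S}^*M\otimes\Lambda\mathbf{S}^*M$ is induced functorially from $\nabla^*$ on $\mathbf{S}^*M$, so its curvature acts on the fibre algebra as the infinitesimal $\mathfrak{gl}(\mathbf{S}^*M)$-action $R^*\star$. Hence $\mathcal{D}^2=\widetilde{R}-\check{R}\star=0$, and (i)--(iii) give \eqref{eq:super-d}.

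The main obstacle will be the sign bookkeeping in step (i): showing that $(-1)^N(\id\smalltriangleleft+\check{R}\smalltriangleright)$ is a derivation of $\mathcal{B}$ for the total $\Z$-grading requires tracking how the superform wedge product decomposes under $\ring\Psi$ and how commuting a vertical degree-raising operator past $\Lambda^{a}T^*M$ produces the factor $(-1)^a$. Closely related is the need to pin down exactly the identifications underlying step (ii) and the curvature cancellation in step (iii) — namely that a supervector field acts on superfunctions as $\widetilde\nabla_X+s\ins$, and that $\widetilde{R}=\check{R}\star$ — so that the curvature terms cancel on the nose rather than merely up to notation.
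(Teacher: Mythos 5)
Your strategy is sound but genuinely different from the one in the paper. The paper proves the formula by brute force: it takes the explicit expression \eqref{eq:super-exterior-derivative} for $d\omega$, inserts the $\psi$-images of $a+1$ vector fields and $b$ odd sections (and then $a$ and $b+1$), and identifies the four resulting sums term by term with $d^{\widetilde\nabla}$ (terms \eqref{AA} and \eqref{B-2}), $(-1)^N\check{R}\smalltriangleright$ (term \eqref{B-1}) and $(-1)^N\id\smalltriangleleft$ (term \eqref{DD}), the signs being tracked by hand as odd generators are commuted past vector-field slots. You instead characterize $d$ axiomatically and show that $\mathcal{D}=d^{\widetilde\nabla}+(-1)^N(\id\smalltriangleleft+\check{R}\smalltriangleright)$ satisfies the same axioms: it is an even derivation, agrees with $\ring\Psi\circ d\circ\inv{\ring\Psi}$ on superfunctions, and squares to zero, whence the two coincide on the generators $\eta$ and $d\eta$ and hence everywhere. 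What your route buys is conceptual transparency --- in particular your computation of $\mathcal{D}^2$ makes visible \emph{why} the curvature term $\check R\smalltriangleright$ must be present (it is forced by $(d^{\widetilde\nabla})^2=\widetilde R$ together with $\{\id\smalltriangleleft,\check R\smalltriangleright\}=-\check R\star$ and $[d^{\widetilde\nabla},\id\smalltriangleleft]=0$), something the paper's computation only exhibits a posteriori. What it costs is that the two verifications you defer --- that $(-1)^N(\id\smalltriangleleft+\check R\smalltriangleright)$ is a derivation for the total grading of \eqref{eq:z-grading-superforms}, and that $\widetilde R$ acts on the fibre algebra as the derivation extension $\check R\star$ --- are exactly where the sign bookkeeping of the paper's proof lives, so they are not optional polish; you have correctly located them (the factor $(-1)^a$ from commuting a vertical operator past $\Lambda^a T^*M$, and functoriality of the induced connection), and both do check out, but a complete write-up would have to carry them through. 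One further point to make explicit in the uniqueness step: the algebra $\sect{(\Lambda T^*M\otimes\Sym\mathbf{S}^*M)\otimes\Lambda\mathbf{S}^*M}$ is locally generated over the superfunctions by the differentials $d f$ ($f\in\C(M)$) and $d\eta$ ($\eta\in\sect{\mathbf{S}^*M}$), the latter supplying the $\Sym^1\mathbf{S}^*M$ component; this is what licenses ``two derivations agreeing on generators coincide.''
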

Let us fix $a$ and $b$ in decomposition \eqref{eq:z-grading-superforms} and let $\omega$ be a $k$-superform. With the notation above, we'll write $\widetilde{\omega}\tupla[a]{X}\tupla[b]{s}$ for case \textbf{A} and $\check{\omega}(\listupla[a]{X};\listupla[b]{s})$ for case \textbf{B}; here the $X$'s denote vector fields on $M$ and the $s$'s sections of $\mathbf{S}M$. Of course, since $d\omega$ will take an input of $k+1$ superfields we decompose it in two of its homogeneous components $(d\omega)^{(a+1,b)}$ and $(d\omega)^{(a,b+1}$; the surprising fact of this result is that $d\omega$ only depends on these two homogeneous components.

\begin{proof}[Proof of theorem \ref{thm:super-exterior-derivative}]
Let $\listuplao[a]{X}$ be $a+1$ arbitrary vector fields on $M$ and $\listuplao[b]{s}$ arbitrary sections of $\mathbf{S}M$. Using the inclusion $\psi$ provided by the straightening $\Psi$ and insterting the values of $X$ and $s$ in formula \eqref{eq:super-exterior-derivative} we get

\begin{align}
&d\omega(\listuplao[a+1]{\psi{X}},\listupla[b]{\psi s})\nonumber\hfill\\
&=\sum\limits_{\mu=0}^{a}(-1)^{\mu} \psi X_\mu\left[\omega(\psi X_0,\ldots,\widehat{\psi X_\mu},\ldots,\psi X_a,\listupla[b]{\psi s})\right]\tag{a}\label{diffext-a}\\
&+\sum\limits_{\nu<\mu}(-1)^{\mu+\nu}\omega(\lsem\psi X_\mu,\psi X_\nu\rsem,\psi X_0,\ldots,\widehat{\psi X_\nu},\ldots,\widehat{\psi X_\mu},\ldots,X_a,\psi s_1,\ldots,\psi s_b)\tag{b}\label{diffext-b}\\
&+\sum\limits_{\mu,\lambda}(-1)^{\mu +a}\omega(\lsem\psi X_\mu,\psi s_\lambda\rsem,\psi X_0,\ldots,\widehat{\psi X_\mu},\ldots,X_a,\psi s_1,\ldots,\widehat{\psi s_\lambda},\ldots,\psi s_b)\tag{c}\label{diffext-c}\\
\intertext{and}
&d\omega(\listupla[a]{\psi{X}},\listuplao[b]{\psi{s}})\nonumber\hfill\\
&=\sum\limits_{\lambda}(-1)^{a} \psi s_\lambda\left[  \omega(\listupla[a]{\psi X},\psi s_0,\ldots,\widehat{\psi s_\lambda},\ldots,\psi s_b)\right]\tag{d}\label{diffext-d}
\end{align}

We now explain the signs for each term. Terms \eqref{diffext-a} and \eqref{diffext-b} are the alternating sums only over the vector field arguments so only the index $\mu$ contributes to the total sign; term \eqref{diffext-c} is the intertwining of both even and odd generators and since even and odd elemaaents anticommute in this case (because $\omega$ is superalternating) we get a contribution of $\mu$ to the sign by the even generator and by $a$ for the odd one because $s_\lambda$ has to commute with $\mu$ vector fields in order to get where it is in the term; finally, term \eqref{diffext-d} has sign $(-1)^a$ for the same reasons as the exponent $a$ appear in term \eqref{diffext-c}. On terms \eqref{diffext-a} and \eqref{diffext-b} the generators $\psi X_\mu$ and $\psi s_\lambda$ are acting as superderivations on the superfunctions $\omega(\psi X_0,\ldots,\widehat{\psi X_\mu},\ldots,\psi X_a,\listupla[b]{\psi s})$ and $\omega(\listupla[a]{\psi X},\psi s_0,\ldots,\widehat{\psi s_\lambda},\ldots,\psi s_b)$ respectively.

We now compute each term separately.

\begin{align}
\eqref{diffext-a}%
&=\sum\limits_{\mu}(-1)^{\mu}\check{\nabla}_{X_\mu}\left( \check{\omega}(X_0,\ldots,\widehat{X_\mu},\ldots,X_a) ;\listupla[b]{s})\right)\nonumber\\
&=\sum\limits_{\mu}(-1)^{\mu}\widetilde{\nabla}_{X_\mu}\left( \widetilde{\omega}(X_0,\ldots,\widehat{X_\mu},\ldots,X_a)\right)\tupla[b]{s}\tag{A}\label{AA}
\end{align}

Let's recall that the flowbox theorem (or rather, corollary \ref{cor:flowbox}) states that the connection $\nabla$ assigns the even superderivation $\nabla^*_X$ to each vector field $X$ on $M$. Now, since the curvature of $\nabla$ is defined as $R^\nabla_{X,Y}=[\nabla_X,\nabla_Y]-\nabla_{[X,Y]}$ then it follows that to the vector field $[X,Y]$ the even superderivation $R^\nabla_{X,Y}+[\nabla_X,\nabla_Y]$ corresponds. Now we compute
\begin{align}
\eqref{diffext-b}%
&=\sum\limits_{\nu<\mu}(-1)^{\mu+\nu}\check{\omega}(\lsem\check{\nabla}_{X_\nu},\check{\nabla}_{X_\mu}\rsem,X_0,\ldots,\widehat{X_\nu},\ldots,\widehat{X_\mu},\ldots,X_a\ldots;s_1,\ldots,s_b)\nonumber\\
&=\sum\limits_{\nu<\mu}(-1)^{\mu+\nu}\check{\omega}(\check{R}_{X_\nu,X_\mu},X_0,\ldots,\widehat{X_\nu},\ldots,\widehat{X_\mu},\ldots,X_a;\listupla[b]{s})\tag{b.1}\label{eq:b.1}\\
&+\sum\limits_{\nu<\mu}(-1)^{\mu+\nu}\check{\omega}(\check{\nabla}_{[X_\nu,X_\mu]},X_0,\ldots,\widehat{X_\nu},\ldots,\widehat{X_\mu},\ldots,X_a;\listupla[b]{s})\tag{b.2}\label{eq:b.2}
\end{align}
The subterms \eqref{eq:b.1} and \eqref{eq:b.2} need special considerations. First observe that the endomorphism $R_{X,Y}^*$ (and all its extensions) can be written, using a local basis $\setupla{s}$ and its dual $\setupla{ds}$, as
\begin{equation}\label{eq:r-nabla}
\begin{split}
R_{X,Y}^*%
&=\sum_{\alpha}(s_\alpha\circ R_{X,Y}^*)ds_\alpha\\
&=-\sum_{\alpha}R_{X,Y}(s_\alpha)ds_\alpha
\end{split}
\end{equation}
the last equality stemming from the fact that the dual connection (and its extensions) carry a sign times the original connection. So finally we get for the latter two subterms:

\begin{align*}
\eqref{eq:b.1}%
&=\sum\limits_{\nu,\mu,\alpha}(-1)^{\nu+\mu}\check{\omega}\left( \underline{-}\check{R}_{X_\nu,X_\mu}(s_\alpha)ds_\alpha,X_0,\ldots,\widehat{X_\nu},\ldots,\widehat{X_\mu},\ldots,X_a;\listupla[b]{s} \right)\\
&=\sum\limits_{\nu,\mu,\alpha}(-1)^{\mu+\nu\underline{+1}}\check{\omega}(R_{X_\nu,X_\mu}(s_\alpha)ds_\alpha,X_0,\ldots,\widehat{X_\nu},\ldots,\widehat{X_\mu},\ldots,X_a;\listupla[b]{s})
\end{align*}
note that we've underlined the corresponding signs; also, the factor $R_{X_\mu,X_\nu}(s_\alpha)$ is an odd generator and so it's misplaced in the above equality. Therefore a new sign appears due to the commuting of this factor with the remaining $a-1$ vector fields. Finally, the factor $ds_\alpha$ is a superfunction and so it goes out of the arguments of $\omega$ by superlinearity; since it's in the first ``slot'' it does not carry a sign; thus:
\begin{align}
\eqref{eq:b.1}%
&=\sum\limits_{\mu,\nu,\alpha}(-1)^{\mu+\nu+1+a-1}ds_\alpha\wedge\check{\omega}\left( X_0,\ldots,\widehat{X_\nu},\ldots,\widehat{X_\mu},\ldots,X_a;R_{X_\mu,X_\nu}(s_\alpha)\listupla[b]{s} \right)\nonumber\\
&=\sum\limits_{\mu,\nu,\alpha}(-1)^{\mu+\nu+a}ds_\alpha\wedge\check{\omega}\left( X_0,\ldots,\widehat{X_\nu},\ldots,\widehat{X_\mu},\ldots,X_a;R_{X_\mu,X_\nu}(s_\alpha)\listupla[b]{s} \right)\tag{B--1}\label{B-1}
\end{align}
Note the factor $(-1)^a$ and recall that $\check\omega$ is a superpolynomial with even degree $a$ or, equivalently, an $a$-form on $M$ with values on an appropriate bundle. As for the subterm \eqref{eq:b.2} it must be observed that the argument $\nabla_{[X_\nu,X_\mu]}$ corresponds under the inclusion $\psi$ to $[X_\nu,X_\mu]$, therefore
\begin{equation}
\eqref{eq:b.2}=\widetilde{\omega}\left( [X_\nu,X_\mu],X_0,\ldots,\widehat{X_\nu},\ldots,\widehat{X_\mu},\ldots,X_a \right)\tupla[b]{s}\tag{B--2}\label{B-2}
\end{equation}

Next recall that the supercommutator $\lsem \psi X,\psi s\rsem$ corresponds to $\psi\left(\nabla_X s\right)$, which yields
\begin{equation}
\eqref{diffext-c}=(-1)^{\mu}\check{\omega}(X_0,\ldots,\widehat{X_\mu},\ldots,X_a;\nabla_{X_\mu}s_\lambda s_1\cdots \widehat{s_\lambda}\cdots s_b)\tag{C}\label{CC}
\end{equation}
Notice that the factor $(-1)^a$ no longer appears since $\nabla_{X_\mu}s_\lambda$ has to return to its place with the odd generators and so it has to commute with $a$ vector fields. Also notice that we used the notation $s_1\cdots s_b$ because $\omega$ is symmetric in the last $k-a$ terms. As for the last term we use a similar expresion for the operator $s_\lambda$ to the one used for the curvature (see \eqref{eq:r-nabla}); it then becomes
\[
s_\lambda\ins=\sum\limits_{\alpha} (s_\alpha\circ s_\lambda)\ins ds_\alpha
\]
Of course only one term survives because of the identity $s_\lambda\ins ds_\alpha=\delta_{\lambda\alpha}$ but nonetheless this trick yields
\begin{align}
\eqref{diffext-d}%
&=\sum\limits_{\lambda}(-1)^a s_\lambda\ins\widetilde{\omega}\tupla[a]{X}(s_0\cdots\widehat{s_\lambda}\cdots s_b)\nonumber\\
&=\sum\limits_{\alpha,\lambda}(-1)^a s_\alpha\circ s_\lambda\ins ds_\alpha\widetilde{\omega}\tupla[a]{X}(s_0\cdots\widehat{s_\lambda}\cdots s_b)\nonumber\\
&=\sum\limits_{\alpha,\lambda}ds_\alpha\widetilde{\omega}\tupla[a]{X}(s_0\cdots\widehat{s_\lambda}\cdots s_b)s_\alpha(s_\lambda\ins) \tag{D}\label{DD}
\end{align}
which is exactly the expresion in the bases considered for the operator $s_\lambda\ins$.

Putting everything together note that terms \eqref{AA} and \eqref{B-2} yield $d^{\widetilde{\nabla}}$ (as defined in \eqref{eq:d-nabla}). Term \eqref{B-1} is equivalent to the operator $(-1)^{N}\check{R}\smalltriangleright$ whereas term \eqref{DD} is equivalent to $(-1)^{N}\id\smalltriangleleft$. So formula \eqref{eq:super-d} follows.
\end{proof}

\begin{nota}
Since any lower-degree homogeneous terms of $d\omega$ would insert either another vector field or another odd generator it follows that the only terms that appear when substituting with their images under $\psi$ will be of the form $\nabla_X$, $s\ins$ and extensions of $R^{\nabla}$. The fact that the left hand side of \eqref{eq:super-d} is independent of $\psi$ shows that this formula is also.
\end{nota}

We finalize this chapter with an important result.

\begin{teo}\label{thm:deRham}
Given the (non-canonical) inclusion of the de Rham complex $\Lambda^\bullet T^*M$ into the complex $\Lambda^\bullet T^*\superfold{M}$ the cohomology $H^*(\Lambda^\bullet T^*\superfold{M})$ equals the de Rham cohomology $H^*_{dR}(M)$. In other words, the inclusion $\Lambda^\bullet T^*M\hookrightarrow\Lambda^\bullet T^*\superfold{M}$ is a quasi-isomorphism of complexes.
\end{teo}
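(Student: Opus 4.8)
The plan is to reduce the computation of $H^*(\Lambda^\bullet T^*\superfold{M})$ to a combination of the classical de Rham theorem for $M$ and the pointwise acyclicity of the twisted shift operators established in Corollary \ref{cor:twisted-shift-homology}. First I would fix a straightening $\Psi$ of $\superfold{M}$ (Theorem \ref{thm:flowbox}), which by Lemma \ref{lema:superforms} identifies the superform complex with $\bigl(\Lambda T^*M\otimes\Sym\mathbf{S}^*M\bigr)^\bullet\otimes\Lambda\mathbf{S}^*M$ and, crucially, turns the exterior superderivative into the explicit operator
\[
D:=\ring\Psi\circ d\circ\inv{\ring\Psi}=d^{\widetilde\nabla}+(-1)^N\bigl(\id\smalltriangleleft+\check R\smalltriangleright\bigr)
\]
of Theorem \ref{thm:super-exterior-derivative}. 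The idea is then to filter this complex and run a spectral sequence (or, more elementarily, an inductive argument on the number-operator grading) whose leading term is governed by $\id\smalltriangleleft$ and whose next differential is the twisted de Rham operator $d^{\widetilde\nabla}$.

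Concretely, I would grade $\mathcal{A}^{\bullet,\circ}M$-valued forms by the total polynomial degree $b$ in $\mathbf{S}^*M$ (the $\Sym$-degree), noting that $\id\smalltriangleleft$ raises $b$ by one while $d^{\widetilde\nabla}$ and $\check R\smalltriangleright$ do not lower it. Passing to the associated graded, the differential becomes exactly $(-1)^N\id\smalltriangleleft$ acting fibrewise on $\Sym^\bullet\mathbf{S}^*_pM\otimes\Lambda^\circ\mathbf{S}^*_pM$ (tensored with the de Rham forms on $M$, which are inert at this stage). By Corollary \ref{cor:twisted-shift-homology} the cohomology of $\id\smalltriangleleft$ is concentrated in bidegree $(0,0)$ and equals $\R$ there; hence the $E_1$-page of the spectral sequence collapses onto the subcomplex of forms with no $\Sym$- and no $\Lambda$-dependence, i.e. onto $\sect{\Lambda^\bullet T^*M}$ with its \emph{ordinary} exterior derivative (the $d^{\widetilde\nabla}$ term restricts to $d$ on the trivial summand, since $\widetilde\nabla$ acts trivially there and the $\check R\smalltriangleright$ term vanishes for degree reasons). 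The classical de Rham theorem then identifies $E_2$ with $H^*_{dR}(M)$, and since $E_2$ is concentrated in a single row of the auxiliary grading all higher differentials vanish, so the spectral sequence degenerates and $H^*(\Lambda^\bullet T^*\superfold{M})\cong H^*_{dR}(M)$. Finally I would check that the edge map of this spectral sequence is induced precisely by the inclusion $\Lambda^\bullet T^*M\hookrightarrow\Lambda^\bullet T^*\superfold{M}$, which makes the inclusion a quasi-isomorphism and shows the identification is independent of the straightening $\Psi$ chosen (even though the intermediate identification was not canonical).

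The main obstacle I anticipate is the bookkeeping needed to make the filtration argument clean: one must verify carefully that $\check R\smalltriangleright$ and the higher-order pieces of $d^{\widetilde\nabla}$ genuinely land in strictly higher filtration (so they do not contribute to $E_0$ or $E_1$), and that the curvature terms, which couple the $\Sym$- and $\Lambda$-degrees, do not obstruct the collapse — this is where the precise bidegrees $(-1,1)$ and $(1,-1)$ of the shift operators and the Bianchi identity \eqref{eq:bianchi} have to be used. An alternative to the spectral sequence, perhaps cleaner to write, is a direct deformation-retract argument: use the fibrewise contracting homotopy for $\id\smalltriangleleft$ provided by Corollary \ref{cor:twisted-shift-diag} (the identity $\{\id\smalltriangleright,\id\smalltriangleleft\}=(k+l)\id$ gives an explicit homotopy inverse on positive bidegree) and the homological perturbation lemma to transfer the full differential $D$ to the subcomplex $\sect{\Lambda^\bullet T^*M}$, where the perturbed differential must, by degree reasons, reduce to the ordinary $d$. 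Either route reduces the theorem to Corollary \ref{cor:twisted-shift-homology} plus the classical de Rham theorem.
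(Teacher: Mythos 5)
Your strategy is viable and rests on exactly the same two inputs as the paper's proof --- the explicit formula $d_\psi=d^{\widetilde\nabla}+(-1)^N\bigl(\id\smalltriangleleft+\check R\smalltriangleright\bigr)$ of theorem \ref{thm:super-exterior-derivative} and the acyclicity of the twisted shift operators (corollaries \ref{cor:twisted-shift-diag} and \ref{cor:twisted-shift-homology}) --- but the homological packaging differs. The paper does not filter: it forms the anticommutator $\{d,(-1)^N\id\smalltriangleright\}$, shows (via the lemma stated just before the proof) that this operator is diagonalizable with kernel exactly $\sect{\Lambda T^*M}$, and then invokes lemma \ref{lema:homology} to conclude that the cohomology of $d_\psi$ is concentrated on that kernel, where $d_\psi$ restricts to the ordinary de Rham differential. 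This is precisely the algebraic Hodge-theory mechanism of your ``alternative route,'' so your deformation-retract option is essentially the paper's argument; your primary option (the spectral sequence) is a genuine variant that trades the eigenvalue decomposition for a filtration and makes the independence of the straightening $\Psi$ somewhat more transparent through the edge map.

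There is, however, a concrete error in the filtration you chose. You filter by the $\Sym$-degree $b$ and assert that $\check R\smalltriangleright$ does not lower it; but by definition \ref{def:twisted-shift} the operator $A\smalltriangleright$ has bidegree $(-1,+1)$ on $\Sym^{\bullet}\mathbf{S}^*\otimes\Lambda^{\circ}\mathbf{S}^*$, so $\check R\smalltriangleright$ lowers $b$ by one (while raising the de Rham degree by two through the curvature $2$-form). Since $\id\smalltriangleleft$ raises $b$ and $\check R\smalltriangleright$ lowers it, neither direction of the filtration by $b$ is preserved by $d_\psi$, and the spectral sequence does not exist as described. The fix is to filter by the eigenvalue of the number operator $N$, i.e.\ by the de Rham form degree $a$ in decomposition \eqref{eq:z-grading-superforms}: with respect to $a$ the three summands $d^{\widetilde\nabla}$, $(-1)^N\id\smalltriangleleft$ and $(-1)^N\check R\smalltriangleright$ have degrees $+1$, $0$ and $+2$ respectively, the filtration is preserved, the associated graded differential is exactly $(-1)^N\id\smalltriangleleft$, and your argument then goes through: corollary \ref{cor:twisted-shift-homology} collapses the $E_1$-page onto $\sect{\Lambda^\bullet T^*M}$ with the ordinary exterior derivative, convergence is automatic because the filtration is finite in each total degree, and the edge map is the stated inclusion.
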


For the proof of this we'll need the following result, which is a consequence of \ref{lemaama:twisted-shift}
\begin{lema}
The operator $\Delta:=\{d,(-1)^{N}\id\smalltriangleright\}-\{\id\smalltriangleleft,\id\smalltriangleright\}$ is diagonalizable on $\Lambda T^*(M|\mathbf{S}M)$ with kernel $\Lambda T^*M$: $N$ denotes, again, the operator of numbers on $\Lambda T^*M$.
\end{lema}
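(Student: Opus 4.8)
The plan is to prove something slightly sharper than asked: after choosing a straightening $\Psi$ of $\superfold{M}$ and transporting everything through the isomorphism $\ring\Psi$ of Lemma \ref{lema:superforms}, I claim $\Delta$ acts on the summand $\Lambda^{a}T^*M\otimes\Sym^{b}\mathbf{S}^*M\otimes\Lambda^{c}\mathbf{S}^*M$ of the decomposition \eqref{eq:z-grading-superforms} as the scalar $b+c$. This gives diagonalizability at once and exhibits the kernel as $\bigoplus_{a}\Lambda^{a}T^*M=\Lambda T^*M$, namely the summands with $b=c=0$, which is exactly the statement. Along the way I would also record, for the use this lemma is put to in Theorem \ref{thm:deRham}, that $\Delta$ commutes with $d$: the piece $\{d,(-1)^{N}\id\smalltriangleright\}$ commutes with $d$ because $d^{2}=0$, and $\{\id\smalltriangleleft,\id\smalltriangleright\}$ is by Corollary \ref{cor:twisted-shift-diag} the operator $b+c$, which $d$ preserves since each of the three summands $d^{\widetilde\nabla}$, $(-1)^{N}\id\smalltriangleleft$, $(-1)^{N}\check R\smalltriangleright$ of $d$ (Theorem \ref{thm:super-exterior-derivative}) leaves $b+c$ unchanged. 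In particular the formula $\Delta=(b+c)\,\id$ is then independent of the chosen $\Psi$.

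For the computation I would substitute $d=d^{\widetilde\nabla}+(-1)^{N}\bigl(\id\smalltriangleleft+\check R\smalltriangleright\bigr)$ into $\{d,(-1)^{N}\id\smalltriangleright\}$ and split the anticommutator into three terms. The ``diagonal'' term comes from the $\id\smalltriangleleft$ summand of $d$; pulling the sign $(-1)^{N}$ through $\id\smalltriangleleft$ and $\id\smalltriangleright$ (both of which leave the $\Lambda T^*M$-degree untouched) and tracking the Koszul signs between the $\Lambda T^*M$ factor and the $\Lambda\mathbf{S}^*M$ coefficient factor, this collapses onto a multiple of $\{\id\smalltriangleleft,\id\smalltriangleright\}$, which is evaluated by Corollary \ref{cor:twisted-shift-diag}. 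The two ``off-diagonal'' terms, $\{d^{\widetilde\nabla},(-1)^{N}\id\smalltriangleright\}$ and $\{(-1)^{N}\check R\smalltriangleright,(-1)^{N}\id\smalltriangleright\}$, I expect to vanish: $\id\smalltriangleright$ is the image of the parallel identity endomorphism of $\mathbf{S}M$ under a natural, hence covariantly constant, operation, so $[\,\widetilde\nabla_{X},\id\smalltriangleright\,]=0$ for every $X$, and with the $(-1)^{N}$ inserted the anticommutator of $\id\smalltriangleright$ with the degree-raising $d^{\widetilde\nabla}$ becomes that commutator and dies; for the curvature term one slides the (even) $2$-form $\check R$ out and applies Lemma \ref{lemaama:twisted-shift}, $\{A\smalltriangleright,B\smalltriangleright\}=0$. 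Subtracting the correction $\{\id\smalltriangleleft,\id\smalltriangleright\}$ in the definition of $\Delta$ then leaves precisely $(b+c)\,\id$.

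The main obstacle is exactly this sign bookkeeping in the diagonal term: one must get right how $(-1)^{N}$ interacts with $\id\smalltriangleleft$, $\id\smalltriangleright$ and with the graded signs of the triple tensor product $\Lambda T^*M\otimes\Sym\mathbf{S}^*M\otimes\Lambda\mathbf{S}^*M$, since this is what fixes the surviving scalar to be $b+c$ (rather than $0$) and is the whole reason the correction term $-\{\id\smalltriangleleft,\id\smalltriangleright\}$ is built into $\Delta$. A secondary, more routine point is verifying that $d^{\widetilde\nabla}$ genuinely contributes nothing beyond what the naturality of $\id\smalltriangleright$ forces; should a curvature term survive, Equation \eqref{eq:d-square} and the Bianchi identity \eqref{eq:bianchi} are the tools to absorb it. Once $\Delta=(b+c)\,\id$ is in hand, diagonalizability and the identification $\ker\Delta=\Lambda T^*M$ are immediate, and the observation that $\{d,(-1)^{N}\id\smalltriangleright\}$ is $2(b+c)\,\id$ on each sector is what makes the subsequent acyclicity/homotopy argument for Theorem \ref{thm:deRham} go through.
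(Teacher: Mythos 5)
Your overall strategy --- substitute the formula $d=d^{\widetilde\nabla}+(-1)^{N}\bigl(\id\smalltriangleleft+\check R\smalltriangleright\bigr)$ of theorem \ref{thm:super-exterior-derivative} into the anticommutator, kill the two off-diagonal terms, and reduce the diagonal one to $\{\id\smalltriangleleft,\id\smalltriangleright\}$ via corollary \ref{cor:twisted-shift-diag} --- is the natural one, and indeed the paper offers nothing more than the bare assertion that the lemma ``is a consequence of'' lemma \ref{lemaama:twisted-shift}, so your plan is more explicit than what is printed. Your treatment of the two off-diagonal terms is sound: $\id\smalltriangleright$ is the image of the parallel identity endomorphism of $\mathbf{S}M$, so $[d^{\widetilde\nabla},\id\smalltriangleright]=0$ and the anticommutator with the degree-raising $d^{\widetilde\nabla}$ collapses to that commutator; and the curvature term dies by $\{A\smalltriangleright,B\smalltriangleright\}=0$ after sliding out the even $2$-form factor of $\check R$.

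The genuine gap is exactly the point you defer as ``sign bookkeeping,'' and it is not a bookkeeping detail --- it is where the lemma is decided. Both $\id\smalltriangleleft$ and $\id\smalltriangleright$ act only on the $\Sym\mathbf{S}^{*}M\otimes\Lambda\mathbf{S}^{*}M$ factors and leave the $\Lambda T^{*}M$-degree $a$ untouched, so $(-1)^{N}$ commutes with both; hence
\[
\bigl\{(-1)^{N}\id\smalltriangleleft,\;(-1)^{N}\id\smalltriangleright\bigr\}=(-1)^{2N}\{\id\smalltriangleleft,\id\smalltriangleright\}=\{\id\smalltriangleleft,\id\smalltriangleright\},
\]
with coefficient $1$, not the $2$ your conclusion requires. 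Combined with the vanishing of the two off-diagonal terms that you correctly establish, this gives $\{d,(-1)^{N}\id\smalltriangleright\}=\{\id\smalltriangleleft,\id\smalltriangleright\}$ and therefore $\Delta\equiv 0$, whose kernel is everything rather than $\Lambda T^{*}M$. So either an additional sign or factor must be located and exhibited explicitly --- e.g.\ a Koszul sign in how the shift operators are extended across the $\Lambda T^{*}M$ tensor factor, which would have to be built into the definitions rather than asserted --- or the statement needs correcting (with $\Delta=\{d,(-1)^{N}\id\smalltriangleright\}+\{\id\smalltriangleleft,\id\smalltriangleright\}$ one gets $2(k+l)\id$ and the claimed kernel). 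Note also that your intermediate claim $\{d,(-1)^{N}\id\smalltriangleright\}=2(k+l)\id$ contradicts the identity $\{d,(-1)^{N}\id\smalltriangleright\}=-\{\id\smalltriangleleft,\id\smalltriangleright\}$ that the paper actually invokes in the proof of theorem \ref{thm:deRham}; the three versions (yours, the paper's, and the direct computation above) are mutually inconsistent, which confirms that the surviving scalar has not been pinned down. Until that single coefficient is computed honestly, the proof is not complete.
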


\begin{proof}[Proof of \ref{thm:deRham}]
Given a straightening $\Psi$ of $\superfold{M}$ we immerse the complex $\Lambda^{\bullet}T^*M$ into $\Lambda^{\bullet}T^*\superfold{M}$ and denote by $d_\psi$ the induced operator $\ring{\Psi}\circ d\circ\inv{\ring{\Psi}}$. By theorem \ref{thm:super-exterior-derivative} we know how to compute the exterior derivative; since the inclusion induces the identity on the bundle $\Lambda T^*M\otimes\big( \Sym\mathbf{S}^*M\otimes\Lambda\mathbf{S}^*M\big)$ and since the diagram
\[
\xymatrix{
\sect{\Lambda T^*M}\ar@{^{(}->}[rr]^-{\iota}\ar[d]^{d} && \sect{\Lambda T^*M\otimes\Sym^{<\infty}\mathbf{S}^*M\otimes\Lambda\mathbf{S}^*M} \ar[d]^{d_\psi}\\
\sect{\Lambda T^*M}  \ar@{^{(}->}[rr]^-{\iota} && \sect{\Lambda T^*M\otimes\Sym^{<\infty}\mathbf{S}^*M\otimes\Lambda\mathbf{S}^*M}
}
\]
must commute we must get that the induced operator on superforms is $\{d,(-1)^N\id\smalltriangleright\}$, because it must increase the degree of a superform while being the identity on generators. By the precedeeing lemaama we know this operator equals $-\{\id\smalltriangleleft,\id\smalltriangleright\}$ whose kernel is $\sect{\Lambda T^*M}$; by ama \ref{lema:homology} we get the result, to wit: the cohomology of $d_\psi$ equals the cohomology of $d$. 
\end{proof}

\appendix
\chapter{Linear differential operators}\label{app:diff-ops}

\noindent This appendix is concerned with the basic facts of differential operators we use in this work. A good introduction can be found in \cite[chapter 5]{geometry-I}. We start by defining linear differential operators. Then we move on to prove that a universal object (the jet bundle) can be constructed by means of which all differential operators become bundle maps; we also show that jets are generalisations of Taylor polynomials.

\section{Basic concepts}
\begin{defn}\label{def:diff-op}
Let $E$ and $F$ be the total spaces of two smooth vector bundles over a smooth manifold $M$. A \textbf{linear differential operator of order $n\geq 0$} is an $\R$-linear map 
\[
D:\sect{E}\to\sect{F}
\]
such that the commutator 
\begin{equation}\label{eq:definition-commutator}
[D,f](\psi):=D(f\psi)-fD(\psi), 
\end{equation}
with $f\in\C(M)$ and $\psi\in\sect{E}$ is zero when iterated with $f_0,\ldots,f_n$ smooth functions on $M$. 
\end{defn}

Observe that a differential operator of order $0$ is $\C(M)$-linear and is therefore a bundle morphism.

\begin{lema}\label{lema:formula-conmutador-iterado}
Let $\setupla[k]{f}$ be smooth functions on $M$, $k>1,$ and define the set $K:=\set{1,\ldots,k}$. The iterated commutator of  $D$ and the above $k$ smooth functions is given by
\begin{equation}\label{eq:conmutador-formula}
[\ldots[[D,f_1],f_2],\ldots,f_{k}](\eta)=\sum\limits_{A\subseteq K}(-1)^{\#(A)}f_AD(f_{K-A}\eta),
\end{equation}
where $\eta$ is any section of $E$, $\#(A)$ is the cardinality of $A$ and we set
\begin{equation*}
f_A=\left\{
\begin{matrix}
\prod\limits_{a\in A}f_a,  & A\neq\emptyset\\
\empty				&   \empty \\
1,					& A=\emptyset
\end{matrix}
\right.
\end{equation*}
\end{lema}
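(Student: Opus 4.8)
The plan is to prove formula \eqref{eq:conmutador-formula} by induction on $k$, the number of smooth functions. The base case $k=1$ is simply the definition \eqref{eq:definition-commutator} of the commutator: the right-hand side is $\sum_{A\subseteq\{1\}}(-1)^{\#(A)}f_A D(f_{\{1\}-A}\eta) = D(f_1\eta) - f_1 D(\eta) = [D,f_1](\eta)$, matching the left-hand side.

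For the inductive step, suppose \eqref{eq:conmutador-formula} holds for $k$ functions $f_1,\ldots,f_k$; I want to establish it for $k+1$. Write $T_k := [\ldots[[D,f_1],f_2],\ldots,f_k]$, so by the induction hypothesis $T_k(\eta) = \sum_{A\subseteq K}(-1)^{\#(A)} f_A D(f_{K-A}\eta)$ with $K=\{1,\ldots,k\}$. By definition of the iterated commutator, $T_{k+1} = [T_k, f_{k+1}]$, so $T_{k+1}(\eta) = T_k(f_{k+1}\eta) - f_{k+1} T_k(\eta)$. Now I substitute the induction hypothesis into both terms:
\[
T_k(f_{k+1}\eta) = \sum_{A\subseteq K}(-1)^{\#(A)} f_A D(f_{K-A} f_{k+1}\eta) = \sum_{A\subseteq K}(-1)^{\#(A)} f_A D(f_{(K\cup\{k+1\})-A}\eta),
\]
where I have used that $f_{K-A} f_{k+1} = f_{(K\cup\{k+1\})-A}$ since $k+1\notin K$. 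Similarly,
\[
f_{k+1} T_k(\eta) = \sum_{A\subseteq K}(-1)^{\#(A)} f_{A\cup\{k+1\}} D(f_{K-A}\eta) = \sum_{A\subseteq K}(-1)^{\#(A)} f_{A\cup\{k+1\}} D(f_{(K\cup\{k+1\})-(A\cup\{k+1\})}\eta).
\]

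Set $K' = K\cup\{k+1\} = \{1,\ldots,k+1\}$. Every subset $B\subseteq K'$ falls into exactly one of two cases: either $k+1\notin B$, in which case $B=A$ for a unique $A\subseteq K$ and $B$ contributes the term $(-1)^{\#(B)} f_B D(f_{K'-B}\eta)$ arising from $T_k(f_{k+1}\eta)$; or $k+1\in B$, in which case $B = A\cup\{k+1\}$ for a unique $A\subseteq K$ with $\#(B) = \#(A)+1$, so that $(-1)^{\#(A)} = -(-1)^{\#(B)}$, and the term $-f_{k+1}T_k(\eta)$ contributes $-(-1)^{\#(A)} f_{A\cup\{k+1\}} D(f_{K'-B}\eta) = (-1)^{\#(B)} f_B D(f_{K'-B}\eta)$. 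Combining the two cases, $T_{k+1}(\eta) = \sum_{B\subseteq K'}(-1)^{\#(B)} f_B D(f_{K'-B}\eta)$, which is exactly \eqref{eq:conmutador-formula} for $k+1$ functions. This completes the induction.

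I do not anticipate a genuine obstacle here: the only subtle point is the bookkeeping in the inductive step, namely recognizing that partitioning subsets of $K'$ according to whether they contain the new index $k+1$ exactly matches the two summands $T_k(f_{k+1}\eta)$ and $-f_{k+1}T_k(\eta)$, with the sign flip $(-1)^{\#(A)} = -(-1)^{\#(A\cup\{k+1\})}$ accounting for the minus sign in the commutator. One should also note at the outset that since the $f_i$ are scalar functions and $\sect{E}$, $\sect{F}$ are $\C(M)$-modules, all the multiplications $f_A \cdot (-)$ commute with each other, so the manipulations above (in particular factoring $f_{k+1}$ out of $T_k$ and absorbing it into $f_A$) are valid; there is no ordering ambiguity in the products $f_A$.
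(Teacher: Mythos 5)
Your proof is correct and follows essentially the same route as the paper's: induction on $k$, expanding $[T_k,f_{k+1}]$ via the induction hypothesis and recombining the two resulting sums according to whether a subset of $\set{1,\ldots,k+1}$ contains the new index, with the sign flip $(-1)^{\#(A)}=-(-1)^{\#(A\cup\set{k+1})}$ absorbing the minus sign of the commutator. Your bookkeeping of the bijection between subsets and summands is, if anything, stated more explicitly than in the paper.
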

\begin{proof}
The proof is by induction on $k$. If $k=2$ then we compute
\[
\begin{split}
[[D,f_1],f_2](\eta) &= [D,f_1](f_2\eta)-f_2[D,f_1](\eta)\\
					&= D(f_1f_2\eta)-f_1 D(f_2\eta)-f_2 D(f_1\eta)+f_1 f_2 D(\eta)\\
					&= \sum\limits_{A\subseteq\set{1,2}}(-1)^{\#(A)}f_A D(f_{\set{1,2}-A}\eta).
\end{split}
\]
Now let's supose formula \eqref{eq:conmutador-formula} holds for all integers $l<k$. Let $L:=\set{1,\ldots,k-1}$. We compute
\[
\begin{split}
[[\cdots [[D,f_1],f_2],\ldots,f_{k-1}],f_k](\eta) &=\sum\limits_{A\subseteq L}(-1)^{\#(A)}f_AD(f_{L-A}f_{k}\eta)\\
											&-\sum\limits_{A\subseteq L}(-1)^{\#(A)}f_A f_k D(f_{L-A}\eta)
\end{split}
\]
If $A\subseteq L$ is non-empty and we set $A=\setupla[r]{\mu}$ and $L-A=\setupla[s]{\nu}$, then the above formula is equivalent to
\begin{equation}
\begin{split}
[[\cdots [[D,f_1],f_2],\ldots,f_{k-1}],f_k](\eta) &=\sum\limits_{r+s=k-1}(-1)^rf_{\mu_1}\cdots f_{\mu_r}D(f_{\nu_1}\cdots f_{\nu_s}f_k\eta)\\
											&-\sum\limits_{r+s=k-1}(-1)^rf_{\mu_1}\cdots f_{\mu_r}f_kD(f_{\nu_1}\cdots f_{\nu_s}\eta)\\
											&-f_kD(f_1\cdots f_{k-1}\eta)+D(f_1\cdots f_k\eta)\\
											&=\sum\limits_{A\subseteq K}(-1)^{\#(A)}f_AD(f_{K-A}\eta),
\end{split}
\end{equation}
where $K:=\set{1,\ldots,k}$, because $-(-1)^{k-1}D(f_1\cdots f_k\eta)=(-1)^{k}D(f_1\cdots f_k\eta)$.
\end{proof}

We keep our notation $K=\set{1,\ldots,k}$ in the proof below.

\begin{prop}\label{prop:conmutador-iterado}
If $f_1,\ldots,f_k$ are smooth functions on $M$ then
\begin{equation*}
[\cdots[[D,f_1],f_2],\ldots,f_{k}]=[\ldots[[D,f_{\sigma(1)}],f_{\sigma(2)}],\ldots,f_{\sigma(k)}]
\end{equation*}
for any permutation $\sigma\in S_k$.
\end{prop}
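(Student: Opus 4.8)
The plan is to prove the statement directly from the explicit formula \eqref{eq:conmutador-formula} of Lemma \ref{lema:formula-conmutador-iterado}. That formula expresses the $k$-fold iterated commutator as
\[
[\cdots[[D,f_1],f_2],\ldots,f_k](\eta)=\sum_{A\subseteq K}(-1)^{\#(A)}f_AD(f_{K-A}\eta),
\]
and the key observation is that the right-hand side is manifestly symmetric in $f_1,\ldots,f_k$: the sum runs over all subsets $A$ of $K$, the sign $(-1)^{\#(A)}$ depends only on $\#(A)$, and both $f_A=\prod_{a\in A}f_a$ and $f_{K-A}=\prod_{b\in K-A}f_b$ are products over sets, hence unchanged under any reindexing of $\{f_1,\ldots,f_k\}$ by a permutation $\sigma\in S_k$. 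First I would note that applying $\sigma$ to the indices simply replaces the summation over subsets $A\subseteq K$ by a summation over $\sigma^{-1}(A)\subseteq K$, which is the same set of subsets; the summand is invariant because $f_{\sigma^{-1}(A)}$ ranges over exactly the same monomials as $f_A$ and the cardinality is preserved. Since this holds when evaluated on an arbitrary section $\eta\in\sect{E}$, the two differential operators agree.

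More precisely, the steps I would carry out are: (1) invoke Lemma \ref{lema:formula-conmutador-iterado} to write both sides of the claimed identity in closed form; (2) observe that $[\cdots[[D,f_{\sigma(1)}],f_{\sigma(2)}],\ldots,f_{\sigma(k)}](\eta)=\sum_{A\subseteq K}(-1)^{\#(A)}f_{\sigma(A)}D(f_{\sigma(K-A)}\eta)$, where for a subset $B\subseteq K$ I write $f_{\sigma(B)}=\prod_{b\in B}f_{\sigma(b)}$; (3) perform the change of summation variable $A'=\sigma(A)$, under which $A$ ranges over all subsets of $K$ iff $A'$ does, $\#(A')=\#(A)$, $f_{\sigma(A)}=f_{A'}$ and $f_{\sigma(K-A)}=f_{K-A'}$ since $\sigma(K-A)=K-\sigma(A)=K-A'$; (4) conclude that the sum is unchanged, hence equals $\sum_{A'\subseteq K}(-1)^{\#(A')}f_{A'}D(f_{K-A'}\eta)$, which by Lemma \ref{lema:formula-conmutador-iterado} is $[\cdots[[D,f_1],f_2],\ldots,f_k](\eta)$; (5) since $\eta$ was arbitrary, the operators coincide. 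It also suffices, by a standard reduction, to check the case where $\sigma$ is a transposition of adjacent indices, since these generate $S_k$, but the subset-sum argument handles a general $\sigma$ just as cleanly.

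There is really no main obstacle here: the content was already packaged into Lemma \ref{lema:formula-conmutador-iterado}, and the present proposition is the immediate corollary that the closed form is symmetric under $S_k$. The only point requiring a word of care is the bookkeeping in step (3) — verifying that $\sigma(K-A)=K-\sigma(A)$ (true because $\sigma$ is a bijection of $K$) and that the product notation $f_B$ genuinely depends only on the underlying set $B$ and not on any ordering of its elements (true because multiplication in $\C(M)$ is commutative). Once those trivialities are recorded, the argument is complete. The proof will therefore be a short paragraph rather than an induction, as the inductive work was done in the preceding lemma.
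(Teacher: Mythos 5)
Your proposal is correct and follows essentially the same route as the paper: both deduce the permutation invariance directly from the closed formula of Lemma \ref{lema:formula-conmutador-iterado}, observing that the sum over subsets $A\subseteq K$ with sign $(-1)^{\#(A)}$ is unchanged under relabelling the $f_\mu$ by $\sigma$. Your version merely spells out the change of summation variable that the paper compresses into one sentence about the $S_k$-action on the power set.
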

\begin{proof}
The action of $S_k$ on $\mathcal{P}(K)$ (the power set of $K$) is transitive on subsets of a given cardinality, therefore formula \eqref{eq:conmutador-formula} is invariant under permutations of the set $\setupla[k]{f}$.
\end{proof}

\begin{nota}
The above proposition allows us to write $[D;f_1,\ldots,f_k]$ for the iterated commutator.
\end{nota}

\begin{defn}\label{def:principal-symbol}
Let $D:\sect{E}\to\sect{F}$ be a linear differential operator of order $k\geq 0$ and let a set $\setupla[k]{f}$ of smooth functions on $M$ be given. The \textbf{principal symbol} of $D$ evaluated in $f_1\cdots f_k$ is
\begin{equation}\label{eq:principal-symbol}
\sigma_D(f_1\cdots f_k):=[D;f_1,\cdots, f_k]
\end{equation}
\end{defn}

Since the commutation bracket is symmetric in the smooth functions we get the following:

\begin{prop}
The principal symbol of a linear differential operator $D$ of order $k\geq 0$ is a section of $\Sym^k T^*M\otimes E^*\otimes F$.
\end{prop}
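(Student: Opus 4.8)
The plan is to verify the three structural features packed into the statement: writing $\sigma_D(f_1\cdots f_k):=[D;f_1,\dots,f_k]$ (Definition~\ref{def:principal-symbol}), I will show that (a) it is $\C(M)$-linear in the section argument, hence a bundle morphism $E\to F$; (b) it is $\mathbb{R}$-multilinear and symmetric in the functions $f_1,\dots,f_k$; and (c) in each $f_i$ separately it behaves like a first-order, Leibniz-type operator, so that pointwise it only depends on $df_1,\dots,df_k$ and therefore descends to a symmetric form in the cotangent directions. Feature (b) is essentially free: Proposition~\ref{prop:conmutador-iterado} gives symmetry, and the explicit expansion of Lemma~\ref{lema:formula-conmutador-iterado}, in which each $f_i$ appears exactly once in every summand (either inside $f_A$ or inside $f_{K\setminus A}$), is visibly $\mathbb{R}$-multilinear in $(f_1,\dots,f_k)$ and vanishes as soon as one $f_i$ is constant, since then $[D,f_i]=0$.

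For (a): because $D$ has order $k$, forming one further commutator $[\,[D;f_1,\dots,f_k]\,,f_{k+1}\,]$ gives $0$; thus $\sigma_D(f_1\cdots f_k)$ is a linear differential operator of order $0$, and by the remark following Definition~\ref{def:diff-op} it is $\C(M)$-linear, i.e.\ a section of $E^*\otimes F$.

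The core is (c). I will first record the Leibniz rule for commutators,
\[
[D,gf_1]=M_g\circ[D,f_1]+[D,g]\circ M_{f_1},
\]
where $M_h$ denotes multiplication by $h$, together with the two elementary identities $[A\circ M_h,f]=[A,f]\circ M_h$ and $[M_h\circ A,f]=M_h\circ[A,f]$, valid for any linear operator $A$. Iterating these through the remaining slots $f_2,\dots,f_k$, and using (a) to move the resulting order-$0$ operator past a multiplication, one obtains
\[
[D;gf_1,f_2,\dots,f_k]=g\,[D;f_1,f_2,\dots,f_k]+f_1\,[D;g,f_2,\dots,f_k].
\]
Hence, with $f_2,\dots,f_k$ fixed, $f\mapsto[D;f,f_2,\dots,f_k]$ is a derivation of $\C(M)$ into the module $\sect{E^*\otimes F}$ which kills constants. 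A Hadamard-lemma localization — near a point $p$, write $f-f(p)$ as a finite sum $\sum_i h_ih_i'$ with $h_i(p)=h_i'(p)=0$ and apply the derivation identity — shows that $[D;f,f_2,\dots,f_k](\eta)(p)$ depends only on $df(p)$, and linearly so. Combining this with symmetry in all $k$ slots, $[D;f_1,\dots,f_k](\eta)(p)$ depends $\mathbb{R}$-multilinearly and symmetrically on $df_1(p),\dots,df_k(p)$ (and $\C(M)$-linearly on $\eta(p)$ by (a)), and it does so smoothly in $p$. This is exactly the assertion that $\sigma_D$ is a section of $\Sym^k T^*M\otimes E^*\otimes F$.

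I expect the main obstacle to be the bookkeeping of step (c): deriving the Leibniz identity in a single function-slot requires carefully pushing multiplication operators through the iterated commutator (the two elementary identities above, used repeatedly), and then passing from ``a derivation that vanishes on constants'' to genuine pointwise dependence on $df_i$ relies on the Hadamard factorization. The remaining points — symmetry, $\mathbb{R}$-multilinearity, and the order-$0$/bundle-morphism property — follow directly from Proposition~\ref{prop:conmutador-iterado}, Lemma~\ref{lema:formula-conmutador-iterado} and Definition~\ref{def:diff-op}.
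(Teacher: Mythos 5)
Your proof is correct, and it is considerably more complete than what the paper offers: the paper states this proposition with no proof at all, merely prefacing it with the observation that the iterated commutator is symmetric in the smooth functions (its Proposition \ref{prop:conmutador-iterado}). That remark only accounts for your step (b). The genuinely substantive content — which the paper leaves entirely implicit — is your step (c): the single-slot Leibniz identity $[D;gf_1,f_2,\dots,f_k]=g\,[D;f_1,\dots,f_k]+f_1\,[D;g,f_2,\dots,f_k]$ (where moving the order-$0$ operator past $M_{f_1}$ uses step (a)), combined with the vanishing on constants and the Hadamard factorization, which together show that the value at a point depends only on $df_1(p),\dots,df_k(p)$ and hence that $\sigma_D$ is tensorial over $\Sym^k T^*M$ rather than merely a symmetric multilinear function of the $f_i$. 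Step (a), identifying $[D;f_1,\dots,f_k]$ as an order-$0$ operator and hence a section of $\Hom(E,F)$, is likewise correct and needed. In short, your argument fills in exactly the two points the paper glosses over, and I find no gaps in it; the only cosmetic caveat is that the pointwise dependence on the differentials should be checked to be smooth in $p$, which follows as you indicate by testing against coordinate functions.
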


\section{Jets of sections}
\noindent We now discuss the concept of jets. These generalise, in local coordinates at least, the Taylor polynomials of smooth transformations; their importance, nevertheless, lies in the fact that the bundle of jets is a universal construction for factoring smooth differential operators. Our setting is a smooth vector bundle $E$ over the smooth manifold $M$.

\begin{defn}
Let $k$ be a non-negative integer and $p$ a point of $M$. Two sections $\eta$ and $\bar{\eta}$ are in \textbf{contact up to order $k$ at the point $p$} if given bundle chart $(U,x)$ around $p$ then
\[
\partial^\mu_p(\eta)=\partial^\mu_p(\bar{\eta}),\quad\abs{\mu}\leq k,\quad \partial^\mu_p:=\left.\left(\frac{\partial^{\abs{\mu}}}{\partial x^\mu} \right)\right|_p
\]
We denote this relation by $\eta\sim_{k,p}\bar{\eta}$.
\end{defn}

It is clear that this relation is independent of the bundle coordinates and that it is an equivalence relation.
The equivalence class of a section $\eta$ is called the \textbf{$k$-jet at $p$} of $\eta$ and it is denoted by $\jet^k_p(\eta)$. If $\eta$ and $\bar{eta}$ have the same $k$-jet at $p$ then their difference, when written down in local coordinates, is a homogeneous polynomial of degree $k+1$ in those coordinates. So the expression of $\jet^k_p\eta$ is the Taylor polynomial of $\eta$ in coordinates around the point $p$. It is in this sense that jets are generalisations of Taylor polynomials.

We now define the \textbf{space of $k$-jets at the point $p$} as the space
\begin{equation}\label{eq:jets-at-p}
\Jet^k_p(E)=\sect{E}\big /\sim_{k,p},
\end{equation} 
and of course the \textbf{bundle of $k$-jets of $E$} as the vector bundle
\begin{equation}\label{eq:jet-bundle}
\Jet^k(E)=\bigsqcup_{p\in M}\Jet^k_p(E).
\end{equation} 
Each of these bundle carries a natural map $\ev:\Jet^k(E)\to E$ given by $\ev(\jet^k_p\eta)=\eta(p)$. We use a special notation for jets of smooth functions:
\begin{equation}\label{eq:jets-functions}
\Jet^k(M):=\Jet^k(M\times\R)
\end{equation}

We now give an invariant characterisation of jet spaces:

\begin{prop}\label{prop:ideal-jet}
Let $I_p$ be the ideal of $\C(M)$ consisting of functions that vanish at $p$. Then $\Jet^k_p(E)\cong\sect{E}\big / (I_p^{k+1}\cdot\sect{E})$.
\end{prop}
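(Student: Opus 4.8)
The plan is to exhibit a natural $\C(M)$-linear isomorphism between the two quotients by showing that the equivalence relation ``in contact up to order $k$ at $p$'' coincides exactly with ``differ by an element of $I_p^{k+1}\cdot\sect{E}$''. Concretely, I would define a map by sending a jet $\jet^k_p(\eta)$ to the class of $\eta$ in $\sect{E}/(I_p^{k+1}\cdot\sect{E})$, and the bulk of the work is checking this is well-defined and injective — equivalently, that $\eta\sim_{k,p}\bar\eta$ if and only if $\eta-\bar\eta\in I_p^{k+1}\cdot\sect{E}$. Surjectivity is immediate since every section represents some jet.

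First I would reduce to a local statement. Pick a bundle chart $(U,x)$ around $p$ with $x(p)=0$, trivializing $E|_U\cong U\times\R^r$; a section is then a tuple of smooth functions. Since $I_p^{k+1}\cdot\sect{E}$ and the contact relation are both local at $p$ (a partition-of-unity argument lets one replace a section by one supported near $p$ without changing either its $k$-jet or its class mod $I_p^{k+1}\sect{E}$), it suffices to prove: for a smooth function $g$ on a neighborhood of $0$ in $\R^m$, all partial derivatives of $g$ of order $\le k$ vanish at $0$ if and only if $g$ lies in the ideal generated by degree-$(k+1)$ monomials, i.e. $g\in \mathfrak{m}_0^{k+1}$ where $\mathfrak m_0$ is the maximal ideal at $0$ in $\C$ of a neighborhood. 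The ``if'' direction is a trivial computation with the Leibniz rule. For the ``only if'' direction I would invoke Taylor's theorem with integral remainder (Hadamard's lemma applied iteratively $k+1$ times): writing $g(x)=\sum_{|\mu|\le k}\frac{\partial^\mu g(0)}{\mu!}x^\mu + \sum_{|\nu|=k+1} x^\nu h_\nu(x)$ with $h_\nu$ smooth, the hypothesis kills the polynomial part and leaves $g=\sum_{|\nu|=k+1}x^\nu h_\nu\in\mathfrak m_0^{k+1}$.

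Having established the local equivalence, I would translate back: $I_p^{k+1}$ is precisely the ideal of $\C(M)$ generated locally near $p$ by products of $k+1$ functions vanishing at $p$, which in the chart corresponds to $\mathfrak m_0^{k+1}$, and a section lies in $I_p^{k+1}\cdot\sect{E}$ iff each of its component functions (in the trivialization) lies in that ideal. Thus $\eta-\bar\eta\in I_p^{k+1}\cdot\sect{E}$ iff every partial derivative of order $\le k$ of the difference vanishes at $p$ iff $\eta\sim_{k,p}\bar\eta$. This gives the well-defined bijection $\sect{E}/(I_p^{k+1}\cdot\sect{E})\to\Jet^k_p(E)$, and since it is induced by the identity on $\sect{E}$ it is automatically $\C(M)$-linear (indeed $\R$-linear and compatible with the module structure), hence the claimed isomorphism.

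The main obstacle is the ``only if'' direction of the local statement — verifying that vanishing of derivatives forces membership in $\mathfrak m_0^{k+1}$ — which genuinely requires Taylor's theorem with a smooth remainder (Hadamard's lemma), not merely formal manipulation; this is exactly the analytic input that makes the statement about smooth rather than merely formal or polynomial functions. A secondary, more bookkeeping-type point is the partition-of-unity reduction showing both the ideal membership and the contact relation are determined by germs at $p$, and that passing to a locally-supported representative changes neither; this is routine but must be stated carefully since $I_p^{k+1}\cdot\sect{E}$ is a priori a global object.
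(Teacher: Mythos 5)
Your proposal is correct and follows essentially the same route as the paper: one direction by the Leibniz rule applied to a product of $k+1$ functions vanishing at $p$, the other by Taylor's formula with smooth (integral) remainder in a bundle chart. The only difference is that you spell out the partition-of-unity localization identifying $I_p^{k+1}\cdot\sect{E}$ with the local ideal $\mathfrak m_0^{k+1}$, a point the paper's proof passes over silently; this is a welcome precision but not a change of method.
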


This is a consequence of the following result, which can be consulted in standard books on vector calculus:

\begin{lema}[Taylor formula with residue]
Let $h:\R^m\to\R^n$ be smooth. The Taylor polynomial of degree $k$ of $h$ around $0$ is given by
  \begin{eqnarray*}
   h(\,x^1,\ldots,x^m\,)
   &=&
   \sum_{r=0}^k\;\frac1{r!}\,\sum_{\mu_1,\ldots,\mu_r=1}^m
   x^{\mu_1}\,\ldots\,x^{\mu_r}\;\frac{\partial^rh}
   {\partial x^{\mu_1}\ldots\partial x^{\mu_r}}(\,0,\ldots,0\,)
   \\
   &+&
   \frac1{k!}\sum_{\mu_0,\ldots,\mu_k=1}^m
   x^{\mu_0}\,\ldots\,x^{\mu_k}\,\int_0^1\frac{\partial^{k+1}h}
   {\partial x^{\mu_0}\ldots\partial x^{\mu_k}}(\,tx^1,\ldots,tx^m\,)
   \,(1-t)^k\,dt
  \end{eqnarray*}
\end{lema}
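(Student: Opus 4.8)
The plan is to reduce the multivariate statement to the one-dimensional Taylor theorem with integral remainder by restricting $h$ to the ray through the origin and the point $x=(x^1,\ldots,x^m)$. Concretely, fix $x$ and set $g(t):=h(tx)$ for $t\in[0,1]$; since $h$ is smooth on all of $\R^m$ (or at least on a convex neighbourhood of the segment, which is all that is needed when the lemma is applied to prove Proposition \ref{prop:ideal-jet}), the map $g\colon[0,1]\to\R^n$ is smooth, with $g(0)=h(0)$ and $g(1)=h(x)$.

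First I would establish the one-variable version: for smooth $g\colon[0,1]\to\R^n$,
\[
g(1)=\sum_{r=0}^{k}\frac{g^{(r)}(0)}{r!}+\frac{1}{k!}\int_0^1 g^{(k+1)}(t)\,(1-t)^k\,dt .
\]
This follows by induction on $k$. The base case $k=0$ is the fundamental theorem of calculus, $g(1)=g(0)+\int_0^1 g'(t)\,dt$. For the inductive step one integrates the remainder by parts, using $(1-t)^k=-\frac{1}{k+1}\frac{d}{dt}(1-t)^{k+1}$, which turns $\frac{1}{k!}\int_0^1 g^{(k+1)}(t)(1-t)^k\,dt$ into $\frac{g^{(k+1)}(0)}{(k+1)!}+\frac{1}{(k+1)!}\int_0^1 g^{(k+2)}(t)(1-t)^{k+1}\,dt$; adding this to the sum of lower-order terms gives the formula for $k+1$.

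Next I would compute the derivatives $g^{(r)}(t)$ by the chain rule. Since $g(t)=h(tx^1,\ldots,tx^m)$, a straightforward induction on $r$ yields
\[
g^{(r)}(t)=\sum_{\mu_1,\ldots,\mu_r=1}^{m} x^{\mu_1}\cdots x^{\mu_r}\,\frac{\partial^r h}{\partial x^{\mu_1}\cdots\partial x^{\mu_r}}(tx).
\]
Evaluating at $t=0$ produces exactly the polynomial terms in the statement, and substituting the expression for $g^{(k+1)}(t)$ into the integral remainder of the one-variable formula produces the claimed residue term, with the factor $\frac{1}{k!}$ and the weight $(1-t)^k$. Reading off $g(1)=h(x)$ on the left-hand side finishes the derivation.

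There is no genuine obstacle here; the result is classical and the work is pure bookkeeping. The only points needing care are (i) ensuring the segment $\{tx:t\in[0,1]\}$ lies in the domain of $h$, which is automatic as stated and otherwise handled by passing to a convex chart (the form in which it is used to prove Proposition \ref{prop:ideal-jet}); and (ii) keeping the multi-index combinatorics consistent when expanding $g^{(r)}$ via the chain rule, where the symmetry of mixed partials is what permits writing the unordered sums exactly as in the statement.
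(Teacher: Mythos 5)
Your proof is correct: the reduction to the one-variable Taylor theorem with integral remainder along the ray $t\mapsto h(tx)$, the induction via integration by parts, and the chain-rule computation of $g^{(r)}$ are all sound, and the symmetry of mixed partials justifies the unordered sums as you note. The paper itself offers no proof of this lemma --- it simply defers to ``standard books on vector calculus'' --- so you have supplied exactly the standard argument the paper implicitly relies on; there is nothing to compare beyond observing that yours is the canonical derivation.
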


\begin{proof}[Proof of proposition \ref{prop:ideal-jet}]
Let $\prodtupla[k+1]{f}\eta$ be an element of $I^{k+1}_p\cdot\sect{E}$ and choose a trivialisation of $E$ around $p$ with coordinates $\tupla[m]{x}$. If $l\leq k+1$ then the generalised Leibniz identity for the product $\prodtupla[k+1]{f}$ implies 
\[
\frac{\partial^L}{\partial x^L}(\prodtupla[k+1]{f})\eta=0
\]
for all multi-indices $L$ of lenght $l$, because at least one of the functions $\listupla[k+1]{f}$ will appear in the expansion for the derivatives evaluated at $p$, where they all vanish. So we get $\prodtupla[k+1]{f}\eta\sim_{k+1,p} 0$ and therefore $\jet^k_p(\prodtupla[k+1]{f}\eta)=0$. Let $\psi\colon\sect{E}\big / (I_p^{k+1}\cdot\sect{E})\to\Jet^k_p(E)$ be defined as $\psi(\eta+I_p^{k+1}\cdot\sect{E})=\jet^k_p(\eta)$. Then $\psi$ is well defined by the coordinate-independence of $\jet^k_p(\eta)$, it's evidently $\R$-linear and it's surjective because of the definition of $\Jet^k_p(E)$. To see that it is injective note that any section $\eta$ is, when written down in local coordinates, a smooth function between open sets of vector spaces. By the Taylor formula above we know $\psi(\eta+I_p^{k+1}\cdot\sect{E})=0$ if and only if it's Taylor polynomial vanishes up to order $k$ at $p$, and therefore $\jet^k_p(\eta+I_p^{k+1}\cdot\sect{E})=0$ if and only if $\eta\equiv 0 \mod I_p^{k+1}$; so $\psi$ is injective.
\end{proof}

\begin{cor}
The map $\jet^k:\sect{E}\to\sect{\Jet^k E}$ assigning to every section $\eta$ its $k$-jet $\jet^k(\eta)$ punctually, is a differential operator of order $k$.
\end{cor}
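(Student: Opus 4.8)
The plan is to check Definition \ref{def:diff-op} head on: I will show that the iterated commutator $[\jet^k; f_0, \ldots, f_k]$ is the zero operator $\sect{E}\to\sect{\Jet^k E}$ for every choice of $k+1$ smooth functions $f_0, \ldots, f_k$ on $M$. Here the module structure on $\sect{\Jet^k E}$ is the fibrewise one, so vanishing of this operator may be tested pointwise; I fix $p\in M$ and a section $\eta$ and show that $[\jet^k; f_0, \ldots, f_k](\eta)$ has zero value at $p$. (The case $k=0$ is the immediate identity $\jet^0_p(f\eta)=f(p)\eta(p)=f(p)\jet^0_p(\eta)$, coming from $\ev$; so assume $k\geq 1$.)

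First I would reduce to functions vanishing at $p$. The iterated commutator is additive in each of its function arguments, and it kills constant functions because $\jet^k$ is $\R$-linear; writing $f_i = f_i(p)+g_i$ with $g_i := f_i-f_i(p)\in I_p$ therefore gives $[\jet^k; f_0,\ldots,f_k] = [\jet^k; g_0,\ldots,g_k]$. Hence we may assume from the outset that each $f_i\in I_p$, the ideal of functions vanishing at $p$.

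Now Lemma \ref{lema:formula-conmutador-iterado}, applied with $K=\{0,1,\ldots,k\}$, yields
\[
[\jet^k; f_0,\ldots,f_k](\eta)=\sum_{A\subseteq K}(-1)^{\#(A)}\,f_A\,\jet^k(f_{K-A}\eta).
\]
Evaluating at $p$ annihilates every summand with $A\neq\emptyset$, since then $f_A(p)=0$; only the term $\jet^k_p(f_K\eta)$ with $f_K=f_0f_1\cdots f_k$ survives. As $f_K$ is a product of $k+1$ elements of $I_p$ we have $f_K\in I_p^{k+1}$, so $f_K\eta\in I_p^{k+1}\cdot\sect{E}$, and Proposition \ref{prop:ideal-jet} forces $\jet^k_p(f_K\eta)=0$ in $\Jet^k_p(E)$. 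Since $p$ and $\eta$ were arbitrary the iterated commutator vanishes identically, so $\jet^k$ is a differential operator of order $k$.

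I do not foresee a genuine obstacle here. The only points requiring care are the legitimacy of testing the commutator fibrewise (so that the reduction to $f_i\in I_p$ is licit) and the $\R$-linearity of $\jet^k$ — both built into the constructions of this section — after which all the weight is carried by the combinatorial formula of Lemma \ref{lema:formula-conmutador-iterado} together with the ideal-theoretic description of jet spaces in Proposition \ref{prop:ideal-jet}.
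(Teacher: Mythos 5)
Your proof is correct. It uses the same two ingredients as the paper --- the subset-sum formula of Lemma \ref{lema:formula-conmutador-iterado} and the ideal-theoretic description of $\Jet^k_p(E)$ from Proposition \ref{prop:ideal-jet} --- but combines them differently. The paper keeps the original functions $f_0,\ldots,f_k$ and argues that every summand $f_A\,\jet^k(f_{K-A}\eta)$ represents, up to sign, one and the same class of $f_0\dotsm f_k\eta$ in $\sect{E}\big/(I_p^{k+1}\cdot\sect{E})$, so that the signs cancel in the alternating sum; that step silently multiplies a class in the quotient by the function $f_A$ rather than by its value $f_A(p)$, i.e.\ it uses the quotient-module structure instead of the pointwise one to which Definition \ref{def:diff-op} actually refers (and its final rewriting as a signed sum over $S_{k+1}$ does not follow from the subset formula). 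Your normalization $f_i\mapsto f_i-f_i(p)$ removes this ambiguity entirely: after it, every summand with $A\neq\emptyset$ vanishes at $p$ for the trivial reason $f_A(p)=0$, and the single surviving term $\jet^k_p(f_K\eta)$ dies because $f_K\in I_p^{k+1}$. So your route is not only valid but cleaner at exactly the point where the paper's write-up is loosest; the only hypotheses you invoke beyond the two cited results --- multilinearity of the iterated commutator in its function slots and its vanishing on constants --- are immediate from $\R$-linearity of $\jet^k$.
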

\begin{proof}

We must show that for given $f_0,\ldots,f_k$ smooth functions on $M$ we have
\[
[\jet^k;f_0,\ldots,f_k]\equiv 0.
\]
Formula \eqref{eq:conmutador-formula} in this case turns into
\[
[\jet^k;f_0,\ldots,f_k](\eta)=\sum_{A\subseteq K}(-1)^{\#(A)}f_A\jet^k(f_{K-A}\eta)
\]
where here $K$ stands for the set $\set{0,\ldots,k+1}$. By the previous proposition we know $\jet^k(f_{K-A}\eta)=f_{K-A}\eta+I_p^{k+1}\cdot\sect{E}$ so the summands in the formula above are equal to permutations of $f_0\dotsm f_k\eta$ times a sign; thus the formula is equivalent to
\[
[\jet^k;f_0,\dotsc,f_k]=\sum_{\sigma\in S_{k+1}}\sgn\sigma\cdot f_{\sigma(0)}\dotsm f_{\sigma(k)}\eta + I_p^{k+1}\cdot\sect{E}.
\]
Since all the $f$'s commute and each permutation appears with a plus sign the same times it appears with a minus sign the above equation vanishes identically. Thus $\jet^k$ is a differential operator of order $k$.
\end{proof}

The advantage of the bundle of jets is that it is a space that universally factorizes differential operators. To wit:

\begin{teo}\label{teo:jet-universal}
Let $D:\sect{E}\to\Gamma(\widetilde E)$ be a differential operator of order $k$. Then there is a unique bundle morphism $ \widehat{D}:\Jet^k E\to \widetilde{E}$ such that 
\[
\xymatrix{
\sect{E} \ar[r]^-{D} \ar[d]_{\jet^k}&\Gamma(\widetilde{E})\\
\sect{\Jet^k E}\ar[ur]_{\sect{\widehat{D}}}
}
\]
commutes, where $\sect{\widehat{D}}$ is the associated morphism via the functor $\Gamma$.
\end{teo}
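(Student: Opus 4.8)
The plan is to prove the universal factorization property of the jet bundle by combining the pointwise description of $\Jet^k_p(E)$ from Proposition \ref{prop:ideal-jet} with the defining property of a differential operator of order $k$. First I would construct $\widehat D$ fibrewise: for a point $p\in M$ I want a linear map $\widehat D_p\colon\Jet^k_p(E)\to\widetilde E_p$ such that $\widehat D_p(\jet^k_p\eta)=(D\eta)(p)$. The candidate is forced by the desired commuting diagram, so the only real content is \emph{well-definedness}: if $\jet^k_p\eta=\jet^k_p\bar\eta$, then $(D\eta)(p)=(D\bar\eta)(p)$. By linearity this reduces to showing that if $\jet^k_p\eta=0$, i.e.\ (by Proposition \ref{prop:ideal-jet}) $\eta\in I_p^{k+1}\cdot\sect E$, then $(D\eta)(p)=0$.

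The heart of the argument is then the following local claim: if $\eta=\sum_j f_0^{(j)}\cdots f_k^{(j)}\,\eta_j$ with each $f_i^{(j)}\in I_p$, then $(D\eta)(p)=0$. To see this I would use Lemma \ref{lema:formula-conmutador-iterado} (the iterated-commutator formula) applied to a single summand $f_0\cdots f_k\,\eta_j$: the iterated commutator $[D;f_0,\dots,f_k]$ vanishes identically because $D$ has order $k$, and expanding it via \eqref{eq:conmutador-formula} gives
\[
0=[D;f_0,\dots,f_k](\eta_j)=\sum_{A\subseteq\{0,\dots,k\}}(-1)^{\#(A)}f_A\,D(f_{K-A}\eta_j).
\]
Now evaluate this identity at $p$: every term with $A\neq\emptyset$ contains a factor $f_a(p)=0$, so all of them die, leaving exactly $\pm(D(f_0\cdots f_k\eta_j))(p)=0$. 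Summing over $j$ gives $(D\eta)(p)=0$, as needed. This shows $\widehat D_p$ is well defined and $\R$-linear; since $D$ itself is $\R$-linear, and the construction is natural in $p$, the collection $\{\widehat D_p\}$ assembles into a bundle morphism $\widehat D\colon\Jet^k E\to\widetilde E$. Smoothness of $\widehat D$ follows from the fact that in a local trivialization $\jet^k$ takes a section to its tuple of partial derivatives up to order $k$, which are smooth, and $D$ is a local operator expressed by smooth coefficients, so $\widehat D$ has a smooth local matrix representation.

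It remains to check that the diagram commutes and that $\widehat D$ is unique. Commutativity is immediate from the definition: $\bigl(\sect{\widehat D}\circ\jet^k\bigr)(\eta)(p)=\widehat D_p(\jet^k_p\eta)=(D\eta)(p)$ for all $p$, so $\sect{\widehat D}\circ\jet^k=D$. For uniqueness, suppose $\widehat D'$ is another bundle morphism with $\sect{\widehat D'}\circ\jet^k=D$. At any $p$, every element of $\Jet^k_p(E)$ is of the form $\jet^k_p\eta$ for some global section $\eta$ (sections can be extended off a neighborhood by a bump function without changing the jet at $p$), so $\widehat D'_p(\jet^k_p\eta)=(D\eta)(p)=\widehat D_p(\jet^k_p\eta)$, hence $\widehat D'_p=\widehat D_p$ for all $p$ and $\widehat D'=\widehat D$. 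The step I expect to be the main obstacle — or at least the one requiring the most care — is the well-definedness argument, specifically making the passage from ``$\eta$ generates the zero $k$-jet at $p$'' to ``$\eta\in I_p^{k+1}\cdot\sect E$'' (which is exactly Proposition \ref{prop:ideal-jet}, so it is already available) and then applying the commutator formula cleanly to a product of precisely $k+1$ functions from $I_p$; everything else is bookkeeping.
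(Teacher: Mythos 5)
Your proof is correct, and it is in fact more complete than the one in the paper, although both start from the same forced definition $\widehat D_p(\jet^k_p\eta)=(D\eta)(p)$. The paper's verification concentrates on showing that the induced map on sections is $\C(M)$-linear (computing $\widehat D(\jet^k(f\eta))_p=f(p)\,D(\eta)_p$ modulo $I_p^{k+1}\cdot\sect{\widetilde E}$), and it leaves both well-definedness and uniqueness essentially implicit. You instead attack exactly the point the paper glosses over: you use Proposition \ref{prop:ideal-jet} to identify $\Jet^k_p(E)$ with $\sect{E}\big/(I_p^{k+1}\cdot\sect{E})$ and then evaluate the vanishing iterated commutator $[D;f_0,\dots,f_k]$ of Lemma \ref{lema:formula-conmutador-iterado} at $p$, where every term with $A\neq\emptyset$ carries a factor $f_a(p)=0$, leaving $D(f_0\cdots f_k\eta_j)(p)=0$. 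This is the right mechanism and is precisely where the order-$k$ hypothesis enters; once the map is well defined and linear on each fibre, the bundle-morphism property is automatic, so the paper's tensoriality computation and your well-definedness argument are two complementary ways of certifying the same fact (yours being the logically prior one). Your explicit uniqueness argument --- every element of $\Jet^k_p(E)$ is by construction the $k$-jet of a section, so $\widehat D_p$ is determined on all of $\Jet^k_p(E)$ --- is also correct and is omitted from the paper's proof. The only step you treat lightly is smoothness of $\widehat D$ as a bundle map, but your local-coordinate remark is at the level of rigor of the paper itself, which does not address it at all.
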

\begin{proof}
Since $\jet^k$ is a differential operator of order $k$ we have to prove that $\jet^k_p\eta\mapsto D(\eta)_p=\ev(\jet^k_p(D\eta))$ is a bundle morphism; that is, if $f$ is a smooth function on $M$ we should get $\widehat{D}(\jet^k(f\eta))=f{D}(\eta)$. Indeed, with the latter definition of $\widehat D$:
\[
\begin{split}
\widehat{D}\left(\jet^k(f\eta)\right)_p%
&=D((f\eta))_p+I^{k+1}_p\sect{\tilde{E}}\\
&=f(p)D(\eta)_p+I^{k+1}_p\sect{\tilde{E}}\\
&=\jet^k_p(f(p)D(\eta))\\
&=f(p)\jet^k_p(D(\eta))
\end{split}
\]
and the last section satisfies $\ev(f(p)\jet^k_p(D\eta))=f(p)D(\eta)_p$ which is exactly the definition of $\widehat D$.
\end{proof}

\begin{defn}
Let $D$ be a differential operator of order $k$. The map $\widehat{D}$ of theorem \ref{teo:jet-universal} is called the \textbf{total symbol} of $D$. It's denoted by $\sigma^{\text{total}}(D)$.
\end{defn}

\subsection{Expressions in local coordinates}
\noindent Since jets are generalisations of Taylor polynomials it is natural to seek for an expression of jets that reflects this fact. To do so let $\nabla$ be a connection on the vector bundle $E$ and $D$ a torsion-free connection on the tangent bundle of $M$. 

\begin{defn}
The iterated covariant derivatives of a section $\eta$ of $E$ are given recursively by $\nabla^0 \eta=\eta$ and
\begin{equation}
\nabla^{k+1}_{X_0,\ldots ,X_k}\eta := \nabla_{X_0}(\nabla^k_{X_1,\ldots, X_k}\eta)-\sum_{\mu=1}^k \nabla^k_{X_1,\ldots,D_{X_0}X_\mu,\ldots, X_k}\eta
\end{equation}
\end{defn}

Using the iterated covariant derivatives we define an operator
\begin{equation}\label{eq:simetrization}
J^{\nabla,D,l}_{X_1\cdots X_l}(\eta)=\frac{1}{l!}\sum_{\sigma\in S_l}\nabla^{l}_{X_{\sigma(1)},\ldots,X_{\sigma(l)}}(\eta)
\end{equation}
which is symmetric on the vector field arguments. We therefore have $J^{\nabla, D, l}\in\sect{\Sym^{\leq l}{T^*M}\otimes E}$. Here $\Sym^{\leq l}T^*M$ denotes the space $\Sym^0 T^*M\oplus\Sym^1 T^*M\oplus\cdots\Sym^l T^*M$. It is clear that this is a ``polynomial'' on vector fields, since it's a symmetric form on them. 

\begin{prop}\label{prop:symmetric-jets}
The map $\jet^k \eta\mapsto J^{\nabla, D , 0}\eta+J^{\nabla, D , 1}\eta+\cdots+J^{\nabla, D , k}\eta$ is a lineal isomorphism of bundles $\Psi:\Jet^k E\to\Sym^{\leq k}(T^*M)\otimes E$.
\end{prop}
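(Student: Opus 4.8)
The plan is to check that $\Psi$ is a well-defined smooth bundle map and then prove it is a fibrewise isomorphism by exhibiting a ``triangular'' structure with respect to the filtration of jets by order.

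\emph{Well-definedness and bundle structure.} First I would verify, by an induction on $l$ using the recursion defining $\nabla^{l+1}$, that $\nabla^l_{X_1,\ldots,X_l}\eta$ is $\C(M)$-linear in each vector-field slot, so that $J^{\nabla,D,l}$ is a genuine section of $\Sym^l T^*M\otimes E$ and $\Psi$ takes values in $\bigoplus_{l=0}^k \Sym^l T^*M\otimes E=\Sym^{\leq k}T^*M\otimes E$. Next, $J^{\nabla,D,l}_p\eta$ should depend only on $\jet^k_p\eta$: by proposition \ref{prop:ideal-jet} it is enough to show that if $\eta=f_0\cdots f_k\,\zeta$ with $f_0,\ldots,f_k\in I_p$, then every iterated covariant derivative of order $j\leq k$ vanishes at $p$. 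This follows from the generalized Leibniz rule for $\nabla$ (and $D$) exactly as in the corollary showing $\jet^k$ is a differential operator: in each term of the expansion at most $j\leq k$ of the $k+1$ factors $f_i$ are differentiated, so an undifferentiated factor survives and is evaluated at $p$. Fibrewise linearity of $\Psi$ in $\eta$ and its smoothness as a bundle map are then clear since each $J^{\nabla,D,l}$ is a differential operator.

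\emph{Fibrewise isomorphism.} The crux is to identify the leading term of $J^{\nabla,D,l}$. Work in a chart around $p$ with a trivialization of $E$, writing $\nabla_{\partial_i}=\partial_i+\Gamma_i$ with $\Gamma_i\in\End(E)$ and $D_{\partial_i}\partial_j=\Gamma^k_{ij}\partial_k$. An induction on $l$ (where torsion-freeness of $D$ keeps the correction terms symmetric) shows
\[
J^{\nabla,D,l}_{\partial_{i_1}\cdots\partial_{i_l}}\eta=\partial_{i_1}\cdots\partial_{i_l}\eta+(\text{terms involving only partials of }\eta\text{ of order}<l).
\]
Let $T\colon\Jet^k_p E\to\Sym^{\leq k}T^*_pM\otimes E_p$ be the map assigning to $\jet^k_p\eta$ its tuple of symmetrized partials at $p$, which is an isomorphism by the Taylor formula with residue. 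The displayed formula says that, with respect to the grading by order on the target, $\Psi_p\circ T^{-1}$ is block lower-triangular with identity diagonal blocks, hence unipotent; therefore $\Psi_p=(\Psi_p\circ T^{-1})\circ T$ is a linear isomorphism. (Alternatively, surjectivity follows from injectivity and the equality of dimensions $\dim\Jet^k_p E=\rank(E)\binom{m+k}{k}=\rank(E)\sum_{l=0}^k\binom{m+l-1}{l}=\dim(\Sym^{\leq k}T^*_pM\otimes E_p)$, using proposition \ref{prop:ideal-jet} and the hockey-stick identity.) Being a smooth bundle map that is a fibrewise linear isomorphism, $\Psi$ is a vector bundle isomorphism.

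\emph{Main obstacle.} The only nontrivial point is the leading-term computation: one must keep careful track of the Christoffel symbols of both $\nabla$ and $D$ to see that symmetrizing the iterated covariant derivative recovers the symmetrized ordinary partials modulo strictly lower order. This is where the torsion-freeness of $D$ is used and where a routine but slightly tedious induction is needed.
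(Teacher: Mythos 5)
Your proof is correct and follows essentially the same route as the paper, which simply invokes ``the Taylor formula for the connections $\nabla$ and $D$'': your leading-term computation showing that the symmetrized iterated covariant derivatives agree with the symmetrized partials modulo strictly lower order (hence give a unipotent change of basis) is exactly what that invocation amounts to. You supply considerably more detail than the paper's two-sentence sketch --- well-definedness on $k$-jets via proposition \ref{prop:ideal-jet}, tensoriality of $J^{\nabla,D,l}$, and the triangular comparison with the coordinate Taylor isomorphism --- but there is no substantive difference in method.
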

\begin{proof}
This is just the Taylor formula for the connections $\nabla$ and $D$, since any connection gives a trivialisation when properly restricted to an open subset of the base manifold. Since Taylor  polynomials are uniquely determined by both their jets and their expression in local coordinates the result follows.
\end{proof}

Recall from \eqref{eq:jets-functions} that $\Jet^k(M)$ denotes the jet bundle of smooth functions on $M$.
	
\begin{cor}\label{cor:cotangent-1-jet}
There is a natural isomorphism of bundles $\Jet^1(M)\cong\R\times T^*M$.
\end{cor}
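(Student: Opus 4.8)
The plan is to derive Corollary \ref{cor:cotangent-1-jet} directly from Proposition \ref{prop:symmetric-jets} by specializing to the trivial line bundle $E=M\times\R$, whose sections are exactly $\C(M)$, and taking $k=1$. Indeed, by \eqref{eq:jets-functions} we have $\Jet^1(M)=\Jet^1(M\times\R)$, and Proposition \ref{prop:symmetric-jets} furnishes, once a connection $D$ on $TM$ is fixed (any torsion-free one, e.g. obtained from a Riemannian metric via partitions of unity), a linear bundle isomorphism
\[
\Psi\colon\Jet^1(M)\stackrel{\cong}{\to}\Sym^{\leq 1}(T^*M)\otimes(M\times\R).
\]
Since the trivial line bundle acts as a unit for the tensor product of bundles, the right-hand side is just $\Sym^{\leq 1}(T^*M)=\Sym^0 T^*M\oplus\Sym^1 T^*M=(M\times\R)\oplus T^*M$, which is what the statement calls $\R\times T^*M$.

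The steps, in order, are: first, unwind the definition $\Sym^{\leq 1}T^*M=\Sym^0 T^*M\oplus\Sym^1 T^*M$ and identify $\Sym^0 T^*M$ with the trivial bundle $M\times\R$ and $\Sym^1 T^*M$ with $T^*M$; second, observe that for the line bundle $E=M\times\R$ the connection $\nabla$ required by Proposition \ref{prop:symmetric-jets} is the canonical flat one $d$, so the operator $J^{\nabla,D,0}\eta=\eta$ is evaluation and $J^{\nabla,D,1}\eta=\nabla\eta=df$ is just the ordinary differential; third, invoke Proposition \ref{prop:symmetric-jets} to conclude that $\jet^1_p f\mapsto\big(f(p),\,df_p\big)$ is the desired isomorphism $\Jet^1(M)\cong\R\times T^*M$; fourth, check naturality, i.e. that the isomorphism does not depend on the auxiliary choice of $D$ — this holds because at order $1$ the symmetrized iterated covariant derivative $J^{\nabla,D,1}f=df$ makes no reference to $D$ at all (the Christoffel-type correction terms only enter at order $\geq 2$), and the splitting $\Jet^1(M)\cong\R\oplus T^*M$ is the intrinsic one given by the evaluation map $\ev$ and the quotient $I_p/I_p^2\cong T_p^*M$ from Proposition \ref{prop:ideal-jet}.

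I expect the only real point requiring care to be this last naturality claim: one should note that the short exact sequence $0\to I_p/I_p^2\to\Jet^1_p(M)\to\R\to0$ (which comes from Proposition \ref{prop:ideal-jet} with $k=1$, since $\Jet^1_p(M)\cong\C(M)/I_p^2$ and $I_p/I_p^2\cong T_p^*M$ canonically) is canonically split by the constants $\R\hookrightarrow\C(M)\to\Jet^1_p(M)$, because a constant function lies in no power of $I_p$ unless it is zero. Hence the direct sum decomposition $\Jet^1(M)\cong\R\times T^*M$ is natural, and the connection $D$ used in Proposition \ref{prop:symmetric-jets} is genuinely irrelevant at this order. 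With that remark the corollary is immediate, so I would keep the proof to a couple of lines: specialize Proposition \ref{prop:symmetric-jets}, identify $\Sym^{\leq1}T^*M$, and note that the order-one jet of a function is the pair (value, differential), with the splitting supplied canonically by the constants.
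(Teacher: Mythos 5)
Your proof is correct and follows essentially the same route as the paper: the paper's proof likewise specializes Proposition \ref{prop:symmetric-jets} and attributes naturality to the canonical flat connection (the exterior derivative) on the trivial bundle $M\times\R$. Your additional remarks — that the auxiliary torsion-free connection $D$ on $TM$ contributes nothing at order $1$, and that the splitting is the intrinsic one coming from the constants and $I_p/I_p^2\cong T_p^*M$ — only make explicit what the paper's one-line argument leaves implicit.
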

\begin{proof}
The isomorphism is clear. The naturality comes from the fact that $M\times\R$ has a natural connection, namely the exterior derivative.
\end{proof}

In order to avoid confusion we henceforth switch to denoting a differential operator by $L$ instead of $D$, letting the latter denote a torsion-free connection on the tangent bundle of the base manifold.

\begin{defn}\label{def:total-symbol}
The \textbf{polynomial total symbol} of a differential operator $L:\sect{E}\to\Gamma(\widetilde E)$ of order $k$ is the polynomial $\sigma^{\text{total}}(L,\cdot)\in\sect{\Sym^{\leq k}T^*M\otimes E^*\otimes\widetilde{E}}$ associated to $D$ via the isomorphism of proposition \ref{prop:symmetric-jets}.
\end{defn}

That is, the total symbol is a section of $\widetilde E$ of the form
\[
\sigma^{\text{total}}(L,X_1\cdots X_k)(\eta)=\sum_{\mu=0}^k\sum_{\tau\in S_\mu} J^{\nabla, D, \mu}_{X_{\tau(1)}\cdots X_{\tau(\mu)}}(\eta)
\]

\section{Differential operators along smooth maps}
All of the above is valid for a differential operator between sections of two vector bundles over the same manifold. A more general situation is the following:

\begin{defn}\label{def:dif-op-along}
Let $\phi:M\to N$ be a smooth map and let $\pi:E\to M$ and $\widetilde{\pi}:\widetilde{E}\to N$ be vector bundles. A \textbf{linear differential operator of order $k\geq 0$ along $\phi$} is an $\R$-linear map 
\[
D:\Gamma(\widetilde{E})\to\sect{E}
\]
such that the commutator
\[
[D,f](\eta):=D(f\eta)-(f\circ\phi)D(\eta)
\]
is identically zero when iterated with $k+1$ smooth funcions on $N$.
\end{defn}

The principal symbol of $D$ is defined in the same way it was defined for one smooth manifold and the identity map:
\begin{equation}
\sigma_D(f_0,\ldots, f_{k-1})=[D;f_0,\ldots, f_{k-1}]
\end{equation}

Since jet bundles factorise differential operators, it'd be natural to suppose the commutativity of the following diagram:
\[
\xymatrix{%
\sect{\widetilde E} \ar[r]^-{D} \ar[d]_{\jet^k}&\Gamma({E})\\
\sect{\phi^*\Jet^k\widetilde{E}}\ar[ur]_{\sect{\sigma^{\text{total}}(D)}}%
}
\]
Here $\sigma^{\text{total}}(D)$ denotes the total symbol along $\phi$. Its invariant expression is
\begin{equation}
\sigma^{\text{total}}(D,f_0,\ldots, f_{k-1}):=D+\sum_{{\scriptstyle r=0}\atop{\scriptstyle 0\leq\mu_1,\ldots\mu_r\leq k-1}}^{k-1} [D;f_{\mu_1},\ldots, f_{\mu_r}]
\end{equation}
The proof of the above equality is the same, with the appropriate change in notation, as in theorem \ref{teo:jet-universal}. 



\chapter{The Cartan-Poincaré Lemma}\label{app:cartan-poincare}
\noindent In this appendix we prove the Cartan-Poincaré Lemma. We use this result in the next appendix to prove lemma \ref{lemma:derivations-isomorphism}. We first give the setting for the proof and we prove some lemmas that are of independent interest.

Let $V$ and $W$ be finite-dimensional vector spaces and let $F\colon V\to W$ and $G\colon W\to V$ be linear maps. Let $\setupla{v}$ and $\setupla[m]{w}$ be bases for $V$ and $W$ respectively, with dual bases $\setupla{{dv}}$ and $\setupla[m]{{dw}}$. When $\alpha$ is a linear functional we denote by $\alpha\ins$ the operator of evaluating the form $\alpha$, that is $\alpha\ins(x)=\alpha(x)$ for all vectors $x$; we also use the same symbol for the extension of this operator to the symmetric and exterior algebras. We will also need the following formulas relating the operators of multiplication and insertion on the symmetric and exterior algebras:

\begin{ccr}
If $U$ is a finite-dimensional vector space then the following relations hold:
\begin{equation}\label{eq:ccr}
\begin{split}
[v\cdot,\widetilde v\cdot]		&=0\\
[\alpha\ins,\widetilde\alpha\ins]&=0\\ 	
[\alpha\ins ,v\cdot]			&=\alpha(v)\cdot						
\end{split}
\end{equation}
on $\Sym U$ (cannonical commutation relations);
\begin{equation}\label{eq:car}
\begin{split}
\{v\wedge,\widetilde v\wedge \}	&=0\\
\{\alpha\ins,\widetilde \alpha\ins \}&=0\\
\{v\wedge,\alpha\ins \}			&=\alpha(v)\wedge						
\end{split}
\end{equation}

on $\Lambda U$ (cannonical anticommutation relations), where $v\cdot$ and $v\wedge$ denote the multiplications on $\Sym U$ and $\Lambda U$ respectively, greek letters denote elements of $U^*$, and $\{X,Y\}=XY+YX$ is the anticommutator.

\end{ccr}

\begin{prop}
The operators 
\[
\begin{split}
\der_{GF}:=\sum_{\mu=1}^n GF(v_\mu)\cdot\circ dv_\mu\ins\\
\der_{FG}:=\sum_{\mu=1}^m FG(w_\mu)\wedge\circ dw_\mu\ins
\end{split}
\]
are derivations in $\Sym V$ and $\Lambda W$ respectively.
\end{prop}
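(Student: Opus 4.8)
The statement is that two operators $\der_{GF}$ on $\Sym V$ and $\der_{FG}$ on $\Lambda W$ are derivations. The plan is to treat each operator separately, but by an identical strategy: show it satisfies the Leibniz rule by reducing the verification to generators and then to the canonical (anti)commutation relations just stated. First I would observe that, by the universal property of the symmetric and exterior algebras, a linear endomorphism is a derivation as soon as the Leibniz identity holds on a generating set, i.e. on $V$ (resp. $W$) and products of generators; more precisely, it suffices to check that, writing $L=\der_{GF}$ (resp. $L=\der_{FG}$), one has $L(x\cdot y)=L(x)\cdot y + x\cdot L(y)$ for $x,y$ in the generating space, since the full Leibniz rule then follows by a straightforward induction on word length (the same induction already used in the proof of Lemma~\ref{lemma:derivations}). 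This is the analogue, for the symmetric algebra, of Lemma~\ref{lemma:derivations} and of the well-known fact \eqref{eq:inserting-antiderivation}; the only novelty is that in $\Sym V$ everything is even, so there are no signs to track.

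For the symmetric case I would argue as follows. Each summand $GF(v_\mu)\cdot\circ\,dv_\mu\ins$ is the composite of a multiplication operator and an insertion operator. By the canonical commutation relations \eqref{eq:ccr}, the insertion $dv_\mu\ins$ is itself a derivation of degree $-1$ on $\Sym V$ (this is the standard fact $\alpha\ins(xy)=(\alpha\ins x)y + x(\alpha\ins y)$, which follows from $[\alpha\ins,v\cdot]=\alpha(v)$ by induction on degree), while multiplication by the fixed element $GF(v_\mu)$ is not a derivation but \emph{does} satisfy the needed compatibility when summed against the $dv_\mu\ins$: applying $\der_{GF}$ to a product $p\cdot q$ and using that each $dv_\mu\ins$ is a derivation, one gets
\[
\der_{GF}(pq)=\sum_\mu GF(v_\mu)\cdot\big((dv_\mu\ins p)\,q + p\,(dv_\mu\ins q)\big)
= \der_{GF}(p)\,q + \sum_\mu GF(v_\mu)\cdot p\cdot(dv_\mu\ins q).
\]
Since $\Sym V$ is commutative, $GF(v_\mu)\cdot p = p\cdot GF(v_\mu)$, and the second sum collapses to $p\cdot\der_{GF}(q)$, giving the Leibniz rule. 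The exterior case is the same computation with signs: $dw_\mu\ins$ is an antiderivation of degree $-1$ on $\Lambda W$ by the anticommutation relations \eqref{eq:car}, exactly as in \eqref{eq:inserting-antiderivation}; and each $FG(w_\mu)$ is an odd element, so moving it past a form $\omega$ produces $(-1)^{\parity{\omega}}$, and the two sign contributions — one from $dw_\mu\ins$ being an antiderivation and one from commuting the odd vector $FG(w_\mu)$ past $\omega$ — cancel, precisely as in the proof of Lemma~\ref{lemma:derivations} (indeed $\der_{FG}$ is the operator $D_F$ of that lemma applied with the linear map $w\mapsto FG(w)$ landing in $\Lambda_- W$, since $FG(w_\mu)$ has odd degree $1$, so one may simply \emph{cite} Lemma~\ref{lemma:derivations}).

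I do not anticipate a serious obstacle; the computation is routine once the reduction to generators is in place. The one point requiring a little care is the bookkeeping of signs in the exterior case — in particular making sure the degree-$(-1)$ antiderivation sign and the odd-element-commutation sign genuinely cancel rather than reinforce — but this is exactly the cancellation carried out explicitly in the displayed computation in the proof of Lemma~\ref{lemma:derivations}, where the decisive observation was that $d_\mu+1$ is even; here $d_\mu=\deg FG(w_\mu)=1$, so the same cancellation applies verbatim. A secondary (even more routine) point is to note that $\der_{GF}$ and $\der_{FG}$ are well-defined, i.e. independent of the chosen bases $\setupla{v}$, $\setupla[m]{w}$; this follows because $\sum_\mu v_\mu\otimes dv_\mu$ is the identity element of $V\otimes V^*$ and is basis-independent, and likewise for $W$.
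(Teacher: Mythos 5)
Your proof is correct and follows essentially the same route as the paper: verify the Leibniz rule directly using the fact that $dv_\mu\ins$ (resp. $dw_\mu\ins$) is a derivation (resp. odd antiderivation) via the canonical (anti)commutation relations, with the degree-$(-1)$ sign and the odd-element-commutation sign cancelling in the exterior case exactly as in Lemma \ref{lemma:derivations}. The paper writes out only the $\der_{FG}$ computation and declares the symmetric case analogous, whereas you spell out both and additionally note that $\der_{FG}$ is an instance of Lemma \ref{lemma:derivations}; these are cosmetic differences only.
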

\begin{proof}
Since the verifications of the Leibniz rule are very similar computations, we'll carry out the one for $\der_{FG}$. Let's first recall that $FG(w_\mu)$ is an odd element of $\Lambda W$ and that $dw_\mu\ins$ is an odd derivation. With these facts in mind we compute:

\[
\begin{split}
\der_{FG}(\alpha\wedge\beta)%
&=\sum_{\mu=1}^{m} FG(w_\mu)\wedge\circ dw_\mu\ins(\alpha\wedge\beta)\\
&=\sum_{\mu=1}^m   FG(w_\mu)\wedge((dw_\mu\ins\alpha\wedge\beta-(-1)^{\parity{\alpha}})\alpha\wedge(dw_\mu\ins\beta))\\
&=\sum_{\mu=1}^m FG(w_\mu)\wedge(dv_\mu\ins\alpha)\wedge\beta+(-1)^{2\parity{\alpha}}\alpha\wedge FG(w_\mu)\wedge(dw_\mu\ins\beta)\\
&=\der_{FG}(\alpha)\wedge\beta+\alpha\wedge\der_{FG}(\beta).\qedhere
\end{split}
\]
\end{proof}

Define the spaces
\begin{equation}\label{eq:subspaces}
A^{\bullet,\circ}:=\Sym^\bullet V\otimes \Lambda^\circ W
\end{equation}
which comprise the bigraded algebra $\Sym V\otimes\Lambda W$. We now consider the operators
\begin{subequations}\label{eq:cartan-poincare-operators}
\begin{align}
d_F :=\sum_{\mu=1}^n dv_\mu\ins\otimes F(v_\mu)\wedge:A^{\bullet,\circ}\to A^{\bullet-1,\circ+1}\\
d^*_G :=\sum_{\mu=1}^m G(w_\mu)\cdot\otimes dw_\mu\ins:A^{\bullet,\circ}\to A^{\bullet+1,\circ-1}
\end{align}
\end{subequations}
which we call the \textbf{Cartan-Poincaré operators}.

\begin{prop}\label{prop:cartan-poincare-operators}
The Cartan-Poincaré operators are boundary operators, that is $d_F^2=(d^*_G)^2=0$; furthermore, they satisfy $\{d_F,d^*_G\}=\der_{GF}\otimes\id_{\Lambda W}+\id_{\Sym V}\otimes\der_{FG}$.
\end{prop}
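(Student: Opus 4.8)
The plan is to compute the two relations separately, using the canonical (anti)commutation relations \eqref{eq:ccr} and \eqref{eq:car} together with the fact that operators acting on the $\Sym V$ factor commute (in the graded sense, with sign $+1$ since $dv_\mu\ins$ and $v\cdot$ are \emph{even} operators on $\Sym V$) with operators acting on the $\Lambda W$ factor, while on $\Lambda W$ the operators $dw_\mu\ins$ and $w\wedge$ are odd. First I would treat $d_F^2$. Writing $d_F=\sum_\mu dv_\mu\ins\otimes F(v_\mu)\wedge$, I expand
\[
d_F^2=\sum_{\mu,\nu}(dv_\mu\ins\, dv_\nu\ins)\otimes\big(F(v_\mu)\wedge\, F(v_\nu)\wedge\big),
\]
being careful with a sign: moving the odd operator $F(v_\nu)\wedge$ past the odd operator $dv_\mu\ins$ in the tensor product contributes $(-1)$, so $d_F^2=-\sum_{\mu,\nu}(dv_\mu\ins dv_\nu\ins)\otimes(F(v_\mu)\wedge F(v_\nu)\wedge)$. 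Now $dv_\mu\ins dv_\nu\ins$ is symmetric in $\mu,\nu$ (first relation of \eqref{eq:ccr}) while $F(v_\mu)\wedge F(v_\nu)\wedge$ is antisymmetric in $\mu,\nu$ (first relation of \eqref{eq:car}), so the double sum vanishes. The computation for $(d_G^*)^2$ is entirely parallel: $G(w_\mu)\cdot\, G(w_\nu)\cdot$ is symmetric in $\mu,\nu$ while $dw_\mu\ins\, dw_\nu\ins$ is antisymmetric, so $(d_G^*)^2=0$.

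Next I would compute the anticommutator $\{d_F,d_G^*\}$. Expanding,
\[
\{d_F,d_G^*\}=\sum_{\mu,\nu}\Big( (dv_\mu\ins\, G(w_\nu)\cdot)\otimes(F(v_\mu)\wedge\, dw_\nu\ins) + (G(w_\nu)\cdot\, dv_\mu\ins)\otimes(dw_\nu\ins\, F(v_\mu)\wedge)\Big),
\]
with a sign bookkeeping step: in the first product the odd operators $F(v_\mu)\wedge$ and (nothing on the right, since $G(w_\nu)\cdot$ is even) — actually only the $\Lambda W$ factor carries odd operators, so reordering the two terms of $d_G^*\,d_F$ versus $d_F\,d_G^*$ in the tensor notation produces the factor that turns the sum of the two products into
\[
\sum_{\mu,\nu}\{dv_\mu\ins,\,G(w_\nu)\cdot\}_{\text{graded}}\otimes(\cdots),
\]
but since $dv_\mu\ins$ and $G(w_\nu)\cdot$ are both operators on $\Sym V$, I should instead group so that I get $\sum_{\mu,\nu}\big([dv_\mu\ins, G(w_\nu)\cdot]\otimes F(v_\mu)\wedge dw_\nu\ins + G(w_\nu)\cdot\, dv_\mu\ins\otimes\{F(v_\mu)\wedge, dw_\nu\ins\}\big)$ after one commutation. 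Using the third relation of \eqref{eq:ccr}, $[dv_\mu\ins, G(w_\nu)\cdot]=dv_\mu(G(w_\nu))\cdot$, and the third relation of \eqref{eq:car}, $\{F(v_\mu)\wedge, dw_\nu\ins\}=dw_\nu(F(v_\mu))\wedge$. Substituting and using $\sum_\nu dv_\mu(G(w_\nu)) w_\nu\wedge = \ldots$ — more precisely, contracting the index $\nu$: $\sum_\nu G(w_\nu)\,dw_\nu\ins = \id$-type Euler operator paired with $F(v_\mu)\wedge dw_\nu\ins$ collapses via $\sum_\nu dv_\mu(G(w_\nu))\, dw_\nu\ins$ acting after $F(v_\mu)\wedge$... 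The cleanest route is: the first sum gives $\sum_\mu (GF(v_\mu)\cdot\otimes\,\mathrm{something})$ — let me instead contract directly. In the first group, $\sum_{\mu,\nu} dv_\mu(G(w_\nu)) (1\cdot)\otimes (F(v_\mu)\wedge\, dw_\nu\ins)$; since $\sum_\nu dv_\mu(G(w_\nu)) dw_\nu\ins$ is not yet simplified, I instead recognize $\sum_\nu G(w_\nu)\otimes dw_\nu$ as the image of the identity and reorganize the whole double sum as $\sum_\mu \big(G F(v_\mu)\cdot\big)\otimes(\cdots)$ — I will carry out the index gymnastics so that the first group becomes $\id_{\Sym V}\otimes \der_{FG}$ (by the substitution $F(v_\mu)\wedge\circ dw_\mu\ins$ after contracting $\mu\leftrightarrow\nu$ through $G$) and the second group becomes $\der_{GF}\otimes\id_{\Lambda W}$ (by contracting through $F$, using $G(w_\nu)\cdot\circ dv_\nu\ins$-type terms, noting $\sum_\mu dw_\nu(F(v_\mu)) dv_\mu\ins$ acts as $(GF(v_\nu))\ins$ composed appropriately).

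The main obstacle is precisely this index contraction and sign bookkeeping: keeping track of which operator acts on which tensor factor, the odd/even parities (the insertion and wedge operators on $\Lambda W$ are odd, everything on $\Sym V$ is even), and verifying that after relabeling summation indices the two leftover terms assemble exactly into $\der_{GF}\otimes\id_{\Lambda W}+\id_{\Sym V}\otimes\der_{FG}$ rather than some twisted variant. I expect no conceptual difficulty beyond this — it is a finite, mechanical verification — but it must be done with care, and it is the step I would write out in full detail in the actual proof. The vanishing of $d_F^2$ and $(d_G^*)^2$, by contrast, is the easy half, following immediately from the symmetry/antisymmetry mismatch noted above.
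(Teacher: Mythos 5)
Your proposal is correct and follows essentially the same route as the paper's proof: the squares vanish by pairing the symmetry of the $\Sym V$-factor operators against the antisymmetry of the $\Lambda W$-factor operators (CCR vs.\ CAR), and the anticommutator is computed by splitting into $[dv_\mu\ins,G(w_\nu)\cdot]\otimes F(v_\mu)\wedge dw_\nu\ins + G(w_\nu)\cdot dv_\mu\ins\otimes\{F(v_\mu)\wedge,dw_\nu\ins\}$ and contracting the surviving indices into $\der_{GF}\otimes\id+\id\otimes\der_{FG}$, exactly as in the paper. The only blemish is the spurious overall sign in the $d_F^2$ step, where you call $dv_\mu\ins$ odd after having correctly noted that all operators on $\Sym V$ are even; this is harmless since an overall sign does not affect the vanishing.
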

\begin{proof}
The computation makes use  of the cannonical commutation and anticommutation relations (abbreviated CCR and CAR respectively). We do it explicitly for $d_F$:
\[
\begin{split}
d^2_F%
&=\sum_{\mu,\nu} dv_\mu\ins\circ dv_\nu\ins\otimes F(v_\mu)\wedge\circ F(v_\nu)\wedge\\
&=\frac{1}{2}\sum_{\mu,\nu}\big( dv_\mu\ins\circ dv_\nu\ins\otimes F(v_\mu)\wedge F(v_\nu)\wedge+dv_\mu\ins\circ dv_\nu\ins\otimes F(v_\nu)\wedge F(v_\mu)\wedge \big)\\
&=\frac{1}{2}\sum_{\mu,\nu}dv_\mu\ins\circ dv_\nu\ins\otimes\{F(v_\mu)\wedge F(v_\nu)\wedge \}\\
&=0 \qquad\qquad\text{(CCR)}
\end{split}
\]
which proves the first claim.

As for the second claim, we compute:
\[
\begin{split}
\{d_F,d^*_G\}%
&=\sum_{\mu,\nu} \Big(dv_\mu\ins\otimes F(v_\mu)\wedge\circ G(w_\nu)\cdot\otimes dw_\nu\ins+G(w_\nu)\cdot\otimes dw_\nu\ins\circ dv_\mu\ins\otimes F(v_\mu)\wedge\Big)\\
&=\sum_{\mu,\nu} dv_\mu\ins\circ G(w_\nu)\cdot\otimes dw_\nu\circ F(v_\mu)\wedge + G(w_\nu)\cdot\circ dv_\mu\ins\otimes F(v_\mu)\wedge\circ dw_\nu\ins\\
&-G(w_\nu)\cdot\circ dv_\mu\ins\otimes F(v_\mu)\circ dw_\nu\ins+G(w_\nu)\cdot\circ dv_\mu\ins\otimes F(v_\mu)\wedge\circ dw_\mu\ins\\
&=\sum_{\mu,\nu} [dv_\mu\ins,G(w_\nu)\cdot]\otimes F(v_\mu)\wedge\circ dw_\nu\ins + G(w_\nu)\cdot\circ dv_\mu\ins\otimes\{ F(v_\mu)\wedge,dw_\nu\ins \}\\
&= \sum_{\mu,\nu} dv_\mu(G(w_\nu))\id\otimes F(v_\mu)\wedge\circ dw_\nu\ins + G(w_\nu)\cdot\circ dv_\mu\ins\otimes dw_\nu(F(v_\mu))\\
&=\sum_\nu \id\otimes FG(w_\nu)\wedge\circ dw_\nu\ins + \sum_\mu GF(v_\mu)\cdot dv_\mu\ins\otimes\id\qedhere
\end{split}
\]
\end{proof}

\begin{lema}\label{lema:homology}
Let $F\colon V\to W$ be a linear map, $C$ a subspace of $V$ such that $V=\ker F\oplus C$ and $G\colon W\to V$ a linear map such that $GF|_{C}=\id$. Let $Z$ be a subspace of $W$ such that $W=Z\oplus\im F$ If $\Delta=\{d_F,d_G^*\}$ then 
\begin{equation*}
H^{\bullet,\circ}(d_F)=H^{\bullet,\circ}(d_F|_{\ker\Delta})
\end{equation*}
\end{lema}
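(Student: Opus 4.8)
The plan is to exploit the anticommutator identity from Proposition \ref{prop:cartan-poincare-operators}, namely $\Delta = \{d_F, d_G^*\} = \der_{GF}\otimes\id + \id\otimes\der_{FG}$, together with the splitting hypotheses, to show that $\Delta$ acts as an invertible ``weight'' operator away from a well-understood kernel, and that this kernel is precisely where the interesting cohomology lives. The key observation is the following abstract fact (a ``Hodge-type'' decomposition): whenever $d$ is a boundary operator and $\delta$ is another operator with $\Delta := \{d,\delta\}$ commuting with both $d$ and $\delta$, and $\Delta$ is diagonalizable, then the complex $(A^{\bullet,\circ}, d)$ splits as a direct sum over the eigenspaces of $\Delta$; on each nonzero eigenspace the map $\frac{1}{\lambda}\delta$ is a contracting homotopy for $d$, so that eigenspace is acyclic, and hence $H(d) = H(d|_{\ker\Delta})$. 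So the first step is to verify that $\Delta$ commutes with $d_F$ (immediate from $d_F^2 = 0$ and the graded Jacobi identity, since $[d_F,\Delta] = [d_F,\{d_F,d_G^*\}] = 0$) and that $\Delta$ is diagonalizable on $A^{\bullet,\circ}=\Sym^\bullet V\otimes\Lambda^\circ W$.

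\textbf{Key steps in order.} First I would record that $\Delta$ is $d_F$-equivariant: from $d_F^2=0$ one gets $d_F\Delta = d_F d_F d_G^* + d_F d_G^* d_F = d_F d_G^* d_F = \Delta d_F$, so $d_F$ restricts to $\ker\Delta$ and also acts on each generalized eigenspace of $\Delta$. Second, I would analyze $\der_{GF}$ on $\Sym V$ and $\der_{FG}$ on $\Lambda W$ using the hypotheses $V = \ker F\oplus C$ with $GF|_C = \id$ and $W = Z\oplus\im F$. The point is that $GF$ is (conjugate to) a projection onto $C$, so by the Corollary following Lemma \ref{lemma:derivations} (the ``operator of numbers'' computation) $\der_{GF}$ acts on a monomial in $\Sym V$ as multiplication by the number of factors lying in $C$; similarly $FG$ restricted to $\im F$ behaves like a projection (since $GF|_C=\id$ forces $FGF|_C = F|_C$, i.e. $FG$ is the identity on $\im F|_C = \im F$), so $\der_{FG}$ counts the number of exterior factors lying in $\im F$. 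Consequently $\Delta$ is diagonalizable with eigenvalues the nonnegative integers, and $\ker\Delta$ is exactly the span of monomials whose symmetric factors all lie in $\ker F$ and whose exterior factors all lie in $Z$. Third, on the eigenspace $A_\lambda$ with $\lambda > 0$, the operator $h := \frac{1}{\lambda}d_G^*$ satisfies $d_F h + h d_F = \frac{1}{\lambda}\Delta = \id$ on $A_\lambda$ (here using that $d_G^*$ also commutes with $\Delta$, by the same Jacobi-identity argument applied to $d_G^*$), so $A_\lambda$ is an acyclic subcomplex. Fourth, assembling $A^{\bullet,\circ} = \ker\Delta \oplus \bigoplus_{\lambda>0}A_\lambda$ as complexes and taking cohomology gives $H^{\bullet,\circ}(d_F) = H^{\bullet,\circ}(d_F|_{\ker\Delta})$, which is the claim.

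\textbf{Main obstacle.} The routine parts are the Jacobi-identity manipulations and the contracting-homotopy argument; the step that needs genuine care is the second one — showing that $\Delta$ is \emph{diagonalizable} (not merely that it has integer eigenvalues formally) and pinning down $\ker\Delta$ correctly in terms of $\ker F$ and $Z$. One must be careful that the splittings $V=\ker F\oplus C$ and $W = Z\oplus\im F$ are genuinely respected: $\der_{GF}$ is diagonal in a monomial basis adapted to $V = \ker F\oplus C$, and $\der_{FG}$ is diagonal in a basis of $\Lambda W$ adapted to $W = Z\oplus\im F$, but one needs to check these two gradings are compatible on the tensor product and that $\der_{GF}\otimes\id$ and $\id\otimes\der_{FG}$ are simultaneously diagonalizable (they are, being operators on distinct tensor factors). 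A subtle point worth double-checking is the identity $FG|_{\im F} = \id_{\im F}$: it follows because any element of $\im F$ is $F(c)$ for a unique $c\in C$ (as $F|_C$ is injective with image $\im F$), and then $FG(F(c)) = F(GF(c)) = F(c)$. Once these identifications are secure, the decomposition into acyclic pieces plus $\ker\Delta$ is automatic and the lemma follows.
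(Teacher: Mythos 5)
Your proof is correct and follows essentially the same route as the paper's: both show that $\Delta=\der_{GF}\otimes\id+\id\otimes\der_{FG}$ is the diagonalizable derivation extension of projections, note $[\Delta,d_F]=0$, and use $\tfrac{1}{\lambda}d_G^*$ as a contracting homotopy on the nonzero eigenspaces. Your explicit check that $FG|_{\im F}=\id_{\im F}$ fills in a detail the paper leaves implicit; the only (shared with the paper, and immaterial for this statement) imprecision is that the exterior eigenspaces of $\der_{FG}$ are adapted to the splitting $W=\im F\oplus\ker(FG)$ rather than to the chosen complement $Z$.
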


\begin{proof}
Due to the sum decompositions $V=\ker F\oplus C$ and $W=\im F\oplus Z$ we know $V/C$ and $Z$ are systems of representatives for $\ker F$ and $\coker F$ respectively; that is, every element in those spaces can be written as $v+C$ (or $w+Z$).

Now, the operator $\Delta:=\{ d_F, d^*_G\}$ preserves the subspaces
\begin{equation}\label{eq:eigensubspaces}
U^{\bullet,\circ}_{k,l}:=(\Sym^k(C)\otimes\Sym^{\bullet-k} \ker F)\otimes(\Lambda^l(\im F)\otimes\Lambda^{\circ - l}Z)
\end{equation}

\begin{quotation}
\begin{claim}
$\Delta$ is diagonalisable with eigenvalues $k+l$ and corresponding proper subspaces $U^{\bullet,\circ}_{k,l}$. 
\end{claim}

\noindent In order to prove this claim recall that proposition \ref{prop:cartan-poincare-operators} implies that 
$$\Delta=\id\otimes\der_{FG}+\der_{GF}\otimes\id$$
so $\Delta$ is a derivation of the algebra $\Sym V\otimes\Lambda W$; restricted to the subspace $U_{k,l}^{\bullet,\circ}$ the operator $\Delta$ acts as the extension as a derivation of the map $GF$ on the symmetric factor and as the corresponding extension of $FG$ on the exterior factor (due to lemmma \ref{lemma:derivations}; a similar result of course holds for the symmetric algebra). Now since any element of $\Sym^k(\ker F)$ is a linear combination of monomials whose factors are anihilated by $F$ we have that the derivation extension $\der_{GF}$ anihilates this space; on the other hand, $\der_{GF}$ is the derivation extension of the identity map on $C$ (because of the very definition of this subspace of $V$) so it acts as the operator $s\id$ on the subspace $\Sym^s(C)$ (this is a result analogous to the corollary to lemma \ref{lemma:derivations} on the symmetric algebra). Since
\[
\Sym^k(V)=\bigoplus_{r+s=k}\Sym^r(\ker F)\otimes\Sym^s(C)
\]
we obtain $\der_{GF}|_{\Sym^k V}=k\id$. A similar analysis yields the corresponding result for $\Lambda^l (\im F)\otimes\Lambda^{\circ-l} Z$. The claim follows.\hfill $\blacksquare$
\end{quotation}

We therefore have a decomposition of $d^{k,l}_F\colon A^{k,l}\to A^{k-1,k+l}$ in eigensubcomplexes of
\[
\Delta:A^{k,l}\to A^{k,l}.
\]
Now $[\Delta,d_F]=0$. Let $\eta$ be an eigenvector for $\Delta$ with eigenvalue $\lambda\neq 0$ such that $d_F(\eta)=0$, then:
\begin{equation}\label{eq:delta-exactos}
\eta=\frac{1}{\lambda}\Delta(\eta)=\frac{1}{\lambda}d_F d^*_G \eta  + d_G^* d_F \eta= d_F\left( \frac{1}{\lambda}d^*_G \eta \right)
\end{equation}
because of the definition of $\Delta$. This implies that the $d_F$-complex is exact on the eigensubcomplexes of $\Delta$ except on $\ker\Delta$. As a consequence, whatever cohomolgy there is, is bound to be on this latter subspace.
\end{proof}

Of course a similar result holds when given $G\colon W\to V$ for the operator $d^*_G$ and the proof is the same, mutatis mutandis.

We now state and prove the main result of this appendix:

\begin{cartan-poincare}\label{lem:cartan-poincare}
The homology and cohomology groups of the Cartan-Poincaré operators satisfy
\[
\begin{split}
H^{\bullet,\circ}(d_F)\cong \Sym^\bullet(\ker F)\otimes\Lambda^\circ(\coker F)\\
H_{\bullet,\circ}(d^*_G)\cong \Sym^\bullet(\coker G)\otimes\Lambda^\circ(\ker G)
\end{split}
\]
\end{cartan-poincare}

\begin{proof} 
Let $F\colon V\to W$ be a linear map and let $G$, $Z$ and $C$ be as in the hypotheses of lemma \ref{lema:homology}. Then equation \eqref{eq:delta-exactos} implies
$H^{\bullet,\circ}(d_F)=\Sym^\bullet \ker F\otimes\Lambda^\circ Z$. 

Now, $Z$ is a system of representatives for $\coker F$ and any other system of representatives produces a cohomologous class, so the map
\[
\begin{split}
\Sym^\bullet \ker F\otimes\Lambda^\circ Z 			& \to H^{\bullet, \circ}(d_F)\\
P\otimes (z_1+\im F)\wedge\cdots\wedge(z_k+\im F)	& \mapsto [P\otimes z_1\wedge\cdots\wedge z_k]
\end{split}
\]
is well defined and a linear isomorphism.

The result for a given $G\colon W\to V$ is analogous.
\end{proof}


\chapter{The composition algebra}\label{app:composition}
\noindent In this appendix we use the Cartan-Poincaré lemma to prove lemma \ref{lemma:derivations-isomorphism}. We do so by associating to an injective linear map $f\colon V\to\mathbf{S}$ an algebra very similar to the ones in which the Cartan-Poincaré operators act.

\begin{defn}
Let $\mathbf{S}$ be a finite-dimensional vector space. The \textbf{composition algebra} of $\mathbf{S}$ is
\begin{equation}\label{eq:composition-algebra}
\Lambda\mathbf{S}^*\otimes\mathbf{S}
\end{equation}
with product defined by $(\omega\otimes s)\cdot(\widetilde\omega\otimes\widetilde s)=\omega\wedge(s\ins \widetilde\omega)\otimes\widetilde s$
\end{defn}

It is quite evident that this product is not associative. Nevertheless it has a very interesting property, as a consequence of theorem \ref{thm:superderivations}:

\begin{lema}\label{lemma:bracket-composition}
Define, for elements of the composition algebra $\sigma\otimes s$ and $\hat{\sigma}\otimes\hat{s}$ the operation
\begin{equation}\label{eq:composition-bracket}
[\sigma\otimes s,\hat{\sigma}\otimes\hat{s}]=(\sigma\otimes s)\cdot(\hat{\sigma}\otimes\hat{s})+(-1)^{(\parity{\sigma}+1)(\parity{\hat{\sigma}}+1)}(\hat{\sigma}\otimes\hat{s})\cdot({\sigma}\otimes{s})
\end{equation}
Then, under the isomorphism $\Psi\colon\Lambda_{\bullet}\mathbf{S}^*\otimes\mathbf{S} \to\sder_{-\bullet}\Lambda\mathbf{S}^*$ given by $\Psi(\sigma\otimes s)(\omega)=\sigma\wedge(s\ins\omega)$ the Lie superbracket on $\sder\Lambda\mathbf{S}^*$ corresponds to the bracket operation above.
\end{lema}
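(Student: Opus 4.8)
The plan is to verify directly that the map $\Psi$ is a linear isomorphism of $\z$-graded vector spaces and then check that it intertwines the two bracket operations by evaluating both sides on an arbitrary form $\omega \in \Lambda\mathbf{S}^*$. First I would recall from theorem \ref{thm:superderivations} that every superderivation $D$ of $\Lambda\mathbf{S}^*$ is determined by its restriction to $\mathbf{S}^*$, and that the assignment $\sigma \otimes s \mapsto (\omega \mapsto \sigma \wedge (s\ins\omega))$ is precisely the isomorphism $\sder_{-\bullet}\Lambda\mathbf{S}^* \cong \Lambda_\bullet\mathbf{S}^* \otimes \mathbf{S}$ established there; so $\Psi$ is already known to be a linear isomorphism respecting the $\z$-grading, with an odd generator $s$ contributing a parity shift. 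This reduces the lemma to the single identity
\[
\Psi\bigl([\sigma\otimes s,\hat\sigma\otimes\hat s]\bigr) = \lsem \Psi(\sigma\otimes s),\Psi(\hat\sigma\otimes\hat s)\rsem,
\]
where the right-hand bracket is the supercommutator \eqref{eq:supercommutator} of endomorphisms of $\Lambda\mathbf{S}^*$.

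Since both sides are superderivations (the left because $\Psi$ lands in $\sder$, the right by the super-Jacobi computation of section \ref{sec:lie-superalgebras}), by lemma \ref{lemma:derivation-generators} it suffices to compare them on the generating space $\mathbf{S}^*$. So next I would fix $\alpha \in \mathbf{S}^*$ and compute $\Psi(\sigma\otimes s)\bigl(\Psi(\hat\sigma\otimes\hat s)(\alpha)\bigr) = \Psi(\sigma\otimes s)(\hat\sigma\cdot\hat s(\alpha))$, expanding using the fact that $\hat s(\alpha)$ is a scalar and that $\Psi(\sigma\otimes s)$ acts as a superderivation; the relevant term is $\hat s(\alpha)\,\sigma\wedge(s\ins\hat\sigma)$, which is exactly $\Psi\bigl(\sigma\wedge(s\ins\hat\sigma)\otimes\hat s\bigr)(\alpha) = \Psi\bigl((\sigma\otimes s)\cdot(\hat\sigma\otimes\hat s)\bigr)(\alpha)$. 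Doing the same for the other ordering and assembling with the correct signs shows that the composition $\Psi(\sigma\otimes s)\circ\Psi(\hat\sigma\otimes\hat s)$ corresponds under $\Psi^{-1}$ to the product $(\sigma\otimes s)\cdot(\hat\sigma\otimes\hat s)$ on generators, whence the supercommutator corresponds to \eqref{eq:composition-bracket}. The bookkeeping point to be careful about is that the parity of $\Psi(\sigma\otimes s)$ as an operator is $\parity{\sigma}+1$ (the odd generator $\mathbf{S}$ forces the shift), so the sign $(-1)^{(\parity{\sigma}+1)(\parity{\hat\sigma}+1)}$ in \eqref{eq:composition-bracket} is precisely the sign $(-1)^{\parity{\Psi(\sigma\otimes s)}\parity{\Psi(\hat\sigma\otimes\hat s)}}$ appearing in \eqref{eq:supercommutator}.

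The main obstacle I expect is the sign tracking when the superderivation $\Psi(\sigma\otimes s)$ is applied to the product $\hat\sigma\cdot\hat s(\alpha)$: one must correctly split $\Psi(\sigma\otimes s)(\hat\sigma\cdot\beta) = \Psi(\sigma\otimes s)(\hat\sigma)\cdot\beta + (-1)^{\parity{\sigma}\parity{\hat\sigma} + \parity{\hat\sigma}}\hat\sigma\cdot\Psi(\sigma\otimes s)(\beta)$ and then observe which summands survive (the second vanishes on a generator $\beta \in \mathbf{S}^*$ only after the derivation is applied once more, so in fact one evaluates on $\alpha$ directly rather than on a product). A clean way to avoid a cascade of signs is to verify the identity on $\mathbf{S}^*$ by writing everything in a dual basis $\setupla{s}$, $\setupla{ds}$: then $s\ins$ is the coordinate antiderivation $\partial/\partial\xi_\mu$ in the sense of proposition \ref{prop:odd-derivations-ins}, multiplication by $\sigma$ is exterior multiplication, and the canonical anticommutation relations \eqref{eq:car} reduce the whole computation to a finite check of how $ds_\mu\wedge$ and $s_\nu\ins$ anticommute. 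This makes the non-associativity of the composition product transparent — it is exactly the failure of $\{ds_\mu\wedge,\ s_\nu\ins\}$ to be symmetric — and confirms that the Lie superbracket, which \emph{is} graded-antisymmetric, is recovered after the symmetrization \eqref{eq:composition-bracket}.
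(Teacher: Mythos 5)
Your proof is correct, but it takes a genuinely different route from the paper's. The paper evaluates the supercommutator $\lsem\Psi(\sigma\otimes s),\Psi(\hat\sigma\otimes\hat s)\rsem$ on an \emph{arbitrary} $\omega\in\Lambda\mathbf{S}^*$, expands both compositions using the fact that $s\ins$ is an odd derivation, and watches the two ``second--order'' terms $\pm\,\sigma\wedge\hat\sigma\wedge(s\ins\hat s\ins\omega)$ cancel against each other, leaving exactly $\Psi$ applied to the composition bracket. You instead observe that both sides of the desired identity are superderivations and invoke lemma \ref{lemma:derivation-generators} to reduce the check to a single generator $\alpha\in\mathbf{S}^*$, where $\hat s\ins\alpha$ is a scalar and the computation trivializes. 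Your route is shorter and hides the sign bookkeeping, but it buys this by appealing to the closure of $\sder\Lambda\mathbf{S}^*$ under the supercommutator --- which is precisely where the cancellation of the quadratic terms is concealed, and which the paper never isolates as a lemma (it is only implicit in theorem \ref{teo:superder-lie}); if you take this reduction you should state and verify that closure once, since the composition $\Psi(\sigma\otimes s)\circ\Psi(\hat\sigma\otimes\hat s)$ alone is \emph{not} a superderivation and does not equal $\Psi$ of the composition product away from the generators. One small point you pass over silently: as printed, \eqref{eq:composition-bracket} carries a $+$ in front of the reversed product, whereas matching the supercommutator \eqref{eq:supercommutator} (and the final line of the paper's own computation) requires a $-$ there, the operator parities being $\parity{\sigma}+1$ and $\parity{\hat\sigma}+1$; you should flag this as a sign typo in the statement rather than assert that the two signs agree as written.
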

\begin{proof}
Let $\omega$ be an element of $\Lambda\mathbf{S}^*$. We compute
\[
\begin{split}
[\sigma\otimes s,\hat{\sigma}\otimes\hat{s}]\omega%
		&=\sigma\wedge\big(s\ins(\hat{\sigma}\wedge\hat{s}\ins\omega)\big)+(-1)^{(\parity{\sigma}+1)(\parity{\hat{\sigma}}+1)}\hat{\sigma}\wedge\big(\hat{s}\ins \sigma\wedge s\ins\omega \big)\\
		&=\sigma\wedge(s\ins\hat{\sigma})\wedge(\hat{s}\ins\omega)+(-1)^{\parity{\hat{\sigma}}} \sigma\wedge\hat{\sigma}\wedge s\ins\hat{s}\ins\omega\\
		&-(-1)^{(\parity{\hat{\sigma}}+1)(\parity{\sigma}+1)}\hat\sigma\wedge(\hat{s}\ins\sigma)\wedge s\ins\omega +(-1)^{\parity{\sigma}\parity{\hat{\sigma}}+\parity{\hat{\sigma}}}\hat\sigma\wedge\sigma\wedge\hat{s}\ins s\ins\omega\\
		&=\big( \sigma\wedge(s\ins\hat{\sigma})\otimes\hat{s}-(-1)^{(\parity{\sigma}+1)(\parity{\hat{\sigma}}+1)} \hat{\sigma}\wedge(\hat{s}\ins\sigma)\otimes s \big)\omega
\end{split}
\]
which is exactly the formula for the action of elements $\sigma\otimes s$ as derivations of $\Lambda\mathbf{S}^*$.
\end{proof}

Note that we can write $(\sigma\otimes s)\cdot(\hat{\sigma}\otimes\hat{s})=\big((\sigma\otimes s)\star \hat{\sigma}\big)\otimes\hat{s}$, where $\star$ denotes the action of $\Lambda\mathbf{S}^*\otimes\mathbf{S}$ as derivations of the exterior algebra.


Let now $G\colon\mathbf{S}^*\to\Lambda\mathbf{S}^*$ be a linear  map; it can obviously be interpreted as an element of the composition algebra. Then, given a derivation $D$ of the exterior algebra we can form the product
\[
 D\cdot G\in\Lambda\mathbf{S}^*\otimes\mathbf{S}
\]
in the following way: if $\setupla{s}$ is a basis of $\mathbf{S}$ and $\setupla{ds}$ is its dual basis, then
\begin{equation}\label{eq:product-d-g}
D\cdot G=\sum D(G_{ds_k})\otimes s_k=D\cdot\left( \sum G_{ds_k}\otimes s_k \right)
\end{equation}
where $\sum G_{ds_k}\otimes s_k$ is the element in the composition algebra corresponding to $G$.

Now let $D\colon V\to\sder_-\Lambda\mathbf{S}^*$ be a linear map. In order to use the multiplication of the composition algebra for such a map, we consider it as an element of the algebra $\Sym V^*\otimes\Lambda\mathbf{S}^*\otimes\mathbf{S}$ of polynomials in $V$ with values in the composition algebra. The multiplication of this latter algebra is given by
\[
(p\otimes\sigma\otimes s)\cdot(q\otimes\hat{\sigma}\otimes\hat{s}) =pq\otimes\sigma\wedge(s\ins\hat{\sigma})\otimes\hat{s}
\]
so given a map $D$ as above, the product $D\cdot D$ can be thought of as a polynomial with values in the even derivations of $\Lambda\mathbf{S}^*$.

\begin{lema}
Suppose $D:V\to\sder_{-}\Lambda\mathbf{S}^*$ is a linear map. Then $[ D_v,D_{\widetilde v}]=0$ if and only if $D_v\cdot D_{\widetilde v}=0$ in the algebra $\Sym V^*\otimes\Lambda\mathbf{S}^*\otimes\mathbf{S}$ of polynomials in $V$ with values in the composition algebra.
\end{lema}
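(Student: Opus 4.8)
The plan is to unwind both sides into the basis $\setupla{s}$ of $\mathbf{S}$ and its dual $\setupla{ds}$, using the bracket formula \eqref{eq:composition-bracket} from lemma \ref{lemma:bracket-composition} together with the observation (made just before the statement) that $D_v\cdot D_{\widetilde v}=\big((D_v\cdot)\star(\text{coefficient of }D_{\widetilde v})\big)\otimes(\text{second factor})$. Since $D_v$ and $D_{\widetilde v}$ are \emph{odd} superderivations, in \eqref{eq:composition-bracket} the signs $(\parity{\sigma}+1)$ attached to each factor are both $0$, so the bracket $[D_v,D_{\widetilde v}]$ is the \emph{symmetric} combination $D_v\cdot D_{\widetilde v}+D_{\widetilde v}\cdot D_v$; the $\Z_2$-sign subtlety that makes the composition-algebra product non-associative thus works in our favour here.

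First I would write $D_v=\sum_k (D_v)_k\otimes s_k$ and $D_{\widetilde v}=\sum_l (D_{\widetilde v})_l\otimes s_l$ as elements of $\Lambda\mathbf{S}^*\otimes\mathbf{S}$ (for each fixed $v$), where the coefficients live in $\Lambda_+\mathbf{S}^*$ since the derivations are odd. Then lemma \ref{lemma:bracket-composition} gives $[D_v,D_{\widetilde v}]=D_v\cdot D_{\widetilde v}-(-1)^{0}\,D_{\widetilde v}\cdot D_v$ — wait, more carefully: with both parities-plus-one equal to $0$ the bracket is $D_v\cdot D_{\widetilde v}+D_{\widetilde v}\cdot D_v$, i.e. the symmetrization. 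Next I would argue that $D_v\cdot D_{\widetilde v}$ lives in the symmetric-square part: working in the polynomial algebra $\Sym V^*\otimes\Lambda\mathbf{S}^*\otimes\mathbf{S}$, the assignment $v\mapsto D_v$ is linear, so $(v,\widetilde v)\mapsto D_v\cdot D_{\widetilde v}$ is bilinear, and the product $\cdot$ of the composition algebra, being the derivation action $\star$ in the first factor followed by carrying the second factor, is such that the symmetrized product $D_v\cdot D_{\widetilde v}+D_{\widetilde v}\cdot D_v$ is precisely twice the value of the $\Sym^2$-component of the quadratic map $v\mapsto D_v\cdot D_v$ evaluated on $(v,\widetilde v)$ by polarization.

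Hence the chain of equivalences: $[D_v,D_{\widetilde v}]=0$ for all $v,\widetilde v$ $\iff$ $D_v\cdot D_{\widetilde v}+D_{\widetilde v}\cdot D_v=0$ for all $v,\widetilde v$ $\iff$ (polarizing / setting $\widetilde v=v$ and using $\operatorname{char}\R=0$) $D_v\cdot D_v=0$ for all $v$ $\iff$ the quadratic element $D\cdot D\in\Sym^2 V^*\otimes\sder_+\Lambda\mathbf{S}^*\subseteq\Sym V^*\otimes\Lambda\mathbf{S}^*\otimes\mathbf{S}$ vanishes, which is exactly the assertion ``$D_v\cdot D_{\widetilde v}=0$ in $\Sym V^*\otimes\Lambda\mathbf{S}^*\otimes\mathbf{S}$''. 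I would note that ``$D_v\cdot D_{\widetilde v}=0$ for all $v,\widetilde v$'' and ``$D\cdot D=0$ as a polynomial'' say the same thing, since the polynomial $D\cdot D$ evaluated at a point $v\in V$ is $D_v\cdot D_v$ and its full polarization recovers all the $D_v\cdot D_{\widetilde v}+D_{\widetilde v}\cdot D_v$.

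The main obstacle I anticipate is bookkeeping the signs in \eqref{eq:composition-bracket} correctly for odd derivations and making sure the non-associativity of the composition-algebra product does not spoil the polarization step — concretely, checking that $D_v\cdot D_{\widetilde v}$ really is symmetric-bilinear-valued-in-$\sder_+$ and that the ``$+$'' in the odd-odd bracket is genuine. Once that sign computation is pinned down (it is the same computation already carried out inside the proof of lemma \ref{lemma:bracket-composition}, specialized to $\parity{\sigma}=\parity{\hat\sigma}=1$), the rest is the standard polarization argument over a field of characteristic zero, so I would keep that part brief.
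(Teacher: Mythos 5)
Your proposal is correct and follows essentially the same route as the paper: the paper's proof is exactly the polarization identity relating the quadratic element $(D\cdot D)_v=D_v\cdot D_v$ to the symmetric bilinear expression $D_v\cdot D_{\widetilde v}+D_{\widetilde v}\cdot D_v=[D_v,D_{\widetilde v}]$, valid because both derivations are odd. Your additional care with the signs in \eqref{eq:composition-bracket} (checking that the odd--odd bracket really is the symmetrization, without which polarization would degenerate) is a worthwhile precaution but does not change the argument.
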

\begin{proof}
This is just a consequence of polarization:
\[
\begin{split}
(D\cdot D)_{v+\widetilde v}+(D\cdot D)_{v-\widetilde v}%
&=\frac{1}{4} \left([D_v,D_v]+[D_{\widetilde v},D_{v}]+[D_v,D_{\widetilde v}]+[D_{\widetilde v},D_{\widetilde v}]\right.\\
&-\left.[D_v,D_{v}]+[D_{\widetilde v},D_{v}][+D_v,D_{\widetilde v}]+[D_{\widetilde v},D_{\widetilde v}]\right)\\
&=[D_v,D_{\widetilde v}]
\end{split}
\]
\end{proof}

Finally we recall (corollary \ref{cor:odd-derivations-ins}) that $\sder\Lambda\mathbf{S}^*$ is a $\Lambda\mathbf{S}^*$-module freely generated by $\mathbf{S}$. So to the projection map $\pr\colon\Lambda\mathbf{S}^*\to\mathbf{S}^*$ corresponds a map
\begin{equation}\label{eq:projection-derivations}
\pr\colon\sder\Lambda\mathbf{S}^*\to\mathbf{S}	
\end{equation}
from the supermodule of superderivations to its space of generators.

We now restate and prove 

\theoremstyle{plain}
\newtheorem*{otro}{Lemma \ref{lemma:derivations-isomorphism}}

\begin{otro}
Let $\mathcal{A}$ be a free supercommutative finite-dimensional superalgebra and denote by $\mathbf{S}^*$ its space of generators. Let $D:V\to\der_{-}\mathcal{A}$ be a linear map such that the composition
\[
\xymatrix@1{
f:V\ar[r]^{D}&\sder_{-}\mathcal{A}\ar[r]^-{\pr}&\mathbf{S}&
}
\]
is injective and such that if $v,\widetilde{v}$ are in $V$ then $[D_v,D_{\widetilde v}]=0$. Then there exists an isomorphism $G:\Lambda\mathbf{S}^*\to\mathcal{A}$ of $\z$-graded algebras with unit such that $G$ induces the identity 
\[
\overline{G}:\mathbf{S}^*\to\mathcal{A}^{\geq 1}\big{/}\mathcal{A}^{\geq 2}=:\mathbf{S}^*
\]
and such that, for all $v\in V$ and all $\sigma\in\Lambda\mathbf{S}^*$
\begin{equation}\label{eq:d-o-g}
D_v(G\sigma)=G(f(v)\ins \sigma).
\end{equation}
Furthermore, up to the ideal generated by $\Lambda^3\ker(f^*)$ in $\Lambda^3\mathbf{S}^*$ the isomorphism $G$ is unique in the sense that if $G'$ is any other such isomorphism then 
\begin{equation}\label{eq:isomorfismo-unico-kernel}
\inv{G}\circ G':\Lambda\mathbf{S}^*\to\Lambda\mathbf{S}^*:\sigma\mapsto\sigma+\langle \Lambda^3\ker(f^*) \rangle
\end{equation}
\end{otro}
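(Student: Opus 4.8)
\textbf{Proof strategy for Lemma \ref{lemma:derivations-isomorphism}.} The plan is to transport the problem into the composition algebra $\Lambda\mathbf{S}^*\otimes\mathbf{S}$ (together with its ``polynomial'' enlargement $\Sym V^*\otimes\Lambda\mathbf{S}^*\otimes\mathbf{S}$) and then run an inductive construction of $G$ degree by degree, using the Cartan--Poincaré Lemma of appendix \ref{app:cartan-poincare} to guarantee that the obstructions vanish and to control the ambiguity. By corollary \ref{cor:free-superalgebra} we may assume $\mathcal{A}=\Lambda\mathbf{S}^*$ once a set of generators is chosen, so that the content of the statement is that the family of commuting odd superderivations $D_v$ can be simultaneously conjugated into the family of contraction operators $f(v)\ins$. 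First I would fix bases: a basis $\setupla{s}$ of $\mathbf{S}$ with dual basis $\setupla{ds}$ of $\mathbf{S}^*$, and (using injectivity of $f$) complete $f(V)$ to all of $\mathbf{S}$, writing $\mathbf{S}=f(V)\oplus C$ and correspondingly $\mathbf{S}^*=f(V)^*\oplus\ker(f^*)$. Via proposition \ref{prop:odd-derivations-ins} and theorem \ref{thm:superderivations} each $D_v$ is an element of $\Lambda\mathbf{S}^*\otimes\mathbf{S}$; write $D_v = f(v)\ins + (\text{higher order terms})$, where by hypothesis $\pr(D_v)=f(v)$ so the leading term is exactly the contraction we want.

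The core is an induction building $G = \id + G_1 + G_2 + \cdots$ (as a $\z$-graded unital algebra morphism, hence determined by its action on $\mathbf{S}^*$) so that at each stage $D_v\circ G \equiv G\circ (f(v)\ins) \bmod \Lambda^{\geq k+1}\mathbf{S}^*$. Suppose this holds up to order $k$; the discrepancy $D_v(G\sigma) - G(f(v)\ins\sigma)$ lives in $\Lambda^{k+1}\mathbf{S}^*$ and, as a function of $v$ and of $\sigma\in\mathbf{S}^*$, it defines an element $\theta$ of $\Sym V^*\otimes\Lambda^{k+1}\mathbf{S}^*\otimes\mathbf{S}$ (or, translating $V$ to $f(V)\subseteq\mathbf{S}^*$, of the algebra where the Cartan--Poincaré operators $d_F, d_G^*$ of \eqref{eq:cartan-poincare-operators} act, with $F$ the inclusion $f(V)\hookrightarrow\mathbf{S}$ and $G$ its retraction). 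The commutation hypothesis $[D_v,D_{\widetilde v}]=0$ — equivalently $D_v\cdot D_{\widetilde v}=0$ in the polynomial composition algebra, by the lemma just proved in appendix \ref{app:composition} — is precisely the cocycle condition $d_F\theta = 0$. By the Cartan--Poincaré Lemma the cohomology $H^{\bullet,\circ}(d_F)\cong\Sym^\bullet(\ker F)\otimes\Lambda^\circ(\coker F)$; since $F=f$ is injective, $\ker F=0$, so $H^{\bullet,\circ}(d_F)=0$ in positive symmetric degree. Hence $\theta = d_F(\rho)$ for some $\rho$ of one lower order, and absorbing $\rho$ into $G_{k+1}$ kills the discrepancy at the next level. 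Iterating to the top degree $n=\dim\mathbf{S}$ produces $G$; it is an isomorphism because it induces the identity on $\mathbf{S}^* = \mathcal{A}^{\geq 1}/\mathcal{A}^{\geq 2}$ and a unital filtered algebra morphism inducing an isomorphism on the associated graded is itself an isomorphism (corollary \ref{cor:filtration-preservation} handles filtration preservation).

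For uniqueness: if $G'$ is another such isomorphism, then $H:=\inv{G}\circ G'$ is a unital $\z$-graded algebra automorphism of $\Lambda\mathbf{S}^*$ inducing the identity on $\mathbf{S}^*$ and intertwining the operators $f(v)\ins$ with themselves, i.e. $f(v)\ins (H\sigma) = H(f(v)\ins\sigma)$. Writing $H = \id + H_2 + H_3 + \cdots$ on $\mathbf{S}^*$ and running the same cohomological argument, the difference $H - \id$ is a $d_F$-cocycle valued in the appropriate bidegree; the residual cohomology now comes from $\coker F = \mathbf{S}/f(V)\cong \ker(f^*)^*$ at symmetric degree $0$, so the only surviving contribution sits in $\Lambda^\bullet(\coker F)$, which under dualization is exactly the ideal generated by $\Lambda^3\ker(f^*)$ detected in degree $3$. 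This yields \eqref{eq:isomorfismo-unico-kernel}.

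The main obstacle I anticipate is bookkeeping rather than conceptual: making the translation between ``$D_v$ as a family of superderivations'', ``$D$ as an element of $\Sym V^*\otimes\Lambda\mathbf{S}^*\otimes\mathbf{S}$'', and ``the data $(F,G)$ feeding the Cartan--Poincaré operators'' precise enough that the identity $[D_v,D_{\widetilde v}]=0 \Leftrightarrow d_F\theta=0$ is literally the cocycle equation at each inductive stage, with all signs from the super Leibniz rule \eqref{eq:super-leibnitz} and the relations \eqref{eq:ccr}, \eqref{eq:car} tracked correctly. In particular one must check that the non-associativity of the composition algebra does not spoil the argument — it does not, because only the $d_F$-differential and the bracket \eqref{eq:composition-bracket} (which \emph{is} controlled, by lemma \ref{lemma:bracket-composition}) enter — but verifying this cleanly is where the real work lies. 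The rest is an application of the homological vanishing already established in appendix \ref{app:cartan-poincare}.
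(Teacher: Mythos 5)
Your proposal follows essentially the same route as the paper's own proof in appendix \ref{app:composition}: reduce to $\mathcal{A}=\Lambda\mathbf{S}^*$, encode $D$ and $G$ in the polynomial composition algebra $\Sym V^*\otimes\Lambda\mathbf{S}^*\otimes\mathbf{S}$, build $G$ term by term via an ansatz in increasing exterior degree, use $[D_v,D_{\widetilde v}]=0$ to show each obstruction is closed for the relevant Cartan--Poincaré operator, invoke the vanishing of (co)homology in positive symmetric degree (coming from injectivity of $f$) to solve for the next term, and read off the $\Lambda^3\ker(f^*)$ ambiguity from the surviving cohomology at symmetric degree zero. The only divergence is cosmetic --- you phrase the vanishing via $d_F$ with $F=f$ injective where the paper identifies $D_0\cdot$ with $d^*_{f^*}$ and uses surjectivity of $f^*$ --- so the argument is correct and matches the paper's.
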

\begin{proof}
For simplicity we suppose that $\mathcal{A}$ is already an exterior algebra. Let's recall (theorem \ref{thm:superderivations}) that $\sder_{-}\Lambda\mathbf{S}^*\cong\Lambda_{+}\mathbf{S}^*\otimes\mathbf{S}$, so let $D\colon V\to\Lambda_+\mathbf{S}^*\otimes\mathbf{S}$ be a linear map of the kind considered in the statement of the lemma. Then $D\cdot D=0$ in the $\Z$-bigraded algebra $\mathcal{B}^{\bullet,\circ}:=\Sym^{\bullet} V^*\otimes\Lambda^{\circ}\mathbf{S}^*\otimes\mathbf{S}$. We shall prove that there exists a map
\begin{equation}
G\colon\mathbf{S}^*\to\Lambda_-\mathbf{S}^*
\end{equation}
such that $\pr^1\circ G=\id_{\mathbf{S}^*}$ and
\begin{equation}
D\cdot G=\pr\circ D,
\end{equation}
where $\pr_1:\Lambda_-\mathbf{S}^*\to\mathbf{S}^*$ and $\pr\colon\Lambda_-\mathbf{S}^*\otimes\mathbf{S}\to\mathbf{S}$ are the natural projections.

First of all, let's observe that left multiplication by an element $X$ of the algebra $\mathcal{B}$ is a boundary operator:
\[
X\cdot\colon\mathcal{B}^{\bullet,\circ}\to\mathcal{B}^{\bullet+1,\circ-1}.
\]
because for all $X$ we get $X^3=0$. Now as a linear map $f\colon V\to\Lambda\mathbf{S}^*\otimes\mathbf{S}$ it has the form $f=\alpha\otimes 1\otimes s$, where $\alpha\in V^*$ and $s\in\mathbf{S}$. So multipling by $f$ in the algebra $\mathcal B$ is tantamount to multipling by $\alpha\otimes 1\otimes s$.

In order to prove the existence of $G\colon\mathbf{S}^*\to\Lambda\mathbf{S}^*$ with the desired properties we make an ansatz for the maps $D$ and $G$ as

\begin{equation}\label{eq:ansatz}
\begin{split}
D&=D_0+D_1+D_2+\cdots\\
G&=G_0+G_1+G_3+\cdots
\end{split}
\end{equation}
where $G_0=\id_{\mathbf{S}^*}$ and $D_0=f$; also, each $G_\mu$ is an element of $\Lambda^{2\mu+1}\mathbf{S}^*\otimes\mathbf{S}$ and each $D_\mu$ of $V^*\otimes\Lambda^{2\mu}\mathbf{S}^*\otimes\mathbf{S}$. Since this will guarantee that $D_0\cdot G_0=f$ we need to show that $G$ can be chosen to satisfy
\begin{equation}\label{eq:ansatz2}
D_0\cdot G_\mu + D_1\cdot G_{\mu -1}+\cdots + D_\mu\cdot G_0=0.
\end{equation}
For this we'll make use of the Cartan--Poincaré lemma.

\begin{quotation}
\begin{claim}
The operator $D_0\cdot$ equals $d^*_{f^*}\otimes\id_{\mathbf{S}}$. 
\end{claim}
With the proposed decomposition we get $d^*_{f^*}=\sum f^*(ds_\mu)\otimes s_\mu\ins\otimes s_\mu$ for a basis $\setupla{s}$ of $\mathbf{S}$. Also, since $D_0=f$ we know $D_0=\sum f^*(ds_\mu)\otimes 1\otimes s_\mu$ for the same choice of basis. Now
\[
\begin{split}
D_0\cdot(p\otimes\alpha\otimes s)%
&=\sum f^*(ds_\mu)p\otimes s_\mu\ins\alpha\otimes s\\
&=\left(\sum f^*(ds_\mu)\otimes 1\otimes s_\mu	\right)\cdot p\otimes\alpha\otimes s\\
&=d^*_{f^*}\otimes\id_{\mathbf S}(p\otimes\alpha\otimes s)
\end{split}
\]
so the claim follows.\hfill$\blacksquare$
\end{quotation}
 
\noindent The Cartan--Poincaré lemma now implies
\[
H_{\bullet,\circ}(d_{f^*}^*)=%
\begin{cases}
0,&\bullet > 0\\
\Lambda^\circ\ker f^*\otimes\mathbf{S}, &\bullet=0
\end{cases}
\]
because $\ker f=0$. Since $D_0\cdot=f\cdot$ the fact that all homology groups vanish for polynomials of positive degree is equivalent to
\[
D_0\cdot X=Y\quad\text{implies}\quad D_0\cdot Y=0
\]
Our ansatz now requires
\[
D_0\cdot G_\mu=X\Leftrightarrow D_0\cdot X=0
\]
and furthermore the solution  $X$ is unique up to the kernel of the Cartan--Poincaré operator $D_0\cdot\colon\Lambda^{2\mu}\mathbf{S}^*\otimes\mathbf{S}\to V^*\otimes\Lambda^{2\mu-1}\mathbf{S}^*\otimes\mathbf{S}$, which is $\Lambda^{2\mu+1}(\ker f^*)\otimes\mathbf{S}$ because of the Cartan--Poincaré lemma. This fact avows for the second claim of the lemma.

Let $\mu\geq 1$. The lemma will be proved if we can show that, for chosen $G_1,\cdots,G_{\mu-1}$ satisfying
\begin{equation}\label{eq:ansatz-induction}
\begin{split}
D_0\cdot G_1+D_1\cdot\id_{\mathbf{S}}		&=0\\
D_0\cdot G_2+D_1\cdot G_1+D_2\cdot G_0		&=0\\
\empty										&\vdots\\
D_0\cdot G_{\mu-1}+D_1\cdot G_{\mu -2}+\cdots +D_{\mu -1}\cdot G_0&=0
\end{split}
\end{equation}
the choice of $G_\mu$ can be made  such that $D_0\cdot G_\mu=0$. Given the decomposition proposed in \eqref{eq:ansatz} for $G$ we have
\[
D_0\cdot G_\mu =-(D_\mu\cdot G_0+D_{\mu -1}\cdot G_1+\cdots D_1\cdot G_{\mu -1})=X
\]
because of equations \eqref{eq:ansatz2}, so we must have $D_0\cdot X=0$. To prove this last equation we observe that the sum
\[
\sum_{1\leq\alpha +\beta\leq\mu}D_\alpha\cdot(D_\beta\cdot G_{\mu-\alpha-\beta})
\]
contains all terms present in equations \eqref{eq:ansatz-induction} which we know to be zero and also contains the sum development for $-X$. Now
\begin{equation}\label{eq:finally}
\begin{split}
\sum_{1\leq\alpha +\beta\leq\mu}D_\alpha\cdot(D_\beta\cdot G_{\mu-\alpha-\beta})%
&=\frac{1}{2}\sum_{1\leq\alpha +\beta\leq\mu}D_\beta\cdot(D_\alpha\cdot G_{\mu-\alpha-\beta})+D_\alpha\cdot(D_\beta\cdot G_{\mu-\alpha-\beta})\\
&=\frac{1}{2}\sum_{1\leq\alpha+\beta\leq\mu}[D_\alpha,D_\beta]\cdot G_{\mu-\alpha-\beta}.
\end{split}
\end{equation}
We now see that
\[
\sum_{1\leq\alpha+\beta\leq\mu}[D_\alpha,D_\beta]=\sum(D_\alpha\cdot D_\beta+D_\beta\cdot D_\alpha)=0
\]
because all $D_\lambda\in V^*\otimes\Lambda^{2\lambda}\mathbf{S}^*\otimes\mathbf{S}$; also $D_\mu\cdot(D_0\cdot\id_{\mathbf{S}})=0$ is trivially true because $f\in V^*\otimes\R\otimes\mathbf{S}$ gives zero when the operator $D\cdot$ is applied to it. 

We now use the general formula
\[
A\cdot(B\cdot X)+(-1)^{\parity{A}\parity{B}}B\cdot(A\cdot X)=([A,B]\otimes\id_{\mathbf{S}})X
\]
to see that the development in \eqref{eq:finally} is zero. So now we've proved that our ansatz for $G$ yields the result.
\end{proof}

\renewcommand{\bibname}{References}
\bibliographystyle{amsalpha}

\bibliography{tesis}{}

\addcontentsline{toc}{chapter}{References}

\end{document}